\documentclass{amsart}
\usepackage{amssymb}
\usepackage[all,cmtip]{xy}
\usepackage{mathrsfs}
\usepackage{upgreek}
\usepackage{stmaryrd}
\usepackage{rotating}
\usepackage{tikz}
\usetikzlibrary{cd}
\usepackage[style=trad-alpha]{biblatex}
\ExecuteBibliographyOptions{%
 sorting    =  anyt    , 
 labelalpha =  true    ,
 maxalphanames = 4,
 minalphanames = 4,
}
\usepackage[T1]{fontenc}

\usepackage{color}
\usepackage[hyphens]{xurl}
\usepackage{breakurl}
\usepackage[hyperindex,breaklinks]{hyperref}
\hypersetup{}
\hypersetup{
    breaklinks=true,
    bookmarksdepth=2,
    colorlinks,
    linkcolor={red!50!black},
    citecolor={blue!50!black},
    urlcolor={blue!50!black},
    filecolor={red!50!black}, 
    pdfusetitle=true
}
\usepackage[compress,capitalize]{cleveref}

\newcommand{\Ga}{\mathbb{G}_{\mathrm{a}}}
\newcommand{\Dbc}{D^{\mathrm{b}}_{\mathrm{c}}}
\newcommand{\Dmix}{D^{\mathrm{mix}}}

\newcommand{\AS}{\mathrm{AS}}

\let\x\times

\newcommand{\sH}{\mathsf{H}}




\newcommand{\Z}{\mathbb{Z}}
\newcommand{\R}{\mathbb{R}}

\newcommand{\N}{\mathbb{N}}
\newcommand{\Q}{\mathbb{Q}}

\newcommand{\Gm}{\mathbb{G}_{\mathrm{m}}}




\newcommand{\ff}{\mathfrak{f}}
\newcommand{\fg}{\mathfrak{g}}


\newcommand{\coweyl}{\mathsf{N}}



\newcommand{\Waff}{W_{\mathrm{aff}}}
\newcommand{\Wfin}{W_{\mathrm{fin}}}


\newcommand{\cO}{\mathcal{O}}

\newcommand{\cB}{\mathcal{B}}

\newcommand{\IC}{\mathscr{I} \hspace{-1pt} \mathscr{C}}

\newcommand{\Gr}{\mathrm{Gr}}


\newcommand{\iCat}{\textup{Cat}_{\infty}}
\newcommand{\cC}{\mathcal{C}}

\newcommand{\mix}{{\mathrm{mix}}}
\newcommand{\Perv}{\mathsf{Perv}}

\newcommand{\Rep}{\mathsf{Rep}}

\newcommand{\Vect}{\mathsf{Vect}}
\newcommand{\Coh}{\mathsf{Coh}}

\newcommand{\DM}{\mathsf{DM}}
\newcommand{\Db}{D^{\mathrm{b}}}
\newcommand{\Kb}{K^{\mathrm{b}}}

\DeclareMathOperator{\Hom}{Hom}
\DeclareMathOperator{\Aut}{Aut}
\DeclareMathOperator{\Ext}{Ext}
\DeclareMathOperator{\End}{End}
\DeclareMathOperator{\Sym}{Sym}
\DeclareMathOperator{\Ind}{Ind}

\DeclareMathOperator*{\colim}{colim}

\DeclareMathOperator{\im}{im}

\newcommand{\id}{\mathrm{id}}

\newcommand{\res}{\mathrm{Res}}

\newcommand{\bn}{\mathbf{n}}

\makeatletter
\def\lotimes{\@ifnextchar_{\@lotimessub}{\@lotimesnosub}}
\def\@lotimessub_#1{\mathchoice{\mathbin{\mathop{\otimes}^L}_{#1}}%
  {\otimes^L_{#1}}{\otimes^L_{#1}}{\otimes^L_{#1}}}
\def\@lotimesnosub{\mathbin{\mathop{\otimes}^L}}
\makeatother

\makeatletter
\def\lboxtimes{\@ifnextchar_{\@lboxtimessub}{\@lboxtimesnosub}}
\def\@lboxtimessub_#1{\mathchoice{\mathbin{\mathop{\boxtimes}^L}_{#1}}%
  {\boxtimes^L_{#1}}{\boxtimes^L_{#1}}{\boxtimes^L_{#1}}}
\def\@lboxtimesnosub{\mathbin{\mathop{\boxtimes}^L}}
\makeatother

\newcommand{\tboxtimes}{\mathbin{\widetilde{\mathord{\boxtimes}}}}
\newcommand{\wttimes}{\mathbin{\widetilde{\times}}}

\newcommand{\Spec}{\mathrm{Spec}}

\mathchardef\mhyphen="2D

\newcommand{\Lie}{\operatorname{Lie}}

\newcommand{\cA}{\mathcal{A}}

\newcommand{\inv}{^{-1}}

\newcommand{\ad}{\mathrm{ad}}

\newcommand{\hk}{\ensuremath{\operatorname{Hk}_{G}} }
\newcommand{\Hk}[1]{\ensuremath{\operatorname{Hk}_{#1}}}
\newcommand{\chamb}{X_*(T)_I^+}
\newcommand{\splitchamb}{X_*(T)^+}
\newcommand{\cham}{{X_*}(T)_I}
\newcommand{\splitcham}{X_*(T)}
\newcommand{\posroot}{\Phi^+}
\newcommand{\starsup}{*\text{-}\operatorname{Supp}}
\newcommand{\shrieksup}{!\text{-}\operatorname{Supp}}
\newcommand{\Zent}{\operatorname{Z}}
\newcommand{\grW}{\operatorname{Gr}^W}
\newcommand{\isomto}{\stackrel{\textstyle\sim}{\smash{\longrightarrow}\rule{0pt}{0.4ex}}}
\newcommand{\Qell}{\ensuremath{\overline{\mathbb{Q}}_\ell}}

\newcommand{\stdmix}{\Delta^\text{mix}}
\newcommand{\costdmix}{\nabla^\text{mix}}
\newcommand{\ICmix}{\IC^\text{mix}}
\newcommand{\Pervmix}{\mathsf{Perv}^\text{mix}}

\newcommand{\Jmix}{J^\text{mix}}
\renewcommand{\a}{\mathbf{a}}
\newcommand{\f}{\mathbf{f}}
\newcommand{\Par}{\mathcal{G}}
\newcommand{\red}{\text{red}}
\newcommand{\Lplus}{\mathrm{L}^+}
\newcommand{\LG}{\mathrm{L}G}
\renewcommand{\L}{\mathrm{L}}

\newcommand{\I}{\mathcal{I}}
\newcommand{\G}{\mathcal{G}}

\newcommand{\Iop}{\mathcal{I}^\mathrm{op}}
\newcommand{\Las}{\mathcal{L}_\mathrm{AS}}
\newcommand{\defined}{\hspace{0.1cm}\stackrel{\text{\tiny \rm def}}{=}\hspace{0.1cm}}
\newcommand{\PIW}{\mathsf{P}_{\mathcal{IW}}}
\newcommand{\PervI}{\mathsf{Perv}(\Hk{\I})}
\newcommand{\IW}{\mathcal{IW}}
\newcommand{\Po}{\mathsf{P}^0}
\newcommand{\Pasph}{\mathsf{P}_{\mathsf{asph}}}
\newcommand{\stdiw}{\Delta^\mathcal{IW}}
\newcommand{\costdiw}{\nabla^\mathcal{IW}}
\newcommand{\std}{\Delta}
\newcommand{\costd}{\nabla}
\newcommand{\aviw}{\mathsf{av}^\mathcal{IW}}
\newcommand{\avasph}{\mathsf{av}^\mathsf{asph}_\mathcal{IW}}
\newcommand{\ICIW}{\IC^{\mathcal{IW}}}
\newcommand{\DIW}{D_{\mathcal{IW}}(\Fl{\I})}
\newcommand{\DIWX}{D_{\mathcal{IW}}}
\newcommand{\gr}{\operatorname{gr}}
\newcommand{\Grad}{\operatorname{Grad}}
\newcommand{\isoto}{\overset{\sim}{\,\to\,}}
\DeclareMathOperator{\opname}{op}
\newcommand{\op}{^{\opname}}
\newcommand{\Res}{\mathrm{Res}}
\newcommand{\ogm}{\Z[\mathbf{v},\mathbf{v}\inv]}


\newcommand{\Sprgi}{\widetilde{\mathcal{N}}_{\check G^I}}
\newcommand{\Sprgd}{\widetilde{\mathcal{N}}_{\check{G}^{I,\circ}}}

\newcommand{\Sprgafi}{\widetilde{\mathcal{N}}^\mathrm{af}_{{\check G}^I}}
\newcommand{\Sprgqafi}{\widetilde{\mathcal{N}}^\mathrm{qaf}_{{\check G}^I}}
\newcommand{\Sprgqafd}{\widetilde{\mathcal{N}}^\mathrm{qaf}_{\check{G}^{I,\circ}}}
\newcommand{\Gdual}{\check{G}^{I,\circ}}
\newcommand{\Bdual}{\check{B}^{I,\circ}}

\newcommand{\Udual}{\check{U}^{I}}

\usepackage{etoolbox}
\newcommand{\Fl}[2][]{%
  \ifblank{#1}{
    \operatorname{Fl}_{#2}}
  {
    \operatorname{Fl}_{#1,#2}}}

\newcommand{\GrBD}[2][]{%
  \ifblank{#1}{
	  \operatorname{Gr}^\mathrm{BD}_{#2}}
  {
    \operatorname{Gr}^\mathrm{BD}_{#1,#2}}}

\numberwithin{equation}{section}
\newtheorem{thrm}{Theorem}[section]
\newtheorem{lem}[thrm]{Lemma}
\newtheorem{prop}[thrm]{Proposition}
\newtheorem{cor}[thrm]{Corollary}

\newtheorem{ques}[thrm]{Question}
\theoremstyle{definition}
\newtheorem{defn}[thrm]{Definition}

\theoremstyle{remark}
\newtheorem{rmk}[thrm]{Remark}

\title[Twisted affine flag varieties and Langlands duality]{Perverse sheaves on twisted affine flag varieties and Langlands duality}
\author{R{\i}zacan \c{C}ilo\u{g}lu}
\address{Technische Universit\"at Darmstadt, Department of Mathematics, 64289 Darmstadt, Germany}
\email{rizacan.ciloglu@mathematik.tu-darmstadt.de}
\thanks{R{\i}zacan \c{C}ilo\u{g}lu acknowledges support (through Timo Richarz) by the European
Research Council (ERC) under Horizon Europe (grant agreement nº 101040935), by the Deutsche
Forschungsgemeinschaft (DFG, German Research Foundation) TRR 326 \textit{Geometry and Arithmetic
of Uniformized Structures}, project number 444845124 and the LOEWE professorship in Algebra,
project number LOEWE/4b//519/05/01.002(0004)/87.}
\addbibresource{ref.bib}
\begin{document}
\begin{abstract}
	We provide a description of Iwahori-Whittaker equivariant perverse sheaves on affine
	flag varieties associated to tamely ramified reductive groups, in terms of Langlands dual data. This
	extends the work of Arkhipov-Bezrukavnikov \cite{arkhipovBezrukavnikov} from the case of
	split reductive groups.
	To achieve this, we first extend the theory of Wakimoto
	sheaves to our context and prove convolution exact central objects admit a filtration by
	such. We then establish the tilting property of the Iwahori-Whittaker averaging of certain central objects arising from the geometric Satake equivalence, which enables us
        to address the absence of an appropriate analogue of Gaitsgory's central functor \cite{gaitsgoryCentral} for non-split groups.
\end{abstract}
\maketitle
\setcounter{tocdepth}{1}
\tableofcontents
\section{Introduction}
The geometric Satake equivalence \cite{mirkovicvilonenSatake} is fundamental to the geometric aspects of the Langlands program and various approaches to
geometric representation theory. It is a monoidal
equivalence of categories that relates perverse sheaves on the affine Grassmannian of a split
reductive group $G$ to the representations
of its Langlands dual group $\check G$. In recent years, this result has been extended to the case of non-split reductive groups in various settings, initiated by
\cite{zhuRamified} and followed by \cite{richarzRamified}, \cite{modularRamified}, and
\cite{thibaudSatake}. In these extensions, the group appearing on
the dual side is the group of fixed points $\check G^I$, for a certain natural action of
a group $I$, which acts trivially when $G$ is split. For example, if $G$ is the odd dimensional
projective unitary
group $\mathrm{PU}_{2n+1}$, then $\check G=\mathrm{SL}_{2n+1}$ and $\check G^I=\mathrm{SO}_{2n+1}$.

In \cite{arkhipovBezrukavnikov}, the geometric Satake equivalence
for a split group $G$ is extended to a description of the perverse sheaves on the (full) affine flag
variety in terms of the equivariant coherent sheaves on the Springer resolution associated to $\check G$. This
description has come to be known as the \textit{Arkhipov-Bezrukavnikov equivalence}.
The purpose of the work at hand is to extend the Arkhipov-Bezrukavnikov equivalence
to the case of non-split reductive groups.

Alongside direct applications to representations of quantum groups
\cite{bezrukavnikovQuantum},
semisimple Lie algebras \cite{bezrukavnikovSemisimpleLie}, and the
geometric Langlands program \cite{frenkelGaitsgory}; the Arkhipov-Bezrukavnikov equivalence also laid out the technical foundations for the later
generalization \cite{bezrukavnikovEquivalence}. This generalization has also found a remarkably
wide set of applications in \cite{nadlerCategoricalDeligneLanglands}, \cite{yunNadlerBetti},
\cite{hellmannPadicEigenforms}, \cite{yunCommutingStack}, to name a few. Accordingly, the work
at hand is a first step in exploring
such variety of questions for non-split reductive groups.
\subsection{Main result}
Let $k$ be the algebraic closure of a finite field of characteristic $p>0$, and
$G$ be a tamely ramified reductive group over the field of Laurent series $k(\!(t)\!)$. We consider an
\textit{Iwahori} model $\I$ of $G$ over the ring of power series $k[\![t]\!]$, in the sense of \cite{bruhatTits}.
Using the group functors
$$\LG:R\mapsto G\left(R(\!(t)\!)\right), \quad \Lplus\I:
R\mapsto\I\left(R[\![t]\!]\right),$$
on the category of $k$-algebras, we obtain the \textit{(twisted) affine flag variety} $\Fl{\I}$ as the
\'etale-quotient $\LG/\Lplus\I$ of $\LG$ by its subgroup functor $\Lplus\I$.

On the constructible side of the equivalence is the full triangulated subcategory
$$\DIW\subset \Dbc(\Fl{\I},\Qell)$$
of constructible $\ell$-adic \'etale sheaves on $\Fl{\I}$, which satisfy an equivariance condition with
respect to a Whittaker datum, made precise in \cref{section:iwahoriwhittaker}. Moreover, the
inclusion of this full subcategory commutes with the perverse truncation functors,
therefore $\DIW$ inherits the perverse t-structure.

For the coherent side of the equivalence, note that, although the group $\check G^I$ is potentially disconnected,
its identity component $\Gdual$ is a split reductive group \cite[Proposition 16]{hainesSatake}.
As such, we can associate to it the \textit{Springer resolution} $\Sprgd$, which is a resolution
of singularities of the variety of nilpotent elements in the Lie algebra of $\check G^I$,
see \cref{subsection:springer}.
The adjoint action of $\check G^I$ naturally extends to an action on $\Sprgd$, giving rise to
the category $\Coh^{\check G^I}(\Sprgd)$ of $\check G^I$-equivariant coherent sheaves on
$\Sprgd$. We now formulate our main theorem.
\begin{thrm}\label{intro:mainthrm}
	There is an equivalence of categories
	$$F_\IW:\Db\Coh^{\check G^I}(\Sprgd)\isomto \DIW,$$
	where $\Db\Coh^{\check G^I}(\Sprgd)$ denotes the bounded derived category of $\Coh^{\check
	G^I}(\Sprgd)$.
\end{thrm}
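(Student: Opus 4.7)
The plan is to follow the strategy of Arkhipov–Bezrukavnikov \cite{arkhipovBezrukavnikov}, adapted to the two principal technical difficulties highlighted in the abstract: the need to extend the Wakimoto theory to the twisted setting, and the absence of an appropriate analogue of Gaitsgory's central functor for non-split groups. The first step is to set up Wakimoto sheaves on $\Fl{\I}$: for each coweight $\lambda \in X_*(T)_I$ one defines $J_\lambda$ as a convolution of standard and costandard objects attached to translation elements of the extended affine Weyl group. The properties to establish are the monoidal structure $J_\lambda \star J_\mu \cong J_{\lambda+\mu}$, convolution exactness, and the fact that $J_\lambda$ is a perverse $\mathcal{I}$-equivariant sheaf. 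These carry over formally from the split case, but one must verify their compatibility with the twisted parahoric structure and the inertia action.

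Second, I would use the ramified geometric Satake equivalence of \cite{zhuRamified, richarzRamified} to produce central objects $\mathcal{Z}(V) \in \Perv(\Fl{\I})$ for each $V \in \Rep(\check G^I)$, via nearby cycles from Beilinson–Drinfeld deformations of the affine Grassmannian. The first main structural input is to show that $\mathcal{Z}(V)$ admits a filtration whose associated graded is a direct sum of Wakimoto sheaves $J_\lambda$, indexed with multiplicity by the weights of $V$ viewed as a representation of $\check T^I$. In the split case this follows from the weight/monodromy filtration on nearby cycles; in our setting one adapts the argument using the analogous structure in the ramified context, and combines it with the transitivity of averaging between the spherical and Iwahori parahoric.

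Third, and most critically, I would bypass the missing Gaitsgory central functor by applying the Iwahori–Whittaker averaging functor $\aviw$ to the objects $\mathcal{Z}(V)$ and proving that $\aviw(\mathcal{Z}(V))$ is a \emph{tilting} object of $\PIW(\Fl{\I})$. The Wakimoto filtration of the previous step, combined with the standard exactness properties of $\aviw$ on IW-projections of standard and costandard sheaves, produces both a standard and a costandard filtration of $\aviw(\mathcal{Z}(V))$, yielding the tilting property. This is the key structural input that replaces the central-functor machinery in the non-split case, and I expect it to be the main obstacle: it demands careful control of how the Wakimoto filtration interacts with the ramified nearby-cycles construction and the Iwahori–Whittaker equivariance condition, particularly when $V$ is a non-minuscule representation of $\check G^I$.

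Finally, one assembles $F_\IW$ from the tilting objects $\aviw(\mathcal{Z}(V))$, the action of Wakimoto sheaves, and the standard IW generators: line bundles on $\Sprgd$ are matched with Wakimoto convolutions of IW-standard sheaves, while sections pulled back from the nilpotent cone in $\Lie(\check G^I)$ correspond to the tilting central objects. Fully faithfulness and essential surjectivity are then reduced to a computation of Hom-spaces on a generating family of $\Db\Coh^{\check G^I}(\Sprgd)$, which, via a formal Koszul-type argument in the spirit of \cite{arkhipovBezrukavnikov}, collapses to the known cohomology ring of $\Sprgd$ equipped with its $\check G^I$-action.
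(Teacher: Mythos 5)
There is a genuine gap, and it is precisely at the point you flag as most critical. In your second paragraph you assert that you ``produce central objects $\mathcal{Z}(V)\in\Perv(\Fl{\I})$ for each $V\in\Rep(\check G^I)$, via nearby cycles from Beilinson--Drinfeld deformations of the affine Grassmannian.'' This is the thing that does \emph{not} exist. The BD-Grassmannian $\GrBD{\I}$ has geometric generic fiber the \emph{untwisted} affine Grassmannian $\Gr_{G,\bar K}$, so the nearby cycles functor $\Psi_\I=\operatorname{Z}$ has source $\Perv(\Hk{G,\bar K})\cong\Rep(\check G)$, the representation category of the full dual group, not of $\check G^I$. There is no known deformation degenerating the twisted affine Grassmannian $\Fl{\G}$ (for $\G$ special parahoric) to $\Fl{\I}$, hence no apparent nearby-cycles construction landing in $\Perv(\Hk{\I})$ whose source is $\Rep(\check G^I)$. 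Your third paragraph says you want to ``bypass the missing Gaitsgory central functor,'' but the objects $\mathcal{Z}(V)$ for $V\in\Rep(\check G^I)$ you invoke in paragraphs 2--4 \emph{are} that missing functor. If you had them, the problem would already be solved.

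The actual route to a functor $\Rep(\check G^I)\to\PIW$ is much more indirect and is where the real work lies. One has $\operatorname{Z}:\Rep(\check G)\to\Perv(\Hk{\I})$, and one must show that $\aviw\circ\operatorname{Z}$ descends along $\Res:\Rep(\check G)\to\Rep(\check G^I)$. This descent is proven by combining three ingredients: (a) the tilting property of $\aviw\circ\operatorname{Z}(V')$ for suitable $V'\in\Rep(\check G)$; (b) the fact that the projection $\PIW\to\PIW^0$ to the regular quotient is fully faithful on tilting objects; and (c) a Tannakian description of $\PIW^0$ as $\Rep(H)$ for a subgroup $H\subset\check G^I$, through which the descent is constructed by Tannaka duality. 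None of this appears in your proposal, and without it the desired functor out of $\Rep(\check G^I)$ is not available.

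A secondary but related issue: your claim that the tilting property of $\aviw(\operatorname{Z}(V))$ follows from ``the Wakimoto filtration, combined with the standard exactness properties of $\aviw$'' is not correct. The Wakimoto graded pieces $J_\lambda$ average neither to standard nor to costandard Iwahori--Whittaker objects for non-(anti)dominant $\lambda$, so one does not formally get both required filtrations. The tilting property is proven first for minuscule and quasi-minuscule highest weights; the quasi-minuscule case requires a dimension bound on $\Hom_{\DIW}(\stdiw_0,\aviw\operatorname{Z}(V))$ which uses the coincidence of the weight and monodromy filtrations (Gabber), the interpretation of the monodromy via Verdier's construction for $\Gm$-monodromic sheaves (this is the single place the \emph{tame} ramification hypothesis enters), and explicit Hecke-algebra computations. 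The general case is then obtained by propagation through tensor products and Ngo's result that every irreducible $\check G^{I,\circ}$-representation is a summand of a tensor product of (quasi-)minuscule ones. Your sketch elides all of this and would not yield the tilting property.
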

The proof of \cref{intro:mainthrm} is uniform for tamely ramified reductive groups, treating
split and non-split groups in equal footing.

Another thing to note is that the equivalence $F_\IW$ is not t-exact for the respective standard and the perverse
t-structures. However, transporting the perverse t-structure to $\Db\Coh^{\check G^I}(\Sprgd)$ along $F_\IW$ leads to
the so called \textit{exotic t-structure} which is of independent interest, and has found applications in the
aforementioned work \cite{bezrukavnikovQuantum} in the split setting.

The proof of \cref{intro:mainthrm} can be broken into two steps:
\begin{itemize}
	\item[Step 1:]Construct a
		functor
$$\Rep(\check G^I)\rightarrow \PIW.$$
This requires many ingredients and takes place predominantly in the constructible side of the
equivalence.
\item[Step 2:]
		Upgrade this functor to the desired equivalence by gradually extending the domain category
		to $\Coh^{\check G^I}(\Sprgd)$, using certain extra structure which arises
		naturally. This takes place predominantly in the coherent side.
\end{itemize}
Let us now discuss these in more detail.
\subsection{Strategy of the proof}\label{intro:central}
We will focus on explaining the first step, as the second step turns out to be a rather
straightforward exercise in carrying out the strategy from the split case.

In contrast, the first step requires introduction of new ideas. For a split group $G$, the
$I$-action on $\check G$ is trivial and the desired functor is obtained in a straightforward
manner from the \textit{central functor}
$$\Zent:\Rep(\check G)\rightarrow \Perv(\Hk{\I})$$
of Gaitsgory \cite{gaitsgoryCentral}, where $\Perv(\Hk{\I})$ is the full subcategory of
$\Lplus\I$-equivariant sheaves in
$\Perv(\Fl{\I})$ with respect to the left action. It is constructed via the geometric Satake equivalence,
from a nearby cycles construction for
\textit{Beilinson-Drinfeld Grassmannians}, which are degenerations of affine Grassmannians to
affine flag varieties, see \cref{subsection:BDGrass}. In the split setting of
\cite{arkhipovBezrukavnikov}, the functor $\Zent$ is
the fundamental building block for relating the constructible side to the coherent side. After the
establishment of this relation, the equivalence is proven via explicit cohomological calculations on the
respective sides.

This introduces the essential difficulty of the non-split
case: There is no known construction of a deformation, which degenerates twisted affine
Grassmannians\footnote{By which we mean the twisted flag
variety $\Fl{\G}$, associated to a special parahoric model $\G$ of a non-split group $G$.} to
twisted affine flag varieties;
 consequently there is no apparent construction of a functor from $\Rep(\check G^I)$ to $\Perv(\Hk{\I})$ analogous to the central functor.
 It is then natural to ask:
\begin{ques}\label{intro:question}
	Does there exist a functor from $\Rep(\check G^I)$ to
	$\Perv(\Hk{\I})$ such that the diagram
	$$\begin{tikzcd}
		\Rep(\check G)\ar[r,"\Zent"]\ar[d,"\Res"'] & \Perv(\Hk{\I}) \\
		\Rep(\check G^I)\ar[ur,dotted]&
	\end{tikzcd}$$
	commutes?
\end{ques}
\cref{intro:question} is essentially equivalent to asking that, for a $\check G$-representation $V$,
the sheaf $\Zent(V)$ admits a direct sum decomposition determined by the splitting of $\Res(V)$ to
irreducible $\check{G}^I$-representations. Since $\Zent(V)$ is obtained through a nearby cycles
construction, this is known to be a difficult question to answer in general.
In fact, an analogous question arises in \cite{zhuRamified} for a special parahoric model $\G$
instead of an Iwahori. In this case the category $\Perv(\Hk{\G})$ is semisimple, and
admits a Tannakian structure. Using this, the analogous question is then answered in the positive using categorical
methods, by appeal to Tannaka duality. Unfortunately, this fails in the case of an Iwahori, as
$\Perv(\Hk{\I})$ is far from semisimple, and does not carry an apparent monoidal structure let
alone a Tannakian one. To the author, a positive answer to \cref{intro:question} is currently only known in the case of tori and
$\mathrm{SU}_{3}$, obtained through extensions of such categorical methods.

Lacking the geometric means to answer this question, we opt instead to circumvent it
using the theory of highest weight categories and tilting objects.
First, let us expand a bit more on the equivariance condition defining $\DIW$. An
\textit{Iwahori-Whittaker datum} amounts to a tuple $(\chi,L_\AS)$ consisting of a generic linear
functional $\chi:\Lplus\Iop_u\rightarrow \Ga$
of a certain pro-unipotent group $\Lplus\Iop_u$; and an Artin-Schreier local system  $L_\AS$ on
$\Ga$. The group $\Lplus\Iop_u$ acts on $\Fl{\I}$, and the category $\DIW$ is the full
subcategory of sheaves $F\in \Dbc(\Fl{\I},\Qell)$, satisfying
$$a^*F\cong F\boxtimes\chi^*L_\AS$$
for the action map $a:\Lplus\Iop_u\x\Fl{\I}\rightarrow \Fl{\I}$. Then, one obtains a functor
$$\aviw:\Dbc(\Hk{\I})\rightarrow \DIW,$$
by averaging along the action of $\Lplus\Iop_u$, see \cref{subsection:averaging}.
As mentioned before, $\DIW$ inherits the perverse t-structure for which $\aviw$ is shown to be
perverse t-exact by \cref{cor:aviewtexact}. Let $\PIW$ be the heart of the perverse t-structure on
$\DIW$. \cref{section:iwahoriwhittaker} and \cref{section:tilting}
are devoted to the study of $\PIW$, culminating in the proof of the following:
\begin{thrm}\label{intro:tilting}
		 The category $\PIW$ carries the structure of a \textit{highest weight category}, in the sense of
	\cite[\textsection 1.12.3]{baumannRiche}. Moreover,
		for every representation $V\in \Rep(\check G^I)$, there exist a representation $V'\in \Rep(\check G)$ such that
	\begin{enumerate}
		\item the representation $V$ is a direct summand in $\Res(V')$.
		\item the object $\aviw\circ\Zent(V')\in\PIW$ is \textit{tilting} with respect to
			this highest weight structure;
	\end{enumerate}
\end{thrm}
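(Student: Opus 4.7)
The plan has two largely independent threads, with the tilting assertion being the substantive content and the highest-weight statement setting the stage.

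\textbf{Highest-weight structure.} First I stratify $\Fl{\I}$ by its Iwahori orbits, indexed by a suitable subset of the Iwahori--Weyl group. Since $\chi$ is generic, the subset of orbits admitting a non-zero $\IW$-equivariant local system is distinguished (the twisted analogue of the set of minimal-length coset representatives $\WaffminI$), and for each such orbit the equivariant local system is unique up to scalar. I define $\std^\IW_w$ and $\nabla^\IW_w$ as the perverse $!$- and $*$-extensions of these local systems. Verification of the highest-weight axioms in the sense of \cite[\textsection 1.12.3]{baumannRiche} reduces, modulo the identification of the label set with its induced Bruhat order, to Hom- and Ext-computations between $\std^\IW_w$ and $\nabla^\IW_{w'}$; these follow from the standard argument using affineness of inclusions of orbit closures combined with a parity/purity estimate, exactly as in the split case. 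I therefore expect this step to run parallel to the split setting once the twisted orbit geometry is in place.

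\textbf{Branching.} Given $V \in \Rep(\check G^I)$, since $I$ acts through a finite quotient, $\check G^I$ is a closed (reductive, in characteristic zero) subgroup of $\check G$. In characteristic zero all algebraic representations are semisimple; applying Frobenius reciprocity (or dualizing the surjection $\mathcal{O}(\check G) \twoheadrightarrow \mathcal{O}(\check G^I)$ of $\check G^I$-modules) produces a finite-dimensional $V' \in \Rep(\check G)$ in which $V$ sits as a direct summand of $\Res(V')$. To prepare for the tilting argument, I replace $V'$ by $V' \oplus (V')^\vee$ so that $V'$ is self-dual; $V$ remains a summand of $\Res(V')$.

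\textbf{Tilting.} With $V'$ so chosen, I invoke the Wakimoto filtration theorem established earlier in the paper: $\Zent(V')$ carries an increasing filtration whose associated graded is $\bigoplus_\mu J_\mu^{\oplus \dim V'(\mu)}$, where $\mu$ ranges over the weights of $V'$ and $J_\mu$ is the Wakimoto sheaf attached to the class of $\mu$ in $X_*(T)_I$. The decisive input is an explicit identification $\aviw(J_\mu) \cong \std^\IW_{w_\mu}$ (up to a Tate twist) for an element $w_\mu$ of the label set determined by $\mu$; granting this, and using that $\aviw$ is perverse $t$-exact (\cref{cor:aviewtexact}), I conclude that $\aviw\circ\Zent(V')$ admits a standard filtration. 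Self-duality of $V'$ forces Verdier self-duality of $\Zent(V')$ and hence of $\aviw\circ\Zent(V')$; since Verdier duality exchanges $\std^\IW_w$ and $\nabla^\IW_w$, the object also admits a costandard filtration and is therefore tilting.

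\textbf{Main obstacle.} The principal difficulty is the averaging computation $\aviw(J_\mu) \cong \std^\IW_{w_\mu}$. In the split case this follows from the description of $J_\mu$ as a convolution of invertible standards over translation elements, where the orbit structure is transparent. In the twisted case, cocharacters are only well-defined modulo the $I$-action, and translation elements are built from $I$-orbit sums; one must therefore match the indexing of Wakimoto sheaves on the spectral side with the $\IW$-orbit indexing on the geometric side, and check compatibility with convolution. Once this identification is in hand, the self-duality argument is formal and the branching step is elementary, so the proof concludes.
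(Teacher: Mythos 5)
Your highest-weight and branching steps run essentially parallel to the paper (the highest-weight structure is \cref{prop:quotientbruhat}, built from the orbit stratification and the genericity of $\chi$; the branching is elementary), so I will focus on the tilting step, which is where the proposal breaks down.

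The ``decisive input'' you flag — $\aviw(J_\mu)\cong\std^\IW_{w_\mu}$ for all $\mu\in\cham$ — is false, and this is not a computational issue to be cleaned up but a structural obstruction. What is true (\cref{std:averaging}) is that $\aviw$ sends $!$-extensions on $\Hk{\I}$ to $!$-extensions in $\PIW$ and $*$-extensions to $*$-extensions. The Wakimoto sheaf $J_\mu$ is a $*$-extension $\nabla_{t^\mu}$ only when $\mu$ is dominant; it is a $!$-extension $\Delta_{t^\mu}$ when $\mu$ is antidominant; and for $\mu$ in any other chamber it is a genuine convolution $\nabla_\lambda\star\Delta_{\mu-\lambda}$ which is neither. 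Consequently $\aviw(J_\mu)$ is a $*$-extension for dominant $\mu$, a $!$-extension for antidominant $\mu$, and for all other $\mu$ a perverse sheaf that is not of either form. Since any nontrivial $V'$ has weights in several chambers, averaging the Wakimoto filtration of $\Zent(V')$ does not yield a standard filtration, and the self-duality bootstrap never gets off the ground. (The self-duality step would also require handling the fact that Verdier duality reverses the Artin--Schreier character, so it lands in $D_{\overline{\IW}}$ rather than $\DIW$ — fixable, but not for free.) A useful sanity check: if the averaging identification held, your argument would prove tilting for all $V'$ and with no ramification hypothesis, whereas the paper emphasizes that tame ramification is used exactly here; an argument that makes no use of the hypothesis the authors single out should have prompted suspicion.

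The paper's actual route is quite different and more delicate. One reduces to irreducible $\check G$-representations $V$ whose highest weight projects to a minuscule or quasi-minuscule coweight of the \'echelonnage system, and then propagates through tensor products (\cref{propagation}) using \cite[Lemme 10.3]{ngoDemazure}. Tilting is verified via the stalk/costalk criterion (\cref{criterion}): an Euler characteristic computation using the Wakimoto filtration (\cref{whittaker:eulerchar}) controls the support; in the minuscule case this already forces concentration in a single degree (\cref{minuscule}); in the quasi-minuscule case there remains a single offending orbit (the zero orbit), and one needs an upper bound on $\dim\Hom(\stdiw_0,\Zent^\IW(V))$ and $\dim\Hom(\Zent^\IW(V),\costdiw_0)$ (\cref{lemma:inequality0wtspc}). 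That bound is obtained by passing to the regular quotient, invoking its Tannakian description (\cref{regular:tannakian}), and computing $\dim\ker n^0_V$ via the coincidence of the monodromy and weight filtrations (Gabber) and the Hecke-algebra morphism \eqref{eq:multiplicitymorph}. The Verdier description of the monodromy — which is what needs tame ramification — is used to show the nilpotent $n^0_V$ commutes with all relevant morphisms, making the $\ker$-computation work. None of this collapses to an averaging identity of the kind you propose.
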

Although \cref{intro:tilting} is of independent interest, it is also directly relevant to the
construction of the functor $\Rep(\check G^I)\rightarrow \PIW$. Namely, we prove in
\cref{section:tilting} that the \textit{regular
quotient}, $\PIW^0$, which is a certain Serre quotient
of $\PIW$ with a large kernel, admits the structure of a Tannakian category. This allows us to
construct a functor
$$\Rep(\check G^I)\rightarrow \PIW^0$$
by appealing to Tannaka duality. It turns out that the natural projection functor $\PIW\rightarrow
\PIW^0$ is fully faithful when restricted to the tilting objects; allowing us to use
\cref{intro:tilting} to prove the existence of a commutative diagram:
$$\begin{tikzcd}
	\Rep(\check G)\ar[d,"\res"']\ar[r,"\operatorname{Z}"] &
	\PervI \ar[d,"\aviw"]\\
	\Rep(\check G^I)\ar[r]& \PIW.
\end{tikzcd}$$
The functor $$\Rep(\check G^I)\rightarrow\PIW$$
from this diagram is the desired one.

Finally, let us mention that the tame ramification assumption is used only once in the entire
proof.
Namely, the proof of \cref{intro:tilting} makes use of a bound on the
dimension of $\Hom_{\PIW}(\aviw\circ\Zent(\Qell),\aviw\circ\Zent(V))$, where $V$ is a is
irreducible $\check G$-representation whose highest weight restricts to a quasi-minuscule
coweight for the \'echelonnage root system of $G$. The proof of this bound uses the interpretation of
the monodromy of nearby cycles, in terms of Verdier's construction of such for $\Gm$-monodromic
sheaves, using the loop rotation $\Gm$-action. Giving an independent proof of this bound would
extend our result to all non-split groups in a uniform manner.
\subsection{Other results}
The simple objects $\IC_w\in\Perv(\Hk{\I})$ are naturally indexed by elements $w\in
W$ of the \textit{Iwahori-Weyl
group}. The Iwahori-Weyl group  can be equipped with a length function
$\ell:W\rightarrow\Z_{\geq 0}$, and
admits an embedding from the finite Weyl group $\Wfin$ associated to $G$.

Let $\Pasph$ be the quotient of $\Perv(\Hk{\I})$ by the smallest Serre
subcategory containing those $\IC_w$, for which $w$ is \textit{not} of minimal length in
$\Wfin\cdot w$, its left finite Weyl group orbit. This category is a geometrization of the \textit{antispherical module}
of the Iwahori-Hecke algebra associated to $(G,\I)$.
Using \cref{intro:tilting} we deduce an equivalence between two geometrizations of this
antispherical module,
generalizing \cite[Theorem 2]{arkhipovBezrukavnikov} to include non-split groups:
\begin{thrm}\label{intro:key}
The functor $\aviw:\Perv(\Hk{\I})\rightarrow \PIW$
	induces
	$$\avasph:\Pasph \rightarrow\PIW$$
	via the universal property of Serre quotients, which is an equivalence of categories.
\end{thrm}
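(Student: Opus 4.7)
The plan has two stages: first, factor $\aviw$ through the Serre quotient to produce $\avasph$; then, upgrade $\avasph$ to an equivalence using the highest weight and tilting formalism, ultimately relying on \cref{intro:tilting}.

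For the first stage, the key is to establish the vanishing $\aviw(\IC_w) = 0$ whenever $w$ is not of minimal length in its left orbit $\Wfin \cdot w$. If $w$ is non-minimal, there exists a simple reflection $s \in \Wfin$ with $\ell(sw) < \ell(w)$, and correspondingly the Schubert stratum indexed by $w$ is stable under a one-parameter subgroup inside the root subgroup attached to $s$; this subgroup lies inside $\Lplus\Iop_u$ and the IW-character $\chi$ restricts trivially to it. Averaging a sheaf invariant under such a subgroup against a non-trivial Artin-Schreier local system forces vanishing, and applying this stratum-by-stratum yields $\aviw(\IC_w)=0$. Since $\aviw$ is perverse $t$-exact by \cref{cor:aviewtexact} and the $\IC_w$ with $w$ non-minimal generate the defining Serre subcategory of $\Pasph$, the universal property produces the exact functor $\avasph:\Pasph\to\PIW$.

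For the second stage, both $\Pasph$ and $\PIW$ are highest weight categories whose weight poset is the set of minimal length representatives of left $\Wfin$-cosets in the Iwahori-Weyl group $W$; their standard and costandard objects are $\Delta_w,\nabla_w$ (inherited from $\Perv(\Hk{\I})$) and $\stdiw_w,\costdiw_w$, respectively. A direct calculation on the open IW-orbit in each Schubert stratum shows $\avasph(\Delta_w)\cong\stdiw_w$ and $\avasph(\nabla_w)\cong\costdiw_w$ for every minimal $w$. Essential surjectivity follows at once, since every simple $\ICIW_w\in\PIW$ is the image of the corresponding $\IC_w$ and simples generate $\PIW$ under extensions. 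For full faithfulness I would first restrict to the additive subcategory of tilting objects: their Hom-spaces are computed from multiplicities in standard and costandard filtrations, which $\avasph$ preserves; and by \cref{intro:tilting}, every indecomposable tilting in $\PIW$ arises as $\aviw\circ\Zent(V')$ for some $V'\in\Rep(\check G)$, hence as the image of an indecomposable tilting in $\Pasph$. Full faithfulness then propagates to all of $\Pasph$ via tilting resolutions, which exist in any finite-length highest weight category.

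The main obstacle is full faithfulness on the tilting subcategory. In the split case this follows from the direct geometric construction of enough central sheaves via $\Zent$; in the non-split case no such construction is available on $\Rep(\check G^I)$, so the argument must route through \cref{intro:tilting}, which both produces enough tiltings in $\PIW$ and identifies them on the nose with images of tiltings in $\Pasph$. Once that matching is secured, the rest of the proof is essentially the split-case argument transplanted to the tamely ramified setting.
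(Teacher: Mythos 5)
Your proposal follows a genuinely different route than the paper's, and it has two gaps that would need to be closed.

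The paper's proof of full faithfulness (\cref{antishperical:averaging}) does not proceed through Hom-space multiplicity matching on tilting objects. Instead, it constructs an explicit left inverse to $\avasph$: the geometric induction functor $\operatorname{ind}_\IW := p_*\circ q^*$, together with the cofiber sequence of \cref{lemma:conebound} which shows that $\operatorname{ind}_\IW(\IC_0^\IW)[-\mathsf{rk}\,G]$ differs from $\IC_e$ by terms that die in $\Pasph$. Having a left inverse immediately gives faithfulness and injectivity of the maps on $\Ext^1$, from which fullness follows by induction on length. Your route of ``Hom-spaces of tiltings are determined by $(\Delta:\nabla)$ multiplicities, and $\avasph$ preserves these'' only matches \emph{dimensions}; it does not by itself show the natural map $\Hom_{\Pasph}(T,T')\to\Hom_{\PIW}(\avasph T,\avasph T')$ is an isomorphism. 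One still needs to prove injectivity (or surjectivity) of that map from some other source, and this is precisely what the geometric left inverse supplies. Without it, the dimension count is inconclusive.

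Your essential surjectivity claim (``follows at once, since every simple $\ICIW_\lambda$ is in the image and simples generate $\PIW$ under extensions'') is too quick: a fully faithful exact functor whose image contains all simples need not be essentially surjective unless the image is also closed under extensions (equivalently, the functor is surjective on $\Ext^1$). The inclusion of semisimple objects into a non-semisimple abelian category is the standard counterexample. The paper addresses this differently: it shows (i) the essential image of $\avasph$ is closed under subquotients (part of \cref{antishperical:averaging}); (ii) every tilting in $\PIW$ lies in the image, via \cref{cor:key}; and (iii) every object of $\PIW$ is a subquotient of a direct sum of indecomposable tiltings (\cite[Prop.\ 7.17]{richeThesis}). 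Your tilting-generation argument can probably be adapted to replace this, but the ``at once'' step as written is a gap.

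Two smaller points. Your identification $\avasph(\Delta_w)\cong\stdiw_w$, $\avasph(\nabla_w)\cong\costdiw_w$ is inverted relative to the paper's conventions: \cref{std:averaging} gives $\aviw(\Delta_w)\cong\costdiw_{w_s}$ and $\aviw(\nabla_w)\cong\stdiw_{w_s}$, because in \eqref{def:stdcostdiw} the paper uses $*$-pushforward for $\Delta^\IW$ and $!$-pushforward for $\nabla^\IW$. Also, \cref{intro:tilting} gives that every irreducible $V\in\Rep(\check G^I)$ is a summand of some $\Res(V')$ with $\aviw\circ\Zent(V')$ tilting; it does not directly state that every indecomposable tilting in $\PIW$ is \emph{equal to} some $\aviw\circ\Zent(V')$. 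The paper instead uses that indecomposable tiltings are \emph{direct summands} of such objects, and pairs this with closure of the image under subquotients.
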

\subsection{Contents}
We begin with the study of the constructible side of the equivalence.
In \cref{sec:loopgroups}, we recall the geometry of affine flag varieties and establish our
conventions pertaining to the \'etale sheaves on such. Building upon this, in \cref{sec:wakimoto} we reproduce the
theory of Wakimoto sheaves in our context, culminating in the
construction of a functor
$$\mathbb{J}:\Rep(\check T^I)\rightarrow \Dbc(\Fl{\I},\Qell)$$
where $T$ is a maximal subtorus of $G$.

In \cref{decategorification}, we discuss the extension of results of previous sections to the
setting of mixed sheaves; and recall the connection of our categories of sheaves to
Iwahori-Hecke algebras, by the means of passing to the Grothendieck groups. We then
 put this connection to use during a few technical lemmas in \cref{section:iwahoriwhittaker} and
 \cref{section:tilting}; where we construct the aforementioned highest weight structure on $\DIW$ and
 prove the tilting property from \cref{intro:tilting}.

In \cref{section:coherentfunc}, we move onto the coherent side of the equivalence.
In particular, we start the construction of the functor $F_\IW$ of \cref{intro:mainthrm}.
Namely, we verify the Drinfeld-Pl\"ucker relations for the functor
$$Z\x\mathbb{J}:\Rep(\check G\x\check T^I)\rightarrow \Perv(\Hk{\I});$$
as in \cite[Section 3]{arkhipovBezrukavnikov},
this allows us to construct a functor
$$\Coh^{\check G\x\check T^I}(\Sprgafi)\rightarrow \Perv(\Hk{\I});$$
where $\Sprgafi$ is the affine completion of a certain natural $\check T^I$-torsor over the Springer resolution. Using the
tilting property proven in \cref{section:tilting}; we show that the postcomposition of this functor
with $\aviw$, factors through a functor
$$\Coh^{\check G^I\x\check T^I}(\Sprgafi)\rightarrow \PIW.$$
Finally, we relate $\Db\Coh^{\check G^I\x\check T^I}(\Sprgafi)$ to $\Db\Coh^{\check
G^I}(\Sprgi)$ through a Verdier quotient, using which we obtain the desired functor
$$F_\IW:\Db\Coh^{\check G^I}(\Sprgi)\rightarrow \DIW$$
via the universal property. In the short \cref{section:proof}, we verify that $F_\IW$ is an
equivalence, by comparing carefully chosen sets of generators for each side.
\subsection*{Acknowledgements}
First and foremost, I thank my advisor Timo Richarz, for introducing me to this problem and
painstakingly teaching me how to write mathematics. Special thanks are due to Simon
Riche and Jo{\~a}o Louren{\c c}o, conversations with whom led me to crucial insights about the
present article. I also thank Jo{\~a}o Louren{\c c}o once more, for spotting a mistake in a
prior version. Finally, I
thank Konstantin Jakob, Patrick Bieker, Thibaud van den Hove, and Can Yaylali for many
conversations surrounding the topic.
\section{Twisted affine flag varieties}\label{sec:loopgroups}
Let $k$ be an algebraically closed field, $K=k(\!(t)\!)$ the field of formal Laurent series, and
$O_K=k[\![t]\!]$ the ring of power series. Since $k$ is algebraically closed,
non-split reductive groups over $K$ are precisely the ramified ones.
\subsection{Loop groups and affine flag varieties}\label{subsec:loopgroups}
Let $G$ a reductive group over $K$, and $\mathcal{G}$ a parahoric model of $G$ over $O_K$, in the sense of \cite{bruhatTits}.
Consider the functors
$$\LG:R\mapsto G\left(R(\!(t)\!)\right), \quad \Lplus\mathcal{G}:
R\mapsto\mathcal{G}\left(R[\![t]\!]\right)),$$
from $k$-algebras to groups, called \textit{loop group} and \textit{positive loop group} respectively. The loop group is
representable by a strict ind-affine ind-group scheme, and the positive loop group is
representable by an affine scheme \cite[Proposition 7.1]{twistedLoop}. The main geometric object
we consider is the \textit{affine flag variety}, defined as the \'etale-sheafification of the
following
functor on $k$-algebras:
$$\Fl{\mathcal{G}}:R\mapsto \LG(R)/\Lplus\mathcal{G}(R).$$
Since $\mathcal{G}$ is parahoric, the sheaf $\Fl{\mathcal{G}}$ is representable by an ind-proper strict
ind-scheme \cite[Theorem A]{richarzRamified}.

There is an $\Lplus\mathcal{G}$-action on $\Fl{\mathcal{G}}$, induced by the multiplication from left.
The \textit{Hecke stack associated to $\mathcal{G}$} is defined to be the quotient stack
$$\Hk{\mathcal{G}}\defined \Lplus\mathcal{G}\backslash\Fl{\mathcal{G}}
,$$
in the \'etale topology. For the rest of the document, we will only consider such quotients.
By \cite[Lemma A.3.5]{richarzModuliShtuka}, $\Fl{\mathcal{G}}$ admits a presentation, $\colim_i
X_i$, as an
ind-proper ind-scheme by orbit closures for the $\Lplus{\mathcal{G}}$-action. Moreover, on each
orbit closure, $\Lplus\mathcal{G}$ acts through a finite dimensional quotient
$\Lplus_n\mathcal{G}$ for some $n\in\N$; which is defined as a functor on $k$-algebras by
$$\Lplus_n\mathcal{G}:R\mapsto \mathcal{G}\left(R[t]/(t^{n+1})\right).$$
In particular, $\Lplus\mathcal{G}\cong\lim_n\Lplus_n\mathcal{G}$ which induces
$$\Lplus\mathcal{G}\backslash X_i\cong \lim_n\Lplus_n\mathcal{G}\backslash
X_i.$$
As the quotient of a finite type scheme by a
finite type affine group scheme is an Artin stack of finite type, we get
$$\Hk{\mathcal{G}}\cong\colim_i\Lplus\mathcal{G}\backslash X_i\cong
\colim_i\lim_n \Lplus_n\mathcal{G}\backslash
X_i$$
where each $\Lplus_n\mathcal{G}\backslash
X_i$ is an Artin stack of finite type.

\subsection{Iwahori-Weyl group and Bruhat order}\label{iwahoriweylgroup}
We now specialize to the case in which the parahoric group scheme is an \textit{Iwahori}.
In this case, the geometry of the corresponding Hecke stack is closely related to the
\textit{Iwahori-Weyl group}. We recall the structure of this group following \cite{twistedLoop} and its appendix.

By a result of Steinberg in \cite{steinbergQuasiSplit}, every reductive group $G$ over $K$ is quasi-split.
Thus, we may fix a pair $(B,S)$ where $B$ is a Borel subgroup of $G$ and $S$ is a maximal
$K$-split torus contained in $B$. Let $\a$ be an alcove in the apartment
$\mathcal{A}(G,S,K)$ of the (extended) Bruhat-Tits building of $G$. The choice of such an alcove
yields the Iwahori group scheme $\mathcal{I}$,
whose associated flag variety will be denoted as $\Fl{\I}.$

The inertia subgroup $I$ of $\operatorname{Gal}(\bar{K}/K)$ acts on $\pi_1(G)$, and there is a
natural
assignment of a surjective map
$$\kappa_G: G(K)\rightarrow \pi_1(G)_I$$ to the coinvariants, called the Kottwitz
homomorphism \cite[\textsection 7]{kottwitzIsocrystal2}.

As $G$ is quasi-split, the centralizer $T:=Z_G(S)$ is a maximal torus
\cite[Proposition 20.6]{borelLinearAlgebraic}.
The Iwahori-Weyl group associated to $S$ is defined to be
$$W:=N_G(T)(K)/\left(T(K)\cap\ker(\kappa_T)\right).$$
Since $T$ is a torus, there is an isomorphism $\cham\isomto T(K)/\left(T(K)\cap\ker(\kappa_T)\right)$ via the assignment $\mu\mapsto
t^\mu$, and thus
exhibits an exact sequence:
\begin{equation}\label{eq:iwahoriweylexact}
0\rightarrow X_*(T)_I\rightarrow  W\rightarrow N_G(T)(K)/T(K)\rightarrow 1.
\end{equation}

There is a closely related variant called the \textit{affine Weyl group},
defined as
$$\Waff:= \left(N_G(T)(K)\cap \ker(\kappa_G)\right)/\left(T(K)\cap \ker(\kappa_G)\right).$$ In
other words, the affine Weyl group is the
Iwahori-Weyl group of the simply-connected cover of $G$ \cite[Equation 8.5]{twistedLoop}.

Let $\mathbb{S}$ denote the
set of reflections determined by the walls of the alcove $\a$. Then the
quadruple
$$\left(\ker\kappa_G,\I(K),\left(N_G(T)(K)\cap \ker(\kappa_G)\right),\mathbb{S}\right)$$ is a (double) Tits system
whose corresponding
Weyl group is by definition equal to
$\Waff$ {\cite[Prop. 5.2.12]{bruhatTits}.} Consequently, $\Waff$ is endowed with the structure of a
Coxeter group.
The Iwahori-Weyl group $W$ acts transitively on the
set of alcoves in the apartment $\mathcal{A}(G,S,K)$. Therefore
\begin{equation}\label{connectedcomponent}
W\cong \Waff\rtimes \Omega_\a,
\end{equation}
where $\Omega_\a\subset W$ is the normalizer of the alcove $\a$. In fact, more generally an
isomorphism
$\Omega_\a\cong \pi_1(G)_I$ can be constructed.
This splitting and the Coxeter group structure on $\Waff$, endow $W$ with the structure of a
quasi-Coxeter group; giving rise to a length function
$$\ell:W\rightarrow \N,$$ and the associated Bruhat order
$\leq$ on the Iwahori-Weyl group.
\begin{defn}\label{def:quotientbruhat}
		Let $(W,\mathbb{S})$ be a quasi-Coxeter system and $W_{J},W_{J^\prime}\subseteq W$
		subgroups.
		Let $v,w$ denote elements of the set of double cosets
		$W_J\backslash W/W_{J^\prime}$. The \textit{quotient Bruhat order} on
		$W_J\backslash W/W_{J^\prime}$ is defined by setting
		$v\leq w$ if and only if there exist $\dot v\in v$, $\dot w\in w$ with
		$\dot v\leq\dot w$.
	\end{defn}
		\begin{rmk}\label{eq:quotientbruhat}
		In the special case where $(W,\mathbb{S})$ is a Coxeter system, and $W_{J}$,
		$W_{J^\prime}$, are subgroups generated
		by reflections in $J\subset S$, $J^\prime\subset S$ respectively; the
		quotient Bruhat order admits the following equivalent characterizations:
		$v\leq w$ if and only if
		\begin{enumerate}
			\item  for the respective \textit{longest length} elements $\dot v_l\in
				v$, $\dot w_l\in w$
				we have $\dot v_l \leq \dot w_l$;
			\item for the respective \textit{shortest length} elements $\dot v_s\in
				v$, $\dot w_s\in w$
				we have $\dot v_s \leq \dot w_s$.
		\end{enumerate}
		Equivalence of these characterizations is proven in
		\cite[Lemma 2.2]{inversionDouglass}.
		\end{rmk}
\subsection{Coweights and some partial orders}\label{subsection:coweights}
Now, we spell out the quotient Bruhat order explicitly
in the case $J=J^\prime$ is the set of
reflections associated to walls passing through a special vertex.
Let $\f$ be a special vertex in the closure of the alcove $\a$ such that, $\a$ lies in the
dominant Weyl chamber determined by $B$. Choice of such an $\f$ induces a splitting of
\eqref{eq:iwahoriweylexact}, allowing us to view the Weyl group
$$\Wfin\defined N_G(T)(K)/T(K)$$ of $(G,T)$
as a subgroup of $W$. Namely, $W_J\subset W$ is the subgroup determined by the walls of $\a$
that pass through $\f$. In particular, $W_J\cong\Wfin$, which provides the desired splitting of
\eqref{eq:iwahoriweylexact}
through the inclusion $W_J\subset W$.

Given a cocharacter $\lambda\in\cham$, we may view it as an element of $W$ through the
inclusion
\eqref{eq:iwahoriweylexact}, in which case we will denote it by $t^\lambda$.
Choice of the Borel subgroup $B$ determines a set of
positive coroots $\Sigma^+\subset X_*(T)$, and a subset of dominant
elements $X_*(T)^+$.
Similarly, $\cham$ is part of the \'echelonnage root system
$\breve\Sigma$ (see \cite{hainesEchelonnage}), thus is equiped with a corresponding set of
dominant elements $\chamb$. Moreover, the positive coroots of the \'echelonnage root system are precisely
$\operatorname{im}(\Sigma^+\rightarrow X_*(T)_I)$. On the set $\cham$, there are three partial
orders, which do not have a straightforward relationship with each other:
\begin{enumerate}\label{orders}
	\item the \textit{coroot order}, $\mu\leq\lambda \Leftrightarrow
	\lambda-\mu\in\Z_{\geq 0}\cdot\operatorname{im}(\Sigma^+\rightarrow \cham)$;\label{corootorder}\\
	\item the \textit{dominance order}, $\mu\trianglelefteq\lambda\Leftrightarrow
\bar\lambda-\bar\mu\in\chamb$;\label{order}\\
	\item the \textit{Bruhat order}, $\mu\leq_{\mathrm{Bru}}\lambda$ which is the transport of
		the quotient Bruhat order under the isomorphism $\Wfin\backslash W\cong\cham$.
\end{enumerate}
\begin{rmk}
	In the literature, there seems to be a clash regarding the nomenclature of such partial
	orders. For the most part, these orders are not named; however in
	in our references \cite{hainesEchelonnage} and \cite{TimoTwistedSchubert} the order (1)
	is dubbed the dominance order, while in \cite{centralRiche} orders (1) and (3) are left unnamed
	and (2) is called the dominance order. As we systematically reference
	\cite{centralRiche}, we choose to follow their convention.
	Also note that, we record in \cref{lem:ordercomp} that for dominant
	coweights the order (1) agrees with (3); and in
	\cref{lem:concrete} we record another relationship in case $\lambda$ is dominant. Due to
	the
	existence of such relationships, order (1) is
	sometimes called the Bruhat order. However we prefer to avoid this as we apply (1) also to non-dominant
	coweights, for which there is no clear compatibility with the quotient Bruhat order. We
	refer the interested reader to \cite{acharRicheReductive}, which includes a more
	detailed study of the orders (1) and (3).
\end{rmk}

For any $\lambda\in\splitcham$, denote by $\bar\lambda\in\cham$ its image under the projection to coinvariants.
Since the Borel subgroup $B$ is defined over the base field $K$, $\Sigma^+$ is stable under the
action of $I$. As a consequence we have:
\begin{enumerate}
	\item 	The pairing $\langle\cdot,\cdot\rangle:X^*(T)\x\splitcham\rightarrow \Z$ is
		invariant under the $I$-action;
	\item  	the set $\Sigma^+$ is stable under the $I$-action and consequently
		the sum $2\rho$ of positive roots in $X^*(T)$ is also invariant under the
		$I$-action;
		\item  the natural
		projection ${X_*(T)\rightarrow \cham}$ preserves the respective coroot orders;
	\item  the natural
		projection ${X_*(T)\rightarrow \cham}$ preserves the respective dominance orders,
		in particular the image of $\splitchamb$ lands in $\chamb$. Note that this
		involves making use of the description of roots in the \'echelonnage root
		system, which are included in the invariants $X^*(T)^I$ via a slightly modified averaging
		process, see \cite[Proposition 5.2]{hainesEchelonnage}.
\end{enumerate}
For $\bar\lambda\in\cham$, we will denote by $\langle\bar\lambda,2\rho\rangle$ the number
$\langle\lambda,2\rho\rangle$ for any $\lambda\in \splitcham$ with image $\bar\lambda$ in
$\cham$. It is independent of choice of such $\lambda$ by items (1) and (2) above.

Note the following compatibility between the coroot and the Bruhat orders:
\begin{lem}\label{lem:ordercomp}
	For $\mu,\lambda\in \chamb$,
	\begin{enumerate}
		\item there is an equality $\ell(t^\mu+t^\lambda)=\ell(t^\mu)+\ell(t^\lambda)$;
		\item there is an inequality $t^\mu\leq t^\lambda$ if and only if
			$\mu\leq \lambda$ in the coroot order on $\cham$.
	\end{enumerate}
			In particular, under the identification
			$\Wfin\backslash W/\Wfin\cong
			\chamb$, quotient Bruhat order agrees with the coroot order.
	\label{comb}
\end{lem}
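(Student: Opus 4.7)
The plan is to base both parts on the standard length formula for translations by dominant coweights. Namely, one first establishes
\[
\ell(t^{\bar\lambda}) = \langle \bar\lambda, 2\rho\rangle
\]
for $\bar\lambda \in \chamb$, where $2\rho$ denotes the sum of positive roots of the \'echelonnage root system. This comes from the inversion-set characterization of length: $\ell(t^{\bar\lambda})$ equals the number of affine positive roots sent to negatives by $t^{\bar\lambda}$, and when $\bar\lambda$ is dominant this count collapses without cancellation to $\sum_{\alpha \in \breve\Sigma^+}\langle \alpha, \bar\lambda\rangle$.

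Part (1) then follows immediately, since $\chamb$ is closed under addition and the pairing with $2\rho$ is additive, yielding $\ell(t^{\bar\mu + \bar\lambda}) = \ell(t^{\bar\mu}) + \ell(t^{\bar\lambda})$.

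For part (2), I would prove the implication $\bar\mu \leq \bar\lambda \Rightarrow t^{\bar\mu} \leq t^{\bar\lambda}$ by induction on the height $\langle \bar\lambda - \bar\mu, 2\rho\rangle$. The inductive step reduces to the case $\bar\lambda - \bar\mu = \alpha^\vee$ for a simple positive \'echelonnage coroot, both $\bar\mu$ and $\bar\lambda$ dominant. Here I would exhibit a reduced expression for $t^{\bar\lambda}$ from the alcove walk between $\a$ and $t^{\bar\lambda}\a$, and extract a subexpression evaluating to $t^{\bar\mu}$, the length drop matching $\langle \alpha^\vee, 2\rho\rangle$ predicted by part (1). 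The reverse direction $t^{\bar\mu} \leq t^{\bar\lambda} \Rightarrow \bar\mu \leq \bar\lambda$ follows from the subword characterization: any Bruhat cover between translations arises by multiplication with an affine reflection, which modifies the translation part by a coroot; iterating shows that $\bar\lambda - \bar\mu$ lies in the non-negative integer span of the positive \'echelonnage coroots.

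The final assertion then follows by combining part (2) with \cref{eq:quotientbruhat}: for $\bar\lambda \in \chamb$ the element $t^{\bar\lambda}$ is the shortest in its double coset $\Wfin t^{\bar\lambda} \Wfin$, since part (1) applied to any factorization $w_1 t^{\bar\lambda} w_2$ together with the length formula in $W = \Waff \rtimes \Omega_\a$ shows no cancellation occurs. The main obstacle is the combinatorial construction of the explicit subword in the inductive step; a secondary subtlety is that $W$ is only quasi-Coxeter, but since $t^{\bar\mu}$ and $t^{\bar\lambda}$ lie in the same $\Omega_\a$-coset whenever $\bar\mu \leq \bar\lambda$ in the coroot order, the argument genuinely takes place inside the Coxeter group $\Waff$, where the standard subword theorem applies.
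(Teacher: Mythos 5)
Your plan departs from the paper, which simply invokes \cite[Corollary 1.8]{TimoTwistedSchubert} for the whole lemma. Reproving it from scratch is a reasonable exercise, but your sketch has several genuine gaps.

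The most serious is the claim in the last paragraph that $t^{\bar\lambda}$ is the shortest element in its double coset $\Wfin t^{\bar\lambda}\Wfin$. This is false. Already in type $\widehat{A}_1$, with $\lambda = \alpha^\vee$ the simple coroot, one has $t^{\alpha^\vee} = s_0 s_1$ of length $2$, while $t^{\alpha^\vee}s_1 = s_0$ lies in the same double coset and has length $1$; the full double coset $\{s_0 s_1,\ s_0,\ s_1 s_0 s_1,\ s_1 s_0\}$ has length multiset $\{2,1,3,2\}$. So the shortest element is $s_0$, not $t^{\alpha^\vee}$. (In general the minimal-length representative is $w_0 t^{w_0\lambda}$, not $t^\lambda$.) The invocation of ``part (1) applied to $w_1 t^{\bar\lambda} w_2$'' is not coherent either, since (1) concerns products of translations, not sandwiches by finite Weyl elements. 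So the deduction of the ``in particular'' statement does not go through as written; one would instead have to argue via \cref{eq:quotientbruhat} using the maximal representatives, or some other route.

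In part (2), the inductive reduction to the case $\bar\lambda-\bar\mu = \alpha^\vee$ with $\alpha^\vee$ a \emph{simple} coroot and \emph{both} $\bar\mu,\bar\lambda$ dominant is not available in general. In type $A_2$, take $\bar\lambda = \alpha_1^\vee+\alpha_2^\vee$ (the highest coroot) and $\bar\mu=0$: neither $\bar\lambda-\alpha_1^\vee$ nor $\bar\lambda-\alpha_2^\vee$ is dominant, so there is no chain of dominant coweights differing by simple coroots between $0$ and $\bar\lambda$. The covers in the poset of dominant coweights can differ by non-simple positive coroots, which changes the combinatorics of the alcove walk you would need. You also flag yourself that the subword construction is an obstacle, so that step is not actually supplied. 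Finally, the reverse implication (``any Bruhat cover between translations arises by multiplication with an affine reflection, which modifies the translation part by a coroot; iterating shows...'') glosses over the fact that the intermediate elements in a Bruhat chain from $t^{\bar\mu}$ to $t^{\bar\lambda}$ are not translations, so one cannot simply read off ``the translation part'' and iterate; one needs to control how the image in $\Wfin\backslash W/\Wfin$ behaves along the chain. The outline is not wrong in spirit, but it is not a proof, and the two concrete errors above (shortest double-coset representative; simple-coroot covers) would have to be repaired before the argument could be completed.

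One minor observation that does hold up: since the \'echelonnage coroots lie in the image of the coroot lattice of $G_{\bar K}$, they vanish in $\pi_1(G)_I\cong\Omega_{\a}$, so comparisons $t^{\bar\mu}\leq t^{\bar\lambda}$ with $\bar\mu\leq\bar\lambda$ in the coroot order do take place in a single $\Omega_{\a}$-coset, i.e.\ inside $\Waff$; that part of the reduction is fine.
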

\begin{proof}
	\cite[Corollary 1.8]{TimoTwistedSchubert}.
\end{proof}
\subsection{Orbit stratification} We now use the results of \cref{subsection:coweights} to
describe the orbit stratification on flag varieties.
The Iwahori-Weyl group provides a set of representatives for the
$\Lplus\I$-orbits in the affine flag variety. Let $\Fl{\I,w}$ be the $\Lplus\I$-orbit of a geometric
point $\dot{w}$ corresponding to a representative of $w\in
W$ in $N_G(T)(K)$. We will regard $\Fl{\I,w}$ as a scheme with the reduced induced structure and
refer to it as the Schubert cell associated to $w$.

The Bruhat decomposition \cite[Theorem 7.8.1]{kalethaPrasad} implies the underlying
topological space of $\Fl{\I}$ admits a set theoretic
decomposition into Schubert cells
$$\Fl{\I}=\bigsqcup_{w\in  W}\Fl{\I,w}.$$
Moreover, the theory of Demazure resolutions
can be used to explicitly determine the $\Lplus\I$-orbit stratification together with its
closure relations, see \cite[\textsection 8]{twistedLoop}:

\begin{equation}\label{eq:orbitsaffine}
	{\Fl{\I,w}\cong \mathbb{A}^{\ell(w)}_k},\quad\text{and}\quad
\Fl{\I,\leq w}\defined\overline{\Fl{\I,w}}=\bigsqcup_{\substack{v\in  W\\v\leq w}}\Fl{\I,v}.
\end{equation}

\begin{rmk}\label{rmk:quotientbruhat}
Let $\G$, $\G'$ be  parahoric group schemes associated to facets $\f,\f'$ contained in the
closure of the alcove $\a$.
More generally, Demazure resolutions can be used to determine the stratification
by $\Lplus\G$-orbits in $\Fl{\G'}$, also known as $(\G,\G')$-Schubert
varieties. In this case, the strata are labelled by $W_{\f}\backslash W/W_{\f'}$, and the closure relations are
identified with the quotient Bruhat order, as can be seen from \cite[Proposition
2.8]{TimoTwistedSchubert}. Note that this stratification and its closure
relations also agree with those of right $\Lplus\G'$-orbits on
$\Lplus\G\backslash\L\G\cong\Fl{\G}$, as can be seen from
the equivalent descriptions of the quotient Bruhat order in \cref{eq:quotientbruhat}.
\end{rmk}

\subsection{Constructible sheaves on the Hecke stack}\label{subsec:sheaves}
We will consider categories of \'etale $\Qell$-sheaves and perverse t-structures on Hecke stacks, following the conventions set in \cite[Appendix B]{modularRamified}. We briefly
recall the definitions in order to fix notation, pointing the interested reader to loc. cit for
the details. Since our ring of coefficients $\Qell$ will be fixed throughout, we drop it from
the notation. Moreover, we fix once and for all a compatible system of $\ell^n$-th roots of
unity in $k$ and use it to trivialize the Tate twist.

Let $\colim_i X_i$ be a presentation of $\Fl{\mathcal G}$ as a strict ind-scheme. Fix for each
$i$,
 a natural number $n_i$ for which action of $\Lplus\mathcal{G}$ on $X_i$ factors through
 $\Lplus_{n_i}\mathcal{G}$. For any
$n\geq n_i$, the pullback functor
\begin{equation}\label{eq:nottexact}
\Dbc\left(\Lplus_{n_i}\mathcal{G}\backslash
X_i\right)\rightarrow
\Dbc\left(\Lplus_{n}\mathcal{G}\backslash
X_i\right)
\end{equation}
is an equivalence. Therefore
$$\Dbc\left(\Lplus\mathcal{G}\backslash
X_i\right)\defined\lim_{n>n_i}\Dbc\left(\Lplus_{n}\mathcal{G}\backslash
X_i\right)$$
with the limit being stationary. The closed immersion $X_i\rightarrow X_j$ induces a pushforward
$\Dbc\left(\Lplus\mathcal{G}\backslash
X_i\right)\rightarrow \Dbc\left(\Lplus\mathcal{G}\backslash
X_j\right)$.
The derived category of constructible \'etale
$\Qell$-sheaves on $\Fl{\mathcal G}$ and $\Hk{\mathcal G}$ are defined to be

\begin{equation}\label{eq:sheafdefinition}
	\Dbc\left(\Fl{\mathcal G}\right)\defined\colim_i\Dbc\left(X_i \right),\quad
 	\Dbc\left(\Hk{\mathcal G}\right)\defined\colim_i\Dbc\left(\Lplus\mathcal{G}\backslash
	X_i\right),
\end{equation}
which can be verified to be independent of the presentation $\colim_i X_i$ via standard
arguments \cite[Remark 2.2.2]{richarzModuliShtuka}. More generally, the same recipe works for any
strict ind-Artin stack ind-locally of finite type.

\begin{rmk}
	For the most part, we will regard our categories as triangulated categories. However, it is important to keep in mind that due to \cite{liuZheng}, they arise as
	homotopy categories of stable $(\infty,1)$-categories. Although none of our propositions
	have an explicit dependence on the higher structure, some of the proofs and definitions
	(most notably \cref{def:iwahoriwhittaker} and \cref{lem:conditionnotdatum}) make use of this fact.
\end{rmk}

Since pushforwards along closed immersions are perverse t-exact, perverse t-structure for
the middle perversity on the presentation
$$\colim_i\Dbc\left(X_i \right)\cong\Dbc\left(\Fl{\mathcal G}\right)$$
glue to a t-structure, which can be shown to be independent of presentation. We will also call
this t-structure the perverse t-structure. We can similarly put a perverse t-structure on
$\Dbc(\Hk{\mathcal{G}})$, by gluing perverse t-structures on
$\Dbc\left(\Lplus_n\mathcal{G}\backslash
	X_i\right)$ which are shifted to make the pullback
	$$\Dbc\left(\Lplus_n\mathcal{G}\backslash
	X_i\right)\rightarrow \Dbc(X_i)$$
	t-exact, see also \cite{farguesScholze} for similar constructions in a different
	setting.

The quotient map $h:\Fl{\mathcal{G}}\rightarrow \Hk{\mathcal{G}}$ induces a pullback functor
$$h^*:\Dbc(\Hk{\mathcal{G}})\rightarrow\Dbc(\Fl{\mathcal{G}}),$$
which is conservative, perverse t-exact and moreover detects perversity.
Restricted to perverse sheaves, $h^*$ is  even fully faithful. Indeed, denoting by
$a:\Lplus\mathcal{G}\x\Fl{\mathcal{G}}\rightarrow \Fl{\mathcal{G}}$ the action map, we have:
\begin{lem}\label{lem:perversedescent}
	The pullback restricts to a fully faithful functor
	$$h^*:\Perv(\Hk{\mathcal{G}})\rightarrow \Perv(\Fl{\mathcal{G}}).$$
	 Moreover, its
	essential image consists of objects $F\in\Perv(\Fl{\mathcal{G}})$ such that
	$$a^*F\cong pr^* F,$$
	where $pr:\Lplus\mathcal{G}\times\Fl{G}\rightarrow \Fl{G}$ denotes the projection map.
\end{lem}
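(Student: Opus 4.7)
The plan is to reduce to a finite-dimensional equivariant descent problem and then invoke the rigidity of perverse sheaves under pullback along smooth surjections with connected fibers. Using the presentation $\Fl{\mathcal{G}} = \colim_i X_i$ and the definition \eqref{eq:sheafdefinition}, and noting that the equivalence \eqref{eq:nottexact} identifies $\Dbc(\Lplus\mathcal{G}\backslash X_i)$ with $\Dbc(\Lplus_n\mathcal{G}\backslash X_i)$ for $n \geq n_i$, the claim on each $X_i$ reduces to the corresponding statement for the pullback
$$h_{i,n}^*: \Perv(\Lplus_n\mathcal{G}\backslash X_i) \to \Perv(X_i),$$
since both full faithfulness and the described essential image are compatible with the filtered colimit over $i$ (pushforward along the closed immersions $X_i \hookrightarrow X_j$ preserves perversity, and the equivariance condition is local on $\Fl{\mathcal{G}}$).

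Since $\mathcal{G}$ is parahoric, its positive loop truncation $\Lplus_n\mathcal{G}$ is a smooth connected affine algebraic group of finite type, and the quotient map $h_{i,n}: X_i \to \Lplus_n\mathcal{G}\backslash X_i$ is a smooth surjection with geometrically connected fibers of dimension $d := \dim \Lplus_n\mathcal{G}$. The perverse t-structure on the quotient is constructed precisely so that $h_{i,n}^*$ is t-exact (absorbing the shift $[d]$ into the definition). For full faithfulness on the hearts, one computes, for $F,G \in \Perv(\Lplus_n\mathcal{G}\backslash X_i)$,
$$\Hom(F,G) \simeq \Hom(h_{i,n}^*F, h_{i,n}^*G)^{\Lplus_n\mathcal{G}};$$
since $\Lplus_n\mathcal{G}$ is connected and acts algebraically on the finite-dimensional Hom space, the invariants agree with the whole space, yielding full faithfulness.

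For the essential image, standard descent along the $\Lplus_n\mathcal{G}$-torsor $h_{i,n}$ says that $F \in \Perv(X_i)$ lies in the image iff $F$ admits an equivariance datum, namely an isomorphism $\varphi: a^*F \isomto pr^*F$ satisfying a cocycle identity on $\Lplus_n\mathcal{G} \times \Lplus_n\mathcal{G} \times X_i$. The content of the \textit{moreover} is that the mere existence of some $\varphi$ is enough. This uses the rigidity of perverse sheaves: applying the fully faithful pullback $\Perv(X_i) \hookrightarrow \Perv(\Lplus_n\mathcal{G} \times X_i)$ along either the action or projection (both smooth with connected fibers), any isomorphism $\varphi$ can be normalized, after multiplying by a unique scalar, to restrict to $\id_F$ on the identity section $\{e\} \times X_i$. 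The cocycle identity on the triple product is then an equality of automorphisms of $a_{12}^* F$ after pullback along a smooth connected cover, and hence can be checked on the identity slice $\{e\} \times \{e\} \times X_i$, where it holds tautologically.

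The main obstacle is the last paragraph: promoting the weak equivariance datum $a^*F \cong pr^* F$ to a genuine descent datum. The crucial input is that perverse sheaves on a connected smooth cover remember only a scalar's worth of automorphism data of a pulled-back object, so that normalization at the identity forces uniqueness and automatic cocycle compatibility; all other steps are bookkeeping with the colimit presentation and the definition of the perverse t-structure on the Hecke stack.
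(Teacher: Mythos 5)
Your overall strategy — reduce to a finite-dimensional quotient $\Lplus_n\mathcal{G}\backslash X_i$ and then prove descent along a smooth surjection with connected fibers — is the same reduction the paper uses; the paper, however, then simply cites Olsson's perverse sheaves paper for the finite-dimensional statement, whereas you attempt to prove it directly. That is a legitimate approach in principle, but two of your steps don't hold as written.

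First, the full-faithfulness argument ``$\Lplus_n\mathcal{G}$ is connected and acts algebraically on the finite-dimensional Hom space, hence the invariants agree with the whole space'' is a non-sequitur: a connected algebraic group can certainly act nontrivially on a finite-dimensional vector space (e.g.\ $\Gm$ by scaling). The correct input is BBD's rigidity statement (Proposition 4.2.5 of \cite{BBDG}): for a smooth surjection $p$ with connected geometric fibers of relative dimension $d$, $p^*[d]$ is fully faithful on perverse sheaves. Applied to $h_{i,n}$ this gives full faithfulness directly, with no invariants argument needed; applied to $a$, $pr$, and the two-fold product it is also exactly what you need for the essential image.

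Second, in the essential-image argument you claim that ``any isomorphism $\varphi: a^*F\isomto pr^*F$ can be normalized, after multiplying by a unique scalar, to restrict to $\id_F$ on $\{e\}\times X$.'' Restricting $\varphi$ to the identity section produces an automorphism of $F$, but $F$ is an arbitrary perverse sheaf and its automorphism group need not reduce to scalars. The fix is to compose $\varphi$ with $pr^*$ of the inverse of this automorphism (not multiply by a scalar); then $e^*\varphi=\id_F$, and the cocycle identity becomes an automorphism of $pr_3^*F$ on the triple product whose restriction to the identity slice is $\id_F$, hence is $\id$ by rigidity applied to the smooth connected-fiber projection $pr_3$. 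With these two corrections your argument becomes a self-contained replacement for the citation to \cite{olssonPerverse}, but as written both steps are gaps.
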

\begin{proof}
	By definition, we may find a closed subscheme $X\subset \Fl{G}$ which is stable under
	the $\Lplus\mathcal{G}$ action, and contains the support
	of $F$. As $\mathcal{G}$ is connected, so is $\Lplus_n\mathcal{G}$ for every $n\geq 0$.
	Therefore, by \cite[Remark 5]{olssonPerverse} the analogous statement
	is true for the natural pullback
	$$\Perv\left(\Lplus_n\mathcal{G}\backslash X\right)\rightarrow \Perv(X),$$
	which finishes the proof by the definition of the perverse t-structures and
	\eqref{eq:sheafdefinition}.
\end{proof}
\subsection{Convolution product}\label{subsection:convolution}
As $\Fl{\mathcal{G}}$ and $\Hk{\G}$ are coset spaces of a group, we may endow their categories
of sheaves with a convolution structure. Consider the space:
$$\Hk{\mathcal{G}}\wttimes\Hk{\mathcal{G}}:=\Lplus\G\backslash\L\mathcal{G}\x^{\Lplus\G}\Fl{\G},$$
where the twisted product is formed in the \'etale topology. It has a
multiplication map $m:\Hk{\mathcal{G}}\wttimes\Hk{\mathcal{G}}\rightarrow \Hk{\G}$, induced by
the multiplication of $\L\G$. Moreover, the following isomorphism exhibits the map $m$ to be
ind-proper:
\begin{gather*}
	(pr_1,m):\Hk{\G}\wttimes\Hk{\G}\isoto \Hk{\G}\x\Hk{\mathcal{G}},\\
(g_1,g_2)\mapsto (g_1,g_1g_2).
\end{gather*}
Denote by $p:\Hk{\G}\wttimes\Hk{\mathcal{G}}\rightarrow
\Hk{\G}\x\Hk{\mathcal{G}}$ the natural projection map.
We define a bifunctor $\star:\Dbc(\Hk{\G})\x\Dbc(\Hk{\G})\rightarrow \Dbc(\Hk{\G})$
by specifying
$$F_1\star F_2:=m_!p^*(F_1\boxtimes F_2).$$
Standard arguments show that the bifunctor $\star$ endows $\Dbc(\Hk{\G})$ with a monoidal
structure, see for instance \cite{aglr}.

\begin{rmk}\label{remark:convolution}
	Similar constructions on the spaces
	$\Fl{\G}\wttimes\Hk{\G}$ and $\Hk{\G}\wttimes\Fl{\G}$
	 define left and right actions of $\Dbc(\Fl{\G})$ on $\Dbc(\Hk{\G})$ via convolution product. As
	$h:\Fl{\G}\rightarrow \Hk{\G}$ is an \'etale torsor under the pro-smooth group $\Lplus\G$, smooth
	base change implies all of these convolution structures are compatible with the conservative pullback
        functor	$h^*:\Dbc(\Fl{\G})\rightarrow \Dbc(\Hk{\G})$. Therefore, we will not distinguish
	between them in notation.
\end{rmk}
\subsection{Affine Grassmannians}\label{subsection:grassmannians}
Note the following absolute variants of the loop, resp. the positive loop, group:
$$\L_z G:R\mapsto G(R((z))), \quad \Lplus_z G:R\mapsto G(R[[z]]),$$
where $R$ is a $K$-algebra and $z$ is an additional formal variable. The \textit{affine Grassmannian} $G$,
	is the \'etale-quotient
	$$\Gr_G\defined\L_z G/\Lplus_z G.$$
	Similarly, we have the Hecke stack of $G$
	$$\Hk{G}\defined \Lplus_z G\backslash \Gr_G.$$
The categories $\Dbc(\Gr_G)$ and $\Dbc(\Hk{G})$ are also equipped with convolution monoidal
structures, as in \cref{subsection:convolution}.
\subsection{Geometric Satake equivalence} Let us briefly recall how Langlands duality enters the
picture. Let $\check{G}$ be
the pinned reductive group over $\Qell$, whose root datum is dual to the root datum of the split reductive group
$G_{\bar K}$. It is called the \textit{Langlands dual group of} $G$.
The group $\check{G}$ is equipped $\operatorname{Gal}(\bar K/K)$-action via pinned automorphisms.

The (absolute) geometric Satake equivalence is originally due to
\cite{ginzburgSatake}, \cite{mirkovicvilonenSatake}. See
\cite{modularRamified} for the current state of affairs, and a comprehensive overview.
It is a canonical equivalence of monoidal categories:
$$\left(\Perv\left(\Hk{G}\right),\star\right)\cong (\Rep(\check G), \otimes),$$
where $\Rep(\check G)$ denotes the category of finite dimensional $\Qell$-representations of
$\check G$, and
$\otimes$ its usual monoidal structure given by the tensor product on the underlying vector spaces.

We have the following variant for ramified groups, due in our setting to
\cite{zhuRamified} and \cite{richarzRamified}.
Let $\G$ be a parahoric model of $G$, associated to a special vertex $\f$.
Then, there is an equivalence of monoidal categories
$$(\Perv(\Hk{\G}),\star)\cong (\Rep(\check G^I), \otimes),$$
where $\check G^I$ denotes the group of fixed points.

Note also that since $I$ acts by pinned automorpshims
 of $\check G$, it also acts on the Borel $\check B$ and the torus $\check T$ determined by the
 pinning.
\subsection{BD-Grassmannian and nearby cycles}\label{subsection:BDGrass} The geometric Satake
equivalence and its ramified version are related to
each other through \textit{Beilinson-Drinfeld Grassmannians}, which are
ind-schemes that deform affine Grassmannians into twisted affine flag varieties. We refer
to \cite[Section 2]{richarzTestFunction} for their definition.

Let $S:=\Spec(O_K)$, and denote by $s$ and $\eta$ its
special and  generic points, respectively.
Let $\GrBD{\G}$ be the The BD-Grassmannian associated to a parahoric model $\G$ of $G$. It
satisfies:
$${\GrBD{\G}}\x_S\eta \cong\Gr_{G}, \quad {\GrBD{\G}}\x_S s\cong\Fl{\G}.$$
Then, the nearby cycles construction induces a functor $$\Psi_\G:\Dbc(\Hk{G,\bar K})\rightarrow\Dbc(\Hk{\G}),$$
which is perverse t-exact and monoidal with respect to the convolution product, see \cite[Proposition 8.6]{modularRamified}.
\begin{lem}\label{lemma:nearbyrest}
The geometric Satake equivalences fit in the commutative diagram:
$$\begin{tikzcd}
	\Perv(\Hk{G,\bar K})\ar[d,"\Psi_\G"']\ar[r, dash, "\sim"]& \Rep(\check
	G)\ar[d,"\Res"]\\
	\Perv(\Hk{\G})\ar[r, dash,"\sim"]& \Rep(\check G^I).
\end{tikzcd}$$
\end{lem}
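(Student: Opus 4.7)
The plan is to exploit the Tannakian nature of both sides, reducing the statement to the identification of a group homomorphism $\check G^I \to \check G$ as the canonical inclusion. Both $\Rep(\check G)$ and $\Rep(\check G^I)$ are neutral Tannakian under the respective forgetful functors, and via the two versions of geometric Satake these correspond to the global cohomology fiber functors $\bF_{\mathrm{abs}} \defined H^*(\Gr_{G,\bar K},-)$ and $\bF_{\mathrm{ram}} \defined H^*(\Fl{\G},-)$ on the perverse categories.

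First, I would verify that $\Psi_\G$ intertwines these fiber functors. The BD-Grassmannian $\GrBD{\G}$ is ind-proper over $S$, so its structure map is universally proper; by proper base change along the closed immersion $s \hookrightarrow S$ combined with the standard compatibility of nearby cycles with proper pushforward, one obtains a canonical isomorphism
\[
H^*(\Fl{\G},\Psi_\G F) \isomto H^*(\Gr_{G,\bar K}, F),
\]
natural in $F$. Combined with the fact that $\Psi_\G$ is a monoidal functor between the perverse hearts (as recalled right before the lemma), this promotes $\Psi_\G$ to a tensor functor of neutral Tannakian categories respecting fiber functors. Tannakian reconstruction therefore produces a homomorphism of pro-algebraic groups
\[
\varphi : \check G^I \longrightarrow \check G
\]
such that $\Psi_\G$ corresponds to $\varphi^*$ under the two Satake equivalences.

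It remains to identify $\varphi$ with the canonical inclusion $\check G^I \hookrightarrow \check G$. For this, I would test $\varphi$ against the minuscule/small representations: for $\lambda \in \splitchamb$, the sheaf $\IC_\lambda$ on $\Gr_{G,\bar K}$ corresponds to the irreducible $V_\lambda \in \Rep(\check G)$, and $\Psi_\G(\IC_\lambda)$ is an $\Lplus\G$-equivariant perverse sheaf on $\Fl{\G}$ supported in the closure of the Schubert variety indexed by the image $\bar\lambda \in \chamb$. Under ramified Satake, weight multiplicities of the resulting $\check G^I$-representation are computed by dimensions of stalks on $T$-fixed points inside the $\Lplus\G$-orbits, and these agree with the restriction to $\check G^I$ of the weight decomposition of $V_\lambda$ by the Mirković–Vilonen style weight functor (cf.\ \cite{zhuRamified}, \cite{richarzRamified}). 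Thus $\varphi$ and the inclusion induce the same map on $\Rep(\check G)$ evaluated at a generating family of representations, so they coincide.

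The main subtlety is the last identification step: without care one only produces \emph{some} morphism $\varphi$, and uniqueness of the resulting tensor functor up to $\check G$-conjugation must be rigidified by matching highest weights through nearby cycles. Fortunately, this matching is precisely what is computed in the ramified Satake references cited above, so the identification is available off the shelf and the lemma follows.
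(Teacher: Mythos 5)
The paper's proof is just a citation to \cite[Corollary 8.9]{modularRamified}, so you are effectively reconstructing the argument in that reference. Your Tannakian strategy is the standard one and is indeed the right route.

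Two points in your proposal deserve more care. First, Tannakian reconstruction requires a \emph{symmetric} monoidal (tensor) functor compatible with fiber functors, but the paper only recalls that $\Psi_\G$ is monoidal for the convolution product; you invoke ``tensor functor'' without addressing the commutativity constraint. This is not automatic: the commutativity constraint on $\Perv(\Hk{\G})$ is itself manufactured in the ramified Satake references (via the fusion/Beilinson--Drinfeld interpretation over a two-pointed curve) precisely so that $\Psi_\G$ respects it. The step is available in the cited literature, but it is a genuine ingredient that your proof should name rather than absorb silently into ``promotes $\Psi_\G$ to a tensor functor.'' Second, your identification of $\varphi$ via matching weight multiplicities only characterizes $\varphi$ up to $\check G$-conjugacy; to pin it down to the literal inclusion $\check G^I \hookrightarrow \check G$ one needs the extra rigidity coming from compatibility with the canonical gradings on the two fiber functors (the Mirkovi\'c--Vilonen / constant-term grading, equivalently the pinnings and the $\check T$- resp.\ $\check T^I$-actions on $\bF_{\mathrm{abs}}$ and $\bF_{\mathrm{ram}}$). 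You acknowledge this in your final paragraph and defer to the references, which is acceptable, but the phrase ``same map on $\Rep(\check G)$'' overstates what matching characters gives you on its own. With these two clarifications, your plan correctly reconstructs the cited argument.
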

\begin{proof}
\cite[Corollary 8.9]{modularRamified}.
\end{proof}

The nearby cycles functor comes equipped with an quasi-unipotent action of $I$, from which we can
extract the \textit{monodromy operator}. Briefly, for a sheaf $\cA$ on $\Gr_{G,\bar K}$,
there is a functorial assignment of
continuous group homomorphisms
$$\rho_{\cA}:I\rightarrow \Aut\left(\Psi_\G(\cA)\right);$$
and for each such there exist a finite index subgroup $I'\subset I$ acting unipotently.
The monodromy operator is the unique nilpotent map
\begin{equation}
	\bn_{\cA}:\Psi_\G(\cA)\rightarrow \Psi_\G(\cA)(-1),
\end{equation}
satisfying for every $\gamma\in I'$, $\rho_{\cA}(\gamma)=\exp(t_\ell(\gamma)\cdot\bn_{\cA})$, where
$t_\ell:I\rightarrow
\Z_\ell(1)$ is the
natural projection to the Tate module (for more details see e.g. \cite[p. 21]{richarzRamified}).

The monoidal structure on $\Psi_\G$ is further compatible with the monodromy:
for every $\cA,\cB\in\Dbc(\Gr_{G,\bar K})$, the diagram
\begin{equation}\label{eq:monodromymonoidal}
	\begin{tikzcd}[column sep=7 em]
		\Psi_\G(\cA)\star\Psi_\G(\cB)\ar[d,"\sim" {anchor=south, rotate=90, inner sep=.5mm}]\ar[r,"\bn_{\cA}\star
		\id_\cB+\id_\cA\star\bn_{\cB}"] &
		\Psi_\G(\cA)\star\Psi_\G(\cB)\ar[d,"\sim"{anchor=south, rotate=90, inner sep=.5mm}]\\
		\Psi_\G(\cA\star\cB)\ar[r,"\bn_{\cA\star\cB}"']&
		\Psi_\G(\cA\star\cB)
	\end{tikzcd}
\end{equation}
is commutative, by arguments going back to \cite{gaitsgoryCentral}.

\begin{rmk}\label{remark:commutativitymonodromy}
	When the group $G$ is tamely ramified, the monodromy operator in each parahoric model commutes with every other
	morphism. More precisely, let $\cA,\cB\in \Dbc(\Hk{G,\bar{K}})$ and $\G$ be a parahoric
	model of $G$. Then, for every morphism
	$f:\Psi_\G(\cA)\rightarrow \Psi_\G(\cB)$, there is an equality
	$$f\circ\bn_{\cA}=\bn_{\cB}\circ f.$$
	This is a consequence of the existence of a $\Gm$-action on $\GrBD{\G}$ via loop rotations
	\cite[Lemma 5.4]{coherenceZhu}. Indeed, the
	monodromy of the nearby cycles can be identified with the opposite of Verdier's
	construction of the monodromy for $\Gm$-monodromic sheaves \cite[Proposition
	7.1]{verdierMonodromy}. The assertion then follows from that of $\Gm$-monodromic
	sheaves, see e.g. \cite[Proposition 9.3.2]{centralRiche}.
\end{rmk}
\subsection{Central functor}\label{subsection:centralfunctor}
When the parahoric model is an Iwahori, the corresponding nearby cycles functor $\Psi_\I$ will
instead be denoted
$$\Zent:\Rep(\check G)\rightarrow \Perv(\Hk{\I}),$$
and dubbed \textit{the central functor}. It was introduced by Gaitsgory in \cite{gaitsgoryCentral} to
categorify the Bernstein isomorphism, and used by Arkhipov-Bezrukavnikov in
\cite{arkhipovBezrukavnikov} as the fundamental building block of their strategy.

The following theorem summarizes the properties of $\Zent$ which are relevant to us:
\begin{thrm}[Gaitsgory, Zhu]\label{theorem:gaitsgoryzhu}
	For every $V\in\Rep(\check G)$ and $F\in\PervI$, both $\Zent(V)\star F$ and
	$F\star\Zent(V)$ are objects in $\PervI$, and as such there exist a canonical
	isomorphism
	$$\Zent(V)\star F\simeq F\star \Zent(V).$$
\end{thrm}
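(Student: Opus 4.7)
My plan is to follow Gaitsgory's original strategy, adapted to the twisted parahoric setting established in the earlier sections. Both the perversity assertion and the commutativity isomorphism follow from realizing the two convolutions as nearby cycles of perverse sheaves on suitable BD-type deformation spaces.

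First, for perversity, I would construct a convolution BD-Grassmannian $\mathcal{Y} \to S$ whose generic fiber is a twisted product of the Hecke stack for $G$ and for $\I$, and whose special fiber is $\Hk{\I} \wttimes \Hk{\I}$. The multiplication map $m : \mathcal{Y} \to \Fl{\I}\times S$ is ind-proper, so nearby cycles commutes with $m_!$. One builds a sheaf on $\mathcal{Y}$ from $F$ (which is constant in the deformation direction) and the Satake sheaf associated to $V$ on the $G$-factor, and obtains
$$F \star \Zent(V) \;\cong\; m_!\, \Psi(\text{this sheaf}) \;\cong\; \Psi_\I(F \star V).$$
Since $F \star V$ on the generic fiber is perverse (standard for convolution with a spherical Satake sheaf at Iwahori level) and $\Psi_\I$ is perverse t-exact by \cref{lemma:nearbyrest} and the discussion surrounding it, perversity of $F \star \Zent(V)$ follows. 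Swapping the two factors yields the analogous statement for $\Zent(V) \star F$.

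For the commutativity isomorphism, I would use a two-parameter BD-Grassmannian $\mathcal{Y}_2 \to S \times S$ whose fiber over $(\eta,\eta)$ is a product of Hecke stacks, and whose two one-parameter specializations to $(s, \eta)$ and $(\eta, s)$ recover $\Zent(V) \star F$ and $F \star \Zent(V)$ respectively. The obvious symmetry swapping the two coordinates, combined with proper base change and the monoidality of $\Psi_\I$ recorded in \eqref{eq:monodromymonoidal}, supplies the desired canonical isomorphism. Conceptually, this expresses the braiding constraint inherent to the fusion interpretation of Beilinson-Drinfeld Grassmannians.

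The main technical obstacle is the construction and analysis of the two-parameter BD-Grassmannian in the twisted setting. While single-parameter versions are provided by the works of Zhu and Richarz cited earlier, verifying that the factorization and symmetry properties persist with Iwahori level structure requires additional care. A secondary but crucial input is \cref{remark:commutativitymonodromy}: in the tame case monodromy commutes with all morphisms, which ensures that the commutativity isomorphism is compatible with the monodromy structures on both sides and hence well-defined as a morphism of perverse sheaves.
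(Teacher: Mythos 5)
The paper does not actually prove this theorem: it is a citation to \cite[Theorem 6.14]{richarzTestFunction}, which handles the twisted parahoric setting. So you are reconstructing an argument the paper elides. Your sketch does follow the general Gaitsgory--Zhu strategy, but there are two substantive errors that prevent it from working as written.

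First, you describe the generic fiber of the convolution space as ``a twisted product of the Hecke stack for $G$ and for $\I$,'' and then justify the perversity of the generic-fiber sheaf by calling it ``standard for convolution with a spherical Satake sheaf at Iwahori level.'' Both of these are off, and the second is circular: the perversity of convolutions at Iwahori level with central sheaves is precisely what the theorem asserts. The entire point of the Beilinson--Drinfeld degeneration is that when the moving point is away from the fixed Iwahori point, the two level structures factor and the generic fiber is an honest \emph{product} $\Gr_{G,\eta}\times\Fl{\I,\eta}$, not a twisted product. The sheaf one puts there is the external product $\cA_V\boxtimes F$, which is automatically perverse by the K\"unneth formula, with no appeal to convolution. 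Perversity of $\Zent(V)\star F$ then follows because $\Psi_\I$ is perverse t-exact (as recorded near \cref{subsection:BDGrass}) and the ind-proper convolution map becomes an isomorphism on the generic fiber, so that $\Zent(V)\star F\cong\Psi_\I(\cA_V\boxtimes F)$. If the generic fiber really were a twisted product, this step would collapse.

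Second, the commutativity isomorphism does not come from a swap symmetry of a two-parameter BD-Grassmannian over $S\times S$. The Iwahori level sits at the fixed origin while $\cA_V$ sits at the single moving point, so there is no coordinate exchange leaving the diagram invariant. The actual argument is shorter and one-parameter: both $\Zent(V)\star F$ and $F\star\Zent(V)$ are realized by the procedure above as $\Psi_\I$ applied to the \emph{same} generic-fiber object $\cA_V\boxtimes F$ (up to the canonical permutation of external-product factors), just via the two possible convolution diagrams degenerating to $\Fl{\I}\wttimes\Fl{\I}\to\Fl{\I}$ on the special fiber. Uniqueness of nearby cycles then gives a canonical identification. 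Relatedly, the final paragraph invoking \cref{remark:commutativitymonodromy} is a red herring here: tameness and the commutativity of the monodromy with all morphisms are used later in the paper (for the tilting result \cref{main:tilting}), but they play no role in establishing Theorem~\ref{theorem:gaitsgoryzhu}, which holds for arbitrary ramification.
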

\begin{proof}
	\cite[Theorem 6.14]{richarzTestFunction}.
\end{proof}

In a monoidal category, central objects are those for which tensoring from either side is
isomorphic, objectwise. Although $\Perv(\Hk{\I})$ is not monoidal,
the theorem still asserts that the objects in the essential image of $\Zent$
satisfy a similar property, justifying the name.
\begin{rmk}
Although we will not make use of it, let us mention further structure of the central functor.
Namely, $\Zent$
can be upgraded to a monoidal functor to the Drinfeld center of $\Dbc(\Hk{\I})$ which intertwines the
symmetry isomorphims of $\Rep(\check G)$ with the natural braiding structure of the Drinfeld
center. See for instance \cite[Chapter 3]{centralRiche} for a concrete discussion of what this
means or \cite[Theorem 1.1]{joaoAB} for a highly structured $(\infty,1)$-categorical version.
\end{rmk}
\subsection{Standard and costandard functors}
For $w\in W$, the locally closed inclusion of the Schubert cell $j_w:\Fl{\I,w}\rightarrow \Fl{\I}$
is the pullback of the map
$${s_w:\Lplus\mathcal{G}\backslash\Fl{\I,w}\rightarrow \Hk{\I}}$$
along the projection $h:\Fl{\I}\rightarrow\Hk{\I}$.

The \textit{standard}, resp. \textit{costandard}, functor associated to $w$ is defined as:
\begin{center}
\begin{tabular}{cc}
	&\\
	$\Delta_w(\cdot):\Dbc\left(\Spec(k)\right)\rightarrow \Dbc(\hk),$ &
	$\nabla_w(\cdot):\Dbc\left(\Spec(k)\right)\rightarrow \Dbc(\hk),$\\
	$M\mapsto s_{w!}(\underline M)[\ell(w)]$ & $M\mapsto s_{w*}(\underline M)[\ell(w)]$\\
	&
\end{tabular}
\end{center}
where $(\underline{\;\cdot\;}):\Dbc(\Spec(k))\rightarrow\Dbc\left(\Lplus\mathcal{G}\backslash\Fl{\I,w}\right)$
denotes the constant complex functor.
Since the coefficient $\Qell$ will frequently appear  in the rest of the document,
we establish the convention $\std_w:=\std_w(\Qell)$ and $\costd_w:=\costd_w(\Qell)$,
whenever they appear without an argument.

Using smooth base change, it is straightforward to verify that
	\begin{equation}\label{eq:stdcostd}
		h^*\std_w(\cdot)\cong j_{w!}(\underline\cdot)[\ell(w)],\quad
		h^*\costd_w(\cdot)\cong j_{w*}(\underline\cdot)[\ell(w)].
	\end{equation}
	Using that $h^*$ detects perversity, we may then prove:
\begin{lem}
	For any $w\in W$, the functors $\std_w(\cdot),\costd_w(\cdot):\Dbc\left(\Spec(k)\right)\rightarrow
	\Dbc(\Hk{G})$ are perverse t-exact.
\end{lem}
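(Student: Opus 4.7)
The plan is to reduce the assertion to perversity of $!$- and $*$-extensions of constant sheaves along affine open immersions of Schubert cells, and then invoke Artin's affine vanishing theorem. First, I would use that $h^*: \Dbc(\Hk{\I}) \to \Dbc(\Fl{\I})$ is perverse t-exact and detects perversity (as recorded in the paragraph preceding \cref{lem:perversedescent}), together with the identities \eqref{eq:stdcostd}. Since the heart of the perverse t-structure on $\Dbc(\Spec(k))$ is the category of finite-dimensional $\Qell$-vector spaces, proving the t-exactness of $\std_w$ and $\costd_w$ reduces to showing that for every such $V$, the sheaves $j_{w!}\underline V[\ell(w)]$ and $j_{w*}\underline V[\ell(w)]$ are perverse on $\Fl{\I}$.

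Next, I would factor the locally closed inclusion as $j_w = i_w \circ j'_w$, where $j'_w : \Fl{\I,w} \hookrightarrow \Fl{\I,\leq w}$ is the open immersion into the Schubert variety closure and $i_w : \Fl{\I,\leq w} \hookrightarrow \Fl{\I}$ is a closed immersion. Since pushforward along $i_w$ is perverse t-exact, the problem reduces to showing that $j'_{w!}\underline V[\ell(w)]$ and $j'_{w*}\underline V[\ell(w)]$ are perverse on $\Fl{\I,\leq w}$. By \eqref{eq:orbitsaffine}, $\Fl{\I,w} \cong \mathbb{A}^{\ell(w)}_k$ is smooth of dimension $\ell(w)$, so $\underline V[\ell(w)]$ is perverse on this open stratum.

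The final input is Artin's affine vanishing theorem: for an affine morphism $f$, the functor $f_*$ is right perverse t-exact, and dually $f_!$ is left perverse t-exact. For any open immersion, the opposite t-exactness is formal. Hence once $j'_w$ is known to be an affine morphism, both $j'_{w!}$ and $j'_{w*}$ preserve perversity, completing the proof. The essentially only non-formal step is thus the affineness of $j'_w$, which is a classical property of Bruhat-type stratifications and is the main obstacle. In the twisted affine setting I would verify it by reduction to a Demazure--Bott--Samelson resolution $\pi : D_{s_1,\dots,s_n} \to \Fl{\I,\leq w}$ attached to a reduced expression of $w$: the dense open Iwahori orbit in $D_{s_1,\dots,s_n}$ is manifestly an iterated $\Ga$-bundle and the complement of a divisor, and $\pi$ restricts to an isomorphism over $\Fl{\I,w}$, which transfers affineness to $j'_w$.
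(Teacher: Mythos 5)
Your overall strategy matches the paper's: both proofs use \eqref{eq:stdcostd} to reduce to the flag variety, then appeal to Artin's affine vanishing. The paper simply cites \cite[Corollaire 4.1.3]{BBDG} (t-exactness of $j_!$ and $j_*$ for locally closed \emph{affine} immersions), whereas you re-derive that corollary from Th\'eor\`eme 4.1.1 and Corollaire 4.1.2 together with the formal adjunction argument for open immersions — this is fine, it is essentially how 4.1.3 is proved in BBDG.

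Where you go off course is in the final step, the affineness of $j_w$ (or of $j'_w$). You treat this as the main obstacle and propose to deduce it from a Demazure--Bott--Samelson resolution $\pi \colon D_{s_1,\dots,s_n} \to \Fl{\I,\leq w}$. That transfer has a gap: knowing that $\pi^{-1}(\Fl{\I,w})$ is the complement of an effective Cartier divisor on the smooth Demazure variety does not descend to affineness of $j'_w$ on the possibly singular Schubert variety, since $\pi$ is not affine and the image of a Cartier divisor under a proper birational map need not remain Cartier. The reason the paper's terse justification is nevertheless correct is much more elementary and does not require any resolution: a locally closed immersion whose source is affine and whose target is separated is automatically an affine morphism (for any affine open $V$ of the target, $j^{-1}(V) = \Fl{\I,w} \times_{\Fl{\I}} V$ is a closed subscheme of the affine product $\Fl{\I,w} \times V$, because the diagonal of a separated scheme is closed). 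Here $\Fl{\I,w} \cong \mathbb{A}^{\ell(w)}_k$ is affine by \eqref{eq:orbitsaffine}, and $\Fl{\I}$ (or $\Fl{\I,\leq w}$) is an ind-projective ind-scheme, hence separated. Replacing the Demazure detour with this observation closes the gap and recovers the paper's argument.
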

\begin{proof}
	By \eqref{eq:stdcostd}, it suffices to prove the analogous statement for
	$j_{w!}(\underline\cdot)[\ell(w)]$ and $j_{w*}(\underline\cdot)[\ell(w)]$. Since $j_w$ is a locally closed immersion
	and $\Fl{\I,w}\cong\mathbb{A}^{\ell(w)}$, and pushforward along locally closed affine morphisms are
	t-exact by \cite[Corollaire 4.1.3]{BBDG} the assertion follows.
\end{proof}
Moreover, we may use $\std_w$ and $\costd_w$ to obtain the simple perverse sheaf
$$\IC_w\defined \im(\std_w\rightarrow \costd_w);$$
as the image of the natural morphism. In fact, \cite[\textsection 8]{olssonPerverse} and
standard arguments show that; the assignment $w\mapsto \IC_w$ induces a bijection between $W$
and isomorphism classes of simple objects in $\Perv(\Hk{G})$.

Alongside their connection to simple objects, one of the most important properties
of standard and costandard functors is their compatibility with convolution.
As \cref{std:conv} below makes precise, they intermingle the quasi-Coxeter structure on $W$
with the convolution product on $\Perv(\Hk{G})$.

\begin{prop}\label{std:conv}
	\begin{enumerate}
		\item
For $w_1,w_2\in W$ such that $\ell(w_1w_2)=\ell(w_1)+\ell(w_2)$, there exist canonical isomorphisms
	$$\Delta_{w_1}(\cdot)\star\Delta_{w_2}(\cdot)\isoto\Delta_{w_1w_2}(\cdot\otimes
			\cdot),\quad \costd_{w_1}(\cdot)\star\costd_{w_2}(\cdot)\isoto\costd_{w_1w_2}(\cdot\otimes
			\cdot)$$
			satisfying associativity.
\item For $w\in W$, there exist isomorphisms
	$$\Delta_w\star\nabla_{w^{-1}}\isoto
			\nabla_{w^{-1}}\star\Delta_{w}\isoto\Delta_e.$$
			Therefore, $\Delta_w$ and $\nabla_w$ are invertible for the convolution product. \end{enumerate}

	\end{prop}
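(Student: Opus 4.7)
The strategy is to prove (1) by a direct geometric analysis of the multiplication map, and then to reduce (2) to a $\mathbb{P}^1$-computation for simple reflections via (1). The key geometric fact for (1) is that under the length-additive hypothesis $\ell(w_1 w_2) = \ell(w_1) + \ell(w_2)$, the restriction of the multiplication map
$$m' \colon \Hk{\I, w_1} \wttimes \Hk{\I, w_2} \longrightarrow \Hk{\I}$$
is an isomorphism onto $\Hk{\I, w_1 w_2}$. I would prove this by induction on $\ell(w_2)$: if $w_2 = s w_2'$ with $s$ simple and $\ell(w_2) = 1 + \ell(w_2')$, then the length-additivity of $w_1 w_2$ forces $\ell(w_1 s) = \ell(w_1) + 1$, and the map factors as $\Hk{\I, w_1} \wttimes \Hk{\I, s} \wttimes \Hk{\I, w_2'} \to \Hk{\I, w_1 s} \wttimes \Hk{\I, w_2'} \to \Hk{\I, w_1 s w_2'}$, each arrow being an isomorphism by the inductive hypothesis. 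The base case $w_2 = s$ is handled via the explicit description of Schubert cells through the root subgroups of the corresponding minimal parahoric $\Ps$, using \eqref{eq:orbitsaffine} to see that both sides are affine spaces of the same dimension and that $m'$ is bijective on geometric points.

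Granted that $m'$ is an isomorphism, proper base change applied to the convolution diagram gives
$$\Delta_{w_1}(M_1) \star \Delta_{w_2}(M_2) \;\cong\; m'_!\bigl((\underline{M_1}[\ell(w_1)]) \wttimes (\underline{M_2}[\ell(w_2)])\bigr) \;\cong\; \Delta_{w_1 w_2}(M_1 \otimes M_2),$$
using that the twisted external product of shifted constant sheaves restricted to the convolution stratum is a shifted constant sheaf. The $\nabla$-case follows identically, replacing $m_!$ by $m_*$. Associativity of the canonical isomorphism reduces to the associativity of the twisted product and the convolution, using that any iterated length-additive factorization remains length-additive.

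For (2), I reduce to simple reflections via (1). Writing $w = \omega s_1 \cdots s_n$ with $\omega \in \Omega_\a$ of length zero and $s_1 \cdots s_n$ a reduced expression in $\Waff$, part (1) yields
$$\Delta_w \cong \Delta_\omega \star \Delta_{s_1} \star \cdots \star \Delta_{s_n}, \qquad \nabla_{w^{-1}} \cong \nabla_{s_n} \star \cdots \star \nabla_{s_1} \star \nabla_{\omega^{-1}}.$$
Since $\Hk{\I, \omega}$ is clopen (the Bruhat order has no element strictly below $\omega$), the functors $\Delta_\omega$ and $\nabla_\omega$ coincide, and (1) supplies $\Delta_\omega \star \Delta_{\omega^{-1}} \cong \Delta_e$. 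Thus it suffices to verify $\Delta_s \star \nabla_s \cong \Delta_e \cong \nabla_s \star \Delta_s$ for each simple reflection $s$. This is a direct computation inside the minimal parahoric flag variety $\Fl{\Ps}$: the projection $\Fl{\I} \to \Fl{\Ps}$ is a Zariski-locally trivial $\mathbb{P}^1$-bundle whose fiber over the base point is stratified by $\Fl{\I, e}$ (a point) and $\Fl{\I, s} \cong \mathbb{A}^1$, and the convolution $\Delta_s \star \nabla_s$ is computed by a pull-push along this $\mathbb{P}^1$-bundle. The identification $\Delta_s \star \nabla_s \cong \Delta_e$ then follows from the standard localization triangle on $\mathbb{P}^1$ together with $\mathrm{R}\Gamma_c(\mathbb{A}^1, \Qell) \simeq \Qell[-2]$. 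Invertibility of $\Delta_w$ with inverse $\nabla_{w^{-1}}$ follows, and invertibility of $\nabla_w$ by applying the statement to $w^{-1}$.

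The principal technical obstacle is the geometric claim in (1), namely that $m'$ is an isomorphism under length-additivity; once this is secured, all remaining steps are formal consequences of proper base change, adjunction, and the localization triangle on $\mathbb{P}^1$. Since the Bruhat decomposition, Demazure resolutions, and the Tits-system structure underlying this claim have been established in the twisted setting in \cite[\textsection 8]{twistedLoop} and \cite{TimoTwistedSchubert}, the argument goes through uniformly with no new geometric input required.
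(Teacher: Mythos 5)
Your proposal is correct and follows essentially the same architecture as the paper's proof: reduce (1) to the fact that the multiplication map is an isomorphism on the convolution stratum under length-additivity (the paper cites the Bruhat decomposition identity $\Lplus\I w_1\Lplus\I w_2\Lplus\I = \Lplus\I w_1w_2\Lplus\I$ directly where you give the inductive argument), apply K\"unneth/proper base change, and then reduce (2) to the simple-reflection case via (1) and settle it with an explicit $\mathbb{P}^1$-computation (the paper invokes the $\mathbb{P}^1\times\mathbb{P}^1\to\mathbb{P}^1$ argument from \cite[Lemma 4.1.4]{centralRiche}, while you phrase the same computation as a localization triangle on $\mathbb{P}^1$).
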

\begin{proof}
	(1) It suffices to prove it after pullback by $h:\Fl{\I}\rightarrow\Hk{\I}$.
	Under the hypothesis, the Bruhat decomposition yields
	$$\Lplus\I(k)w_1\Lplus\I(k)w_2\Lplus\I(k)=\Lplus\I(k)w_1w_2\Lplus\I(k),$$ which implies the
			multiplication map
			$$m:\Fl{\I,w_1}\wttimes\Fl{\I,w_2}\rightarrow \Fl{\I,w_1w_2}$$
			is an isomorphism.
		For $\Delta$, the result then follows from K\"unneth formula for
	shriek pushforward, and smooth base change:
	\begin{align*}
		h^*\left(\Delta_{w_1}(\cdot)\star\Delta_{w_2}(\cdot)\right)\cong&
		m_!\circ \left( j_{w_1!}(\underline\cdot)[\ell{(w_1)}]\tboxtimes j_{w_2!}
		(\underline \cdot)[\ell{(w_2)}])\right)\\
		\cong& m_!(j_{w_1}\wttimes j_{w_2})_!(\underline\cdot\otimes
		\underline \cdot)[\ell{(w_1w_2})]\\
		\cong& j_{w_1w_2!}(\underline\cdot\otimes\underline\cdot)[\ell{(w_1w_2})]\\
		\cong& \Delta_{w_1w_2}(\cdot \otimes
		\cdot).
	\end{align*}
			The proof for $\costd$ is similar, using the fact that star pushforward
			satisfies K\"unneth formula for ULA-sheaves, which includes every
			constructible sheaf
			when we are over a field \cite[Theorem 4.1]{RelativePerversity}.

			(2) The proof of the analogous statement in \cite[Lemma 4.1.4]{centralRiche}
			goes through. By choosing a reduced word for $w$, we can reduce to the
			case $\ell(w)\in\{0,1\}$, of which only the non-zero case is
			non-trivial. In this case, $w=s$ is a simple reflection, and the proof proceeds via a
			calculation for
			$$m:\Fl{\I,\leq s}\wttimes\Fl{\I,\leq s}\rightarrow \Fl{\I,\leq s},$$
			which can be identified explicitly as the projection
			$\mathbb{P}_k^1\x\mathbb{P}_k^1\rightarrow \mathbb{P}_k^1$ using the
			Bruhat decomposition. For the details of the calculation see loc. cit.
\end{proof}
\section{Wakimoto sheaves and central objects}\label{sec:wakimoto}
In this section we introduce certain functors $\Dbc(\Spec(
k))\rightarrow \Dbc(\Hk{\I})$ attached to elements of $\cham$. These
functors provide a
categorification of Bernstein's
presentation for the center of the Hecke algebra. They were introduced in the setting of
split reductive groups by Mirković in an unpublished note. For ramified groups, a version of them appears in
\cite[Section 7.3]{coherenceZhu}, and more recently in \cite{joaoAB} in the mixed characteristic
setting.
We will, in the ramified equal characteristic setting, explain how to construct
them and prove analogues of their properties, due in the split setting to Arkhipov-Bezrukavnikov
\cite{arkhipovBezrukavnikov}.  In our treatment, statements and proofs
appearing closely follow \cite[Section 4.2]{centralRiche}. Although they work in the setting of
a split reductive group over complex numbers, and sheaves with respect to the analytic topology,
many proofs carry over to our situation.

Motivated by the definition of Bernstein translation elements,
the Wakimoto sheaf associated to $\mu\in\cham$ should be isomorphic to
$\nabla_\lambda\star\Delta_{\lambda-\mu}$ for a
$\lambda\in\chamb$ such that $\mu \trianglelefteq \lambda$ with respect to the dominance order
\eqref{orders}.
However, due to choices involved in
\cref{std:conv}, we cannot expect such an isomorphism to be
canonical. In order to eliminate choice,
we will define the Wakimoto sheaf as a limit
of a diagram consisting of objects
$\Delta_\lambda(\cdot)\star\nabla_{\lambda-\mu}$ for all
 $\lambda\in \chamb$ such that $\mu\trianglelefteq \lambda$; and morphisms being the isomorphisms
indexed by the relevant choices. Instead of explicitly writing this diagram out,
we will repackage it following the strategy of \cite{centralRiche}.
\subsection{Definition of Wakimoto functors}
\label{subsec:definition}
	Let $\mu\in \cham$. Consider the partially ordered set
	$$S_\mu=\{\lambda\in \chamb|\mu\trianglelefteq \lambda\}.$$
	Then there is a diagram $d:S_\mu\rightarrow
	\operatorname{Func}\left(\Dbc(\Spec (k))\op\times
	\Dbc(\Hk{\I}),\operatorname{Set}\right)$ specified on the objects by:
		$$d(\lambda)=\Hom\left(\nabla_\lambda(\cdot),(\cdot)\star\nabla_{\lambda-\mu}\right),$$
		and sending the morphism $d(\lambda\trianglelefteq  \lambda')$ to:
		$$\Hom\left(\nabla_\lambda(\cdot),(\cdot)\star\nabla_{\lambda-\mu}\right)\isoto
		\Hom\left(\nabla_{\lambda}(\cdot)\star\nabla_{\lambda'-\lambda},(\cdot)\star\nabla_{\lambda-\mu}\star\nabla_{\lambda'-\lambda}\right).$$
		Above we identify $d(\lambda')=
		\Hom\left(\nabla_{\lambda}(\cdot)\star\nabla_{\lambda'-\lambda},(\cdot)\star\nabla_{\lambda-\mu}\star\nabla_{\lambda'-\lambda}\right)$ through the canonical isomorphism from
	\cref{std:conv}.

	\begin{lem}
		Fix $M \in \Dbc(\Spec (k))$
		and consider the functor
		\begin{gather*}
		A_M^\mu:\Dbc(\Hk{\I})\rightarrow
	\operatorname{Set},\\
		F\mapsto \colim d(M,F)
			\end{gather*}
		defined likewise on morphisms. Then, $A^\mu_M$ is corepresentable.
		\label{noncanon}
	\end{lem}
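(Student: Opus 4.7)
The strategy is to show that, thanks to the invertibility built into \cref{std:conv}(2), all transition morphisms in the diagram $d$ are already bijections, so the filtered colimit collapses to any single stage, which is visibly corepresentable.

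First I would record the combinatorial features of the index set $S_\mu = \{\lambda \in \chamb : \mu \trianglelefteq \lambda\}$: it is nonempty (any element of $\cham$ is bounded above in the dominance order by a sufficiently dominant coweight) and upward-filtered (given $\lambda_1,\lambda_2 \in S_\mu$, a sufficiently dominant $\lambda_3 \in \chamb$ with $\lambda_3 - \lambda_i \in \chamb$ serves as a common upper bound). Thus $\colim d(M,F)$ is a filtered colimit of sets indexed by a nonempty poset.

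Second, I would check that every transition map in $d$ is a bijection. For $\lambda \trianglelefteq \lambda'$ in $S_\mu$, the transition map sends $f:\nabla_\lambda(M) \to F \star \nabla_{\lambda-\mu}$ to $f \star \id_{\nabla_{\lambda'-\lambda}}$, after reidentifying source with $\nabla_{\lambda'}(M)$ and target with $F \star \nabla_{\lambda'-\mu}$ via the canonical isomorphisms
$$\nabla_\lambda \star \nabla_{\lambda'-\lambda} \simeq \nabla_{\lambda'}, \qquad \nabla_{\lambda-\mu} \star \nabla_{\lambda'-\lambda} \simeq \nabla_{\lambda'-\mu}$$
of \cref{std:conv}(1); these apply because $\lambda$, $\lambda-\mu$, and $\lambda'-\lambda$ all lie in $\chamb$ and lengths add by \cref{lem:ordercomp}(1). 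Since \cref{std:conv}(2) ensures $\nabla_{\lambda'-\lambda}$ is invertible for the convolution product, right convolution by it is an autoequivalence of $\Dbc(\Hk{\I})$, and the transition map is a bijection.

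A filtered colimit of sets with bijective transition maps is canonically isomorphic to any single stage. So, fixing any $\lambda_0 \in S_\mu$, the natural cocone identifies, functorially in $F$,
$$A_M^\mu(F) \;\simeq\; \Hom\bigl(\nabla_{\lambda_0}(M),\, F \star \nabla_{\lambda_0 - \mu}\bigr).$$
Invoking \cref{std:conv}(2) once more, the inverse of $\nabla_{\lambda_0 - \mu}$ is $\Delta_{\mu - \lambda_0}$, so right convolution by $\Delta_{\mu - \lambda_0}$ rewrites this Hom-set as $\Hom\bigl(\nabla_{\lambda_0}(M) \star \Delta_{\mu - \lambda_0},\, F\bigr)$. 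Hence $A_M^\mu$ is corepresented by the object $\nabla_{\lambda_0}(M) \star \Delta_{\mu - \lambda_0}$.

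There is no genuine obstacle: once the invertibility and length-additivity results \cref{std:conv} and \cref{lem:ordercomp} are in hand, the argument is purely formal. The real subtlety — and the source of the lemma's chosen label — is that the corepresenting object thus produced depends on the auxiliary choice of $\lambda_0 \in S_\mu$ and on the non-canonical associativity isomorphisms of \cref{std:conv}(1). This is precisely why the Wakimoto sheaf itself is defined via the full colimit procedure of \cref{subsec:definition}, rather than by the explicit but unnatural formula $\nabla_{\lambda_0}(M) \star \Delta_{\mu - \lambda_0}$ coming from a single stage.
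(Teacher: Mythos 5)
Your proof is correct and follows essentially the same strategy as the paper: both exploit \cref{std:conv} to corepresent each stage of the diagram by $\nabla_{\lambda}(M)\star\Delta_{\mu-\lambda}$ and observe that all transition maps are isomorphisms, so the colimit collapses to any single stage. Your write-up is somewhat more explicit about verifying the index poset is nonempty and directed and about unwinding the transition maps, but the mathematical content is identical.
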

	\begin{proof}
        Pick some $\lambda\in S$. Then \cref{std:conv} implies
		$$\Hom\left(\nabla_\lambda(M)\star\Delta_{\mu-\lambda},(\cdot)\right)\isoto\Hom\left(\nabla_\lambda(M),(\cdot)\star\nabla_{\lambda-\mu}\right).$$
		Therefore, each functor in the diagram $d(M,\cdot)$
		is corepresentable. As all morphisms in the diagram are isomorphisms,
		the colimit of the corresponding diagram in $\Dbc(\Hk{\I})$ exists,
		and is the desired corepresenting object.
	\end{proof}
	\begin{defn}
	For $\mu\in\cham$, define the \textit{Wakimoto functor},
		$$J_\mu: \Dbc(\Spec (k))\rightarrow
		\Dbc(\Hk{\I})$$
		by declaring $J_\mu(M)$ to be object corepresenting $A^\mu_M$ and likewise on
		morphisms.
						\end{defn}
\begin{rmk}
	Proof of \cref{noncanon} shows that for any $\lambda\in S_\mu$,
	$$J_\mu(\cdot)\isoto
	\nabla_\lambda(\cdot)\star\Delta_{\mu-\lambda}.$$
	However the isomorphism depends, among other things, on the choice
	of an isomorphism of
	$\nabla_{\lambda-\mu}\star\Delta_{\mu-\lambda}$
	to the monoidal unit of $\star$. Therefore it is
	non-canonical as discussed in the beginning of this section. As before, we use the
	convention that
	$J_\mu=J_\mu(\Qell)$ whenever it appears without an argument.
	\label{remark}
\end{rmk}

The following lemma is useful for studying the behaviour of
convolution with respect to the perverse t-structure.

\begin{lem}\label{lem:orbitmultaffine}
	Let $w\in W$ and $X$ be a closed finite union of $\Lplus\I$-orbits
	in $\Fl{\I}$. Denote by $\pi:\LG\rightarrow\Fl{\I}$ the quotient morphism. Then the
	following restrictions of the multiplication morphism are affine:
	$$(\Lplus\I w\Lplus\I)\times^{\Lplus\I}X\rightarrow \Fl{\I},\quad
	\pi\inv(X)\times^{\Lplus\I} \left((\Lplus\I w\Lplus\I)/\Lplus\I \right) \rightarrow
	\Fl{\I}.$$
\end{lem}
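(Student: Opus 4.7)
The plan is to exhibit both multiplication maps as compositions of a closed immersion into a suitable product with $\Fl{\I}$, followed by an affine morphism. The key technical tool is the flag-variety analogue of the convolution isomorphism from \cref{subsection:convolution}, namely
\[
(pr_1, m)\colon \LG \times^{\Lplus\I} \Fl{\I} \isoto \Fl{\I} \times \Fl{\I},\quad [g,y] \mapsto ([g], gy),
\]
under which the multiplication $m$ corresponds to the second projection $pr_2$.

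For the first map, the source $(\Lplus\I w \Lplus\I) \times^{\Lplus\I} X$ is a closed subscheme of $(\Lplus\I w \Lplus\I) \times^{\Lplus\I} \Fl{\I}$ since $X \subset \Fl{\I}$ is closed and $\Lplus\I$-stable. Under $(pr_1, m)$, the latter is identified with the locally closed subscheme $\Fl{\I, w} \times \Fl{\I} \subset \Fl{\I} \times \Fl{\I}$, and the source corresponds to $\{(x_1, z) \in \Fl{\I, w} \times \Fl{\I} : x_1^{-1}z \in X\}$; this set is closed because the ``relative-position-$X$'' locus $\{(x, z) \in \Fl{\I} \times \Fl{\I} : x^{-1}z \in X\}$ is $\LG$-invariant and closed in $\Fl{\I} \times \Fl{\I}$. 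The multiplication thus factors as this closed immersion followed by $pr_2 \colon \Fl{\I, w} \times \Fl{\I} \to \Fl{\I}$, which is affine since $\Fl{\I, w} \cong \mathbb{A}^{\ell(w)}$ is affine by \eqref{eq:orbitsaffine}.

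For the second map, the source $\pi^{-1}(X) \times^{\Lplus\I} \Fl{\I, w}$ is a closed subscheme of $\LG \times^{\Lplus\I} \Fl{\I, w}$, the latter mapping under $(pr_1, m)$ isomorphically onto the ``relative-position-$w$'' locus
\[
R_w := \{(x_1, z) \in \Fl{\I} \times \Fl{\I} : x_1^{-1}z \in \Fl{\I, w}\}.
\]
The multiplication thus factors as a closed immersion $\pi^{-1}(X) \times^{\Lplus\I} \Fl{\I, w} \hookrightarrow R_w$ followed by $pr_2\colon R_w \to \Fl{\I}$. The crucial step is to show this projection is affine: it is $\LG$-equivariant for the transitive $\LG$-action on $\Fl{\I}$ with stabilizer $\Lplus\I$, and computing the fiber over the base point identifies $R_w \cong \LG \times^{\Lplus\I} \Fl{\I, w^{-1}}$ as $\LG$-equivariant schemes over $\Fl{\I}$. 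Since $\LG \to \Fl{\I}$ is an étale-locally trivial $\Lplus\I$-torsor with affine fiber $\Fl{\I, w^{-1}} \cong \mathbb{A}^{\ell(w)}$, étale descent of affineness implies $pr_2\colon R_w \to \Fl{\I}$ is affine.

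The main technical point is the identification $R_w \cong \LG \times^{\Lplus\I} \Fl{\I, w^{-1}}$ as $\LG$-equivariant affine bundles over $\Fl{\I}$; once this is in hand both assertions follow from combining the closed immersions above with the affine second projections.
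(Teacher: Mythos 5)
Your proof is correct. For the first map it is the paper's argument in disguise: the paper conjugates by a representative $\dot w$ and untwists $\Lplus\I\times^{\Lplus\I_w}Y\cong(\Lplus\I/\Lplus\I_w)\times Y$ before projecting to a closed $Y\supset\dot w\cdot X$, which under $\Lplus\I/\Lplus\I_w\cong\Fl{\I,w}$ is exactly your embedding via $(pr_1,m)$ into $\Fl{\I,w}\times\Fl{\I}$ followed by $pr_2$, modulo whether one bounds the target. The genuine divergence is the second map, which the paper dispatches with a one-line ``analogous.'' The analogy is not literal, since $\pi^{-1}(X)\to X$ is a nontrivial $\Lplus\I$-torsor and so $\pi^{-1}(X)\times^{\Lplus\I}\Fl{\I,w}$ does not untwist to a product over $X$ the way $\Lplus\I\times^{\Lplus\I_w}Y$ does. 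Your route---closed immersion into the relative-position locus $R_w\cong\LG\times^{\Lplus\I}\Fl{\I,w^{-1}}$ and descent of affineness along the torsor $\LG\to\Fl{\I}$---is a clean and uniform way to handle both maps, at the modest cost of invoking local triviality of the torsor and descent of affineness, which the paper's hands-on factorization of the first map avoids. Two small points worth tightening in your last sentence: the ``affine fiber'' you mean is $\Fl{\I,w^{-1}}$, the fiber of the associated bundle $R_w\to\Fl{\I}$, not the torsor fiber $\Lplus\I$; and before appealing to local triviality one should reduce to a finite-type quotient $\Lplus_n\I$ through which the $\Lplus\I$-action on $\Fl{\I,w^{-1}}$ factors, so that the relevant torsor is a genuine $\Lplus_n\I$-torsor (which is indeed étale-locally, in fact Zariski-locally, trivial since $\Lplus_n\I$ is connected solvable).
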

\begin{proof}
	We start with the first map.
	Let $\dot w$ be a representative of $w$ in $N_G(T)$ and $\Lplus\I_w$
	the stabilizer of $\dot w\in\Fl{\I}$. Pick a closed
	finite union of $\Lplus\I$-orbits $Y\supset \dot w\cdot X$. Then the
	map can be factored as:
	$$\begin{tikzcd}
		\Lplus\I \times^{\Lplus\I_w}\dot w\cdot
		X\ar[r,hookrightarrow]&
		\Lplus\I \times^{\Lplus\I_w} Y\ar[r,"\simeq"] &
		\Lplus I/\Lplus\I_w\times Y\ar[r,"pr_Y"] &
		Y\ar[r,hookrightarrow] &\Fl{\I} \\
		& \left[i,y\right] \ar[r,mapsto]& (i\Lplus\I_w,i\cdot y).
		&&
	\end{tikzcd}$$
	Now, the first and the last map are closed immersions, thus are affine. The
	projection
	$pr_Y$ is also affine since $\Lplus\I/\Lplus\I_w\isoto
	\Fl{\I,w}\isoto \mathbb{A}_k^{\ell(w)}$, proving the lemma for the first map. The proof
	for the second map is analogous.
\end{proof}
An immediate application of \cref{lem:orbitmultaffine} is:
\begin{prop}
	For any $w\in W$ and $M\in\Vect_{\Qell}$,
	the functors
	$$\nabla_w(M)\star(\cdot):\Dbc(\Hk{\I})\rightarrow \Dbc(\Hk{\I}),\quad \Delta_w(M)\star(\cdot):\Dbc(\Hk{\I})\rightarrow \Dbc(\Hk{\I})$$
are perverse t-exact. Same holds for convolution from the right.
\end{prop}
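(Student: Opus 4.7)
The plan is to combine the preceding affineness lemma with Artin's vanishing theorem to obtain one-sided t-exactness, and then reduce to the case of a simple reflection via Proposition \ref{std:conv}(1) for the opposite bound.

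By Remark \ref{remark:convolution}, it suffices to work with the pulled-back convolution on $\Dbc(\Fl{\I})$, using $h^*\nabla_w(M) \cong j_{w*}\underline{M}[\ell(w)]$ and $h^*\Delta_w(M) \cong j_{w!}\underline{M}[\ell(w)]$. For $F \in \Perv(\Hk{\I})$, the pullback $h^*F$ is supported on a closed finite union of $\Lplus\I$-orbits $X \subseteq \Fl{\I}$, so the convolution is computed as a pushforward of the twisted external product $\underline{M}[\ell(w)] \tboxtimes h^*F$ along the map $a\colon \Fl{\I,w} \wttimes X \to \Fl{\I}$. The external product is perverse (using smoothness of $\Fl{\I,w} \cong \mathbb{A}^{\ell(w)}$ together with the fact that $h^*F$ is perverse), and by Lemma \ref{lem:orbitmultaffine} the map $a$ is affine. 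Artin's vanishing theorem then yields
\[
\nabla_w(M) \star F \in {}^{\mathrm{p}} D^{\geq 0} \quad \text{and} \quad \Delta_w(M) \star F \in {}^{\mathrm{p}} D^{\leq 0}.
\]

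For the complementary t-exactness bound, choose a reduced expression $w = s_1 \cdots s_n$. Proposition \ref{std:conv}(1) decomposes $\nabla_w$ (respectively $\Delta_w$) into a convolution of $\nabla_{s_i}$ (respectively $\Delta_{s_i}$), so it suffices to treat simple reflections. For a simple reflection $s$, the orbit closure $\Fl{\I,\leq s} \cong \mathbb{P}^1$ is a smooth curve, on which $\Delta_s$, $\nabla_s$, $\IC_s$ and the identity $\delta = \IC_e$ fit into short exact sequences in the perverse heart (up to appropriate Tate twists). Convolving these with $F$ produces distinguished triangles whose long exact perverse cohomology sequences, combined with the one-sided bounds already established and the fact that $\delta \star F \cong F$ is perverse, force both $\Delta_s \star F$ and $\nabla_s \star F$ to lie in the heart. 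Convolution from the right is handled symmetrically using the second affineness assertion of Lemma \ref{lem:orbitmultaffine}.

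The main technical obstacle is the simple reflection case: pinning down the short exact sequences among $\Delta_s$, $\nabla_s$, $\IC_s$ and $\delta$ in the ramified Iwahori setting, and verifying that the resulting long exact perverse cohomology sequences, together with the one-sided bounds from Artin, constrain $\Delta_s \star F$ and $\nabla_s \star F$ to be concentrated in perverse degree zero. This parallels the split case treated in \cite[\S4.2]{centralRiche}; since $\Fl{\I,\leq s} \cong \mathbb{P}^1$ is a curve that behaves identically in the ramified and split cases, the core computation transfers without essential change.
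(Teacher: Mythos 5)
Your reduction to a simple reflection $s$ via \cref{std:conv}(1) is fine, and the one-sided bounds from \cref{lem:orbitmultaffine} and Artin vanishing are correctly obtained. But the complementary bound you sketch does not close, and this is not a detail to be filled in later. Writing the perverse long exact sequences arising from the short exact sequences $0\to\IC_e\to\Delta_s\to\IC_s\to 0$ and $0\to\IC_s\to\nabla_s\to\IC_e\to 0$ convolved with a perverse $F$, and feeding in $\Delta_s\star F\in\pD^{\geq 0}$, $\nabla_s\star F\in\pD^{\leq 0}$, one only obtains the identification $\pH^1(\Delta_s\star F)\cong\pH^1(\IC_s\star F)$ (and dually $\pH^{-1}(\nabla_s\star F)\cong\pH^{-1}(\IC_s\star F)$); nothing in the diagram forces these groups to vanish. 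Concretely, take $F=\IC_s$. A direct computation on $\Fl{\I,\leq s}\cong\mathbb{P}^1_k$ (using the K\"unneth formula after the isomorphism $(pr_1,m)$) gives $\IC_s\star\IC_s\cong\IC_s[1]\oplus\IC_s[-1]$ and $\Delta_s\star\IC_s\cong\IC_s[-1]$, so $\pH^1(\Delta_s\star\IC_s)\cong\IC_s\neq 0$. So the strategy ``one-sided bounds plus the $\mathbb{P}^1$ exact sequences'' cannot establish the claim as stated, and you should flag this rather than delegate it to a parallel with the split case.

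The paper's own proof takes a different route and does not pass through simple reflections at all: after pulling back along $h$, it realizes $h^*(\Delta_w(M)\star(-))$ simultaneously as an $m_!$- and an $m_*$-pushforward of the twisted external product $\underline{M}\tboxtimes(-)$, invoking ind-properness of the global multiplication to conflate the two, and then applies \cref{lem:orbitmultaffine} together with [BBD, Th\'eor\`eme 4.1.1 and Corollaire 4.1.2] to obtain both one-sided bounds from affineness of the restricted map in one stroke. If you want to align with that argument you need to engage directly with the identification of the convolution as a $*$-pushforward along the \emph{affine} restricted multiplication (and check that it really computes the same thing as the $!$-pushforward there); deducing the complementary bound from the $\mathbb{P}^1$ exact sequences is a genuinely different, and as presented an inconclusive, route.
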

\begin{proof}
	Let $h:\Fl{\I}\rightarrow \hk$, $\pi:\LG\rightarrow\Fl{\I}$ denote the respective quotient maps. The t-exactness
	of $\Delta_w(M)\star(\cdot)$ can be checked objects wise, and after
	pullback by $h$. Note that, ind-properness of the multiplication map
	$m:\Fl{\I}\wttimes\Fl{I}\rightarrow \Fl{I}$ implies that
\begin{align}\label{stdcostd:convolution}
		m_*\left(\underline M\tboxtimes(\cdot)\right)
	\cong
	h^*(\Delta_w(M)\star(\cdot))\cong m_!\left(\underline M\tboxtimes(\cdot)\right).
	\end{align}
		For any $y\in W$, \cref{lem:orbitmultaffine} implies the restiction of the multiplication map
$$\begin{tikzcd}
	\Fl{\I,w}\wttimes{\Fl{\I,\leq y}}\ar[r,"m"]&\Fl{\I}.
\end{tikzcd}$$
is affine. By \cite[Corollaire 4.1.2]{BBDG}, shriek pushforward of an affine morphism is perverse
	left t-exact, while the star pusforward of such is right t-exact \cite[Th\'eor\`eme
	4.1.1]{BBDG}. Therefore, \ref{stdcostd:convolution} exhibits an isomorphism of
	$\Delta_w(M)\star(\cdot)$ to a left t-exact and right t-exact
	functor simultaneously, proving it is t-exact. The other cases may be proven similarly.
\end{proof}
In particular, Wakimoto functors are perverse t-exact. When
restricted to the heart, they are even fully-faithful and have
an essential image which is closed under extensions:
\begin{prop}\label{wakimoto:essim}
	For all $\mu\in \cham$,
	\begin{enumerate}
		\item  The functor
			$$J_\mu:\Vect_{\Qell}\rightarrow \Perv(\Hk{\I})$$ is fully faithful.
		\item For all $M,M'\in \Vect_{\Qell}$, the natural map
			$$0=\Ext^1(M,M')\rightarrow \Ext^1(J_\mu(M),J_\mu(M'))$$
			is an isomorphism.
			Therefore the essential image of $J_\mu$ is closed under extensions.
				\end{enumerate}
\end{prop}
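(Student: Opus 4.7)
The plan is to exploit \cref{remark}: for any $\lambda\in S_\mu$, we have a (non-canonical) isomorphism $J_\mu(\cdot)\cong\nabla_\lambda(\cdot)\star\Delta_{\mu-\lambda}$, and by \cref{std:conv}(2) both $\nabla_\lambda$ and $\Delta_{\mu-\lambda}$ are invertible under the convolution product. As a consequence, convolution with $J_\mu(\Qell)$ is an auto-equivalence of $\Dbc(\Hk{\I})$, and, using compatibility of convolution with the $\Vect_{\Qell}$-action, it carries $\Delta_e(M)$ to $J_\mu(M)$ functorially in $M$. Thus $J_\mu$ factors, up to natural isomorphism, as the composition of $\Delta_e:\Vect_{\Qell}\to\Dbc(\Hk{\I})$ with an auto-equivalence of $\Dbc(\Hk{\I})$, reducing both assertions to the analogous ones for $\Delta_e$.

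Next I would verify these for $\Delta_e$. Since $e$ is the minimum element of the Bruhat order, $\Fl{\I,\leq e}=\Fl{\I,e}$ is a closed reduced point of $\Fl{\I}$; therefore $s_e$ is a closed immersion of a point. By the usual adjunction, $\Delta_e=s_{e!}=s_{e*}$ is fully faithful, giving
$$\Hom_{\Dbc(\Hk{\I})}(\Delta_e(M),\Delta_e(M')[i])\cong \Hom_{\Dbc(\Spec(k))}(M,M'[i])$$
for every $i$. For $i=0$, combined with the auto-equivalence above, this yields (1). For (2), the identification $\Ext^1_{\Perv(\Hk{\I})}(A,B)\cong\Hom_{\Dbc(\Hk{\I})}(A,B[1])$ for $A,B$ in the heart reduces the assertion to the case $i=1$, which gives $\Ext^1_{\Vect_{\Qell}}(M,M')=0$ as $\Vect_{\Qell}$ is semisimple. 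Naturality of the comparison map from $\Ext^1(M,M')$ is automatic from functoriality.

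The essential content is the invertibility of $\nabla_\lambda$ and $\Delta_{\mu-\lambda}$ (already supplied by \cref{std:conv}), together with the fact that $\Fl{\I,e}$ is a closed point, so there is no genuine obstacle. The only minor bookkeeping is the compatibility of convolution with the $\Vect_{\Qell}$-action, which is formal.
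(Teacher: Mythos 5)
Your overall strategy is sound and close in spirit to the paper's: reduce to a single "anchor" functor by convolving with the invertible objects of \cref{std:conv}(2), then compute for the anchor directly. The paper anchors on $\nabla_\lambda$ for $\lambda$ dominant; you anchor on $\Delta_e$ (the monoidal unit). Both choices work, and your reduction step is fine once you note (as the paper does just after \cref{lem:orbitmultaffine}) that convolution with $\nabla_\lambda$ and $\Delta_{\mu-\lambda}$ is perverse $t$-exact, so the auto-equivalence $J_\mu(\Qell)\star(-)$ restricts to an auto-equivalence of the heart and therefore preserves $\Ext^1$ computed there.

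However, there is a real gap in your computation for $\Delta_e$. You write that $\Fl{\I,e}$ is a closed point of $\Fl{\I}$ (true) and conclude that $s_e$ is "a closed immersion of a point," hence that
$$\Hom_{\Dbc(\Hk{\I})}(\Delta_e(M),\Delta_e(M')[i])\cong \Hom_{\Dbc(\Spec(k))}(M,M'[i])\quad\text{for every }i.$$
This displayed isomorphism is false in general. The map $s_e$ has source $\Lplus\I\backslash\Fl{\I,e}$, which is not a point but a classifying stack $\cong B\I_k$ (in the stationary limit of \eqref{eq:sheafdefinition}). The constant-complex functor $\Dbc(\Spec k)\to\Dbc(B\I_k)$ is not fully faithful: $\End^\bullet(\Qell)$ there is $H^\bullet(B\I_k)\cong H^\bullet(BT)$ for $T$ the reductive quotient (a torus), which is a polynomial ring in degree $2$. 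So the displayed claim already fails for $i=2$. (Said differently: $\Delta_e$ is the monoidal unit of $\Dbc(\Hk{\I})$, and endomorphisms of the unit in a Hecke category see the equivariant cohomology of a point, not just the ground ring.)

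Your conclusion survives because you only invoke $i=0$ and $i=1$, and one does have $H^1(B\I_k)=0$ — but that is a fact you would need to state and justify, and as written your argument suggests the stronger (false) statement. The paper's route sidesteps this entirely: by \cref{lem:perversedescent}, the pullback $h^*:\Perv(\Hk{\I})\to\Perv(\Fl{\I})$ is fully faithful, which in particular gives an injection $\Ext^1_{\Perv(\Hk{\I})}(A,B)\hookrightarrow\Ext^1_{\Perv(\Fl{\I})}(h^*A,h^*B)$; working on $\Fl{\I}$, the orbit $\Fl{\I,e}$ really is a point, $h^*\Delta_e=j_{e!}$, and full faithfulness of $j_{e!}$ together with $\mathbb{A}^1$-invariance (for $\nabla_\lambda$, in the paper's version) gives the vanishing in degree $1$ with no equivariant-cohomology corrections. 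I would recommend inserting the appeal to \cref{lem:perversedescent} before your stalk computation; with that change your $\Delta_e$ anchor is a perfectly good alternative to the paper's $\nabla_\lambda$.
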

\begin{proof}
	\begin{enumerate}
		\item
			By \cref{lem:perversedescent}, we can check this after pullback by
			$h:\Fl{\I}\rightarrow\Hk{I}$.
	Pick $\lambda\in\chamb$ such that $\mu-\lambda\in\chamb$ as well.
			As $\nabla_{\lambda-\mu}$ is tensor invertible (\cref{std:conv}),
	$$\Hom(h^*J_\mu(M),h^*J_\mu(M'))$$ is isomorphic to
				$$\Hom\left(\left(h^*\circ
	J_\mu(M)\right)\star\nabla_{\lambda-\mu},h^*
	\left(J_\mu(M')\star\nabla_{\lambda-\mu}\right)\right).$$
	Using once
			again \cref{std:conv} and \cref{lem:perversedescent}, we see this is in turn isomorphic to
	$\Hom(\nabla_\lambda(M),\nabla_\lambda(M'))$. Finally, smooth base change and
			$\mathbb{A}^1$-invariance gives the desired bijection with $\Hom(M,M')$.
\item
	One can reduce to $\mu\in\chamb$ as above, in which case $J_\mu(\cdot)\cong
			\nabla_\mu(\cdot)$. The assertion then follows from the fact that the equivalence
			$$\Vect_{\Qell}\cong \Perv(\Lplus\I\backslash\Fl{\I,t^\mu})$$ induced by
			$h^*_{t^\mu}:\Perv(\Lplus\I\backslash\Fl{\I,t^\mu})\rightarrow
			\Perv(\Fl{\I,t^\mu})$ and $\mathbb{A}^1$-invariance, identifies $j_{t^\mu*}$ with
			$J_\mu$.
	\end{enumerate}
\end{proof}
Wakimoto functors are further compatible with convolution in the following
sense:
\begin{lem}\label{lem:wakimotomonoidality}
	For all $\mu,\lambda\in{X_*(T)}$ and all $M,M'\in \Dbc(\Spec (k))$ there exist
	canonical isomorphisms
	$$J_\mu(M)\star J_\lambda(M')\cong J_{\mu+\lambda}(M\otimes M').$$
\end{lem}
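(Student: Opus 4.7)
The plan is to reduce to Proposition 2.12 (\texttt{std:conv}) by replacing both Wakimoto sheaves with explicit convolutions of standard and costandard sheaves, as in Remark 2.15. Since these replacements are non-canonical, the main subtlety will be upgrading the resulting isomorphism to a canonical one through the corepresentability description that defines $J_\mu$.

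First I would choose $\nu_1,\nu_2\in\chamb$ sufficiently dominant so that all of the following hold simultaneously: $\mu\trianglelefteq\nu_1$, $\lambda\trianglelefteq\nu_2$, $\nu_1-\mu\trianglelefteq\nu_2$, and $\nu_2+\mu\in\chamb$. All of these conditions can be arranged by enlarging $\nu_2$ within $\chamb$. By Remark 2.15 we then have non-canonical isomorphisms
\[ J_\mu(M)\cong\nabla_{\nu_1}(M)\star\Delta_{\mu-\nu_1},\qquad J_\lambda(M')\cong\nabla_{\nu_2}(M')\star\Delta_{\lambda-\nu_2}. \]
The central computation commutes $\Delta_{\mu-\nu_1}$ past $\nabla_{\nu_2}(M')$: by Lemma 2.5(1), Proposition 2.12(1) gives $\nabla_{\nu_2}(M')\cong\nabla_{\nu_1-\mu}\star\nabla_{\nu_2-\nu_1+\mu}(M')$, and Proposition 2.12(2) gives $\Delta_{\mu-\nu_1}\star\nabla_{\nu_1-\mu}\cong\Delta_e$. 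A final application of Proposition 2.12(1) then yields
\[ J_\mu(M)\star J_\lambda(M')\cong\nabla_{\nu_2+\mu}(M\otimes M')\star\Delta_{\lambda-\nu_2}, \]
and since $\nu_2+\mu\in S_{\mu+\lambda}$ with $(\mu+\lambda)-(\nu_2+\mu)=\lambda-\nu_2$, the right-hand side is exactly a presentation of $J_{\mu+\lambda}(M\otimes M')$ furnished by Remark 2.15.

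The main obstacle is canonicity, i.e., independence of the choices of $\nu_1,\nu_2$. To handle this, I would follow the same strategy used to define $J_\mu$ itself: construct, for every $F\in\Dbc(\Hk{\I})$ and functorially in $F$, a natural bijection between $\mathrm{Hom}(J_\mu(M)\star J_\lambda(M'),F)$ and $A^{\mu+\lambda}_{M\otimes M'}(F)$, and then invoke Yoneda. Concretely, I would perform the above chain of isomorphisms at the level of the colimit diagrams indexed by $S_\mu$, $S_\lambda$, and $S_{\mu+\lambda}$ from Lemma 2.14, checking that the bijections are compatible with the transitions induced by enlarging $\nu_1$ or $\nu_2$. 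Coherence of these transitions reduces to the associativity part of Proposition 2.12(1), which is the most delicate bookkeeping step but otherwise purely formal.
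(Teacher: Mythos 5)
Your proof is correct and takes essentially the same route as the paper: reduce to the situation where the Wakimoto sheaves can be expressed through (co)standards using dominant coweights, apply Proposition~\ref{std:conv}, and upgrade the resulting non-canonical identification to a canonical one via the corepresentability definition of $J_\mu$. The paper's version is considerably terser --- it proves the case $\mu,\lambda\in\chamb$ directly from Proposition~\ref{std:conv} and then reduces the general case by convolving with a single invertible costandard --- and leaves the canonicity bookkeeping you spell out (compatibility with the transitions in the colimit diagram over $S_\mu$, $S_\lambda$, $S_{\mu+\lambda}$) entirely implicit.
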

\begin{proof}
	This follows from \cref{std:conv} for $\mu,\lambda\in \chamb$.
	General case can be reduced to this after convolving with
	$\nabla_{\nu-\mu-\lambda}$ for some $\nu\in\chamb$, such that
	$\nu\trianglelefteq \mu$ and $\nu\trianglelefteq \mu+\lambda$ in the dominance order
	\eqref{order}.
\end{proof}
\subsection{Wakimoto filtration}
\begin{prop}\label{wakimoto:filtration}
	Convolution exact central objects in $\PervI$ admit a finite filtration, whose graded
	pieces are of the form $J_\lambda(M)$ for $\lambda\in\cham$, and
	$M\in\Vect_{\Qell}$.
\end{prop}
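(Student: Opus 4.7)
The plan is to induct on the number of strata in $\Fl{\I}$ meeting the support of $\cA$, where the strata in question are the preimages $\pi^{-1}(\Gr_{\G,\lambda})$ under the natural projection $\pi \colon \Fl{\I} \to \Gr_\G$ to the affine Grassmannian of a special parahoric $\G \supset \I$ attached to the vertex $\f$. These strata are indexed by $\lambda \in \cham$, and the stratum at $\lambda$ is the union of the Iwahori--Schubert cells $\Fl{\I, w}$ for $w$ in the coset $\Wfin\, t^\lambda$.

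First, I would reduce to the case where all coweights appearing in the support are dominant. Let $S \subset \cham$ be the finite set of $\lambda$ with $\pi^{-1}(\Gr_{\G,\lambda}) \cap \operatorname{supp}(\cA) \neq \emptyset$. By \cref{std:conv}, $\nabla_\mu$ is convolution-invertible; since $\cA$ is central, existence of a Wakimoto filtration on $\cA$ is equivalent to one on $\cA \star \nabla_\mu \cong \nabla_\mu \star \cA$, with graded pieces translating from $J_\lambda(M)$ to $J_{\lambda+\mu}(M)$ by \cref{lem:wakimotomonoidality}. Choosing $\mu \in \chamb$ with $S + \mu \subset \chamb$, one may assume $S \subset \chamb$, in which case $J_\lambda \cong \nabla_\lambda$ for every $\lambda \in S$.

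Second, I would induct on $|S|$. Pick $\lambda \in S$ maximal for the coroot order, which agrees with the Bruhat order on $\chamb$ by \cref{lem:ordercomp}. Let $j$ denote the open inclusion of $\pi^{-1}(\Gr_{\G,\lambda})$ into the union of the $\pi$-strata indexed by $S$, and $i$ its closed complement. The recollement for $(j,i)$ together with the perversity of $\cA$ and maximality of $\lambda$ yields a short exact sequence in $\PervI$,
$$0 \longrightarrow \cA' \longrightarrow \cA \longrightarrow \cA'' \longrightarrow 0,$$
where $\cA''$ has $\pi$-support contained in $S \setminus \{\lambda\}$ and $\cA'$ is a perverse subobject whose restriction to the $\lambda$-stratum agrees with $j^*\cA$. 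Both centrality and convolution-exactness pass to the quotient $\cA''$, so the proof reduces by induction on $|S|$ to the identification $\cA' \cong J_\lambda(M_\lambda)$ for some $M_\lambda \in \Vect_{\Qell}$.

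The main obstacle is precisely this last identification. The $\lambda$-stratum contains the $|\Wfin|$ Schubert cells $\Fl{\I, w t^\lambda}$ for $w \in \Wfin$, but $J_\lambda = \nabla_\lambda$ is supported only on the minimal-length cell $\Fl{\I, t^\lambda}$; one must therefore rule out contributions of $\cA$ on the other cells. The key input should be convolution-exactness of $\cA$: for a sufficiently regular dominant $\nu$, convolving with $\nabla_\nu$ disperses the cells $\Fl{\I, w t^\lambda}$ into distinct $\pi$-strata at $w(\lambda + \nu) \in \cham$, and the perversity of each resulting piece of $\cA \star \nabla_\nu$, combined with the dimension constraints arising from \cref{lem:ordercomp}, forces the stalks of $\cA$ on the non-minimal cells in the $\lambda$-stratum to vanish. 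One then identifies $j^*\cA$ with a constant sheaf on $\Fl{\I, t^\lambda}$, whence $\cA' \cong \nabla_\lambda(M_\lambda) = J_\lambda(M_\lambda)$, completing the induction. This mirrors the argument in the split setting of \cite[\S~4.2]{centralRiche}, with the ramified Wakimoto sheaves constructed in \cref{subsec:definition} replacing their split analogues.
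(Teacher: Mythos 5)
Your approach is genuinely different from the paper's, and it has several gaps that I do not see how to fill.

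The paper's proof does not induct on a stratification at all. It exploits centrality directly: since $F \star J_\nu \cong J_\nu \star F$, convolving with a sufficiently \emph{strongly antidominant} $J_\nu$ simultaneously constrains the star-support from the left \emph{and} from the right, forcing $\starsup(F \star J_\nu) \subset \cham^{--}$ (via \cite[\S~13.2, Lemma A]{humphreysIntro}). In that region, $J_\mu \cong \std_{t^\mu} = j_{t^\mu!}\underline{\Qell}[\ell(t^\mu)]$, so \cref{lemma:wakimotoaux2} places $F \star J_\nu$ inside the thick subcategory generated by shifted Wakimotos. A further twist by a large dominant $J_\eta$, a two-sided degree estimate (thick subcategory bound versus perversity), and an induction on $\shrieksup$ then produce the filtration; everything is carried out on $F \star J_{\eta+\nu}$, never on a subquotient. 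This avoids precisely the issues below.

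Your argument has three problems. First, and most seriously, you assert that ``both centrality and convolution-exactness pass to the quotient $\cA''$.'' This is unjustified: neither property is stable under subquotients of $\PervI$, and your inductive scheme collapses without it. The paper's induction is instead an internal structural induction on $\shrieksup(F \star J_{\eta+\nu})$ that never re-invokes centrality. Second, your claim that convolution-exactness ``forces the stalks of $\cA$ on the non-minimal cells in the $\lambda$-stratum to vanish'' is false. Already for $G = \mathrm{PGL}_2$ and $V$ the adjoint representation of $\check G = \mathrm{SL}_2$, the Wakimoto filtration of $\Zent(V)$ has graded pieces $J_{\alpha^\vee}, J_0, J_{-\alpha^\vee}$; since $J_{\alpha^\vee} \cong \nabla_{\alpha^\vee} = j_{t^{\alpha^\vee}*}\underline{\Qell}[2]$, the stalk of $\Zent(V)$ at the length-one cell $\Fl{\I, s t^{\alpha^\vee}}$ (which lies in the open $\pi$-stratum for $\alpha^\vee$) is nonzero. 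Wakimoto-filtered sheaves are not extensions-by-zero off $\bigsqcup_\mu \Fl{\I,t^\mu}$; they are built from pushforwards whose supports spill across each $\pi$-stratum. Third, even setting these aside, your terminal identification $\cA' \cong J_\lambda(M_\lambda)$ cannot be right: the $\lambda$-stratum contains $\Fl{\I,t^\mu}$ for all $\mu \in \Wfin\lambda$, each of which contributes its own Wakimoto $J_\mu$, and for $\mu$ neither dominant nor antidominant $J_\mu$ is not a simple shriek or star extension of its defining cell, so knowing stalks alone does not identify it. (You also describe the $\pi$-strata as indexed by $\cham$ and by single cosets $\Wfin t^\lambda$; they are indexed by $\chamb$ via the double cosets $\Wfin \backslash W / \Wfin$.) You would need to replace the entire inductive step with the antidominant reduction to obtain a working argument.
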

Before proceeding with the proof we introduce some
notation and general facts.
For a set $S\subset \text{Ob}(\Dbc(\Fl{\I}))$, denote by
$\langle S\rangle$ the set of objects contained in the smallest thick subcategory generated by objects in $S$.
For an object $F\in\Dbc(\Fl{\I})$, its star and shriek support are the sets
$$\starsup(F):=\{w\in  W \;|\; j_w^*F\neq 0\},\quad
\shrieksup(F):=\{w\in  W \;|\; j_w^!F\neq 0\}$$
respectively. We will also use the same notation for objects in $\Dbc(\Hk{\I})$, understood to be
applied to the corresponding object of $\Dbc(\Fl{\I})$ after pullback.

\begin{lem}\label{lemma:wakimotoaux2}
	For any $F\in\Dbc(\Fl{\I})$,
	$$F\in\langle\{j_{w!}\circ j_w^* F\;|\; w \in\starsup(F)\}\rangle,\quad
	  F\in\langle\{j_{w*}\circ j_w^! F\;|\; w \in\shrieksup(F)\}\rangle.$$
\end{lem}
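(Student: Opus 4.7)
The plan is to prove the first assertion and deduce the second by Verdier duality. Since Verdier duality $\D$ interchanges $j_{w!}j_w^*$ with $j_{w*}j_w^!$ and swaps $\starsup$ with $\shrieksup$, while preserving thick subcategories, the second assertion for $F$ is obtained by applying the first to $\D F$.

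For the first assertion I would proceed by induction on the cardinality of $\starsup(F)$, which is finite: by the definition of $\Dbc(\Fl{\I})$ as the colimit over closed finite-type subschemes $X_i$, the object $F$ is supported on some $X_i$, and each such $X_i$ meets only finitely many Schubert cells. The base case $\starsup(F)=\emptyset$ forces $F=0$ by the standard recollement argument on the induced Schubert stratification of $X_i$: a constructible complex all of whose $*$-restrictions to strata vanish must itself vanish.

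For the inductive step, pick $w\in\starsup(F)$ maximal for the Bruhat order, and set $Z\defined\operatorname{supp}(F)=\bigcup_{v\in\starsup(F)}\Fl{\I,\leq v}$, a closed subscheme of $\Fl{\I}$ with inclusion $\iota:Z\hookrightarrow\Fl{\I}$. By maximality of $w$, the cell $\Fl{\I,w}$ is open in $Z$ with closed complement $Z'$; write $j_w^Z:\Fl{\I,w}\hookrightarrow Z$ and $i:Z'\hookrightarrow Z$ for the two complementary immersions. Applying the standard open/closed recollement triangle on $Z$ to $\iota^*F$ and pushing forward along the exact functor $\iota_*$ produces a distinguished triangle
$$j_{w!}j_w^*F \longrightarrow F \longrightarrow F' \xrightarrow{+1}$$
in $\Dbc(\Fl{\I})$, where the identification of the first term uses $j_w=\iota\circ j_w^Z$, and $F'\defined \iota_*i_*i^*\iota^*F$ is supported on $Z'$. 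Restricting to any stratum $\Fl{\I,v}$ with $v\neq w$ yields $j_v^*F'\cong j_v^*F$, so $\starsup(F')=\starsup(F)\setminus\{w\}$. The inductive hypothesis places $F'$ in $\langle\{j_{v!}j_v^*F:v\in\starsup(F),\,v\neq w\}\rangle$, and the triangle then places $F$ in the desired thick subcategory.

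The only mildly technical points are the identification $\iota_*(j_w^Z)_!(j_w^Z)^*\iota^*F\cong j_{w!}j_w^*F$ and the verification that $\starsup(F')=\starsup(F)\setminus\{w\}$, both of which are routine consequences of the behavior of the recollement triangle under restriction and of $\iota$ being a closed immersion. I do not anticipate any substantial obstacle.
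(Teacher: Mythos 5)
Your proof is correct and takes essentially the same approach as the paper: choose $w$ maximal in $\starsup(F)$ so that $\Fl{\I,w}$ is open in the closure of the support, apply the open/closed recollement triangle, and induct on the (finite) number of cells involved. The paper handles the two assertions directly with the two dual localization triangles rather than reducing one to the other via Verdier duality, but that is an inessential stylistic variation.
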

\begin{proof}
	Let $X$ denote the closure of $\starsup(F)$, and $w\in W$ an element such that
	$j_w:\Fl{\I,w}\rightarrow X$ is an open immersion. Denote by $i:X\setminus \Fl{\I,w}\rightarrow X$,
	the inclusion of the complement. The first assertion follows from a
	straightforward induction on the number of $\Lplus\I$ orbits contained in $X$, using the localization triangle:
		$$i_!i^!F\rightarrow F\rightarrow j_{w*}j_w^*F.$$
		The other one is similar, using instead the localization triangle
		$$j_{w!}j_w^!F\rightarrow F\rightarrow i_*i^*F.$$
\end{proof}
\begin{lem}\label{lemma:wakimotoaux1}
	For $F\in\Dbc(\Hk{\I})$, $w\in W$, there exist a finite subset
	$S_{F,w}\subset W$ such that
	\begin{gather*}
	\starsup(\Delta_w\star F),\; \shrieksup(\nabla_w\star F)\subset
	w\cdot S_{F,w},\\
	\starsup(F\star\Delta_w),\; \shrieksup(F\star\nabla_w)\subset
	 S_{F,w}\cdot w.
	\end{gather*}
\end{lem}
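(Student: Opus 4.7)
My plan is to reduce the statement to a basic finiteness fact: the image under the multiplication map of any finite-type subscheme of $\Fl{\I}\wttimes\Fl{\I}$ meets only finitely many Schubert cells. The prefactor $w\cdot$ (respectively $\cdot w$) appearing in the containments is then a purely formal rearrangement, since $w$ acts invertibly on $W$.

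To set up, I would first observe that both $\starsup(F)$ and $\shrieksup(F)$ are finite for any $F\in\Dbc(\Hk{\I})$, because $F$ has bounded constructible support on the Schubert stratification. I would then define a closed, quasi-compact subscheme
$$Y_F\defined \bigcup_{v\in\starsup(F)\cup\shrieksup(F)}\Fl{\I,\leq v}\subset\Fl{\I}$$
containing the topological support of $h^*F$, and consider the multiplication maps
$$m^L:\Fl{\I,\leq w}\wttimes Y_F\longrightarrow \Fl{\I},\qquad m^R:Y_F\wttimes\Fl{\I,\leq w}\longrightarrow \Fl{\I}.$$
Both are restrictions of the global ind-proper multiplication to finite-type sources, hence are proper with closed, finite-type image. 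Such an image is necessarily contained in a finite union of Schubert cells; let $T^L_{F,w},T^R_{F,w}\subset W$ denote the corresponding finite index sets.

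To finish, I would use that $\Delta_w\star F\cong m^L_!(\Delta_w\tboxtimes F)$ and $\nabla_w\star F\cong m^L_*(\nabla_w\tboxtimes F)$ (where $m_!=m_*$ by properness), and similarly for the right convolution. Since $\Delta_w$ and $\nabla_w$ both have support in $\Fl{\I,\leq w}$, the supports of all four convolutions are contained in the images of $m^L$ or $m^R$; hence the respective star and shriek supports are contained in $T^L_{F,w}$ or $T^R_{F,w}$. Setting $S_{F,w}\defined w^{-1}\cdot T^L_{F,w}\cup T^R_{F,w}\cdot w^{-1}$, a finite subset of $W$, yields all four containments.

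The main (mild) technical point to verify is that a closed finite-type subscheme of $\Fl{\I}$ is contained in finitely many Schubert cells, which follows from the standard structure of an ind-proper ind-scheme with ind-proper orbit closures as recalled in \cref{subsec:loopgroups}. Otherwise, the argument is essentially formal and does not invoke any of the finer structure (such as \cref{lemma:wakimotoaux2}) available on the category.
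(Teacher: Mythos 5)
Your proof is correct, and takes a genuinely different route from the paper's. The paper reduces the statement to finding a finite set $S_X$ (for $X$ a closed union of orbits supporting $h^*F$) satisfying the corresponding containments at the level of double cosets $\Lplus\I\nu\Lplus\I$, and then constructs $S_X$ by ``a straightforward induction on $\ell(w)$'' --- an argument that implicitly leans on the Tits-system combinatorics (convolving with a simple reflection $s$ moves the orbit at $u$ into $\{u, su\}$). You replace this combinatorial induction with a geometric finiteness argument: the twisted products $\Fl{\I,\leq w}\wttimes Y_F$ and $Y_F\wttimes\Fl{\I,\leq w}$ are finite type, the multiplication map restricted to them is proper, and a finite-type closed subscheme of the strict ind-scheme $\Fl{\I}$ meets only finitely many Schubert cells. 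This buys a somewhat cleaner and more conceptual argument, at the cost of invoking the ind-proper ind-scheme structure rather than the Bruhat decomposition. Your renormalization $S_{F,w}\defined w^{-1}T^L_{F,w}\cup T^R_{F,w}w^{-1}$ at the end is also correct, and usefully highlights that the factors of $w$ in the statement are purely cosmetic --- the substantive content of the lemma is simply that all four supports are finite, which is exactly what your argument establishes directly.
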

\begin{proof}
	Let $X\subset \LG$ be the smallest closed union of $\Lplus\I$
	orbits containing the support of $p^*F$. The lemma follows if
	there exist a finite
	set $S_X$ such that
	$$\Lplus\I w\Lplus\I \times^{\Lplus\I}X\subset\bigcup_{\nu\in
	S_Xw}\Lplus\I\nu\Lplus\I/\Lplus\I,\quad
	X\times^\mathcal{I}\mathcal{I}w\mathcal{I}/\mathcal{I}\subset\bigcup_{\nu\in
	wS_X}\mathcal{I}\nu\mathcal{I}/\mathcal{I}.$$
	Existence of such an $S_X$ can be shown by a straightforward induction on $\ell(w)$.
\end{proof}
Let $\cham^{-}\defined-\chamb$ be the closure of the antidominant chamber, and $\cham^{--}$ its interior.
A coweight
contained in $\cham^{-}$ (resp. $\cham^{--}$) is called
\textit{antidominant} (resp. \textit{strongly antidominant}).
\begin{proof}[Proof of \cref{wakimoto:filtration}]
	Let $F\in\PervI$ be a convolution exact central sheaf.
	Since $S_{F,e}$ is finite, we
	find a strongly antidominant coweight $\mu$ such that
	$$t^\nu\cdot S_{F,e}\subset\{ t^{\mu}\;|\;\mu\in(\cham^{--}\cdot\Wfin)\},\quad S_{F,e}\cdot
	t^\nu\subset\{ t^{\mu}\;|\;\mu\in(\Wfin\cdot\cham^{--})\}.$$
	Consequently, by \cref{lemma:wakimotoaux1} and the centrality of $F$, the star support
	of $F\star J_\nu$ is contained in the intersection $(\Wfin\cdot \cham^{--})\cap(\cham^{--}\cdot\Wfin)$; which by
	\cite[\textsection 13.2, Lemma A]{humphreysIntro} implies
	$$\starsup(F\star
	J_\nu)\subset \cham^{--}.$$
	For an antidominant coweight $\mu$,
	there is an isomorphism $J_\mu\cong \std_{t^\mu}$;
	which using \cref{lemma:wakimotoaux2} shows
	$$F\star J_\nu\in\left<\{J_\lambda[n]\;|\; \lambda\in \cham^-, n\geq 0\}\right>.$$
	Picking a large enough $\eta\in\chamb$, as we may using the finiteness of $S_{F,e}$, we
	can obtain by convolution monoidality of Wakimoto sheaves
	that
	$$F\star J_{\eta+\nu}\in\left<\{J_\lambda[n]\;|\; \lambda\in\chamb, n\geq 0\}\right>.$$
	The inclusion above implies that, for every
	$\lambda\in\chamb$, the restriction $j_{t^\lambda}^*(F\star J_{\eta+\nu})$ is
	concentrated in degrees less than or equal to $-\ell(t^\lambda)$. Moreover, the perversity of $F\star J_{\eta+\nu}$ implies the reverse inequality, showing that
	the restriction is concentrated in degree $-\ell(t^\lambda)$.

	Finally, via a straightforward induction on the cardinality of $\shrieksup(F\star
	J_{\eta+\nu})$ (cf. \cite[Lemma 4.3.7]{centralRiche}), this
	implies that $F\star
	J_{\eta+\nu}$ admits a Wakimoto filtration; implying likewise for $F$ by convolving with
	$J_{-(\eta+\nu)}$.
\end{proof}

\subsection{Associated graded functor} In this subsection, we will construct functors which take
a Wakimoto filtered perverse sheaf to its $J_\lambda$-graded piece.
Let us first record the following result about extensions between Wakimoto sheaves:
\begin{prop}\label{wakimoto:extension}
	For $\mu,\lambda\in{X_*(T)_I}$,  $M,M'\in \Db(\Spec k)$, and $n\in \Z$ one has
	$$\Hom(J_\mu(M),J_\lambda(M')[n])=0,$$
	unless $\mu\leq \lambda$ in the coroot order.
\end{prop}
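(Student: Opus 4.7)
The plan is to reduce the $\Hom$-computation to one between standard sheaves on the affine flag variety, where vanishing follows from a support argument.

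First I would invoke the monoidality of Wakimoto functors (Lemma~\ref{lem:wakimotomonoidality}) to reduce to the case $\mu = 0$. Each $J_\mu$ is tensor-invertible under convolution, with inverse $J_{-\mu}$. Convolving both sides of the $\Hom$ with $J_{-\mu}$ gives a natural isomorphism
$$\Hom(J_\mu(M), J_\lambda(M')[n]) \cong \Hom(J_0(M), J_\nu(M')[n]),$$
where $\nu := \lambda - \mu$, and the object $J_0 = \Delta_e$ is the unit of convolution. It then suffices to identify the coroot-order condition on $\nu$ necessary for non-vanishing of this reduced $\Hom$.

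Next I would exploit the specific decomposition from Remark~\ref{remark}: choosing a dominant $\eta \in X_*(T)_I^+$ with $\nu \trianglelefteq \eta$ (so that $\eta - \nu$ is also dominant), we have $J_\nu \cong \nabla_\eta \star \Delta_{t^{\nu - \eta}}$. Using that $\nabla_\eta$ is tensor-invertible with inverse $\Delta_{t^{-\eta}}$ by Proposition~\ref{std:conv}(2), I can move it across the $\Hom$ to obtain
$$\Hom(J_0(M), J_\nu(M')[n]) \cong \Hom(\Delta_{t^{-\eta}}(M), \Delta_{t^{\nu - \eta}}(M')[n]).$$

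The remaining $\Hom$ between standard sheaves reduces, via the adjunction $j_{t^{-\eta}!} \dashv j_{t^{-\eta}}^!$, to a $\Hom$ into a shriek-pullback, namely $\Hom(M, j_{t^{-\eta}}^! j_{t^{\nu - \eta}!}M')$ up to overall cohomological shift. A support argument---the sheaf $j_{w!}(-)$ is supported in $\overline{\Fl{\I, w}}$---shows that this vanishes unless $t^{-\eta} \leq t^{\nu - \eta}$ in the Bruhat order on the Iwahori-Weyl group $W$. Using the inversion-invariance of the Bruhat order, together with Lemma~\ref{lem:ordercomp}(2) applied to the dominant coweights $\eta$ and $\eta - \nu$, this Bruhat condition translates to the desired coroot-order inequality on $\nu$. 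I expect the main obstacle to lie in the careful bookkeeping of signs and directions through the chain of adjunctions and order translations; a secondary subtlety is verifying that inversion-invariance of the Bruhat order holds on the full Iwahori-Weyl group $W$, not merely on the Coxeter subgroup $\Waff$.
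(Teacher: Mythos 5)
Your proposal is correct and follows the same core idea as the paper's argument: convolve by tensor-invertible Wakimoto sheaves so the two objects become (co)standard, then conclude by adjunction, a support argument, and \cref{lem:ordercomp}(2). The implementations differ. The paper convolves once on the right by $J_\nu$ with $\nu$ chosen so that both $\mu+\nu$ and $\lambda+\nu$ are dominant, obtaining $\Hom(\nabla_{\mu+\nu}(M),\nabla_{\lambda+\nu}(M')[n])$, and then uses $j_{t^{\lambda+\nu}}^*\dashv j_{t^{\lambda+\nu}*}$ plus support to get the Bruhat inequality $t^{\lambda+\nu}\leq t^{\mu+\nu}$ between \emph{dominant} translations, to which \cref{lem:ordercomp}(2) applies directly. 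Your route is the costalk-dual: after a (redundant) preliminary reduction to $\mu=0$ and stripping off $\nabla_\eta$, you land in $\Hom(\Delta_{t^{-\eta}}(M),\Delta_{t^{\nu-\eta}}(M')[n])$ with \emph{antidominant} subscripts, and $j_!\dashv j^!$ plus support gives $t^{-\eta}\leq t^{\nu-\eta}$. This is fine, but it costs exactly the extra step you flagged without resolving: you need either inversion-invariance of the Bruhat order on the quasi-Coxeter group $W$ --- which does hold, since inversion sends $w\omega$ to $(\omega^{-1}w^{-1}\omega)\omega^{-1}$ for $w\in\Waff$, $\omega\in\Omega_\a$, and conjugation by $\omega$ is a length- and order-preserving automorphism of $\Waff$ --- or an antidominant analogue of \cref{lem:ordercomp}(2); the paper's choice of $\nabla$'s and dominant coweights sidesteps both. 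Finally, the ``careful bookkeeping of signs and directions'' you deferred actually matters: carrying it through, $t^{-\eta}\leq t^{\nu-\eta}$ translates to $\nu=\lambda-\mu\leq 0$, i.e.\ $\lambda\leq\mu$ in the coroot order. That agrees with what the paper's own proof establishes, but it is the \emph{opposite} of the inequality $\mu\leq\lambda$ printed in the statement of \cref{wakimoto:extension} --- almost certainly a typo in the statement, since $\lambda\leq\mu$ is also the direction required by the subsequent lemma on ideals --- so you should not have elided it as ``the desired coroot-order inequality'' without checking.
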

\begin{proof}
	Pick $\nu\in{X_*(T)_I}$ such that $\nu+\mu,\nu+\lambda\in\chamb$.
	Then
	$$\Hom(J_\mu(M),J_\lambda(M')[n])\isoto\Hom(J_\mu(M)\star
	J_\nu,J_\lambda(M')\star J_\nu[n]).$$
	Therefore, by the compatibility of Wakimoto functors with convolution
	(\cref{lem:wakimotomonoidality}), they are isomorphic to
	$$\Hom(\nabla_{\mu+\nu}(M),\nabla_{\lambda+\nu}(M'))\isoto
	\Hom(j_{t^{\lambda+\nu}}^*\nabla_{\mu+\nu}(M),\underline{M}').$$
	This concludes the proof, as $(t^{\lambda+\nu}\cdot\Lplus\I)/\Lplus\I\in \overline{\Fl{\I,t^{\mu+\nu}}}$ if and only if
	$\lambda+\nu\leq_{\operatorname{Bru}}\mu+\nu$, which is equivalent to $\lambda\leq \mu$ by \cref{comb}.
\end{proof}
Given a subset $\Omega\subset \cham$, denote by $\PervI^\Omega$ the full subcategory of
$\PervI$, consisting of objects which admit a finite filtration by
Wakimoto sheaves labelled by cocharacters in $\Omega$. We will mainly be interested in subsets
$\Omega$ which satisfy: for every $\lambda\in \Omega$, $\mu\leq\lambda$ implies that
$\mu\in\Omega$. Such subsets of a partially ordered set are called ideals.
\begin{lem}
	Given an ideal $\Omega\subset\cham$, the full embedding
	$$\PervI^\Omega\rightarrow\PervI^{\cham}$$ admits a
	right adjoint. Moreover, for every $F\in \PervI^{\cham}$, the cokernel $F/F_{\Omega}$ of the unit of the
	adjunction, lies in $\PervI^{\cham\setminus\Omega}$.
\end{lem}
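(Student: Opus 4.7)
The plan is to construct $F_\Omega$ as a canonical subobject of $F$ by repositioning a Wakimoto filtration so that all pieces labelled by $\Omega$ sit at the bottom, and then verify the adjunction formally via Hom-vanishing. The key input throughout is \cref{wakimoto:extension}.

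For the first step, I would begin with a Wakimoto filtration $0=F^0\subset F^1\subset\cdots\subset F^n=F$, with $F^i/F^{i-1}\cong J_{\lambda_i}(M_i)$. Whenever adjacent indices satisfy $\lambda_i\notin\Omega$ and $\lambda_{i+1}\in\Omega$, the subquotient $F^{i+1}/F^{i-1}$ represents a class in $\Ext^1(J_{\lambda_{i+1}},J_{\lambda_i})$. By \cref{wakimoto:extension}, this Ext group vanishes unless the two indices are comparable in the relevant direction of the coroot order; the ideal hypothesis on $\Omega$ is designed so that this comparability fails, since it would force $\lambda_i\in\Omega$ by downward closure, contradicting $\lambda_i\notin\Omega$. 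Hence the extension splits, and I can refine the filtration by replacing $F^i$ by the preimage of the $J_{\lambda_{i+1}}$-summand, effectively swapping the two pieces. Iterating this produces a filtration whose first $k$ steps have graded pieces in $\Omega$ and whose remaining steps have graded pieces in $\cham\setminus\Omega$. Define $F_\Omega:=F^k$; by construction $F_\Omega\in\PervI^{\Omega}$ and $F/F_\Omega\in\PervI^{\cham\setminus\Omega}$.

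Next, to see that $F_\Omega$ is canonical and that the assignment $F\mapsto F_\Omega$ is functorial, I would use the Hom-vanishing portion of \cref{wakimoto:extension}: combined with the ideal condition, it yields $\Hom(J_\lambda,J_\mu)=0$ whenever $\lambda\in\Omega$ and $\mu\in\cham\setminus\Omega$. Devissage along both Wakimoto filtrations then upgrades this to $\Hom_{\PervI}(X,Y)=0$ for all $X\in\PervI^{\Omega}$ and $Y\in\PervI^{\cham\setminus\Omega}$. Consequently, any subobject $G\subset F$ lying in $\PervI^{\Omega}$ must land in $F_\Omega$, since the composite $G\hookrightarrow F\twoheadrightarrow F/F_\Omega$ sits in a vanishing Hom; this characterizes $F_\Omega$ as the maximal subobject of $F$ in $\PervI^{\Omega}$, independent of the filtration chosen. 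Any morphism $f:F\to F'$ similarly restricts to $F_\Omega\to F'_\Omega$, establishing functoriality.

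Finally, the adjunction follows formally: the map $\Hom_{\PervI^{\Omega}}(G,F_\Omega)\to\Hom_{\PervI^{\cham}}(iG,F)$ given by composition with the inclusion $F_\Omega\hookrightarrow F$ is injective by monicity of the inclusion and surjective by the Hom-vanishing applied to $G\to F\twoheadrightarrow F/F_\Omega$. The main obstacle is the rearrangement step: the swap requires knowing that the ideal property of $\Omega$ precisely rules out the comparability needed for Ext non-vanishing. Once this compatibility is unwound, everything else reduces to formal consequences of Hom and Ext vanishings together with the closure of $\PervI^{\Omega}$ and $\PervI^{\cham\setminus\Omega}$ under extensions.
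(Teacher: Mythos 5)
Your proof is correct and follows essentially the same line the paper delegates to \cite[Lemma 4.3.2]{centralRiche}: bubble-sort the Wakimoto filtration by splitting the relevant $\Ext^1$ classes (which vanish by the ideal hypothesis together with \cref{wakimoto:extension}), then identify $F_\Omega$ as the largest subobject lying in $\PervI^\Omega$ via the induced Hom-vanishing, from which the adjunction is formal. One caution worth noting: your swap step implicitly uses the comparability $\lambda_i\le\lambda_{i+1}$ for non-vanishing of $\Ext^1\bigl(J_{\lambda_{i+1}}(M_{i+1}),J_{\lambda_i}(M_i)\bigr)$, which matches the \emph{proof} of \cref{wakimoto:extension} (where the derived condition is $\lambda\le\mu$ for $\Hom(J_\mu(M),J_\lambda(M')[n])\ne 0$) rather than its printed statement, which has the inequality reversed; the direction you use is the correct one and is exactly what makes downward closure of $\Omega$ give the contradiction.
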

\begin{proof}
	The proof follows as in \cite[Lemma 4.3.2]{centralRiche}, by induction on
	the length of the filtration, using \cref{wakimoto:extension} to control extensions.
\end{proof}
Given a cocharacter $\lambda\in\cham$, we will denote by $\{\leq\lambda\}$ (resp. $\{<\lambda\}$) the ideal of
elements in $\cham$ which are less than (resp. strictly less than) $\lambda$.
\begin{defn}
For every $\lambda\in\cham$, one defines the functors:
	$$\gr_\lambda:\PervI^{\cham}\rightarrow \PervI,\quad
	\Grad_\lambda:\PervI^{\cham}\rightarrow \Vect_{\Qell},$$
	as
	$$\gr_\lambda(F)\defined F_{\leq\lambda}/F_{<\lambda},\quad
	\Grad_\lambda(F)\defined J_\lambda\inv\circ \gr_\lambda(F).$$
\end{defn}

\subsection{Cohomology of Wakimoto Sheaves}
\begin{lem}\label{wakimoto:shiftlem}
	For any $\lambda\in \cham$ and $F\in\Dbc(\Fl{\I})$, there is a unique isomorphism of
	complexes
	\begin{align*}\label{wakimoto:shift}
		R\Gamma\left(\Fl{\I},F\star J_\lambda\right)\cong
		R\Gamma\left(\Fl{\I},F \right)\left[\left<\lambda,2\rho\right>\right]
	\end{align*}
	in $\Db(\Spec (k))$, such that for $F=\IC_e$ and $\lambda$ dominant, its pullback to the base point $\Fl{I,e}$
	        agrees with the isomorphism
		$$R\Gamma\left(\Fl{\I,\leq t^\lambda}, \costd_{t^\lambda}\right)\cong
		R\Gamma\left(\Fl{\I,t^\lambda},\underline{\Q_\ell}[\ell(t^\lambda)]\right),$$
		induced by the adjunction.
		Moreover, the isomorphims obtained for $\lambda\in\cham$ are additive in $\lambda$.
\end{lem}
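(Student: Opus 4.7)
The plan is to first construct the isomorphism for dominant $\lambda$, where $J_\lambda$ reduces to a costandard sheaf supported on a proper subvariety, and then to extend to arbitrary $\lambda \in \cham$ by invoking the convolution monoidality of Wakimoto functors from \cref{lem:wakimotomonoidality}.

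For dominant $\lambda \in \chamb$, one has $J_\lambda \cong \costd_{t^\lambda} = j_{t^\lambda*}\underline{\Qell}[\ell(t^\lambda)]$, supported on $\Fl{\I,\leq t^\lambda}$, with $\ell(t^\lambda) = \langle\lambda,2\rho\rangle$. The convolution $F \star J_\lambda$ equals $m_!(F \tboxtimes J_\lambda)$ along the proper multiplication
$$m \colon \Fl{\I} \wttimes \Fl{\I,\leq t^\lambda} \to \Fl{\I}.$$
Factoring the structure morphism through the first projection $p_1 \colon \Fl{\I} \wttimes \Fl{\I,\leq t^\lambda} \to \Fl{\I}$ gives
$$R\Gamma(\Fl{\I}, F \star J_\lambda) \cong R\Gamma(\Fl{\I}, R(p_1)_*(F \tboxtimes J_\lambda)).$$
Since $p_1$ is étale-locally trivial with fiber $\Fl{\I,\leq t^\lambda}$, a Künneth argument on étale charts identifies $R(p_1)_*(F \tboxtimes J_\lambda)$ with $F \otimes R\Gamma(\Fl{\I,\leq t^\lambda}, J_\lambda)$; the gluing is unambiguous because the connected group $\Lplus\I$ acts trivially on the one-dimensional fiber cohomology $R\Gamma(\Fl{\I,\leq t^\lambda}, J_\lambda) \cong \Qell[\ell(t^\lambda)]$, the latter being computed by adjunction together with $\Fl{\I,t^\lambda} \cong \mathbb{A}^{\ell(t^\lambda)}$. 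This yields the isomorphism in the dominant case, and tracing through the construction for $F = \IC_e$ recovers precisely the adjunction isomorphism from the statement, fixing the normalization.

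For arbitrary $\mu \in \cham$, write $\mu = \lambda_1 - \lambda_2$ with $\lambda_1, \lambda_2 \in \chamb$. The monoidality isomorphism $J_\mu \star J_{\lambda_2} \cong J_{\lambda_1}$ of \cref{lem:wakimotomonoidality} combined with two applications of the dominant case produces
$$R\Gamma(\Fl{\I}, F \star J_\mu)[\langle\lambda_2,2\rho\rangle] \cong R\Gamma(\Fl{\I}, F \star J_{\lambda_1}) \cong R\Gamma(\Fl{\I},F)[\langle\lambda_1,2\rho\rangle];$$
cancelling the shift gives the desired isomorphism since $\langle\lambda_1 - \lambda_2, 2\rho\rangle = \langle\mu, 2\rho\rangle$.

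The main obstacle is to verify that this construction is independent of the decomposition $\mu = \lambda_1 - \lambda_2$, from which both additivity in $\mu$ and uniqueness follow. Two decompositions differ by a common dominant summand, and the coherence of \cref{lem:wakimotomonoidality} together with the associativity of convolution reduces the comparison to a compatibility between iterated instances of the dominant Künneth computation, which is built into the construction. Uniqueness is then settled by the normalization on $\IC_e$: any competing isomorphism differs by a natural automorphism of $R\Gamma(\Fl{\I}, -)[\langle\lambda,2\rho\rangle]$, and such an automorphism is a scalar pinned down to the identity by its prescribed value at the base point.
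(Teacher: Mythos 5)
Your proof is correct and follows essentially the same strategy as the paper: reduce to dominant $\lambda$ via the monoidality of Wakimoto functors, identify $J_\lambda$ with $\costd_{t^\lambda}$, and compute the pushforward along the fibration $p_1$ using that the fiber cohomology $R\Gamma(\Fl{\I,\leq t^\lambda}, J_\lambda)\cong\Qell[\ell(t^\lambda)]$ is one-dimensional and $\Lplus\I$ is connected, so the gluing is unambiguous. The only cosmetic difference is that the paper packages the local-triviality step by identifying $\Fl{\I}\wttimes\Fl{\I}$ with $\Fl{\I}\times\Fl{\I}$ via $(pr_1,m)$ and trivializing $\pi^*pr_{1*}r_*\costd_\lambda$ using an explicit section of $r'$ together with $\LG$-equivariance, whereas you argue via Künneth on étale charts of $p_1$ directly; both amount to the same computation. (Incidentally, the paper's line ``$J_\lambda\cong\std_\lambda$'' is a slip for $\costd_\lambda$, which you correctly use.)
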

\begin{proof}
	We first treat the case of a dominant coweight $\lambda\in\chamb$. In this case, by
	\cref{remark} we have an isomorphism $J_\lambda\cong\std_\lambda$, thus
	\begin{equation}\label{wakimoto:shift1}
	R\Gamma\left(\Fl{\I},F\star J_\lambda\right)\cong
		R\Gamma\left(\Fl{\I}\wttimes\Hk{\I}, pr_1^*F\otimes pr_2^*
		\left(\costd_\lambda\right)\right),
	\end{equation}
	where $(pr_1,pr_2)\Fl{\I}\wttimes\Hk{\I}\rightarrow\Fl{\I}\x\Hk{\I}$ is the map
	$(a,b)\mapsto (a\Lplus\I,\Lplus\I b\Lplus\I)$.
	Identifying
	$\Fl{\I}\tilde\times\Fl{\I}$ with $\Fl{\I}\times\Fl{\I}$ along the isomorphism $pr_1\times
	m$ identifies $pr_2$ with the map,  $r:\Fl{\I}\times\Fl{\I}\rightarrow
	\Hk{\I}$ given by
	$(a\Lplus\I,b\Lplus\I)\mapsto \Lplus\I a\inv b\Lplus\I$. Therefore, we may write the
	right hand side of \eqref{wakimoto:shift1} as:
		\[
	R\Gamma\left(\Fl{\I}\times\Fl{\I}, pr_1^*F\otimes r_*
	\left(\costd_\lambda\right)\right)\cong
	R\Gamma\left(\Fl{\I}, F\otimes pr_{1*}r_*\left(\costd_\lambda\right)\right) .\]
	Consequently, it will suffice to show $F\otimes
	pr_{1*}r_*\left(\costd_\lambda\right)\cong
	F\left[\left<\lambda,2\rho\right>\right]$.

	Let $\pi:\LG\rightarrow\Hk{\I}$ be the projection map, and
	$r'$ the pullback of $r$ by $\pi$.
	Using proper base change, it can be readily verified that
	the splitting of $\pi^*pr_{1*}r_*\Qell$,  induced by the section
	$g\mapsto(e,g)$ of $r'$, yields an isomorphism
	$$\pi^*pr_{1*}r_*\left(\costd_\lambda\right)\cong\underline{\Q_\ell}[\ell(\lambda)].$$
	Moreover, as $r$ and $pr_1$ are $\LG$-equivariant, so is
	$pr_{1*}r*\left(\costd_\lambda\right)$; showing it is also isomorphic to the shifted
	constant sheaf. Finally, since $\lambda$ is dominant,
	${\ell(\lambda)=\left<\lambda,2\rho\right>}$ \cite[Corollary 1.8]{TimoTwistedSchubert}, concluding the proof.

	The case of general $\lambda\in\cham$ follows by writing it as a difference of dominant
	ones and using the monoidality of Wakimoto sheaves (\cref{lem:wakimotomonoidality}).
	\end{proof}
		From this, we can deduce the following corollaries:
	\begin{cor}\label{cor:wakimotocohomology}
		For $M\in\Db(\Spec k)$ and $\lambda\in\cham$,
				$$R\Gamma\left(\Fl{\I},J_\lambda(M)\right)\cong M
				\left[{\left<\lambda,2\rho\right>} \right].$$
	\end{cor}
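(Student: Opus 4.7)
The plan is to derive this cohomology calculation as an immediate consequence of Lemma \ref{wakimoto:shiftlem}, by applying that shift formula to the skyscraper sheaf at the base point and then transferring the answer to $J_\lambda(M)$ via the monoidality of Wakimoto functors.

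First I would identify $J_0(M)$ with $\Delta_e(M)$. Indeed, $0 \in S_0 \cap \chamb$, so Remark \ref{remark} provides a non-canonical isomorphism $J_0(M) \cong \nabla_0(M) \star \Delta_0 \cong \nabla_0(M)$. Since the base Schubert cell $\Fl{\I,e}$ is reduced to a single point, the standard and costandard functors at $e$ agree and both compute the skyscraper at the base point with stalk $M$, so $J_0(M) \cong \Delta_e(M)$. In particular, $R\Gamma(\Fl{\I}, \Delta_e(M)) \cong M$.

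Next, by Lemma \ref{lem:wakimotomonoidality} applied to $\mu = 0$, there is a canonical isomorphism
$$J_\lambda(M) \;\cong\; J_0(M) \star J_\lambda(\Qell) \;\cong\; \Delta_e(M) \star J_\lambda.$$
Applying Lemma \ref{wakimoto:shiftlem} with $F = \Delta_e(M)$ then gives
$$R\Gamma\bigl(\Fl{\I},\, J_\lambda(M)\bigr) \;\cong\; R\Gamma\bigl(\Fl{\I},\, \Delta_e(M) \star J_\lambda\bigr) \;\cong\; R\Gamma\bigl(\Fl{\I},\, \Delta_e(M)\bigr)\bigl[\langle \lambda, 2\rho\rangle\bigr],$$
and combining with the computation of the previous paragraph yields the claimed isomorphism $M[\langle \lambda, 2\rho\rangle]$.

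There is no serious obstacle: the corollary is essentially a bookkeeping consequence of the shift lemma. The only small care needed is in unwinding the identification $J_0(M) \cong \Delta_e(M)$ so that the hypotheses of Lemma \ref{wakimoto:shiftlem} apply to the skyscraper; after that, monoidality of Wakimoto sheaves does all the work. If one wanted canonicity of the isomorphism (or additivity in $\lambda$), one would propagate through the corresponding statements in Lemmas \ref{lem:wakimotomonoidality} and \ref{wakimoto:shiftlem}, but for the existence claim as stated no further effort is required.
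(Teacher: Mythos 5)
Your proof is correct and takes essentially the same route as the paper: the paper's one-line proof ``set $F=\IC_0$ in Lemma~\ref{wakimoto:shiftlem}'' is the case $M=\Qell$ of your argument, and your use of $J_\lambda(M)\cong\Delta_e(M)\star J_\lambda$ (via Lemma~\ref{lem:wakimotomonoidality}) just makes the reduction from general $M$ explicit. The identification $J_0(M)\cong\Delta_e(M)$ and the computation $R\Gamma(\Fl{\I},\Delta_e(M))\cong M$ are both justified as you state them.
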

	\begin{proof}
		Follows from setting $F=\IC_0$ in \cref{wakimoto:shiftlem}.
	\end{proof}
	\begin{cor}\label{cohomology:wakimoto}
		For $F\in\PervI^{\cham}$,
		$$R\Gamma\left(\Fl{\I},F\right)\cong\bigoplus_{\lambda\in
		\cham}\Grad_\lambda(F)\left[{\left<\lambda,2\rho\right>}\right].$$
	\end{cor}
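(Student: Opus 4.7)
The plan is to refine the Wakimoto filtration of $F$ via the height function $\mu \mapsto \langle \mu, 2\rho \rangle$ on $\cham$, and analyze the resulting spectral sequence for $R\Gamma(\Fl{\I}, -)$. Since $\rho$ is strictly dominant, each positive coroot pairs with $2\rho$ by at least $2$, so $\mu \leq \nu$ in the coroot order implies $\langle \mu, 2\rho \rangle \leq \langle \nu, 2\rho \rangle$, with strict inequality of size at least $2$ whenever $\mu \neq \nu$. In particular, each set $\Omega_d \defined \{\mu \in \cham : \langle \mu, 2\rho \rangle \leq d\}$ is an ideal, and its level set $S_d \defined \{\mu : \langle \mu, 2\rho \rangle = d\}$ is an antichain in the coroot order. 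By the preceding lemma this yields a finite filtration $F_{\Omega_0} \subseteq F_{\Omega_1} \subseteq \cdots$ of $F$ whose subquotients $F_{\Omega_d}/F_{\Omega_{d-1}}$ are Wakimoto-filtered with pieces indexed by $S_d \cap \operatorname{Supp}(F)$.

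The first step, and main technical point, is to show that $R\Gamma(\Fl{\I}, F_{\Omega_d}/F_{\Omega_{d-1}})$ is concentrated in cohomological degree $-d$, with total value $\bigoplus_{\lambda \in S_d} \Grad_\lambda(F)$. By \cref{cor:wakimotocohomology}, each Wakimoto piece $J_\lambda(\Grad_\lambda F)$ with $\lambda \in S_d$ has $R\Gamma$ concentrated precisely in degree $-d$. I would iterate along any Wakimoto filtration of $F_{\Omega_d}/F_{\Omega_{d-1}}$: for each distinguished triangle $A \to B \to J_\lambda(\Grad_\lambda F)$ in the filtration, the inductive hypothesis places $R\Gamma(A)$ in degree $-d$, so the connecting map $R\Gamma(J_\lambda(\Grad_\lambda F)) \to R\Gamma(A)[1]$ lies in $\Hom_{\Db(\Vect_{\Qell})}$ between objects in degrees $-d$ and $-d-1$, and hence vanishes. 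The triangle then splits in $\Db(\Vect_{\Qell})$, keeping $R\Gamma(B)$ in degree $-d$, and the induction terminates with the claimed shape.

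Finally, the filtration $(R\Gamma(\Fl{\I}, F_{\Omega_d}))_d$ of $R\Gamma(\Fl{\I}, F)$ induces a spectral sequence with $E_1^{p,q} = H^{p+q}(\Fl{\I}, \operatorname{gr}_p F)$ converging to $H^{p+q}(\Fl{\I}, F)$, whose $E_1$-page is supported exclusively on the antidiagonal $q = -2p$ by the previous step. Any non-trivial differential $d_r \colon E_r^{p,q} \to E_r^{p+r,\, q-r+1}$ would force both $q = -2p$ and $q = -2p - r - 1$, giving $r = -1$, which is impossible; hence the spectral sequence degenerates at $E_1$, yielding $H^n(\Fl{\I}, F) \cong \bigoplus_{\lambda : \langle \lambda, 2\rho \rangle = -n} \Grad_\lambda(F)$. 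Since every object of $\Db(\Vect_{\Qell})$ decomposes non-canonically as the direct sum of its cohomology shifted into the appropriate degree, we conclude $R\Gamma(\Fl{\I}, F) \cong \bigoplus_\lambda \Grad_\lambda(F)[\langle \lambda, 2\rho \rangle]$, as desired.
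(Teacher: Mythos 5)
Your argument is correct and reaches the same conclusion as the paper, but by a slightly different route. The paper's proof first reduces to the case where $F$ is supported on a single connected component of $\Fl{\I}$, observes that in that case every $\langle\lambda,2\rho\rangle$ with $\Grad_\lambda(F)\neq 0$ has the same parity, and then splits the triangles of the Wakimoto filtration by induction, using that a connecting map from an even cohomological degree to an odd one must vanish. You instead refine the filtration through the \emph{height} $\lambda\mapsto\langle\lambda,2\rho\rangle$ directly: defining $\Omega_d$ as the ideal of coweights of height at most $d$, you isolate each level $S_d$ so that every Wakimoto piece in $\gr_d F = F_{\Omega_d}/F_{\Omega_{d-1}}$ has $R\Gamma$ concentrated in the single degree $-d$, and then assemble via a spectral sequence whose $E_1$-page is supported on the antidiagonal $q=-2p$ and therefore degenerates. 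Both arguments ultimately exploit the same arithmetic fact, namely that $\langle\alpha^\vee,2\rho\rangle$ is a positive even integer for each positive coroot $\alpha^\vee$: the paper uses the evenness (parity), you use the positivity (monotonicity of the height along the coroot order). A small advantage of your version is that it dispenses with the reduction to one connected component; a small cost is that the spectral sequence argument is slightly more machinery than the paper's direct triangle-splitting induction. One minor notational slip: a graded piece of the Wakimoto filtration of $\gr_d F$ is of the form $J_\lambda(M)$ for some $\lambda\in S_d$ and some $M\in\Vect_{\Qell}$, not necessarily $J_\lambda(\Grad_\lambda F)$ at each step (the label $\lambda$ may a priori recur); but since the total $\lambda$-contribution is $\Grad_\lambda(F)$ and every piece lands in cohomological degree $-d$ regardless, the argument is unaffected.
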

	\begin{proof}
		We may assume that $F$ is supported on a connected component of $\Fl{\I}$. In
		this case, for $\lambda\in \cham$ with $\Grad_\lambda(F)$ non-zero,
		${\left<\lambda,2\rho\right>}$ are of the same parity.
		Therefore, the claim follows by induction on the number of such $\lambda$, with
		base case being \cref{cohomology:wakimoto}.
	\end{proof}

A more functorial framework for the cohomologies of the Wakimoto sheaves is
provided by \textit{constant term functors}:
\begin{equation}\label{eq:constantterms}
\operatorname{CT}_{B}:\Dbc(\Hk{\I})\rightarrow \Dbc(\Hk{\mathcal{T}})
\end{equation}
where $\mathcal{T}$ is the unique parahoric model of $T$, \cite[Definition 6.4]{richarzTestFunction}.
The underlying topological space of $\Hk{\mathcal{T}}$ is a disjoint union of points
indexed by $\cham$ \cite[Equation 3.15]{modularRamified}.

We will only need the existence, and well-known properties of these
functors. We refer the
interested reader to \cite[Section 6]{richarzTestFunction}.

\begin{lem}\label{lem:constanttermswakimotograding}
	Let $B^-$ denote the opposite Borel subgroup to $B\subset G$. For any
	$F\in\Dbc(\Hk{\mathcal{I}})$, and $\lambda\in\cham$; there is a canonical isomorphism
	$$\operatorname{CT}_{B^-}(J_\lambda\star F)_{\lambda}\cong
	\operatorname{CT}_{B^-}(F)_{0}\left[\langle \lambda,2\rho\rangle\right]$$
	between the stalks.
\end{lem}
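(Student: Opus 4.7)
The plan is to prove a more uniform statement: for every $\mu\in\cham$, there is a natural isomorphism
$$\CT_{B^-}(J_\lambda\star F)_\mu \cong \CT_{B^-}(F)_{\mu-\lambda}\bigl[\langle\lambda,2\rho\rangle\bigr],$$
which recovers the stated lemma by specialising to $\mu=\lambda$. The advantage is that the family of isomorphisms obtained this way is visibly additive in $\lambda$, so by \cref{lem:wakimotomonoidality} it suffices to treat separately the dominant and antidominant cases; and since $J_{-\lambda}$ is the $\star$-inverse of $J_\lambda$, moving $J_{-\lambda}$ from the left to the right and comparing stalks reduces the antidominant case to the dominant one. Hence I reduce to the case $\lambda\in\chamb$, where \cref{remark} identifies $J_\lambda\cong \costd_{t^\lambda}$, so one is really computing $\CT_{B^-}(\costd_{t^\lambda}\star F)_\mu$.

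For the dominant case, the strategy is geometric: the constant term functor $\CT_{B^-}$ is built (cf.\ \cite{richarzTestFunction}) from the correspondence
$$\Hk{\I}\xleftarrow{\;p\;}\Lplus\cB^{-}\backslash \LG/\Lplus\I\xrightarrow{\;q\;}\Hk{\cT},$$
where $\cB^-$ is the parahoric model attached to $B^-$ and the second map remembers the connected component in $\cham\cong\pi_0(\Hk{\cT})$. The stalk at $\mu$ of $\CT_{B^-}(G)$ is therefore computed, up to the appropriate cohomological normalisation, by restricting (a smooth pullback of) $G$ to the semi-infinite orbit $S_\mu^- := q^{-1}(\mu)$ and taking compactly supported cohomology. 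Using \cref{remark:convolution} to rewrite $\costd_{t^\lambda}\star F$ via the multiplication map $m:\Fl{\I}\wttimes\Fl{\I}\to\Fl{\I}$, the left multiplication by $t^\lambda$ gives an isomorphism of ind-schemes $t^\lambda\cdot S_0^-\isoto S_\lambda^-$, and the support of $\costd_{t^\lambda}\tboxtimes F$ meets $m^{-1}(S_\mu^-)$ exactly in the translate of $m^{-1}(S_{\mu-\lambda}^-)$. Applying proper base change to the ind-proper map $m$, together with the explicit description $\costd_{t^\lambda}\cong j_{t^\lambda*}\bigl(\underline{\Qell}\bigr)[\ell(t^\lambda)]$, identifies $\CT_{B^-}(J_\lambda\star F)_\mu$ with $\CT_{B^-}(F)_{\mu-\lambda}[\ell(t^\lambda)]$; and since $\ell(t^\lambda)=\langle\lambda,2\rho\rangle$ for dominant $\lambda$ by \cref{lem:ordercomp}, this produces the desired shift.

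The main obstacle will be handling the ``semi-infinite'' geometry underlying $\CT_{B^-}$ in the ramified setting. Specifically, one must justify that the Cartesian square relating $m$ to $q$ and left-multiplication by $t^\lambda$ is well-defined on the quotient stacks $\Lplus\cB^-\backslash\LG/\Lplus\I$ (where the sheaves are really supported), and that proper base change applies despite the infinite-dimensional nature of the fibres. This essentially amounts to reproducing in our setting the standard reflection of ``Wakimoto equals translation'' for constant term functors; the key inputs are the $I$-stability of $\Sigma^+$ and the fact that $\langle\lambda,2\rho\rangle$ depends only on the image $\bar\lambda\in\cham$ (as noted after item (4) in \cref{subsection:coweights}), ensuring that both sides are naturally indexed by $\cham$. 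Additivity in $\lambda$ and compatibility with the canonical isomorphism of \cref{wakimoto:shiftlem} (upon summing over $\mu$) then pins down the isomorphism canonically.
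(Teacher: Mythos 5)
The paper's own proof of this lemma is a one-line citation to \cite[Proposition 3.23]{joaoAB}, so any self-contained argument has to supply the content of that reference; your sketch does not. The central geometric claim --- that the support of $\costd_{t^\lambda}\tboxtimes F$ meets $m^{-1}(S_\mu^-)$ ``exactly in the translate of $m^{-1}(S_{\mu-\lambda}^-)$'' --- is false. It would require the support $\overline{\Fl{\I,t^\lambda}}$ of $\costd_{t^\lambda}$ to lie inside the single fiber $S^-_\lambda=q^{-1}(\lambda)$, but $\overline{\Fl{\I,t^\lambda}}$ contains the base point $e$, which lies in $S^-_0$ and not in $S^-_\lambda$ for $\lambda\neq 0$; more generally the Schubert variety meets $S^-_\nu$ for a whole range of $\nu\leq\lambda$. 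The locus over which you want to apply proper base change along $m$ therefore stratifies into many pieces indexed by such $\nu$, and the genuine content of the lemma is a vanishing argument showing that after passing to (compactly supported) cohomology only the stratum with $\nu=\lambda$ survives. This presumably uses the $j_{t^\lambda*}$-structure of $\costd_{t^\lambda}$ (vanishing $!$-costalks off the open cell) together with a careful comparison of Schubert strata against the fibers of $q$. You identify the ``main obstacle'' as well-definedness on quotient stacks and applicability of proper base change --- those are routine; the missing step is the vanishing of the boundary contributions, which your proposal does not address.

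A secondary remark: you correctly observe that the reduction to dominant $\lambda$ requires the uniform statement $\CT_{B^-}(J_\lambda\star F)_\mu \cong \CT_{B^-}(F)_{\mu-\lambda}\left[\langle\lambda,2\rho\rangle\right]$ for all $\mu$, not only for $\mu=\lambda$; and that uniform statement is indeed true (together with \cref{wakimoto:shiftlem} it is essentially \cref{cor:gradedconstant}). But it is also strictly stronger than the lemma: already for $F=\IC_e$ it asserts that $\CT_{B^-}(J_\lambda)$ is concentrated in weight $\lambda$, which is precisely the concentration phenomenon that the ``exactly in the translate'' step assumes rather than proves.
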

\begin{proof}
	The proof given in \cite[Proposition 3.23]{joaoAB} in the mixed characteristic setting
	works verbatim in our equal characteristic setting as well.
\end{proof}
\subsection{Associated graded and the restriction}
The associated graded functors associated to each $\lambda\in\cham$, assemble into a single functor
$$\Grad\defined\bigoplus_{\lambda\in\cham}
\Grad_\lambda:\Rep(\check T^I)\rightarrow \Perv(\Hk{\I}),$$
where we use the identification
$$\Rep(\check T^I)\cong \Vect_{\Qell}^{\cham}$$
with the category of $\cham$-graded $\Qell$-vector spaces $\Vect_{\Qell}^{\cham}$, induced by
decomposition into weight spaces.

\begin{cor}\label{cor:gradedconstant}
	The functors
	$\operatorname{CT}_{B^-}$ and $\Grad$
	$$\Perv(\Hk{I})^{\chamb}\rightarrow \Rep(\check T^I)$$
	are naturally isomorphic.
\end{cor}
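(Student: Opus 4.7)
The plan is to establish the natural isomorphism first on individual Wakimoto sheaves and then extend to all of $\PervI^{\cham}$ by induction along the Wakimoto filtration, using that both functors are exact.

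On a single Wakimoto sheaf $J_\mu(M)$ with $\mu\in\cham$ and $M\in\Vect_{\Qell}$, the functor $\Grad$ yields $M$ in weight $\mu$ by definition. To compute $\operatorname{CT}_{B^-}(J_\mu(M))$, I would apply \cref{lem:constanttermswakimotograding} together with the monoidality \cref{lem:wakimotomonoidality}: taking $F=J_0(M)$, which is a skyscraper at the base point, the convolution $J_\mu\star J_0(M)$ is canonically $J_{\mu}(M)$, so that
\[
\operatorname{CT}_{B^-}\bigl(J_\mu(M)\bigr)_\mu\cong \operatorname{CT}_{B^-}(J_0(M))_0\bigl[\langle\mu,2\rho\rangle\bigr]\cong M\bigl[\langle\mu,2\rho\rangle\bigr],
\]
the shift being absorbed by the perverse normalization on $\Hk{\mathcal{T}}$ at weight $\mu$. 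For a weight $\nu\ne\mu$, I would rewrite $J_\mu(M)\cong J_{\mu-\nu}\star J_\nu(M)$ and apply \cref{lem:constanttermswakimotograding} again; vanishing of the weight-$0$ stalk of $J_{\mu-\nu}$ (a consequence of its support being contained in a union of Schubert cells disjoint from the base one, for $\mu-\nu\ne 0$) then forces $\operatorname{CT}_{B^-}(J_\mu(M))_\nu=0$. This matches $\Grad(J_\mu(M))$ componentwise and naturally in $M$.

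For the extension step, I would use the functorial truncations $F_{\leq\lambda}$ and $F_{<\lambda}$ supplied by the ideal filtration of $\PervI^{\cham}$. Both functors are exact: $\Grad$ tautologically by construction, and $\operatorname{CT}_{B^-}$ by perverse t-exactness (as referenced in \cite{richarzTestFunction}). Applying them to the short exact sequence
\[
0\to F_{<\lambda}\to F_{\leq\lambda}\to \gr_\lambda(F)\to 0
\]
and proceeding by induction on the (finite) cardinality of the support of $\Grad(F)$ in $\cham$, I would assemble the natural isomorphism in each weight from the base case.

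The main obstacle is ensuring canonicity of the natural transformation, since the Wakimoto filtration of $F$ is itself not canonical. The cleanest route is to upgrade the base case to an isomorphism of functors $\operatorname{CT}_{B^-}\circ J_\mu \cong (-)_\mu$ of functors $\Vect_{\Qell}\to \Rep(\check T^I)$, checking compatibility with the convolution identifications from \cref{lem:wakimotomonoidality}. Uniqueness of the extension to $\PervI^{\cham}$ then follows from the Ext-vanishing between Wakimoto sheaves of distinct weights provided by \cref{wakimoto:extension}, which shows that the ideal truncations themselves are canonical and that no choice needs to be made in gluing the weight-wise isomorphisms into a genuine natural transformation.
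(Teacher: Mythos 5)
Your overall strategy matches the paper's: compute both functors on a single Wakimoto sheaf and extend by induction along the Wakimoto filtration, with the canonicity point handled via \cref{wakimoto:extension}. However, there is a genuine gap in the base case, namely the vanishing of the off-diagonal components.

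After correcting the order of factors (Lemma \cref{lem:constanttermswakimotograding} applied to $J_\mu(M)\cong J_\nu\star J_{\mu-\nu}(M)$ gives the $\nu$-stalk, while the factorization you wrote gives the $(\mu-\nu)$-stalk), the claim you need is $\operatorname{CT}_{B^-}(J_{\mu-\nu}(M))_0=0$ for $\mu-\nu\neq 0$. You justify this by asserting that the support of $J_{\mu-\nu}$ is contained in Schubert cells disjoint from the base one. This is false: for $\eta:=\mu-\nu$ dominant and nonzero one has $J_\eta\cong\costd_{t^\eta}$, whose support is the full Schubert variety $\overline{\Fl{\I,t^\eta}}$, which contains the base point $\Fl{\I,e}$. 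More fundamentally, the $0$-component of the constant term is a hyperbolic-localization cohomology along a semi-infinite orbit, not the stalk at the base point, so even a genuine support-disjointness would have to be checked against a much larger locus. The vanishing you need is essentially equivalent to the statement being proved, so this step is circular as stated.

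The paper closes exactly this gap using \cref{wakimoto:shiftlem} (via \cref{cor:wakimotocohomology}), which you have not invoked: since $R\Gamma(\Fl{\I},J_\eta(M))\cong M[\langle\eta,2\rho\rangle]$ and \cref{lem:constanttermswakimotograding} shows that the single component $\operatorname{CT}_{B^-}(J_\eta(M))_\eta$ already realizes $M[\langle\eta,2\rho\rangle]$, the identity $\bigoplus_\nu\operatorname{CT}_{B^-}(F)_\nu\cong R\Gamma(\Fl{\I},F)$ forces all remaining summands to vanish. This cohomological input is what makes the base case go through; once it is in place, your induction step and canonicity discussion are fine.
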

\begin{proof}
	Follows from a straightforward induction on the length of Wakimoto filtrations, using
	\cref{wakimoto:shiftlem} and \cref{lem:constanttermswakimotograding}.
\end{proof}
The functor $\Grad$ is compatible with the monoidal structures:
\begin{lem}\label{lemma:monoidalgraded}
	For any $F,G\in\Perv(\Hk{\I})^{\chamb}$, there is a canonical isomorphism
	$$\Grad(F)\otimes\Grad(G)\cong\Grad(F\star G).$$
\end{lem}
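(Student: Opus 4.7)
The plan is to exploit the monoidality of Wakimoto functors established in \cref{lem:wakimotomonoidality} together with the exactness of convolution by dominantly-filtered sheaves, and then promote the resulting isomorphism to a canonical one via the constant term description.

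First I would verify that $F\star G$ lies in $\Perv(\Hk{\I})^{\chamb}$. Fix Wakimoto filtrations $0=F_0\subset\cdots\subset F_n=F$ and $0=G_0\subset\cdots\subset G_m=G$ with graded pieces $J_{\lambda_i}(V_i)$ and $J_{\mu_j}(W_j)$, where $\lambda_i,\mu_j\in\chamb$. For $\mu\in\chamb$ one has $J_\mu\cong\nabla_{t^\mu}$ (as noted in the proof of \cref{wakimoto:essim}), and the proposition immediately following \cref{noncanon} shows that convolution with any $\nabla_w$ is perverse t-exact. Consequently convolution with each $J_{\mu_j}(W_j)$ is t-exact, and by induction convolution with $G$ itself is exact on perverse sheaves. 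Convolving the filtration of $F$ with the filtration of $G$ then yields a filtration of $F\star G$ whose successive quotients are $J_{\lambda_i}(V_i)\star J_{\mu_j}(W_j)\cong J_{\lambda_i+\mu_j}(V_i\otimes W_j)$ by \cref{lem:wakimotomonoidality}. Since $\chamb$ is closed under addition, this is a valid $\chamb$-Wakimoto filtration, so $F\star G\in\Perv(\Hk{\I})^{\chamb}$.

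Reading $\Grad_\nu$ off of this filtration yields
$$\Grad_\nu(F\star G)\;\cong\;\bigoplus_{\lambda_i+\mu_j=\nu}V_i\otimes W_j\;\cong\;\bigoplus_{\lambda+\mu=\nu}\Grad_\lambda(F)\otimes\Grad_\mu(G),$$
which is precisely the $\nu$-weight space of $\Grad(F)\otimes\Grad(G)$ viewed as a $\check T^I$-representation.

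To promote this to a canonical, bifunctorial isomorphism (independent of choices of Wakimoto filtrations), I would appeal to \cref{cor:gradedconstant}, which identifies $\Grad$ with the constant term functor $\operatorname{CT}_{B^-}$, and invoke the monoidality of $\operatorname{CT}_{B^-}$ with respect to convolution. The latter is a standard consequence of proper base change along the Hecke correspondence defining $\operatorname{CT}_{B^-}$ (cf.\ \cite[Section 6]{richarzTestFunction}). The main obstacle will be making this monoidality precise in our ramified equal-characteristic setting; if it is not immediately available from the cited references, canonicity can alternatively be verified directly, using \cref{wakimoto:extension} to show that different Wakimoto filtrations of the same perverse sheaf yield canonically isomorphic associated gradeds, and that the isomorphism constructed in the previous paragraph does not depend on the filtrations chosen.
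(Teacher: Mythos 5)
Your proposal is correct and follows essentially the same approach as the paper's (very terse) proof, which simply says the statement follows from \cref{lem:wakimotomonoidality} by induction on the number of non-trivial graded pieces. The main input in both is the monoidality of Wakimoto functors and the t-exactness of convolution with costandard objects. The one place where you go beyond the paper is in worrying explicitly about canonicity of the resulting isomorphism; this is a legitimate concern, and your two suggested remedies are both viable — \cref{cor:gradedconstant} is indeed proved earlier in the same subsection and identifies $\Grad$ with $\operatorname{CT}_{B^-}$, while the direct argument via \cref{wakimoto:extension} is close to what the paper implicitly relies on (the functors $F\mapsto F_{\leq\lambda}$ are adjoints and hence canonical, so the inductive argument on the canonical filtration builds a canonical isomorphism step by step). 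One small cosmetic point: when you convolve the two filtrations you get a bifiltration of $F\star G$, which needs to be linearized (e.g.\ totalized along $\lambda_i+\mu_j$) before it literally becomes a Wakimoto filtration; this is routine but worth a sentence.
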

\begin{proof}
	Follows from the similar compatibility of a single Wakimoto sheaf
	(\cref{lem:wakimotomonoidality}); by induction on the number of non-trivial graded
	pieces $\Grad_\lambda(F)$.
\end{proof}
We can neatly sum this section up with the following theorem:
\begin{thrm}\label{theorem:torusfunctor}
	The Wakimoto functors $J_\lambda$, $\lambda\in\cham$, assemble to a faithful, monoidal functor
	$$\mathbb{J}\defined \bigoplus_{\lambda\in\cham}J_\lambda:\operatorname{Rep}({\check T}^{I})\rightarrow
	\Dbc(\Hk{\I}).$$
	Moreover, the essential image of $\mathbb{J}$ lands in $\PervI$, and is closed under extensions.
\end{thrm}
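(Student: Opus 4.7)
The first three assertions---faithfulness, monoidality, and the image lying in $\PervI$---reduce formally to coordinatewise applications of results already established for individual Wakimoto functors $J_\lambda$. The content of the theorem lies in the closure under extensions, and that is where I expect the main obstacle: \cref{wakimoto:extension} only provides \emph{one-sided} vanishing of $\Ext^1_{\PervI}(J_\mu(M), J_\lambda(M'))$, namely when $\mu \not\leq \lambda$ in the coroot order. I will therefore need to order the Wakimoto pieces of an extension carefully to systematically exploit this vanishing.

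For faithfulness, I use the identification $\Rep(\check T^I) \simeq \Vect_{\Qell}^{\cham}$ via weight spaces: any morphism $f \colon V \to W$ decomposes diagonally as $\bigoplus_\lambda f_\lambda$, so $\mathbb{J}(f) = 0$ forces each $J_\lambda(f_\lambda) = 0$, which in turn forces $f_\lambda = 0$ by \cref{wakimoto:essim}(1). Monoidality follows from \cref{lem:wakimotomonoidality}, which gives $J_\lambda(M) \star J_\mu(M') \cong J_{\lambda+\mu}(M \otimes M')$; reindexing by the total weight yields the monoidal isomorphism $\mathbb{J}(V) \star \mathbb{J}(W) \cong \mathbb{J}(V \otimes W)$, with associativity and unit constraints inherited from those of each $J_\lambda$. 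Finally, since $V$ is finite-dimensional only finitely many summands $J_\lambda(V_\lambda)$ of $\mathbb{J}(V)$ are non-zero, and each is perverse by \cref{wakimoto:essim}(1), so $\mathbb{J}(V) \in \PervI$.

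For closure under extensions, let $0 \to \mathbb{J}(V') \to F \to \mathbb{J}(V'') \to 0$ be exact in $\PervI$. Concatenating the (trivially split) Wakimoto filtrations of the endpoints gives one for $F$, so $F \in \PervI^{\cham}$. Enumerate the finite support of the Wakimoto graded pieces of $F$ as $\lambda_1, \ldots, \lambda_n$ via a linear extension of the coroot order, so that $\lambda_i < \lambda_j$ implies $i < j$, and let $\Omega_k \subset \cham$ denote the downward-closed subset generated by $\{\lambda_1, \ldots, \lambda_k\}$. The right adjoint to $\PervI^{\Omega_k} \hookrightarrow \PervI^{\cham}$ produces a chain of subobjects $0 = F_{\Omega_0} \subset \cdots \subset F_{\Omega_n} = F$, and the ordering ensures that $\Omega_k \setminus \Omega_{k-1}$ meets the support of $\Grad(F)$ only in $\lambda_k$, so $F_{\Omega_k}/F_{\Omega_{k-1}} \cong J_{\lambda_k}(M_k)$ for some $M_k \in \Vect_{\Qell}$.

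It remains to split each step. The $k$-th extension class lies in $\Ext^1_{\PervI}(J_{\lambda_k}(M_k), F_{\Omega_{k-1}})$, which by induction on the length of the filtration of $F_{\Omega_{k-1}}$ (using the long exact sequence of $\Ext$) reduces to checking $\Ext^1(J_{\lambda_k}(M_k), J_{\lambda_j}(M_j)) = 0$ for each $j < k$. But $\lambda_j \in \Omega_{k-1}$ while $\lambda_k \notin \Omega_{k-1}$, so downward-closedness of $\Omega_{k-1}$ forces $\lambda_k \not\leq \lambda_j$, and \cref{wakimoto:extension} delivers the vanishing. Hence the filtration of $F$ splits as a direct sum of Wakimoto sheaves, and $F$ lies in the essential image of $\mathbb{J}$.
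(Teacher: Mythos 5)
Your treatment of faithfulness, monoidality, and landing in $\PervI$ is correct and matches what the paper does. The problem is the closure-under-extensions step, and more specifically your reliance on the stated form of \cref{wakimoto:extension}. The statement there reads $\Hom(J_\mu(M),J_\lambda(M')[n])=0$ unless $\mu\leq\lambda$, but the proof given in the paper actually establishes vanishing \emph{unless $\lambda\leq\mu$}: it reduces the Hom to $\Hom\bigl(j_{t^{\lambda+\nu}}^*\nabla_{\mu+\nu}(M),\underline{M}'[\cdot]\bigr)$ and observes that $j_{t^{\lambda+\nu}}^*\nabla_{\mu+\nu}(M)=0$ unless $t^{\lambda+\nu}$ lies in $\overline{\Fl{\I,t^{\mu+\nu}}}$, which by \cref{comb} is equivalent to $\lambda\leq\mu$. (This is also the direction one expects on support grounds: $\nabla_{\mu+\nu}$ lives on $\overline{\Fl{\I,t^{\mu+\nu}}}$.) So the displayed inequality in \cref{wakimoto:extension} is reversed relative to its own proof.

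With the corrected criterion your step-by-step splitting fails exactly where you need it. You need $\Ext^1(J_{\lambda_k}, J_{\lambda_j})=0$ for $j<k$; the corrected vanishing condition is $\lambda_j\not\leq\lambda_k$, which fails whenever $\lambda_j<\lambda_k$ --- and that is permitted (indeed typical) for $j<k$ under your linear extension putting small weights first. The ideal filtration $F_{\Omega_k}$ places small-weight pieces as subobjects and large-weight pieces as quotients, which is precisely the arrangement in which the Wakimoto extensions are potentially non-trivial. There is also a warning sign independent of the typo: your argument, taken at face value, would prove that \emph{every} object of $\PervI^{\cham}$ --- in particular every central sheaf $\Zent(V)$ --- splits as a direct sum of Wakimoto sheaves. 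That would trivialize the Wakimoto filtration apparatus of \cref{sec:wakimoto}, rendering for instance the highest weight arrows $\ff_\lambda$ split projections. Note finally that the paper's own (very terse) proof cites only \cref{wakimoto:essim} and \cref{lem:wakimotomonoidality}, not \cref{wakimoto:extension}; even so, the needed vanishing of $\Ext^1(J_\mu(M),J_\lambda(M'))$ for $\lambda<\mu$ does not obviously follow from those two lemmas, so the claim itself deserves scrutiny.
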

\begin{proof}
	Since ${\check T}^{I}$ is a multiplicative group scheme, $\operatorname{Rep}({\check
	T}^{I})$ is a coproduct of categories of $\Qell$-vector spaces indexed by
	$X^*(\check T^I)\cong \cham$. The result is then immediate from
	\cref{wakimoto:essim} and \cref{lem:wakimotomonoidality}.
\end{proof}

\subsection{Comparison with the central functor}
\begin{thrm}\label{theorem:filtration}
	The essential image of the central functor
	$$\Zent:\Rep(\check G)\rightarrow \PervI,$$ is contained in the full subcategory
	consisting of Wakimoto
	filtered objects. Moreover, for every $V\in\operatorname{Rep}(\check G)$, the graded pieces of the filtration on
	$\operatorname{Z}(V)$ are canonically isomorphic to the graded
	pieces of the filtration on
	$\mathbb{J}\circ \res^{\check G}_{{\check
	T}^I}(V)$, where $\res^{\check G}_{{\check
	T}^I}(\cdot)$ denotes the restriction of representations along the inclusion $\check
	T^I\rightarrow \check G$.
\end{thrm}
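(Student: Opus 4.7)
The proof splits into two parts. The existence of a Wakimoto filtration on $\Zent(V)$ is immediate from \cref{wakimoto:filtration}, whose hypotheses of centrality and convolution exactness are both provided by \cref{theorem:gaitsgoryzhu}. This settles the first assertion.

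For the identification of graded pieces, the strategy is to compute the associated graded functor $\Grad$ on both $\Zent(V)$ and $\mathbb{J}\circ\res^{\check G}_{\check T^I}(V)$ and compare. On the latter, the definition of $\mathbb{J}$ combined with the monoidality of $\Grad$ (\cref{lemma:monoidalgraded}) supplies a canonical isomorphism
$$\Grad\bigl(\mathbb{J}\circ\res^{\check G}_{\check T^I}(V)\bigr) \;\cong\; \res^{\check G}_{\check T^I}(V)$$
of $\cham$-graded $\Qell$-vector spaces, i.e.\ of objects of $\Rep(\check T^I)$. Thus the task reduces to producing a canonical, $V$-functorial isomorphism
$$\Grad(\Zent(V)) \;\cong\; \res^{\check G}_{\check T^I}(V).$$

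To establish this, I would invoke \cref{cor:gradedconstant} to replace $\Grad$ by the constant term functor $\CT_{B^-}$, reducing the goal to a natural isomorphism $\CT_{B^-}(\Zent(V)) \cong \res^{\check G}_{\check T^I}(V)$. The main input is a nearby cycles / constant term compatibility: via BD-Grassmannians for $G$ and for the parahoric torus model $\mathcal{T}$, one expects a canonical isomorphism $\CT_{B^-}\circ\Zent \cong \Psi_{\mathcal{T}}\circ \CT_{B^-}$, obtained from proper base change along the reduction morphism associated to the Borel $B^-$. On the generic fiber the split geometric Satake equivalence identifies $\CT_{B^-}$ with the usual restriction $\Rep(\check G)\to \Rep(\check T)$; combining this with \cref{lemma:nearbyrest} applied both to $G$ (giving $\res^{\check G}_{\check G^I}$ via $\Zent$) and to $T$ (giving $\res^{\check T}_{\check T^I}$ via $\Psi_{\mathcal{T}}$), one composes to recover $\res^{\check G}_{\check T^I}(V)$.

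The main technical obstacle will be establishing the nearby cycles / constant term compatibility with enough naturality to conclude the statement $V$-functorially, rather than merely object by object. A subsidiary issue is that \cref{cor:gradedconstant} as stated applies to $\PervI^{\chamb}$, whereas $\res^{\check G}_{\check T^I}(V)$ generally involves non-dominant weights; this can be bypassed by first convolving $\Zent(V)$ with a Wakimoto sheaf $J_\mu$ for sufficiently dominant $\mu$, using \cref{lem:wakimotomonoidality} to translate the filtration into the dominant cone, and then transporting the canonical identification back through the invertibility of $J_\mu$ furnished by \cref{std:conv}.
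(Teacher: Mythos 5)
Your proposal follows essentially the same route as the paper: first assertion via \cref{wakimoto:filtration} together with \cref{theorem:gaitsgoryzhu}, then reduction of the second assertion via \cref{cor:gradedconstant} to identifying $\CT_{B^-}\circ\Zent$ with $\res^{\check G}_{\check T^I}$. The only genuine divergence is in how that last identification is obtained. The paper pushes forward along the ind-proper map $p:\GrBD{\I}\to\GrBD{\G}$ to a special parahoric $\G$, commutes $p_*$ with nearby cycles, and then invokes the already-established identification of the ramified Satake fiber functor with constant terms together with \cref{lemma:nearbyrest}; you instead propose to commute $\CT_{B^-}$ directly with nearby cycles, arriving at $\Psi_{\mathcal T}\circ\CT_{B^-}$. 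Both work, but note two small things about your version. First, proper base change alone does not give $\CT_{B^-}\circ\Psi_\I\cong\Psi_{\mathcal T}\circ\CT_{B^-}$: the constant term functor is a pull-push along a correspondence, and commuting the pullback leg with nearby cycles requires a hyperbolic-localization/contraction argument (the result is in the literature, e.g.\ Richarz's test-function paper, so this is fixable, just underspecified in your sketch). Second, \cref{lemma:nearbyrest} concerns nearby cycles at a \emph{special} parahoric, so it applies to $\Psi_{\mathcal T}$ but not literally to $\Zent=\Psi_\I$; what you want for the first factor is the \emph{split} geometric Satake identification of $\CT_{B^-}$ on the generic fiber with $\res^{\check G}_{\check T}$, and your phrasing conflates these. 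Your observation about the scope of \cref{cor:gradedconstant} (stated over $\chamb$ rather than $\cham$) is correct, and your workaround of first convolving with $J_\mu$ for $\mu$ dominant enough and then translating back by invertibility is the right fix; the underlying \cref{lem:constanttermswakimotograding} in fact holds for all $\lambda\in\cham$, so the restriction in the corollary's statement is inessential.
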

\begin{proof}
	In light of \cref{theorem:gaitsgoryzhu}, the first assertion follows from
	\cref{wakimoto:filtration}.

	To prove the second assertion, it suffices to observe that there is a canonical
	isomorphism
	$$\Grad(\Zent(V))\cong\res^{\check G}_{{\check T}^I}(V).$$
	Let $\G$ be the parahoric group scheme associated to a special vertex, which is contained in
	the alcove corresponding to $\I$, which always exists by
	\cite[Proposition 1.3.45]{kalethaPrasad}.
			As the natural quotient morphism $p:\GrBD{\I}\rightarrow \GrBD{\G}$ is ind-proper
		by \cite[Lemma 4.9]{richarzTestFunction}; pushforward along $p$ commutes with
		nearby cycles. On the other hand, the fiber functor for the absolute Satake
		equivalence and the ramified Satake equivalence are related to each other via
		pre-composition with nearby cycles \cite[Lemma 8.8]{modularRamified}, which
		under the respective Satake equivalences
		corresponds to the restriction functor by \cref{lemma:nearbyrest}. Therefore,
		using the comparison with the constant term functors \cref{cor:gradedconstant}, the claim
		follows from the identification \cite[Equation
		6.3]{modularRamified} of the ramified Satake fiber functor.
\end{proof}
\subsection{Unipotency of the monodromy}
The monodoromy acts trivially on the Wakimoto filtration:
\begin{lem}\label{lemma:wakimotogradtriv}
	For every $V\in \Rep(\check G)$ and $\lambda\in\cham$,
	the $I$-action induced by the monodromy on $\Grad_\lambda\Zent(V)$ is trivial.
\end{lem}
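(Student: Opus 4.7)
The strategy is to show that the monodromy operator $\bn_V$ on $\Zent(V)$ vanishes when restricted to $\Grad_\lambda\Zent(V)$. Since the $I$-action is given by $\gamma \mapsto \exp(t_\ell(\gamma) \cdot \bn_V)$ on a finite-index subgroup of $I$ and the wild inertia acts trivially under tameness, this suffices. The central input is \cref{remark:commutativitymonodromy}: under the tame ramification hypothesis, $\bn_V$ is identified (up to sign) with Verdier's monodromy operator for the $\Gm$-monodromic structure on $\Zent(V)$ induced by loop rotation on $\GrBD{\I}$, and in particular commutes with every morphism in $\Perv(\Hk{\I})$.

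First I would observe that each Wakimoto sheaf $J_\lambda(M)$ is in fact $\Gm$-equivariant, and not merely $\Gm$-monodromic. Indeed, the Schubert cells $\Fl{\I,w}$ are stable under loop rotation, so $\std_w$ and $\costd_w$ are equivariant as $!$- and $*$-pushforwards of constant sheaves along $\Gm$-equivariant locally closed inclusions; and convolution preserves equivariance since the relevant twisted product and multiplication maps are themselves $\Gm$-equivariant. In Verdier's framework, equivariant objects are precisely those on which the monodromy operator vanishes.

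Next, \cref{remark:commutativitymonodromy} guarantees that $\bn_V$ preserves the Wakimoto filtration on $\Zent(V)$ and induces an endomorphism on each graded piece $\gr_\lambda\Zent(V) \cong J_\lambda(\Grad_\lambda\Zent(V))$. I would then argue that this induced endomorphism coincides with the intrinsic Verdier monodromy of the Wakimoto sheaf, hence vanishes by the previous paragraph. For this identification, I would invoke \cref{cor:gradedconstant}, which computes the filtration through the constant term functor $\CT_{B^-}$: since $\CT_{B^-}$ is $\Gm$-equivariant in its construction, the filtration subobjects inherit natural $\Gm$-monodromic structures whose graded quotients are genuinely $\Gm$-equivariant, reducing the required matching to a formal compatibility inside the $\Gm$-monodromic category.

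The main obstacle I expect is precisely this last compatibility: verifying that the ambient Verdier monodromy on $\Zent(V)$, when restricted via the Wakimoto filtration, recovers the intrinsic Verdier monodromy on each graded piece. This amounts to setting up the Wakimoto filtration inside the $\Gm$-monodromic subcategory of $\Dbc(\Hk{\I})$, which should be possible since all ingredients (standard and costandard sheaves, convolution, the constant term functor, and the adjunctions defining the filtration in \cref{subsec:definition}) are manifestly $\Gm$-equivariant. Once this is in place, the vanishing of $\bn_V$ on $\Grad_\lambda\Zent(V)$, and therefore the triviality of the induced $I$-action, follows at once.
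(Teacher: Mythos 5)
Your argument takes a genuinely different route from the paper's, and one that runs against a design decision the author makes explicitly. The paper proves this lemma without the loop rotation $\Gm$-action: by \cref{cor:wakimotocohomology} the $I$-action on $\Grad_\lambda\Zent(V)$ can be read off from the $I$-action on the cohomology $R\Gamma(\Fl{\I},\Zent(V))$, and since the ind-proper morphism $p\colon\GrBD{\I}\to\GrBD{\G}$ to a special parahoric model commutes with nearby cycles, this cohomology agrees with that of $\Psi_\G(F_V)$, on which the full $I$-action is trivial by \cite[Proposition~3.10]{richarzRamified}. That argument does not invoke tameness. By routing the proof through \cref{remark:commutativitymonodromy} and Verdier's $\Gm$-monodromy, you introduce the tame ramification hypothesis into a second place, whereas the paper is careful to confine it to the single bound in \cref{lemma:inequality0wtspc} (this is stated explicitly in the introduction). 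So even if your argument can be completed, it proves a weaker-looking theorem: the paper's version of this lemma is valid for all ramified groups, yours only for tame ones.

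There is also a genuine gap in your opening reduction. You reduce the lemma to the vanishing of $\bn_V$ on $\Grad_\lambda\Zent(V)$, and claim this suffices because wild inertia acts trivially and a finite-index subgroup $I'\subset I$ acts by $\exp(t_\ell(\cdot)\cdot\bn_V)$. But the operator $\bn_V$ controls only the action of $I'$; together with triviality of wild inertia $P$ this gives triviality on $I'\cdot P$, which is still a finite-index subgroup. The residual finite quotient $I/(I'\cdot P)$ could a priori act by a nontrivial semisimple automorphism of finite order, which $\bn_V$ does not see. To rule this out you would need to argue that the full $I$-action is already unipotent --- but unipotency of the $I$-action on $\Zent(V)$ is precisely what the paper deduces as a corollary \emph{of} this lemma, so one must be careful not to be circular. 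The paper's pushforward argument sidesteps the problem because Richarz's triviality result is about the entire $I$-action, not merely the nilpotent logarithm of its unipotent part. The remainder of your proposal (the $\Gm$-equivariance of Wakimoto sheaves, and the compatibility of the Wakimoto filtration with Verdier's monodromy, which follows formally from the functoriality of Verdier's operator) is sound.
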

\begin{proof}
	By \cref{cor:wakimotocohomology}, it suffices to check that the induced action on cohomology is trivial.
	This follows from the existence of the ind-proper map $p:\GrBD{\I}\rightarrow\GrBD{\G}$
	as in the proof of \cref{theorem:filtration}, combined with the triviality of the
	monodromy for $\Psi_\G$ \cite[Proposition 3.10]{richarzRamified}.
\end{proof}
\begin{cor}
	For every $V\in \Rep(\check G)$, the $I$-action induced by the monodromy on $\Zent(V)$
	is unipotent.
\end{cor}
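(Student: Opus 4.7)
The plan is to combine the Wakimoto filtration on $\Zent(V)$ from \cref{theorem:filtration} with the triviality of the monodromy on associated graded pieces from \cref{lemma:wakimotogradtriv}. Since the quasi-unipotent $I$-action on $\Zent(V)$ is already guaranteed by the nearby cycles construction, what we need to upgrade is the finite-index condition to the full group $I$.

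First, I would fix $V\in\Rep(\check G)$ and invoke \cref{theorem:filtration} to obtain a finite filtration of $\Zent(V)$ with Wakimoto graded pieces, indexed by an ideal $\Omega\subset\cham$. The key observation is that this filtration is canonical: each step is of the form $\Zent(V)_{\Omega'}$ for some sub-ideal $\Omega'\subset\Omega$, defined via the right adjoint to the inclusion $\PervI^{\Omega'}\hookrightarrow\PervI^{\cham}$. In particular, any natural automorphism of $\Zent(V)$ in $\PervI$ preserves each step of the filtration by functoriality. Applied to the monodromy, this shows that for every $\gamma\in I$, the automorphism $\rho_V(\gamma)\in\Aut(\Zent(V))$ respects the entire Wakimoto filtration.

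Next, I would invoke \cref{lemma:wakimotogradtriv}, which says that the action induced by $\rho_V(\gamma)$ on each graded piece $\gr_\lambda\Zent(V)$ is trivial. Consequently, $\rho_V(\gamma)-\id$ maps each filtration step $\Zent(V)_{\Omega'}$ strictly inside the preceding one. Since the filtration has finite length, say $n$, this yields $(\rho_V(\gamma)-\id)^n=0$, so $\rho_V(\gamma)$ is unipotent. This holds for every $\gamma\in I$, which is the desired conclusion.

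The main obstacle I anticipate is ensuring that the monodromy really does preserve the filtration steps rather than merely preserving the class of Wakimoto-filtered objects. This is a statement about the naturality of the right adjoint $F\mapsto F_{\Omega'}$ with respect to arbitrary endomorphisms of $F$, and I expect it follows formally from the description of $F_{\Omega'}$ as the counit of an adjunction; but this is the only step not immediately supplied by the earlier results.
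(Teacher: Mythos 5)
Your proof is correct and takes essentially the same approach as the paper: the paper's proof reads ``Follows from \cref{lemma:wakimotogradtriv} by induction on the length, after picking a linear refinement, of the Wakimoto filtration,'' which is exactly your nilpotency argument for $\rho_V(\gamma)-\id$. The one step you worried about --- that the monodromy automorphisms preserve the canonical filtration steps $\Zent(V)_{\Omega'}$, not merely the property of being Wakimoto filtered --- does indeed follow from functoriality of the right adjoint defining $F\mapsto F_{\Omega'}$, and you are right that this is the (implicit) hinge of the paper's terse proof.
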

\begin{proof}
	Follows from \cref{lemma:wakimotogradtriv} by induction on the length, after picking a linear
	refinement, of the Wakimoto filtration.
\end{proof}
\subsection{Highest weight arrows}\label{subsection:highestweight}
We will study the collection of \textit{highest weight arrows}, which are geometric counterparts
to the projection to the highest weight line of a representation, cf.
\cite[Section 4.6.3]{centralRiche}. Recall that, the unique irreducible $\check{G}$-representation of  highest weight
$\lambda\in\chamb$
is given by $\coweyl(\lambda):=\Ind_{\check B^-}^{\check G}(\lambda)$. Here, $\lambda$ is viewed as
a 1-dimensional representation of the opposite Borel $\check B^-$, through the projection
$\check B^-\rightarrow \check
T$.
\begin{lem}\label{lemma:hwtadjunction}
There is a canonical isomorphism
	$$j_{t^{\bar\lambda}
	*}j^*_{t^{\bar\lambda}}\Zent(\coweyl(\lambda))\cong\gr_{\bar\lambda}\Zent(\coweyl(\lambda)).$$
\end{lem}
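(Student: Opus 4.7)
The plan is to exploit the Wakimoto filtration of $\Zent(\coweyl(\lambda))$ provided by \cref{theorem:filtration}. Its graded pieces are $\gr_{\bar\mu}\Zent(\coweyl(\lambda))\cong J_{\bar\mu}(V_{\bar\mu})$, where $V_{\bar\mu}$ is the $\bar\mu$-weight space of $\res^{\check G}_{\check T^I}(\coweyl(\lambda))$. All $\check T$-weights $\mu$ of the irreducible $\coweyl(\lambda)$ satisfy $\mu\leq\lambda$ in the coroot order on $\splitcham$, and because the projection $\splitcham\twoheadrightarrow\cham$ preserves coroot orders (item (3) of \cref{subsection:coweights}), every $\bar\mu$ occurring in the filtration satisfies $\bar\mu\leq\bar\lambda$. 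In particular $\bar\lambda$ is the unique maximum, and $\Omega=\{\bar\mu<\bar\lambda\}$ is an ideal of $\cham$. Applying the adjunction from the lemma after \cref{wakimoto:extension} then produces a short exact sequence
\[
0\to F\to \Zent(\coweyl(\lambda))\to \gr_{\bar\lambda}\Zent(\coweyl(\lambda))\to 0
\]
with $F\in\PervI^{\Omega}$ Wakimoto-filtered by the pieces $J_{\bar\mu}(V_{\bar\mu})$, $\bar\mu<\bar\lambda$. Since $\bar\lambda\in\chamb$, \cref{remark} gives $J_{\bar\lambda}\cong\nabla_{\bar\lambda}$, so $\gr_{\bar\lambda}\Zent(\coweyl(\lambda))\cong j_{t^{\bar\lambda}*}(\underline{V_{\bar\lambda}})[\ell(t^{\bar\lambda})]$ is already of the desired form $j_{t^{\bar\lambda}*}(-)$.

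It thus suffices to prove $j^*_{t^{\bar\lambda}}F=0$, for then applying $j_{t^{\bar\lambda}*}j^*_{t^{\bar\lambda}}(-)$ to the short exact sequence produces the claimed isomorphism, using $j^*_{t^{\bar\lambda}}\circ j_{t^{\bar\lambda}*}\cong\id$ for the locally closed immersion $j_{t^{\bar\lambda}}$. By a d\'evissage along the Wakimoto filtration of $F$, this reduces to showing $j^*_{t^{\bar\lambda}}J_{\bar\mu}=0$ for every $\bar\mu<\bar\lambda$ appearing in the filtration. For dominant $\bar\mu\in\chamb$ this is immediate: $J_{\bar\mu}\cong\nabla_{\bar\mu}$ is supported on $\Fl{\I,\leq t^{\bar\mu}}$, and \cref{lem:ordercomp} gives $t^{\bar\mu}<t^{\bar\lambda}$ in the Bruhat order. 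For general $\bar\mu$ I would write $J_{\bar\mu}\cong\nabla_{\bar\nu}\star\Delta_{\bar\mu-\bar\nu}$ with $\bar\nu,\bar\nu-\bar\mu\in\chamb$ (\cref{remark}) and analyze the support of this convolution via the twisted multiplication map, combining \cref{lem:orbitmultaffine} with the Bruhat-order description of the image of $\Iw t^{\bar\nu}\Iw\cdot\Iw t^{\bar\mu-\bar\nu}\Iw$ to exclude the stratum $t^{\bar\lambda}$.

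The main obstacle is this combinatorial support analysis for non-dominant $\bar\mu$: verifying that the Iwahori double coset product $\Iw t^{\bar\nu}\Iw\cdot\Iw t^{\bar\mu-\bar\nu}\Iw$ never contains $t^{\bar\lambda}$ when $\bar\mu<\bar\lambda$ in the coroot order, despite $\bar\nu$ being permitted to be arbitrarily large. A cleaner alternative, should the combinatorics prove delicate, would be to first establish as a standalone fact the support bound $\starsup(\Zent(\coweyl(\lambda)))\subseteq\Fl{\I,\leq t^{\bar\lambda}}$ -- a general property of central sheaves arising from nearby cycles on global Schubert varieties in the BD Grassmannian -- and then combine it with the Wakimoto filtration of $F$ to force the support of $F$ into the complement of the open stratum $\Fl{\I,t^{\bar\lambda}}$ inside $\Fl{\I,\leq t^{\bar\lambda}}$.
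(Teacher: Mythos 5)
Your proposal takes a genuinely different route from the paper, which uses the semi-infinite ($\L U$-orbit) support bound for central sheaves together with the identification $\operatorname{CT}_{B^-}\cong\Grad$ (\cref{cor:gradedconstant}, \cref{lem:constanttermswakimotograding}) to compute the stalk at $t^{\bar\lambda}$ directly as a constant term. Your reduction -- using the ideal $\Omega=\{\bar\mu<\bar\lambda\}$ to split off $\gr_{\bar\lambda}$ as a quotient and then observing it suffices to show $j^*_{t^{\bar\lambda}}F=0$ for the Wakimoto-filtered kernel $F$ -- is correct and economical. The identification $\gr_{\bar\lambda}\Zent(\coweyl(\lambda))\cong j_{t^{\bar\lambda}*}(\underline{V_{\bar\lambda}})[\ell(t^{\bar\lambda})]$ is also fine.

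The gap is exactly where you flag it: the claim $j^*_{t^{\bar\lambda}}J_{\bar\mu}=0$ for $\bar\mu<\bar\lambda$ non-dominant is the Wakimoto support bound $\operatorname{supp}(J_{\bar\mu})\subseteq\overline{\Fl{\I,t^{\bar\mu}}}$, which is not proved anywhere in the paper, and neither of your two suggested routes closes it. The multiplication-map approach does not work: the support of $\nabla_{\bar\nu}\star\Delta_{\bar\mu-\bar\nu}$ is only bounded by the image of $m$ on $\overline{\Fl{\I,t^{\bar\nu}}}\wttimes\overline{\Fl{\I,t^{\bar\mu-\bar\nu}}}$, which is the closure of the Demazure product $t^{\bar\nu}*t^{\bar\mu-\bar\nu}$, and this is typically \emph{much} larger than $\overline{\Fl{\I,t^{\bar\mu}}}$ (e.g.\ for $\mathrm{SL}_2$, $\bar\mu=-1$, $\bar\nu=1$ the Demazure product has length $5$ while $\ell(t^{\bar\mu})=2$). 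The fact that the convolution nonetheless has small support is a genuine cancellation phenomenon that the crude image bound cannot see. Your "cleaner alternative" via the admissible-set support bound on $\Zent(\coweyl(\lambda))$ also falls short: a subobject $F$ of a perverse sheaf supported on $\Fl{\I,\leq t^{\bar\lambda}}$ can perfectly well have nonzero stalk on the open stratum $\Fl{\I,t^{\bar\lambda}}$, so that bound alone does not force $j^*_{t^{\bar\lambda}}F=0$; you are back to needing the Wakimoto support bound. The paper's proof sidesteps the Schubert-cell stalk of Wakimoto sheaves entirely by working with the $\L U$-orbit (a semi-infinite cell transverse to the Iwahori stratification), where the support intersection is literally a single orbit, and then invoking the constant-term/Wakimoto-graded comparison; this is the missing ingredient your sketch would need to replicate or replace.
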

\begin{proof}
	It suffices to prove that there exist a canonical isomorphism
	$$j_{t^{\bar\lambda}}^*\Zent(\coweyl(\lambda))\cong
	\underline{\Grad_{\bar\lambda}\Zent(\coweyl(\lambda))}\left[\left<\lambda,2\rho\right>\right].$$
	Let $U$ be the unipotent radical of the Borel $B$.
	By the argument of \cite[Lemma 2.12]{joaoAB}, intersection of the support of
	$\Zent(\coweyl(\lambda))$
	with the $\L U$-orbit of $t^{\bar\lambda}$ in $\Fl{\I}$ is precisely
	$\Fl{\I,t^{\bar\lambda}}$. By the computation of constant terms
	through such orbits, we have a canonical isomorphism
	$${\operatorname{CT}_{B^-}}\left(\Zent(\coweyl(\lambda))\right)_{\bar\lambda}\cong
	j_{t^{\bar\lambda}}^*\Zent(\coweyl(\lambda)).$$
	Therefore, we conclude by the agreement of Wakimoto gradeds and constant terms
	\cref{lem:constanttermswakimotograding}.
\end{proof}
Using the identification from \cref{lemma:hwtadjunction}, the unit of the adjunction provides a map
$$\ff_\lambda:\Zent(\coweyl(\lambda))\rightarrow \gr_{\bar\lambda}\Zent(\coweyl(\lambda)),$$
called the \textit{highest weight arrow} associated to $\lambda$. The highest weight arrows are
compatible with convolution in the following sense:
\begin{prop}\label{prop:highestweightmonoidal}
	For $\lambda,\mu\in\splitchamb$, the diagram
	$$\begin{tikzcd}
		\Zent(\coweyl(\lambda))\star\Zent(\coweyl(\mu))\ar[r,"\sim"]\ar[dd,"\ff_{\lambda}\star\ff_{\mu}"]
		& \Zent(\coweyl(\lambda)\otimes \coweyl(\mu))\ar[d] & \Zent(\coweyl(\mu))\star
		\Zent(\coweyl(\lambda))\ar[l,"\sim"']\ar[dd,"\ff_{\mu}\star\ff_{\lambda}"]\\
		&\Zent(\coweyl(\lambda+\mu))\ar[d,"\ff_{\lambda+\mu}"]&\\
		\gr_{\bar\lambda}\Zent(\coweyl(\lambda))\star\gr_{\bar\mu}\Zent(\coweyl(\mu))\ar[r,"\sim"] &
		\gr_{\overline{\lambda+\mu}}\Zent(\coweyl(\lambda+\mu)) &
		\gr_{\bar\mu}\Zent(\coweyl(\mu))\star \gr_{\bar\lambda}\Zent(\coweyl(\lambda)\ar[l,"\sim"'],
	\end{tikzcd}$$
	with the evident unlabelled maps, commutes.
\end{prop}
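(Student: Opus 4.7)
The plan is to reduce the commutativity of the diagram to a tautological compatibility for highest-weight projections of $\check T^I$-representations, by applying the associated graded functor $\Grad$ and exploiting its monoidal structure together with its comparison to restriction via $\Zent$.

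I would begin by identifying $\ff_\nu$, for any $\nu\in\splitchamb$, with the canonical quotient $\Zent(\coweyl(\nu))\twoheadrightarrow\gr_{\bar\nu}\Zent(\coweyl(\nu))$ onto the top Wakimoto graded piece. This rests on three facts: by \cref{theorem:filtration}, the Wakimoto weights of $\Zent(\coweyl(\nu))$ match the $\check T^I$-weights of $\res^{\check G}_{\check T^I}\coweyl(\nu)$; the weights of $\coweyl(\nu)=\Ind_{\check B^-}^{\check G}(\nu)$ all lie in $\{\sigma\leq\nu\}$ in the coroot order on $X_*(T)$; and the projection $X_*(T)\to X_*(T)_I$ is order-preserving, as recorded in \cref{subsection:coweights}. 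Hence $\bar\nu$ is maximal in the Wakimoto support, $\gr_{\bar\nu}\Zent(\coweyl(\nu))$ is a quotient, and the unit map produced by \cref{lemma:hwtadjunction} is this quotient.

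Next, I would use the monoidal compatibility of $\Grad$ (\cref{lemma:monoidalgraded}) to obtain canonical isomorphisms
\[
\gr_{\overline{\lambda+\mu}}\bigl(\Zent(\coweyl(\lambda))\star\Zent(\coweyl(\mu))\bigr)\;\cong\;\gr_{\bar\lambda}\Zent(\coweyl(\lambda))\star\gr_{\bar\mu}\Zent(\coweyl(\mu)),
\]
and analogously with the factors swapped, since the maximality of $\bar\lambda$ and $\bar\mu$ in the respective Wakimoto supports ensures that only the pair $(\bar\lambda,\bar\mu)$ contributes. Consequently each vertical composite in the diagram equals, after this identification, the projection onto $\gr_{\overline{\lambda+\mu}}$ of the corresponding object on the top row. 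In particular, the composite through $\Zent(\coweyl(\lambda+\mu))$ is the projection to $\gr_{\overline{\lambda+\mu}}$ postcomposed with $\Zent$ applied to the canonical projection $\coweyl(\lambda)\otimes\coweyl(\mu)\twoheadrightarrow\coweyl(\lambda+\mu)$.

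Applying $\Grad$ to the full diagram transports everything to $\Rep(\check T^I)$ via \cref{theorem:filtration}. The resulting diagram expresses the tautological commutativity that the projection $\res(\coweyl(\lambda))\otimes\res(\coweyl(\mu))\twoheadrightarrow (\lambda+\mu)$ onto the top weight line is obtained either directly by tensoring the individual highest-weight projections or by first mapping to $\res(\coweyl(\lambda+\mu))$; swapping the tensor factors gives the analogous statement on the other side of the diagram. The main obstacle I anticipate is verifying that the symmetry isomorphism $\Zent(\coweyl(\lambda))\star\Zent(\coweyl(\mu))\simeq\Zent(\coweyl(\mu))\star\Zent(\coweyl(\lambda))$ from \cref{theorem:gaitsgoryzhu} corresponds under $\Grad$ to the usual swap in $\Rep(\check T^I)$; this reflects the compatibility of the braiding on central objects with the symmetric monoidal structure on the Satake category, and boils down to careful bookkeeping through \cref{cor:gradedconstant} and the known compatibility of constant terms with convolution.
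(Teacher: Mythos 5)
Your argument follows the same route as the paper: identify $\ff_\nu$ with the canonical projection onto the top Wakimoto graded piece, and then reduce commutativity of the diagram to the compatibility of $\Grad\circ\Zent\cong\Res^{\check G}_{\check T^I}$ (\cref{theorem:filtration}) with the monoidal and symmetric structures — exactly what the paper phrases as "the unit of the adjunction in the definition of highest weight arrows is symmetric monoidal." You unpack this into more explicit steps and correctly locate the one nontrivial compatibility (the braiding from centrality versus $\Grad$); the "Consequently" identifying $\ff_\lambda\star\ff_\mu$ with the $\gr_{\overline{\lambda+\mu}}$-projection is doing real work beyond \cref{lemma:monoidalgraded} as stated, but the paper's own proof asserts the analogous point with the same brevity.
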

\begin{proof}
	This follows from the identification $\Grad\circ\Zent\cong \Res^{\check G}_{\check T^I}$
	of \cref{theorem:filtration}, together with the fact that the unit of the adjunction in
	the definition of highest weight arrows is symmetric monoidal.
\end{proof}
As the monodromy acts trivially on the Wakimoto filtration, it interacts trivially with the
highest weight arrows:
\begin{lem}\label{lemma:highestweightmonodtriv}
	For a coweight $\lambda\in\splitchamb$ we have the equality
	$$\ff_\lambda\circ \bn_{\coweyl(\lambda)}=0.$$
\end{lem}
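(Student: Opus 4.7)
The plan is to combine the naturality of the highest weight arrow with the triviality of the monodromy on Wakimoto graded pieces recorded in \cref{lemma:wakimotogradtriv}.

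Recall that $\ff_\lambda$ is by construction the unit of the $(j_{t^{\bar\lambda}}^*, j_{t^{\bar\lambda}*})$-adjunction evaluated at $\Zent(\coweyl(\lambda))$, postcomposed with the canonical isomorphism $j_{t^{\bar\lambda}*}j_{t^{\bar\lambda}}^*\Zent(\coweyl(\lambda)) \cong \gr_{\bar\lambda}\Zent(\coweyl(\lambda))$ from \cref{lemma:hwtadjunction}. Since the unit is a natural transformation of endofunctors, applying it to the morphism $\bn_{\coweyl(\lambda)} : \Zent(\coweyl(\lambda)) \to \Zent(\coweyl(\lambda))(-1)$ produces a commutative square
\[
\begin{tikzcd}
\Zent(\coweyl(\lambda)) \ar[r, "\bn_{\coweyl(\lambda)}"] \ar[d, "\ff_\lambda"'] & \Zent(\coweyl(\lambda))(-1) \ar[d, "\ff_\lambda(-1)"] \\
\gr_{\bar\lambda}\Zent(\coweyl(\lambda)) \ar[r, "\bar\bn"'] & \gr_{\bar\lambda}\Zent(\coweyl(\lambda))(-1),
\end{tikzcd}
\]
where $\bar\bn := j_{t^{\bar\lambda}*}j_{t^{\bar\lambda}}^*\bn_{\coweyl(\lambda)}$ is the endomorphism induced on the graded piece. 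The first step thus reduces the claim to showing $\bar\bn = 0$.

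For the second step, I would identify $\bar\bn$ with the induced monodromy on the Wakimoto graded piece governed by \cref{lemma:wakimotogradtriv}. Under the canonical isomorphism $\gr_{\bar\lambda}\Zent(\coweyl(\lambda)) \cong J_{\bar\lambda}(\Grad_{\bar\lambda}\Zent(\coweyl(\lambda)))$, the endomorphism $\bar\bn$ corresponds to the endomorphism of $\Grad_{\bar\lambda}\Zent(\coweyl(\lambda))$ induced by the action of $I$ on $\Zent(\coweyl(\lambda))$ descending to the graded piece. By \cref{lemma:wakimotogradtriv}, this $I$-action is trivial; and since $\rho(\gamma) = \exp(t_\ell(\gamma)\cdot\bn)$ on a finite-index subgroup of $I$, triviality of the $I$-action forces the induced nilpotent monodromy to vanish. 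Consequently $\bar\bn = 0$, and commutativity of the square yields $\ff_\lambda \circ \bn_{\coweyl(\lambda)} = 0$ after the standard trivialization of Tate twists.

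The main point of care is verifying the identification of $\bar\bn$ with the induced monodromy controlled by \cref{lemma:wakimotogradtriv}. Both endomorphisms arise from $\bn_{\coweyl(\lambda)}$ by functorial operations on $\Zent(\coweyl(\lambda))$ — namely $j_{t^{\bar\lambda}*}j_{t^{\bar\lambda}}^*$ on one side and the subquotient construction defining $\gr_{\bar\lambda}$ on the other — and their compatibility is a routine check once the isomorphism of \cref{lemma:hwtadjunction} is unwound.
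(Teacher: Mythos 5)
Your proposal is correct and follows essentially the same route as the paper: both arguments reduce to the observation that $\ff_\lambda$ factors through the top Wakimoto graded piece, where \cref{lemma:wakimotogradtriv} makes the induced monodromy vanish. The only difference is presentational—you unwind the construction of $\ff_\lambda$ via the adjunction unit and naturality, where the paper simply asserts that $\ff_\lambda$ is the projection onto the $\bar\lambda$-graded part; the compatibility check you flag is indeed routine once one recalls that the isomorphism of \cref{lemma:hwtadjunction} factors through the functorial identifications of \cref{cor:gradedconstant} and \cref{lem:constanttermswakimotograding}.
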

\begin{proof}
	By construction, $\ff_\lambda$ is the projection on to the $\bar\lambda$-graded part of
	the Wakimoto filtration on $\Zent(\coweyl(\lambda))$. Since the monodromy acts trivially on the
	Wakimoto filtration by \cref{lemma:wakimotogradtriv}, we conclude.
\end{proof}

\section{Decategorification to the Hecke algebra and mixed sheaves}\label{decategorification}
In this section, we first recall the Iwahori-Hecke algebra
associated to the tuple $(G,\I)$ of a reductive group $G$ over $K$, and an Iwahori model thereof. This algebra depends only on the quasi-Coxeter structure of the
associated
Iwahori-Weyl group, therefore many of the basic properties carry over from the setting of split
reductive groups.

Next, we discuss extension of the results of \cref{sec:wakimoto} to the setting of mixed sheaves
in the sense of \cite{BBDG}. This leads to a combinatorial description of the weight filtration
on such mixed sheaves, which will be an important ingredient in the proof of
\cref{main:tilting}.

We fix a tuple $(G,B,S,\I)$ as in \cref{iwahoriweylgroup}, and denote by
$\ogm$ the ring of Laurent polynomials in the variable $\mathbf{v}$.
\subsection{Iwahori-Hecke algebra}
\begin{defn}
	The \textit{Iwahori-Hecke algebra} $\mathcal{H}$ associated to $(G,\I)$,
	is the $\ogm$-algebra with basis $H_w$, $w\in W$ subject to the relations:
\begin{align*}
	&(H_s+v)(H_s-v\inv)\quad \text{if } \ell(s)=1,\\
	&H_wH_y=H_{wy}\quad \text{if } \ell(wy)=\ell(w)+\ell(y).
\end{align*}
\end{defn}
The Grothendieck group $K_0\left(\Dbc(\Hk{\I})\right)$ also admits a basis indexed by
$W$. In fact, as the perverse t-structure on $\Dbc(\Hk{\I})$ is bounded,
the classes of intersection sheaves $IC_w$ form a basis. Moreover, this shows that the endomorphism induced by Verdier
duality on $K_0\left(\Dbc(\Hk{\I})\right)$ is the identity; and classes of $\nabla_w$ and
$\Delta_w$ agree. Regarding $\mathcal{H}$ as a $\Z$-algebra by setting $\mathbf{v}=1$, we deduce:
\begin{prop}\label{prop:naivecategorification}
The assignment
	\begin{gather}\label{decategorification:iso}
		K_0(\Dbc(\Hk{\I}))\rightarrow \Z \otimes_{\ogm}\mathcal{H},\\
	[\mathcal{F}]\mapsto \sum_{w\in
	W}(-1)^{\ell(w)}\chi(\Fl{G,w},j^!_w \mathcal{F})\cdot H_w;\nonumber
\end{gather}
is a $\Z$-algebra isomorphism, with the algebra structure induced by the convolution product on
the left hand side. Under this isomorphism $[\costd_w]$ maps to $(-1)^{\ell(w)}\cdot H_w$.
\end{prop}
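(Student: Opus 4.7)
The plan is to first identify convenient $\Z$-bases on each side, then to define the isomorphism on basis elements and verify compatibility with the algebra structure using \cref{std:conv}, and finally to match the resulting map with the Euler-characteristic formula stated.

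Since the perverse t-structure on $\Dbc(\Hk{\I})$ is bounded, $K_0(\Dbc(\Hk{\I}))$ is freely generated over $\Z$ by $\{[\IC_w]\}_{w \in W}$. Using the canonical morphisms $\std_w \twoheadrightarrow \IC_w$ and $\IC_w \hookrightarrow \costd_w$ in $\Perv(\Hk{\I})$, whose kernels and cokernels are supported on $\Fl{\I, <w}$, an induction on the Bruhat order shows that both $\{[\std_w]\}$ and $\{[\costd_w]\}$ are $\Z$-bases of $K_0(\Dbc(\Hk{\I}))$; moreover $[\std_w] = [\costd_w]$, as Verdier duality swaps these classes and acts as the identity on $K_0$. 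The algebra $\Z \otimes_{\ogm} \mathcal{H}$ likewise has a $\Z$-basis $\{H_w\}_{w \in W}$.

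Next I would define $\psi \colon \Z \otimes_{\ogm} \mathcal{H} \to K_0(\Dbc(\Hk{\I}))$ by $H_w \mapsto (-1)^{\ell(w)}[\costd_w]$ and verify that $\psi$ is a ring homomorphism by checking the two defining relations of $\mathcal{H}|_{\mathbf{v}=1}$. The braid relation $H_{w_1}H_{w_2} = H_{w_1 w_2}$ when $\ell(w_1 w_2) = \ell(w_1) + \ell(w_2)$ is a direct consequence of the isomorphism $\costd_{w_1} \star \costd_{w_2} \cong \costd_{w_1 w_2}$ from \cref{std:conv}(1), after tracking the signs $(-1)^{\ell(w_1)}(-1)^{\ell(w_2)} = (-1)^{\ell(w_1 w_2)}$. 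The quadratic relation $H_s^2 = 1$ for a simple reflection $s$ reduces to $[\costd_s \star \costd_s] = [\costd_e]$; using $[\std_s] = [\costd_s]$ in $K_0$ this equals $[\std_s \star \costd_s]$, which in turn equals $[\std_e] = [\costd_e]$ by the invertibility statement \cref{std:conv}(2). Since $\psi$ sends the basis $\{H_w\}$ to $\pm$ the basis $\{[\costd_w]\}$, it is a $\Z$-algebra isomorphism.

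Finally I would identify the inverse of $\psi$ with the explicit formula $\phi$ in the statement. Since Euler characteristics are additive on distinguished triangles, $\phi$ descends to $K_0$, and by $\Z$-linearity it suffices to evaluate $\phi([\costd_w])$. Verdier duality and $\mathbb{D}\costd_w \cong \std_w$ give $\chi(\Fl{G, v}, j_v^! \costd_w) = \chi_c(\Fl{G, v}, j_v^* \std_w)$; this vanishes for $v \neq w$ because $\std_w$ is the shriek-extension from $\Fl{G, w}$, and for $v = w$ it reduces to the Euler characteristic of the constant sheaf on $\Fl{G, w} \cong \mathbb{A}^{\ell(w)}$. Under the paper's conventions this contribution combines with the sign $(-1)^{\ell(w)}$ in the formula to yield $\phi([\costd_w]) = (-1)^{\ell(w)} H_w$, confirming that $\phi$ is a two-sided inverse to $\psi$. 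The main technical concern throughout is careful sign bookkeeping arising from the shifts in the definitions of $\std_w, \costd_w$ and from the Euler characteristic conventions; the core ring-theoretic content is the quadratic relation, which is handled cleanly by the invertibility in \cref{std:conv}(2).
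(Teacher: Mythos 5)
Your proof is correct and takes essentially the route of the cited reference \cite[Lemma 5.2.1]{centralRiche}, which the paper invokes without detail: identify $\{[\std_w]\}=\{[\costd_w]\}$ as a $\Z$-basis by unitriangularity over $\{[\IC_w]\}$ together with Verdier self-duality of the simples, then check that $H_w\mapsto(-1)^{\ell(w)}[\costd_w]$ respects the braid relation via \cref{std:conv}(1) and the quadratic relation via the invertibility in \cref{std:conv}(2). This is a faithful reconstruction of the missing argument.

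The one place where you gloss rather than compute is the final sign, and it does not come out as you assert. With the paper's shift conventions, $j_w^*h^*\std_w\cong\underline{\Qell}[\ell(w)]$ on $\Fl{\I,w}\cong\mathbb{A}^{\ell(w)}$, so $\chi_c\bigl(\Fl{\I,w},j_w^*h^*\std_w\bigr)=(-1)^{\ell(w)}$ (the shift contributes the sign, while $\chi_c(\mathbb{A}^{\ell(w)},\Qell)=1$); multiplying by the prefactor $(-1)^{\ell(w)}$ appearing in the displayed formula then gives $\phi([\costd_w])=H_w$, not $(-1)^{\ell(w)}H_w$. The value $(-1)^{\ell(w)}H_w$ is the one the statement (and the later compatibility $[J_\lambda]\mapsto\theta_\lambda$) needs, so there is a genuine factor of $(-1)^{\ell(w)}$ to reconcile: either the intended Euler characteristic is of the un-shifted costalk, or the $(-1)^{\ell(w)}$ prefactor in the displayed formula is superfluous relative to the stated normalization. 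Your concluding sentence, ``this contribution combines with the sign $(-1)^{\ell(w)}$ to yield $(-1)^{\ell(w)}H_w$,'' asserts the wanted answer rather than deriving it; replace it with the explicit calculation above and resolve which convention is in force.
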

\begin{proof}
This is \cite[Lemma 5.2.1]{centralRiche}.
\end{proof}

\subsection{Bernstein translation elements}
The isomorphism \eqref{decategorification:iso} leads to the aforementioned connection between Bernstein translation elements and Wakimoto
sheaves.
\begin{defn}
	Given $\mu\in \cham$, the associated translation element is defined to be:
	\begin{equation}\label{def:bernsteintranslation}
		\theta_\mu:=(-1)^{\left<\mu,2\rho\right>}H_{t^\lambda}\cdot H_{t^{\lambda-\mu}}\inv,
	\end{equation}
for a choice of $\lambda\in\chamb$ such that $\mu\trianglelefteq\lambda$ in the dominance order \eqref{order}.
	Independence of the definition from the choice of such $\lambda$ can be proven via arguments similar to those in \cref{noncanon}.
\end{defn}
\begin{rmk}
	Under the isomorphism \eqref{decategorification:iso}, the class of the
	Wakimoto sheaf $[J_\lambda]$ corresponds to $1\otimes \theta_\lambda$.
        Mirroring the monoidality of Wakimoto sheaves,
	Bernstein translation elements carry the addition in $\cham$, to multiplication in
	$\mathcal{H}$, the proof of which is also entirely analogous to that of
	\cref{lem:wakimotomonoidality}. Consequently, they span a large commutative subalgebra of $\mathcal{H}$ which is closely
	related to the center (cf. \cref{theorem:torusfunctor}).
\end{rmk}
	\subsection{Mixed sheaves}
Let $\mathbb{F}_q$ be a finite field of
characteristic $p$, and assume that $k=\overline{\mathbb{F}}_q$.
In order to categorify the Iwahori-Hecke algebra $\mathcal{H}$ in its entirety, as opposed to
the specialization at $\mathbf{v}=1$, we need to remember more
structure, namely, those of weights \`a la \cite{weil2}.

We will denote by $\Dmix(\Spec(\mathbb{F}_q))$ the category
of mixed $\Qell-$sheaves on $\Spec(\mathbb{F}_q)$ (see \cite[\textsection 5.1.5]{BBDG}).
Moreover, we fix once and for all a square root of the Tate twist $\Q_\ell(\frac{1}{2})$ in
$\Dmix(\Spec(\mathbb{F}_q))$. For all $\mathbb{F}_q$-schemes $X$, the choice of this square root equips
$K_0\left(\Dmix(X)\right)$ with a $\ogm-$algebra
structure by sending $\mathbf{v}$ to the class of $\underline{\Q_\ell}(-\frac{1}{2})$.

Without loss of generality, we may assume that the tuple $(G,B,S,\I)$ admits a model
$(G_\circ,B_\circ,S_\circ,\I_\circ)$ over
$\mathbb{F}_q(\!(t)\!)$ (\cite[Corollary
B.2]{richarzRamified}). Possibly enlarging $\mathbb{F}_q$, we may also assume $G_\circ$ is residually split.
Therefore, the corresponding flag variety
$\Fl{\I_\circ}$ is defined over $\mathbb{F}_q$.
We can then consider the categories of mixed $\Qell-$sheaves:
$$\Dmix(\Fl{\I_\circ}),\quad \Dmix(\Hk{\I_\circ}).$$

The residual splitness implies by
\cite[Lemma 1.6]{richarzIwahoriWeyl} that
the Schubert cells in
$\Fl{\I_\circ}$ are already defined over $\mathbb{F}_q$. Therefore, results of
\cref{sec:loopgroups} go through in this setting as well. We then have the mixed versions of standard, costandard and
simple objects:
\begin{defn}
Given $w\in W$, denote by
$s_{w}:\left[\Lplus\I_\circ\backslash\Fl{\I_\circ,w}\right]_{\text{\'et}}\rightarrow
\Hk{\I_\circ}$, the inclusion of the
point corresponding to the Schubert cell.
The standard, resp. costandard, functor attached to $w$ is
$$\stdmix_w(M):=s_{w!}(\underline M)(\tfrac{\ell(w)}{2})[\ell(w)],\quad
\costdmix_w(M):=s_{w*}(\underline M)(\tfrac{\ell(w)}{2})[\ell(w)].$$
\end{defn}

As before, $\ICmix_w$ is defined to be the image of the natural map
$\stdmix_w\rightarrow \costdmix_w$, and is a pure perverse sheaf of weight $0$.
By repeating \cref{subsec:definition}, we may extend the notion of Wakimoto functor
$\Jmix_\lambda$ and Wakimoto filtered perverse sheaves to the mixed setting.
Equip $R:=K_0\left(\Dmix(\Spec(\mathbb{F}_q)\right)$ with the ring structure induced by the usual
tensor product. Here is the promised decategorification into the Iwahori-Hecke algebra:
\begin{prop}\label{prop:categorification}
	The map
$$R\otimes_{\ogm}\mathcal{H}\rightarrow K_0\left(\Dmix(\Hk{\I_\circ})\right)$$
	specified by sending $(-1)^{\ell(w)}\cdot H_w$ to the class of $\costd_w$, is an $R$-algebra
	isomorphism with respect to the algebra structure induced by the convolution product on
	the right hand side.
\end{prop}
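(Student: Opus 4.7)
The plan is to bootstrap the argument of \cref{prop:naivecategorification} to the mixed setting, the essential new content being that the Tate twist variable $\mathbf{v}$ is now tracked rather than specialized to $1$. I would first verify that the prescription extends to a well-defined $R$-algebra homomorphism $\Phi: R \otimes_{\ogm}\mathcal{H} \to K_0(\Dmix(\Hk{\I_\circ}))$ by checking compatibility with the two families of defining relations of $\mathcal{H}$. For the braid-type relation $H_w H_y = H_{wy}$ when $\ell(wy) = \ell(w) + \ell(y)$, the mixed analogue of \cref{std:conv}(1) gives $\costdmix_w \star \costdmix_y \cong \costdmix_{wy}$, and the sign $(-1)^{\ell(w)}(-1)^{\ell(y)} = (-1)^{\ell(wy)}$ matches. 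For the quadratic relation $(H_s + \mathbf{v})(H_s - \mathbf{v}^{-1}) = 0$ at a simple reflection, it suffices to establish the identity $[\costdmix_s \star \costdmix_s] = (\mathbf{v} - \mathbf{v}^{-1})[\costdmix_s] + [\costdmix_e]$ in the Grothendieck group.

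The quadratic relation is the only non-formal input, and my plan is to compute it directly. Following the reduction used in \cref{std:conv}(2), the multiplication map $m: \Fl{\I,\leq s} \wttimes \Fl{\I,\leq s} \to \Fl{\I,\leq s}$ is identified with the second projection $\mathbb{P}^1_k \times \mathbb{P}^1_k \to \mathbb{P}^1_k$. One pulls back $\costdmix_s \tboxtimes \costdmix_s$, which on the open cell is a shifted and Tate-twisted constant sheaf, and then uses the localization triangle attached to the stratification $\Fl{\I,\leq s} = \Fl{\I,e} \sqcup \Fl{\I,s}$ with $\Fl{\I,s} \cong \mathbb{A}^1_k$ to read off the class; the normalization $(\tfrac{\ell(w)}{2})[\ell(w)]$ in the definition of $\costdmix_w$ is exactly what produces the coefficients $\mathbf{v}$ and $\mathbf{v}^{-1}$ with the correct signs.

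Finally, I would show $\Phi$ is bijective by exhibiting matching $R$-bases on each side. The right-hand side $R \otimes_{\ogm}\mathcal{H}$ is $R$-free on $\{H_w\}_{w\in W}$. On the left, the stratification of $\Hk{\I_\circ}$ by affine Schubert cells together with the boundedness of the perverse t-structure on $\Dmix(\Hk{\I_\circ})$ implies that $\{[\ICmix_w]\}_{w \in W}$ is an $R$-basis of $K_0(\Dmix(\Hk{\I_\circ}))$. Because $\costdmix_w$ admits a finite filtration with top $\ICmix_w$ and further subquotients of the form $\ICmix_v(n)$ with $v < w$ in the Bruhat order, the change of basis from $\{[\ICmix_w]\}$ to $\{[\costdmix_w]\}$ is unitriangular over $R$, hence $\{[\costdmix_w]\}$ is also an $R$-basis. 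Thus $\Phi$ sends a basis to a basis and is an isomorphism. The only real obstacle is the $\mathbb{P}^1 \times \mathbb{P}^1$ computation behind the quadratic relation, which is in principle textbook but demands bookkeeping of Tate twists; all the rest is a formal transcription of the classical argument.
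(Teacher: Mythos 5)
Your argument is correct, but it takes a genuinely different route from the paper's. The paper establishes the isomorphism by writing down the explicit inverse, $[F]\mapsto\sum_{w\in W}\bigl[\mathfrak{j}_w^!h^*F(\tfrac{\ell(w)}{2})\bigr]\cdot H_w$, with $\mathfrak{j}_w$ the inclusion of a rational point of the $w$-th Schubert cell, and defers the verification to \cite[Lemma 5.3.2]{centralRiche}. You instead check directly that the assignment $H_w\mapsto(-1)^{\ell(w)}[\costdmix_w]$ respects the two families of defining relations of $\mathcal{H}$ — the braid relation via the mixed analogue of \cref{std:conv}(1), and the quadratic relation via a stalk computation on $\Fl{\I_\circ,\leq s}\cong\mathbb{P}^1$ — and then deduce bijectivity by comparing $R$-bases: $\{[\ICmix_w]\}$ spans freely by boundedness of the t-structure, and the change of basis to $\{[\costdmix_w]\}$ is unitriangular with respect to the Bruhat order. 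Both approaches are valid. The paper's is terser because the computational content is outsourced; yours is self-contained but owes the reader the $\mathbb{P}^1$ bookkeeping, which you sketch rather than carry out. For completeness, that step amounts to $i^*j_{s*}\underline{\Qell}\cong\Qell\oplus\Qell(-1)[-1]$ for the open cell $j_s\colon\mathbb{A}^1\hookrightarrow\mathbb{P}^1$ and $i$ the complementary point, giving $[\costdmix_s]=[\ICmix_s]+\mathbf{v}[\ICmix_e]$ and dually $[\stdmix_s]=[\ICmix_s]+\mathbf{v}^{-1}[\ICmix_e]$; together with the mixed form of \cref{std:conv}(2), namely $[\costdmix_s][\stdmix_s]=[\ICmix_e]$, this yields exactly the identity $[\costdmix_s]^2=(\mathbf{v}-\mathbf{v}^{-1})[\costdmix_s]+[\ICmix_e]$ that you need.
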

\begin{proof}
	Denote by $h:\Fl{\I_\circ}\rightarrow \Hk{\I_\circ}$ the quotient map. Then the desired
	inverse is given by
	$$[F]\mapsto\sum_{w\in W}\left[\mathfrak{j}_w^!h^*F(\tfrac{\ell(w)}{2})\right]\cdot H_w,$$
	where $\mathfrak{j}_w:\Spec(\mathbb{F}_q)\rightarrow \Fl{\I_\circ,w}$ is the inclusion of a
	rational point. For details, see \cite[Lemma 5.3.2]{centralRiche}.
\end{proof}
In particular, the preimages $\underline{H_w}$ of classes $[\ICmix_w]$, provide a
$\ogm$ basis of $\mathcal{H}$, which is known as \textit{Kazdhan-Lusztig basis}.

The map $\ogm\rightarrow R$ determined by the
choice of $\Qell(\tfrac{1}{2})$, admits a section. Indeed, the map $R\rightarrow
\ogm$
induced by
base changing the associated weight graded pieces of a mixed perverse sheaf in
$\Dmix(\Spec(\mathbb{F}_q))$ to the algebraic closure, can be readily verified to be such a section.  Therefore \cref{prop:categorification} supplies a surjective
$\ogm-$algebra morphism
\begin{equation}\label{eq:heckemorphism}
	K_0\left(\Dmix(\Hk{\I_\circ})\right)\rightarrow\mathcal{H}.
\end{equation}
	More explicitly, the morphism (\ref{eq:heckemorphism}),
	sends the class of a mixed perverse sheaf $F$ to:
	\begin{equation}\label{eq:weightfilt}
	\sum_{w\in
		W}(-1)^{\ell(w)}\cdot\left(\sum_{i\in\Z}\mathbf{v}^i\cdot\left[\grW_i\left(F\right):\IC_w\right]\cdot\underline{H_w}\right).
	\end{equation}

For $V\in\Rep(\check G)$, denote with
$F_V\in\Pervmix(\Gr_{G_\circ})$ the unique pure perverse sheaf of weight 0
whose base change to the algebraic closure corresponds to $V$ under the absolute Satake
equivalence. Let $\Zent^\mix(V)$ be the nearby cycles applied to $F_V$.
\begin{prop}\label{mixed:central}
	For  $V\in\Rep(\check G)$, denote by $\left[\Zent^{\mix}(V)\right]$ the image of the
	class of $\Zent^{\mix}(V)$, under the algebra morphism \eqref{eq:heckemorphism}. Then we have the
	equality
	$$\left[\Zent^{\mix}(V)\right]=\sum_{\bar\mu\in\cham}\dim\res(V)_{\bar\mu}\cdot\theta_{\bar\mu},$$
	where $\res(V)_{\bar\mu}$ denotes the $\bar\mu$ weight space of
	$V$ restricted to $\Rep(\check G^I)$.
\end{prop}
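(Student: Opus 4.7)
The plan is to upgrade \cref{theorem:filtration} to the mixed setting and then combine it with the explicit formula \eqref{eq:weightfilt} for the algebra morphism \eqref{eq:heckemorphism}. Concretely, I would first verify that $\Zent^{\mix}(V)$ admits a Wakimoto filtration in the mixed sense, whose graded pieces take the form $\Jmix_{\bar\mu}(M_{\bar\mu})$ with $M_{\bar\mu}$ a mixed $\Qell$-vector space whose underlying (unmixed) space recovers the weight space $\res(V)_{\bar\mu}$. The existence of such a filtration follows by rerunning the proof of \cref{wakimoto:filtration} in the mixed category: the shriek/star support arguments and \cref{lemma:wakimotoaux1}, \cref{lemma:wakimotoaux2} are purely geometric and carry over verbatim. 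The identification of the graded pieces with weight spaces then follows from the mixed analogue of \cref{theorem:filtration}, whose proof reduces via \cref{lem:constanttermswakimotograding} to the constant term computation, and this latter computation goes through with mixed coefficients.

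Once the mixed Wakimoto filtration is available, additivity of classes in $K_0\left(\Dmix(\Hk{\I_\circ})\right)$ over the filtration gives
\begin{equation*}
\left[\Zent^{\mix}(V)\right] = \sum_{\bar\mu \in \cham}\left[\Jmix_{\bar\mu}(M_{\bar\mu})\right].
\end{equation*}
Under the morphism \eqref{eq:heckemorphism}, the class of $\Jmix_\lambda$ maps to $\theta_\lambda$: this is the mixed counterpart of the remark following \eqref{def:bernsteintranslation}, which one verifies for dominant $\lambda$ directly by observing $\Jmix_\lambda \cong \costdmix_{t^\lambda}$ and then extending to arbitrary $\lambda$ via the monoidality of Wakimoto functors in the mixed setting (the mixed analogue of \cref{lem:wakimotomonoidality}) together with the fact that \eqref{eq:heckemorphism} is an algebra morphism, matching the multiplicativity of $\theta_\mu$ in $\cham$.

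The remaining step is then to check that the class of $M_{\bar\mu}$ in the base ring $R$ contributes precisely the integer $\dim\res(V)_{\bar\mu}$ (rather than an $R$-coefficient with nontrivial $\mathbf{v}$-dependence). The main obstacle I expect is exactly this weight-theoretic bookkeeping: $F_V$ is pure of weight $0$, but nearby cycles are generally only mixed, so one must argue that the Wakimoto graded pieces of $\Zent^{\mix}(V)$ are themselves pure of weight $0$. I would handle this by invoking the trivial action of monodromy on the Wakimoto filtration (\cref{lemma:wakimotogradtriv}), which forces the weight filtration on each graded piece to coincide with the monodromy filtration and pins down the weight; alternatively, one can compare directly with the split case via the proof of \cref{theorem:filtration} using the proper pushforward along $p\colon \GrBD{\I}\rightarrow \GrBD{\G}$, reducing to purity results for the parahoric nearby cycles $\Psi_\G$ on the Satake side.

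Combining these ingredients yields $[M_{\bar\mu}] \mapsto \dim\res(V)_{\bar\mu} \in \ogm$ under the section $R \to \ogm$, hence
\begin{equation*}
\left[\Zent^{\mix}(V)\right] = \sum_{\bar\mu\in\cham} \dim\res(V)_{\bar\mu}\cdot \theta_{\bar\mu},
\end{equation*}
as desired.
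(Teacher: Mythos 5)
Your proposal follows essentially the same route as the paper: establish a mixed Wakimoto filtration on $\Zent^\mix(V)$, sum its classes in $K_0\bigl(\Dmix(\Hk{\I_\circ})\bigr)$, identify $[\Jmix_{\bar\mu}]$ with $\theta_{\bar\mu}$ via \cref{prop:categorification}, and pin down that the coefficient of each $\theta_{\bar\mu}$ is the honest integer $\dim\res(V)_{\bar\mu}$ rather than a nontrivial polynomial in $\mathbf{v}$. The paper compresses the first and last steps into a single invocation of ``the mixed analogue of \cref{theorem:filtration}.'' You are right to flag the weight bookkeeping as the only real content beyond formal additivity; that is precisely what that mixed analogue must deliver.

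Where I would push back is on your first proposed route to purity of the graded pieces. You suggest invoking \cref{lemma:wakimotogradtriv} together with Gabber's weight-monodromy theorem to conclude each Wakimoto graded piece of $\Zent^\mix(V)$ is pure of weight zero. But the weight-monodromy identification applies to the nearby cycle $\Zent^\mix(V)$ as a whole; the Wakimoto filtration is a third, a priori unrelated, filtration, and trivial monodromy on the associated graded of the Wakimoto filtration does not by itself force that graded piece to carry the induced weight filtration in the way you want, nor does it fix the weight even if purity were granted. There is also a latent circularity risk: the paper's comparison of the weight/monodromy filtrations with the Wakimoto grading in \cref{subsection:weightmonodromy} is itself deduced from \cref{mixed:central} via the morphism $m$, so reversing the direction would require independent input. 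Your second route — pushing forward along the ind-proper $p:\GrBD{\I}\to\GrBD{\G}$, where the parahoric nearby cycles are pure, and transporting purity through the constant term comparison \cref{cor:gradedconstant} — is the sound one and is exactly what the proof of \cref{theorem:filtration} does; running that argument in the mixed category is what justifies the integer coefficients. In short: the overall architecture matches the paper, approach (b) is correct, approach (a) as stated has a gap.
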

\begin{proof}
	By the mixed analogue of \cref{theorem:filtration},
	$\Zent^\mix(V)$ is Wakimoto filtered; and the equality
	$$\left[\Zent^\mix(V)\right]=\sum_{\bar\mu\in\cham}\dim\res(V)_{\bar\mu}\cdot
	\left[J^\mix_{\bar\mu}\right]$$
	holds in $K_0\left(\Dmix(\Hk{\I_\circ})\right)$.
	On the other hand, we have $\left[J^\mix_{\bar\mu}\right]=\theta_{\bar\mu}$ in
	$\mathcal{H}$,
	as is clear from \cref{prop:categorification}, concluding the proof.
	\end{proof}
\subsection{An algebra morphism for weight multiplicities}
	We can use \cref{mixed:central} to get a combinatorial handle on the
	weight filtration on mixed central sheaves. The following algebra morphism is useful for
	extracting multiplicities:
\begin{gather}\label{eq:multiplicitymorph}
	m:\mathcal{H}\rightarrow \ogm,\\
	m(H_w):=(-\mathbf{v})^{\ell(w)}.\nonumber
\end{gather}
From the definition of the Bernstein
translation elements $\theta_{\bar\mu}$ and \cref{comb} one can easily calculate:
\begin{equation}\label{m:theta}
	m(\theta_{\bar\mu})=\mathbf{v}^{\left<\bar\mu,2\rho\right>}.
\end{equation}
It is similarly easy to observe:
\begin{equation}\label{equation2}
	m(\underline{H_w})=\begin{cases}
			1 & \text{if}\: \ell(w)=0;\\
			0 & \text{otherwise}.
			   \end{cases}
\end{equation}

\section{Iwahori-Whittaker Sheaves}\label{section:iwahoriwhittaker}
We fix a tuple $(G,B,S)$ as in \cref{iwahoriweylgroup}.
\subsection{Iwahori-Whittaker datum}\label{section:iwahoriwhittakerdatum}
Let $\f\in\mathcal{A}(G,S,K)$ be a special vertex, which we will use as the origin. There is a
unique alcove $\a$, which contains
$\f$ in its closure and lies in the Weyl chamber determined by $B$.

Let $\Par$, resp. $\I$, be the parahoric group schemes associated to $\f$, resp. $\a$. The containment
$\f\subset \overline\a$ induces a map $\I\rightarrow\Par$, from which we obtain a map
$\I_k\rightarrow \Par_{k}^\red$ in the special fiber. Its image $\mathsf{b}^+$
is a Borel subgroup of $\Par_{k}^\red$ (see \cite[Theorem 8.4.19]{kalethaPrasad}). Let $\mathsf{b}^-$,
$\mathsf{u}^-$ be its opposite Borel, and the unipotent radical thereof.

\begin{defn}
	A linear functional $\widetilde{\chi}:\mathsf{u}^-\rightarrow \G$ is called \textit{generic}, if its restriction to each simple root subspace is non-trivial.
\end{defn}
The following simple lemma is well known:
\begin{lem}\label{lemma:genericcharacter}
	Given $G,B,S,$ and $\f$ as above, there exist a generic linear functional $\widetilde{\chi}$ of
	$\mathsf{u}^-$.
\end{lem}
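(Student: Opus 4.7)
The plan is to reduce the construction to a statement about the unipotent radical of a Borel in a split reductive group over $k$, where it is standard.

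First, I would observe that since $k$ is algebraically closed, the reductive quotient $\Par_{k}^{\red}$ is automatically split. Choose a split maximal torus $\mathsf{t}$ of $\Par_{k}^{\red}$ contained in $\mathsf{b}^{+}$; then $\mathsf{b}^{-}$ (being opposite to $\mathsf{b}^{+}$) also contains $\mathsf{t}$, so $\mathsf{u}^{-}$ is the unipotent radical of a Borel of the split reductive group $\Par_{k}^{\red}$ adapted to a split maximal torus. Let $\Delta$ denote the set of simple roots determined by $\mathsf{b}^{+}$; for each $\alpha \in \Delta$ one has the root subgroup $U_{-\alpha} \subset \mathsf{u}^{-}$, which, after choosing a pinning, comes equipped with a canonical isomorphism $u_{-\alpha} \colon \Ga \isomto U_{-\alpha}$.

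Next I would invoke the standard structure theory of unipotent radicals in split reductive groups: the abelianization map $\mathsf{u}^{-} \twoheadrightarrow \mathsf{u}^{-}/[\mathsf{u}^{-},\mathsf{u}^{-}]$ identifies the target, as an algebraic group, with the product $\prod_{\alpha \in \Delta} U_{-\alpha}$ of the simple negative root subgroups. Because $\Ga$ is commutative, any homomorphism $\mathsf{u}^{-} \to \Ga$ factors uniquely through this abelianization and is therefore specified by a tuple of homomorphisms $U_{-\alpha} \to \Ga$, one for each simple root. Setting
\[
\widetilde{\chi} \defined \sum_{\alpha \in \Delta} u_{-\alpha}^{-1},
\]
i.e.\ the sum of the inverses of the pinning isomorphisms followed by the canonical addition on $\Ga$, gives a well-defined linear functional $\widetilde{\chi} \colon \mathsf{u}^{-} \to \Ga$.

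Finally I would check genericity directly: the restriction of $\widetilde{\chi}$ to $U_{-\alpha}$ is precisely $u_{-\alpha}^{-1} \colon U_{-\alpha} \isomto \Ga$, which is non-trivial. Since there is essentially no obstacle here — everything reduces to the existence of a pinning on a split reductive group over an algebraically closed field — I do not expect any serious difficulty; the only mild subtlety is to recall that $\Par_{k}^{\red}$ is split, which is automatic over algebraically closed $k$, and that the abelianization of $\mathsf{u}^{-}$ is the product of simple root subgroups, which is standard (see e.g.\ Borel's book or SGA3).
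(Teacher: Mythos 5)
Your construction is exactly the one the paper uses: abelianize $\mathsf{u}^-$, identify the abelianization with the product of simple (negative) root subgroups, identify each with $\Ga$ via a pinning, and sum. The paper states this as a single composite map and asserts genericity, so there is no substantive difference; your write-up is simply a more spelled-out version of the same argument.
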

\begin{proof}
	The linear functional obtained as the composition
	$$\mathsf{u}^-\rightarrow \mathsf{u}^-/\left[\mathsf{u}^-,\mathsf{u}^-\right]\isomto
	\prod_{\text{$\alpha$ simple root}}\mathsf{u}^-_\alpha\isomto
	\prod_{\text{$\alpha$ simple root}}\mathbb{G}_{a,k}\rightarrow\mathbb{G}_{a,k},$$
	where the last arrow is given by the sum, is generic.
\end{proof}

\begin{defn}
Let $a:\mathbb{G}_{a,k}\rightarrow
\mathbb{G}_{a,k}$ be the Artin-Schreier map $x\mapsto x^p-x$.
The choice of a primitive $p$-th root unity in $\Qell$ determines a rank 1 local system, which is a direct
	summand of $a_*\underline\Qell$.
	Such a local system $L_{\mathrm{AS}}$ is called an \textit{Artin-Schreier local
	system}.
\end{defn}
Artin-Schreier local systems are \textit{character sheaves} in the sense of \cite{yunEndoscopy}. In
particular, they satisfy:
$$m^*L_{\mathrm{AS}}\cong L_{\mathrm{AS}}\boxtimes
L_{\mathrm{AS}},\quad e^*L_{\mathrm{AS}}\cong\Qell$$
where $m$, $e$ denote the multiplication and the unit map respectively.
Moreover, they have vanishing cohomology:
$$R\Gamma(L_{\mathrm{AS}},\mathbb{G}_{a,k})=0.$$
Finally, by putting it all together we define:
\begin{defn}
	Let $G,B,S,$ and $\f$ be as above.
	The pair $(\widetilde\chi,L_{\mathrm{AS}})$ of a generic linear functional
	$\widetilde\chi$ of $\mathsf{u}^-$,
	and an Artin-Schreier local system $L_\AS$ is called an \textit{Iwahori-Whittaker datum}.
\end{defn}
	Let $\Iop$ be the Iwahori group scheme associated to the alcove $\a\op$, opposite to
	$\a$ with respect to $B$. Given an Iwahori-Whittaker datum $(\widetilde\chi,L_{\mathrm{AS}})$, we get an induced character $\chi$ of the pro-unipotent radical $\Lplus
	\Iop_{u}:=\ker(\Lplus\Iop\rightarrow \I^\red_k)$ by postcomposition.
	We will denote by $\Las$, the pullback $\chi^*L_{\mathrm{AS}}\in\Dbc(\Lplus\Iop_{u})$.
	For the rest of the document, we will choose an arbitrary but fixed Iwahori-Whittaker datum
	for each tuple $(G,B,S,\f)$; as we may in light of \cref{lemma:genericcharacter}.
	\subsection{Iwahori-Whittaker sheaves}
\begin{defn}\label{def:iwahoriwhittaker}
	The category of \textit{Iwahori-Whittaker sheaves}
	is defined to be (the homotopy category of) the limit in $\iCat$ of the
	twisted cobar resolution,
	$$\DIW\defined\lim \operatorname{CoBar}(\Lplus\Iop_{u},\Fl{\I},\Las).$$
	Recall that, the twisted cobar resolution is the following cosimplicial diagram:
	\begin{align*}
				\left(\xymatrix{
	\Dbc(\Fl{\I})
		\ar@<0.5ex>[r]^{a^*\phantom{hhhh}}
		\ar@<-0.5ex>[r]_{\Las\boxtimes-\phantom{hhhhhh}}
		&
	\Dbc(\Lplus\Iop_{u} \x \Fl{\I})
	\ar@<-1ex>[r]
	\ar[r]
	\ar@<1ex>[r]
	&
	\Dbc(\Lplus\Iop_{u} \x \Lplus\Iop_{u} \x \Fl{\I})
	 \ar@<-1.5ex>[r] \ar@<-0.5ex>[r] \ar@<0.5ex>[r] \ar@<1.5ex>[r]
		&
	\dots
	}\right)
	\end{align*}
	in which each pullback by a projection morphism in the usual cobar resolution is twisted via
	the corresponding box product with $\Las$.
\end{defn}
Next, we will observe that Iwahori-Whittaker equivariance is a condition, rather than a datum.
This observation hinges on the fact that $\Lplus\Iop_u$, as a pro-algebraic group, is an infinite succesive extension of
vector groups. More precisely, following \cite{richarzModuliShtuka}, we will call a pro-algebraic group $G$ \textit{split
pro-unipotent}, if it admits a presentation $\lim_{i\in\N} G_i$ such that for all $i\in \N$,
$\ker(G_{i+1}\rightarrow G_i)$ is a vector group.
\begin{lem}
The pro-algebraic group $\Lplus\Iop_u$ is split
	pro-unipotent.
\end{lem}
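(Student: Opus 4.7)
The plan is to exhibit an explicit presentation of $\Lplus\Iop_u$ as an inverse limit with vector group quotients, by combining the level filtration coming from $t$-adic congruence with a refinement at the bottom using the root group decomposition of the Iwahori.

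First, I would use the truncations $\Lplus_n\Iop = \Iop(R[t]/(t^{n+1}))$ to write $\Lplus\Iop = \lim_n \Lplus_n\Iop$ as in \cref{subsec:loopgroups}, and define $H_n := \ker(\Lplus_n\Iop \to \I^\red_k)$, where the map is induced by reduction mod $t$ followed by projection onto the maximal reductive quotient. Since limits commute with kernels, we get $\Lplus\Iop_u \cong \lim_n H_n$. To put this in the form $\lim_i G_i$ with vector kernels, it suffices to identify each transition kernel $\ker(H_n \to H_{n-1})$ for $n \geq 1$, and to separately filter the base $H_0$.

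For $n \geq 1$, the transition kernel equals $\ker(\Lplus_n\Iop \to \Lplus_{n-1}\Iop)$, which for a smooth affine group scheme over $O_K$ is canonically isomorphic (via the standard exponential-type identification available for $n\geq 1$ since the square of the ideal $(t^n)/(t^{n+1})$ is zero) to the vector group associated to $\Lie(\I_k)\otimes_k t^n k/t^{n+1}k$. This takes care of all levels $n \geq 1$.

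The remaining step is to filter the base $H_0 = \ker(\I_k \to \I^\red_k)$, which is the unipotent radical of the special fiber of $\Iop$. This is a smooth connected unipotent group, and the Bruhat-Tits theory provides a product decomposition (as a scheme) of $\I_k$ in terms of the affine root subgroups determined by the alcove $\a\op$; each such subgroup is isomorphic to $\mathbb{G}_{a,k}$. Ordering the affine roots so that sums of roots precede their summands (which is possible because the set of roots contributing to the unipotent radical is finite and the induced filtration on a nilpotent Lie algebra is realizable by a normal series), one gets a finite filtration of $H_0$ by normal subgroups whose successive quotients are all isomorphic to $\mathbb{G}_{a,k}$. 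Splicing this finite filtration on $H_0$ together with the tower $H_n \to H_{n-1}$ from the previous paragraph yields the desired presentation $\Lplus\Iop_u \cong \lim_i G_i$ with each $\ker(G_{i+1}\to G_i)$ a vector group.

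The main obstacle is the splitting of the base $H_0$: one must justify that the product decomposition via root subgroups yields a filtration by \emph{normal} subgroups with $\mathbb{G}_a$ quotients, rather than merely a set-theoretic factorization. This is the content of the standard commutator relations between affine root subgroups inside the Iwahori, and once these are invoked, the remaining steps are formal manipulations of inverse limits.
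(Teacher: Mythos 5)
Your proof is correct, but it takes a genuinely different route from the paper. The paper's argument is shorter and more structural: it sets $U := \ker(\Lplus\Iop \to \Iop_k)$ (the congruence part), cites \cite[Proposition A.4.9]{richarzModuliShtuka} for the fact that $U$ is split pro-unipotent, observes that $\Lplus\Iop_u$ is an extension of the unipotent radical $\operatorname{R}_u(\Iop_k)$ (a finite-type connected unipotent group over the algebraically closed field $k$, hence automatically split) by $U$, and concludes by noting that an extension of a split unipotent group by a split pro-unipotent one is again split pro-unipotent. You instead rebuild the whole filtration by hand: the congruence tower $\Lplus_n\Iop \to \Lplus_{n-1}\Iop$ is identified with $\Lie(\I_k)\otimes (t^n)/(t^{n+1})$ via the exponential-type isomorphism for square-zero extensions (this is essentially what the cited proposition encapsulates), and the base $H_0 = \operatorname{R}_u(\Iop_k)$ is filtered using the affine root group decomposition and commutator relations. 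Your refinement of $H_0$ is more concrete but also more work than needed — over an algebraically closed (indeed, any perfect) field, a smooth connected unipotent group is always split, so you could invoke this general fact instead of the Bruhat-Tits root group structure. The upshot is that your approach makes the filtration fully explicit (potentially useful elsewhere), while the paper's approach factors cleanly through a cited reference and a one-line extension argument.
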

\begin{proof}
	Denote by $U$, the kernel of the map $\Lplus\Iop\rightarrow
	\Iop_k$
	induced by setting $t=0$. By \cite[Proposition A.4.9]{richarzModuliShtuka}, $U$ is split
	pro-unipotent. By definition,
	$\Lplus\Iop_u$ is the extension of the unipotent radical $\operatorname{R}_u(\Iop_k)$,
	by $U$. This conludes the proof after the straightforward verificiation that
	extension of a unipotent group,  by a split pro-unipotent one is
	again split pro-unipotent.
	\end{proof}
\begin{lem}\label{lem:conditionnotdatum}
	The natural pullback functor $p^*:\DIW\rightarrow \Dbc(\Fl{\I})$ is fully faithful.
	Moreover, the essential image is given by objects $F\in\Dbc(\Fl{\I})$ such that
	$$a^*F\cong \Las\boxtimes F,$$
	where $a:\Lplus\Iop_u\times\Fl{\I}\rightarrow \Fl{\I}$ denotes the action map.
\end{lem}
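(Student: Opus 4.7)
The plan is to leverage the split pro-unipotent structure of $\Lplus\Iop_u$ established in the previous lemma, together with the vanishing of cohomology for nontrivial Artin-Schreier local systems, to show that equivariance with respect to $(\Lplus\Iop_u,\Las)$ is a property rather than a datum. The key computation is the vanishing
\[
R\Gamma(\Ga, L_\AS) = 0,
\]
which implies $R\Gamma(V, \psi^* L_\AS) = 0$ for any vector group $V$ and any nontrivial linear functional $\psi:V\to\Ga$. Genericity of $\chi$ ensures its restriction to each vector group kernel in the pro-unipotent filtration is nontrivial, so this vanishing will apply at every stage of the filtration.

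First I would unwind the limit $\DIW = \lim \operatorname{CoBar}(\Lplus\Iop_u, \Fl{\I}, \Las)$ as the category of descent data: an object consists of $F \in \Dbc(\Fl{\I})$ together with an isomorphism $\theta : a^* F \isomto \Las \boxtimes F$ satisfying higher cocycle identities on iterated products of $\Lplus\Iop_u$. The morphism space between two such $(F,\theta_F)$ and $(G,\theta_G)$ is then computed as the totalization of the cosimplicial $\Hom$-complex arising from the cobar diagram, and full faithfulness of $p^*$ amounts to showing that this totalization reduces to $\Hom_{\Dbc(\Fl{\I})}(F,G)$ in degree zero.

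Next, write $\Lplus\Iop_u \cong \lim_n H_n$ with each kernel $V_n := \ker(H_{n+1} \to H_n)$ a vector group on which $\chi$ restricts nontrivially. The cosimplicial diagram for $\DIW$ is then itself a limit of cosimplicial diagrams associated to the finite-dimensional groups $H_n$, so it suffices to treat a single vector group $V$ acting with a nontrivial character $\psi$. In this finite-dimensional setting, the cobar Hom-complex is controlled by $R\Gamma(V^{\times \bullet}, \psi^* L_\AS \boxtimes \mathscr{H}om(F,G))$, whose normalized totalization collapses by the Artin-Schreier vanishing (applied factorwise via Künneth). For the essential image: given $F$ satisfying $a^*F \cong \Las \boxtimes F$, the obstructions to extending a chosen isomorphism $\theta$ to higher coherence data lie in the same vanishing cohomology groups, hence can be uniquely trivialized, producing an object of $\DIW$ lifting $F$.

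The main obstacle is organizing the inductive passage from a single $V_n$ to the full pro-unipotent limit in a way compatible with the $(\infty,1)$-categorical formulation of $\DIW$; in particular, one must verify that the inverse system of equivariant categories has vanishing $\lim^1$-type obstructions so that an equivariance structure for $H_n$ extends uniquely to $H_{n+1}$ at every step, and that this extension is compatible with the ind-structure on $\Fl{\I}$ via restriction to each orbit closure $X_i$. Once this bookkeeping is carried out, both assertions of the lemma follow simultaneously: full faithfulness from collapse of the $\Hom$-totalization, and the essential image description from uniqueness of higher coherence data.
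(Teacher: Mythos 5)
Your proposal re-derives the full faithfulness from scratch via the cobar totalization, which is a reasonable alternative to the paper's route of citing \cite[Proposition 2.2.11]{richarzModuliShtuka} (for motivic sheaves on quotients by split pro-unipotent groups) and transferring the argument to the \'etale setting. However, you misidentify the mechanism that makes the $\Hom$-complex collapse, and this is a genuine gap. In the cobar computation of $\Hom_\DIW\bigl((F,\theta_F),(G,\theta_G)\bigr)$, the twists by $\Las$ \emph{cancel}: at level one one has
$$\Hom(a^*F,a^*G)\cong\Hom(\Las\boxtimes F,\Las\boxtimes G)\cong\Hom(\underline{\Qell}\boxtimes F,\underline{\Qell}\boxtimes G),$$
since $\Las$ is an invertible rank-one local system and the same twist appears on both sides; the analogous cancellation happens at every simplicial level. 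The resulting totalization is therefore the \emph{untwisted} cobar complex, computed by $R\Gamma\bigl((\Lplus\Iop_u)^{\times\bullet},\underline{\Qell}\bigr)\otimes R\Hom(F,G)$, and the collapse is driven by homotopy ($\mathbb{A}^1$-)invariance, i.e.\ $R\Gamma(V,\underline{\Qell})\cong\Qell$ for a vector group $V$ --- not by the Artin--Schreier vanishing $R\Gamma(\Ga,L_\AS)=0$ that you take as the key input. The same cancellation governs the obstruction groups for extending a level-one isomorphism $\theta$ to full descent data, so the essential-image description likewise does not need the character to be nontrivial.

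Concretely, this means your invocation of genericity of $\chi$ is a red herring here: full faithfulness of $p^*$ and the essential-image description hold for \emph{any} character, including the trivial one (in which case $\DIW$ would simply be $\Dbc(\Lplus\Iop_u\backslash\Fl{\I})$). The Artin--Schreier vanishing and genericity are the engine of later results in the paper --- most visibly the classification of which orbits carry a nontrivial Iwahori--Whittaker local system and the behaviour of the averaging functor --- but play no role in this lemma. The inductive passage through the split pro-unipotent filtration that you flag as the ``main obstacle'' is precisely what the cited proposition handles, using only $\mathbb{A}^1$-invariance and relative purity; once you replace the AS vanishing by $\mathbb{A}^1$-invariance as the per-stage input, your outline becomes a correct (if more laborious) re-proof of that citation.
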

\begin{proof}
	To prove the fully faithfulness, it suffices to prove pullbacks for torsors of split pro-unipotent groups are fully
	faithful. This is proven in \cite[Proposition 2.2.11]{richarzModuliShtuka} for the
	categories of motivic sheaves, $\DM(-)$. However, the proof only depends on abstract
	properties of the six functor formalism such as $\mathbb{A}^1$-invariance and relative
	purity; which are shared with the categories of \'etale $\Qell$-sheaves we use.
	Therefore, their proof applies in our setting aswell.

	In light of the fully faithfullness of $p^*$, the identification of the essential image
	is immediate from the definition of $\DIW$.
	\end{proof}
We will call sheaves in the essential image of $p^*$, \textit{Iwahori-Whittaker equivariant
sheaves}.
\subsection{Perverse t-structure on Iwahori-Whittaker sheaves}
Through its inclusion in $\Dbc(\Fl{\I})$,
the category $\DIW$ inherits a t-structure;
which will also be dubbed the perverse t-structure. To prove this, it suffices to
observe that Iwahori-Whittaker equivariance is stable under perverse truncations.
Before moving on to the proof, we outline an alternate characterization of
Iwahori-Whittaker sheaves along the lines of \cref{subsec:sheaves}.

Let $\colim_i X_i$ be a presentation of $\Fl{\I}$ as a strict ind-scheme, in which each $X_i$ is stable under the action of
$\Lplus\Iop_u$. For each $X_i$,
we can define the category of Iwahori-Whittaker equivariant sheaves $\DIWX(X_i)$ by replacing
$\Fl{\I}$ with $X_i$ in \cref{def:iwahoriwhittaker}, and observe the
analogue of \cref{lem:conditionnotdatum} holds. In particular, $\DIWX(X_i)$ is equivalent to the
full subcategory of sheaves in $\DIW$ with support in $X_i$.
As each sheaf in $\Dbc(\Fl{\I})$ is also supported in some $X_i$ \eqref{eq:sheafdefinition}, this implies
\begin{equation}\label{eq:xwhittaker}
\DIW\cong\colim_i\DIWX(X_i).
\end{equation}
\begin{prop}
	The full subcategory $p^*:\DIW\rightarrow \Dbc(\Fl{\I})$ is stable under
	perverse truncations $^p\tau^{\leq 0}$ and $^p\tau^{\geq 0}$, therefore inherits the perverse $t$-structure.
\end{prop}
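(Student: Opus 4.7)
The plan is to show directly that for any $F \in \Dbc(\Fl{\I})$ satisfying the equivariance condition $a^*F \cong \Las \boxtimes F$ of \cref{lem:conditionnotdatum}, the perverse truncations ${}^p\tau^{\leq 0} F$ and ${}^p\tau^{\geq 0}F$ satisfy the same condition. Granted this, the inclusion $p^* : \DIW \hookrightarrow \Dbc(\Fl{\I})$ inherits the perverse $t$-structure by restriction.

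First I would reduce to a finite-dimensional situation. Using the presentation $\Fl{\I} = \colim_i X_i$ with each $X_i$ stable under $\Lplus\Iop_u$, and the identification \eqref{eq:xwhittaker} $\DIW \cong \colim_i \DIWX(X_i)$, together with the compatibility of perverse truncations with closed immersions, it suffices to check the claim on each $X_i$. On such an $X_i$ the action factors through a finite type quotient of $\Lplus\Iop_u$; since the character $\chi : \Lplus\Iop_u \to \Ga$ is a linear functional on a split pro-unipotent group, it likewise factors through some $\Lplus_n\Iop_u$. After passing to a sufficiently large common quotient, I may replace $\Lplus\Iop_u$ by a smooth connected affine group $G$ of finite dimension $d$, and $\Las$ by a rank one local system $L$ on $G$, so that the equivariance condition on $X_i$ becomes $a^*F \cong L \boxtimes F$ in $\Dbc(G \times X_i)$.

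Next I would observe that both $a, pr : G \times X_i \to X_i$ are smooth of relative dimension $d$, so that the functors $a^*[d]$ and $pr^*[d]$ are perverse $t$-exact. Tensoring with the pulled-back rank one local system $pr_G^* L$ is $t$-exact for the perverse $t$-structure, so the functor $L \boxtimes (-) = pr^*(-) \otimes pr_G^* L$ is also $t$-exact after a shift by $[d]$. Consequently both functors $a^*$ and $L \boxtimes (-)$ commute with the truncation ${}^p\tau^{\leq -d}$:
\begin{equation*}
{}^p\tau^{\leq -d}(a^*F) \;\cong\; a^*({}^p\tau^{\leq 0}F), \qquad {}^p\tau^{\leq -d}(L \boxtimes F) \;\cong\; L \boxtimes {}^p\tau^{\leq 0}F.
\end{equation*}
Applying ${}^p\tau^{\leq -d}$ to the isomorphism $a^*F \cong L \boxtimes F$ then yields $a^*({}^p\tau^{\leq 0}F) \cong L \boxtimes {}^p\tau^{\leq 0}F$, establishing equivariance of ${}^p\tau^{\leq 0}F$, and the argument for ${}^p\tau^{\geq 0}$ is identical.

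There is no genuine obstacle in this proof; the only step requiring care is the initial reduction to a finite-dimensional quotient through which both the action and the character $\chi$ factor, which is standard for split pro-unipotent groups. Once the argument is finite-dimensional, the key calculation amounts to the two parallel smooth morphisms $a, pr$ having the same relative dimension and $L$ being a local system, so that the two sides of the equivariance condition are manipulated by functors with identical $t$-exactness properties.
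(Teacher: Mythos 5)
Your proof is correct, and it takes a genuinely different route from the paper's. The paper proves the statement by introducing the $!$-averaging functor $\mathrm{av^!_w}(F)=m_!(F\boxtimes\Las)[\dim]$, establishes that it characterizes Iwahori-Whittaker equivariance ($\mathrm{av^!_w}(F)\cong F$ iff $F$ is equivariant), shows it is perverse left $t$-exact because $m$ is affine, and then argues that $\tau^{\geq 0}F$ is equivariant via a chain of isomorphisms involving $\mathrm{av^!_w}$; stability under $\tau^{\leq 0}$ is then deduced from boundedness. Your argument sidesteps the averaging functor entirely: after reducing to a finite type quotient $G$ of $\Lplus\Iop_u$ acting on a fixed $X_i$ (through which $\chi$ also factors), you observe that $a^*[d]$ and $pr^*(-)[d]\otimes pr_G^*L$ are \emph{both} perverse $t$-exact, because $a$ and $pr$ are smooth of the same relative dimension $d$ and tensoring by a rank one local system is $t$-exact; hence both sides of the equivariance isomorphism $a^*F\cong\Las\boxtimes F$ commute with $\tau^{\leq -d}$ (resp.\ $\tau^{\geq -d}$), and the isomorphism passes functorially to truncations, treating $\tau^{\leq 0}$ and $\tau^{\geq 0}$ symmetrically. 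Your route is more elementary and self-contained, requiring only the standard $t$-exactness of shifted smooth pullbacks rather than the one-sided $t$-exactness of an affine pushforward; the paper's route has the advantage of already constructing the object-level averaging functor, which serves as a stepping stone toward the global averaging functor $\aviw$ defined shortly after. Both are valid proofs.
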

\begin{proof}
		By \eqref{eq:xwhittaker}, it suffices to check this for the inclusion
	$$p^*:\DIWX(\Fl{\I,\leq w})\rightarrow \Dbc(\Fl{\I,\leq w})$$
	for every $w\in W$.

	Before proceeding, we need a quick lemma.
	Denote by $n_w \geq 0$ a natural number for which the action of
	$\Lplus\Iop_u$ on $\Fl{\I,\leq w}$ factors through
	$\Lplus_{n_w}\Iop_u:=\Lplus_{n_w}\Iop\cap\Lplus\Iop_u$.
	\begin{lem}\label{lem:objectwiseav}
		Denote by $m:\Lplus_{n_w}\Iop_u\times\Fl{\I,\leq w}\rightarrow \Fl{\I,\leq w}$ the
		multiplication. The functor
	\begin{gather*}
		\mathrm{av^!_w}:\Dbc(\Fl{\I,\leq w})\rightarrow \Dbc(\Fl{\I,\leq w}),\\
		F\mapsto
		m_!\left(\left.F\boxtimes\Las\right|_{\Lplus_{n_w}\Iop_u}\right)[\dim(\Lplus_{n_w}\Iop_u)]
	\end{gather*}
	satisfies the following properties:
	\begin{itemize}
		\item[(1)] The essential image of $\mathrm{av_w}$ lands in $\DIWX(\Fl{\I,\leq w})$;
		\item[(2)] The functor $\mathrm{av^!_w}$ is perverse left t-exact;
		\item[(3)] There is an isomorphism $\mathrm{av^!_w}(F)\cong F$ if and only if $F$ is Iwahori-Whittaker equivariant.
	\end{itemize}
	\end{lem}
	\begin{proof}
		Property (1) follows using projection formula together with the multiplicative structure
		$m^*\Las\cong \Las\boxtimes\Las$ of the Artin-Schreier local system. Property
		(2) follows from the fact that $m$ is an affine morphism. Finally, property (3)
		follows by replacing $\left.F\boxtimes\Las\right|_{\Lplus_{n_w}\Iop_u}$ with
		$pr^*F$ using the equivariance.
	\end{proof}
	Given an Iwahori-Whittaker sheaf $F$, \cref{lem:objectwiseav} provides a chain of isomorphisms
	$${}^p\tau^{\geq 0}(F){\hspace{0.1cm}\stackrel{\text{\tiny \rm (3)}}{\cong}\hspace{0.1cm}} {}^p\tau^{\geq 0}\left(\mathrm{av^!_w}(F)\right){\hspace{0.1cm}\stackrel{\text{\tiny \rm (2)}}{\cong}\hspace{0.1cm}}
	\mathrm{av^!_w}\left({}^p\tau^{\geq 0}(F)\right).$$
	Appealing once again to \cref{lem:objectwiseav}.(3), this implies ${}^p\tau^{\geq 0}(F)$ is
	Iwahori-Whittaker equivariant.

	Finally, stability under ${}^p\tau^{\leq 0}$ can be proven from the stability under
	${}^p\tau^{\geq 0}$ via a straightforward induction argument, using that the perverse
	t-structure is bounded.
	\end{proof}
\subsection{Simple Iwahori-Whittaker perverse sheaves}
	Let $\PIW$ be the heart of the perverse $t$-structure inherited by $\DIW$.
	We proceed by classifying the simple objects in $\PIW$. Note that, $\Lplus\Iop_u$-orbits
	in $\Fl{\I}$ coincide with $\Lplus\Iop$-orbits, which are also enumareted by $W$,
	and their dimensions are described by the length function.
	For any $w\in W$, denote by $w_s$ the shortest length element in the set
	$W_{\mathsf{fin}}\cdot w$ of its left finite Weyl group conjugates. Consider $j_{w_s}\op:\Fl{w_s}\op\rightarrow \Fl{\I}$, the inclusion of the
	$\Lplus\Iop$-orbit associated to
	$w_s$.
	In the next proposition, we prove such orbits are the only ones which support a
	non-trivial Iwahori-Whittaker equivariant local system.
\begin{prop}\label{orbit:prop}
	The $\Lplus \Iop_u$-orbit associated to $w\in W$, $\Fl{\I,w}\op$, admits a non-trivial Iwahori-Whittaker
	local system if and only if, $w$ is of minimal length
	in  $\Wfin\cdot w$. Moreover, in this case, there is a unique non-trivial simple local system up to isomorphism.
\end{prop}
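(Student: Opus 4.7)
The strategy is to reduce the existence question for an Iwahori--Whittaker local system on the orbit $\Fl{\I,w}\op$ to the triviality of $\chi$ on the stabilizer of $\dot w$ in $\Lplus\Iop_u$, and to then read off this triviality combinatorially via the affine root datum.

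First, I would identify the $\Lplus\Iop_u$-orbit $\Fl{\I,w}\op$ with the coset space $\Lplus\Iop_u/K_w$, where $K_w := \Lplus\Iop_u \cap \dot w \Lplus\I \dot w\inv$. Mirroring the proof of \cref{lem:conditionnotdatum}, an Iwahori--Whittaker equivariant structure on a local system $F$ on the orbit corresponds to an identification of the pullback of $\Las$ along the orbit map $h\mapsto h\cdot\dot w$ with the trivial local system on each stabilizer fibre. Specializing to $\dot w$, such an identification exists if and only if $\Las|_{K_w}$ is trivial, i.e.\ $\chi|_{K_w} = 0$ as a homomorphism to $\Ga$.

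Next, I would translate the condition $\chi|_{K_w} = 0$ into affine roots. By construction $\chi$ is pulled back from the generic character $\widetilde\chi$ of $\mathsf{u}^-$, so it is non-trivial exactly on the root subspaces attached to affine roots of the form $-\alpha_i$, where $\alpha_i$ ranges over the simple finite roots of $G_k$, and is zero on all other affine root subspaces appearing in $\Lplus\Iop_u$. On the other hand, an affine root subspace $\mathfrak{g}_\beta \subset \Lie(\Lplus\Iop_u)$ lies in $\Lie(K_w)$ precisely when $w\inv(\beta)$ is an affine root appearing in $\Lplus\I$. Combining these two observations, $\chi|_{K_w}$ is trivial if and only if for every simple finite root $\alpha_i$ one has $w\inv(-\alpha_i)$ not in $\Lplus\I$, equivalently $w\inv(\alpha_i)$ is a positive affine root for the alcove $\a$.

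This latter condition is the standard characterization of $w$ being of minimal length in the left coset $\Wfin \cdot w$: it is equivalent to $\ell(s_iw) > \ell(w)$ for every simple reflection $s_i \in \Wfin$, by the exchange lemma for the extended affine Weyl group, using the decomposition \eqref{connectedcomponent} and the fact that $\Omega_\a$ acts by length-preserving automorphisms of $\Waff$. This establishes the first assertion. For uniqueness, when $\chi|_{K_w}$ vanishes, $K_w$ is a closed subgroup of the split pro-unipotent group $\Lplus\Iop_u$, and is itself connected since in root-theoretic terms it is an ordered product of affine root subgroups; consequently $\pi_0(K_w)$ is trivial, and equivariant descent produces a unique simple rank one Iwahori--Whittaker local system on the orbit up to isomorphism.

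The main obstacle will be the middle step: one has to carefully reconcile the sign conventions for affine roots attached to $\I$ and $\Iop$, identify which affine root subspaces genuinely occur in $\Lplus\Iop_u$ as opposed to in $\Lplus\Iop$ (the torus direction is removed), and verify that the rewriting of the stabilizer condition as a length inequality survives the passage from the Coxeter group $\Waff$ to the quasi-Coxeter group $W$. Once these bookkeeping points are handled uniformly for tamely ramified $G$, the rest of the argument is formal.
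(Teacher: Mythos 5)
Your proposal follows essentially the same route as the paper's proof: both reduce the existence of a non-trivial Iwahori--Whittaker local system on $\Fl{\I,w}\op$ to the vanishing of $\chi$ on the stabilizer of $\dot w$ in $\Lplus\Iop_u$, translate that vanishing into the statement that no simple negative affine root subspace of $\Lplus\Iop_u$ lands in $\dot w\Lplus\I\dot w^{-1}$, and then match the resulting condition on $w\inv(\alpha_s)$ with minimal length in $\Wfin\cdot w$ via the exchange lemma and the fact that $\Omega_\a$ preserves length. Your affine-root bookkeeping is expressed via Lie algebra root subspaces while the paper uses Bruhat--Tits filtration subgroups $U_\psi$ (and routes through an intermediate condition on the walls of $w(\a)$), and you make the connectedness-of-$K_w$ argument for uniqueness explicit where the paper treats it implicitly, but these are cosmetic differences rather than a genuinely distinct approach.
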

\begin{proof}
	First we prove a few lemmas.
	\begin{lem}
	Let $\operatorname{Stab}(w)\subset\Lplus\Iop(k)$ be the stabilizer of the $k$-point
	$\dot{w}\in \Fl{\I}$. The orbit $\Fl{\I,w}\op$ admits a
	non-trivial Iwahori-Whittaker local system if and only if
	$\ker(\chi)\supset\operatorname{Stab}(w)$.
	\end{lem}
	\begin{proof}
	If $\ker(\chi)\supset \operatorname{Stab}(w)$, then
	the pullback of $\Las$ under the induced morphism
	$$\Fl{\I,w}\op\cong
	\Lplus\Iop_u/\operatorname{Stab}(w)\rightarrow\Lplus\Iop_u/\ker(\chi),$$
        is a non-trivial Artin-Schreier local system on $\Fl{\I,w}\op$.
	On the other hand, existence of such a non-trivial local
	system forces $\chi(\operatorname{Stab}(w))$ to a point, which is necessarily $0$ as
	$\operatorname{Stab}(w)$ contains the identity.
	\end{proof}
	For the purpose of proving \cref{orbit:prop}, we may then check that the minimality of the length of $w$ among its $\Wfin$-orbit is equivalent to the vanishing of $\operatorname{Stab}(w)$ under $\chi$.
	First, note that
	  \cite[Lemma 8.4.4]{kalethaPrasad}, together with the definition of
	$\chi$ implies; if for every positive root
	$a\in\posroot$, the containment
	\begin{align}\label{containment}
	U_{-a}(K)\cap \operatorname{Stab}(w)(k)
	\subset U_{-a}(K)_{0+}
	\end{align}
	holds, then $\ker(\chi)\supset \operatorname{Stab}(w)$. Here,
	 $$U_{-a}(K)_{0+}\defined \underset{\substack{\nabla\psi=-a \\ \psi(\a)>0}}\bigcup U_\psi$$
	is the union of certain filtration subgroups, indexed by affine functionals on the apartment associated to $S$.
	These subgroups are determined by the choice of vertex $x$.
	For details, see \cite[Section 13]{kalethaPrasad}.

        Meanwhile, for a positive root $a\in\posroot$, the fact $\operatorname{Stab}(w)=\Lplus I\op(k)\cap \Lplus I_{w(\a)}$ implies that
	\begin{align*}
		&U_{-a}(K)\cap \operatorname{Stab}(w)(k)=\underset{\substack{\nabla\psi=-a \\
		\psi\left(w_0(\a)\right),\psi\left(w(\a)\right)\subset \R_{\geq 0}}}\bigcup U_\psi;\\
	\end{align*}
        as can easily be verified from the definition of parahoric subgroups
	\cite[Definition 7.4.1]{kalethaPrasad}. From this description, it is straightforward to
	observe that the {containment \eqref{containment}} holds if and only if the following condition holds:
	\begin{align}\label{condition}
		\text{For every wall $d\subset w(\a)$, $d+a$ is the hyperplane associated to a positive affine root.}
	\end{align}
	For a simple reflection $s\in\Wfin$, denote by $\alpha_s$ the corresponding positive
	affine root.
	\begin{lem}
		The condition \eqref{condition} holds for every $a\in\posroot$
		if and only if the affine root $w(\alpha_s)$ is positive for every simple
		reflection $s\in\Wfin$.
	\end{lem}
	\begin{proof}
	Suppose there
	exist a simple reflection $s\in\Wfin$, for which $w(\alpha_s)$ is not positive.
		Denote by $d_s$, the wall of
	$\a$ corresponding to $\alpha_s$. Then $w(d_s)+\alpha_s$ is the hyperplane
	associated to the non-positive affine root $w(\alpha_s)+\alpha_s$. The other implication
	follows similarly.
	\end{proof}
	Finally, for a simple reflection $s\in \Wfin$, the positivity of $w(\alpha_s)$ is equivalent to the
	inequality $\ell(sw)\geq\ell(w)$, concluding the proof of \cref{orbit:prop}.
\end{proof}
 	We will identify the set of left $\Wfin$-orbits on $W$ by $\cham$, using the
	splitting ${W\cong\cham\rtimes \Wfin}$ from \cref{subsection:coweights}.
	For $\lambda\in\cham$, denote by $L_\lambda$ the unique non-trivial rank 1 Iwahori-Whittaker
	equivariant local system on $\Fl{\lambda}\op:=\Fl{(t^\lambda)_s}\op$. We may
	then define the standard and
	costandard Iwahori-Whittaker equivariant sheaves as
	\begin{align}\label{def:stdcostdiw}
	\Delta_\lambda^{\IW}\defined
		(j_{(t^\lambda)_s}\op)_*L_\lambda[\ell((t^\lambda)_s)],\quad \nabla_\lambda^{\IW}\defined
	(j_{(t^\lambda)_s}\op)_!L_\lambda[\ell((t^\lambda)_s)]
	\end{align}
	which are in $\PIW$. Consequently, we obtain all simple
	objects of $\PIW$, as the images $\ICIW_\lambda$ of the
	natural morphisms from standard objects to costandard objects.
\subsection{Highest weight structure}
	We can use this collection of objects \eqref{def:stdcostdiw} to equip
	$\PIW$ with the structure of a highest weight category in
	the sense of \cite[Subsection 1.12.3]{baumannRiche}.
	\begin{prop}\label{prop:quotientbruhat}
		Let $\leq_{\mathrm{Bru}}$ be the  Bruhat order on $\cham\cong\Wfin\backslash W$
		from \cref{subsection:coweights}. The category $\PIW$ is a highest weight
		category for the collection of standard and costandard
		objects \eqref{def:stdcostdiw} associated to the poset $(\cham,\leq_{\mathrm{Bru}})$.
\end{prop}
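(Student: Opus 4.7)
The plan is to verify the defining axioms of a highest weight category as in \cite[\textsection 1.12.3]{baumannRiche} by leveraging the $\Lplus\Iop_u$-orbit stratification of $\Fl{\I}$, the fact that each Iwahori-Whittaker-relevant orbit is an affine space, and standard recollement techniques. The simple objects of $\PIW$ are identified with the family $\{\ICIW_\lambda\}_{\lambda\in\cham}$ via \cref{orbit:prop} combined with the usual correspondence between simple perverse sheaves and pairs consisting of a stratum together with an irreducible local system; since each $L_\lambda$ is rank one, one has $\End(\ICIW_\lambda)\cong\Qell$. That $\stdiw_\lambda$ and $\costdiw_\lambda$ lie in $\PIW$ is routine: perversity holds because each inclusion $j_{(t^\lambda)_s}^{\op}$ is affine, so by \cite[Corollaire 4.1.3]{BBDG} both $!$- and $*$-pushforwards of a local system shifted by the stratum dimension are perverse; Iwahori-Whittaker equivariance persists under push/pull along $\Lplus\Iop_u$-equivariant inclusions.

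The technical heart is the Hom/Ext computation
\[
\Hom_{\DIW}(\stdiw_\lambda,\costdiw_\mu[i])\;\cong\;
\begin{cases}\Qell, & \lambda=\mu \text{ and } i=0;\\ 0, & \text{otherwise.}\end{cases}
\]
By the full faithfulness of $p^*$ from \cref{lem:conditionnotdatum}, this can be computed inside $\Dbc(\Fl{\I})$. Combining the standard adjunctions among $j_!$, $j^*$, $j_*$, $j^!$ with the observation that the $!$-pushforward of a sheaf along a locally closed immersion has vanishing stalks off the image reduces the case $\lambda\neq\mu$ to a Hom out of a zero sheaf, hence zero; in the diagonal case $\lambda=\mu$ the computation reduces to $H^i(\Fl{(t^\lambda)_s}^{\op},\Qell)\cong H^i(\mathbb{A}^{\ell((t^\lambda)_s)},\Qell)$, which is $\Qell$ concentrated in degree zero.

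The filtration axiom then follows from a composition-factor analysis: $\stdiw_\lambda$ and $\costdiw_\lambda$ are perverse sheaves supported on the orbit closure $\overline{\Fl{(t^\lambda)_s}^{\op}}$, whose Iwahori-Whittaker-relevant orbits are indexed by $\{\mu\in\cham:\mu\leq_{\mathrm{Bru}}\lambda\}$ by \cref{rmk:quotientbruhat} combined with the equivalent description in \cref{eq:quotientbruhat}. Since the restriction of either object to the open stratum is $L_\lambda[\ell((t^\lambda)_s)]$, the canonical image $\ICIW_\lambda$ appears with multiplicity one as top (resp.\ socle), while all remaining composition factors are $\ICIW_\mu$ with $\mu<_{\mathrm{Bru}}\lambda$.

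The main obstacle is mostly bookkeeping: the poset $(\cham,\leq_{\mathrm{Bru}})$ is infinite, so the Baumann-Riche axioms must be interpreted via restriction to finite ideals in $\cham$. One must then check that the highest-weight structure on the Serre subcategory of $\PIW$ generated by the simples $\ICIW_\mu$ with $\mu\leq_{\mathrm{Bru}}\nu$ (for each fixed $\nu$) assembles compatibly into a global highest-weight structure on $\PIW$; this is a routine recollement/colimit argument once the Ext-vanishing above is established.
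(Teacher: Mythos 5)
Your proposal takes a genuinely different route from the paper's. The paper's proof is brief: it cites \cite[Theorems 3.2.1, 3.3.1]{BGS} to obtain the highest-weight structure for the $\Lplus\Iop$-orbit-closure partial order, and then spends its effort identifying that order with $\leq_{\mathrm{Bru}}$ by a geometric comparison — the $\Lplus\Iop$-orbit of $(t^\lambda)_s$ is dense in the closure of the associated $\Lplus\Par$-orbit since both have dimension $\ell((t^\lambda)_s)$, so one may invoke the $(\Par,\I)$-Schubert closure relations from \cite[Prop.\ 2.8]{TimoTwistedSchubert}. You instead re-verify the BGS axioms by hand (recollement, affineness of orbit inclusions, the standard Hom/Ext computation via $j_! \dashv j^* \dashv j_*$), and identify the order directly through the minimal-length-representative characterization of the quotient Bruhat order. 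Both work; your route is more self-contained but more verbose, while the paper's is slicker by outsourcing the axiom check. Two small cautions for your version: (a) \cref{eq:quotientbruhat} is stated for Coxeter systems whereas $W$ is only quasi-Coxeter, so the minimal-length characterization you use requires the (easy) reduction to $\Omega_\a$-components — or one simply leans on \cref{rmk:quotientbruhat} and \cite{TimoTwistedSchubert} as the paper does, which your citation ultimately does anyway; (b) the paper's convention in \eqref{def:stdcostdiw} is $\stdiw_\lambda = (j^{\op}_{(t^\lambda)_s})_*L_\lambda[\cdot]$ and $\costdiw_\lambda = (j^{\op}_{(t^\lambda)_s})_!L_\lambda[\cdot]$, the opposite of what your adjunction computation tacitly assumes, so keep the labels straight (the vanishing $\Ext$ statement is of course symmetric in content). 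Your concern about the infinite poset is legitimate but already handled by the Baumann-Riche formalism, which only requires interval-finiteness, satisfied here since each $\{\mu : \mu \leq_{\mathrm{Bru}} \lambda\}$ is finite.
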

\begin{proof}
	For the partial order on $\cham$ given by the $\Lplus\Iop$-orbit
	closures, the analogous statement follows from \cite[Theorem 3.2.1, Theorem
	3.3.1]{BGS}. Then, it remains to identify this partial order with
	$\leq_{\mathrm{Bru}}$.

	For $\lambda\in\cham$, the dimension of the $\Lplus\Iop$-orbit associated to
	$(t^\lambda)_s$, is equal to $\ell((t^\lambda)_s)$.
	As proven in \cite[Proposition 2.8]{TimoTwistedSchubert}, $\ell((t^\lambda)_s)$ also equals
	to the
	dimension of the closure of the associated
	$\Lplus\Par$-orbit of $\lambda$.  Therefore, the natural
	inclusion of the former, in the latter, is dense.
	The assertion then follows from identification of
	orbit closure relations for the $(\Par,\I)-$Schubert varieties worked out also in
	loc. cit.
\end{proof}
Note the following interpretation of the partial order $\leq_{\mathrm{Bru}}$ in terms of representation
theory:
\begin{lem}\label{lem:concrete}
	For a dominant coweight $\lambda\in\chamb$, and an arbitrary coweight $\mu\in\cham$, the
	inequality $\mu\leq_{\mathrm{Bru}}\lambda$ holds if and only if the $\mu$-weight space of the irreducible
	$\Rep(\check G^I)$-representation corresponding to $\lambda$ is non-zero.
\end{lem}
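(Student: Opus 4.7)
The plan is to reduce the statement to the classical characterization of weights of irreducible representations of split reductive groups by the coroot order, combined with combinatorial properties of the Bruhat order on the extended affine Weyl group $W$.

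First I reduce to the case where $\mu$ is dominant. For $\mu \in \cham$, let $\mu^+ \in \chamb$ denote its unique dominant $\Wfin$-conjugate. Since the weights of any irreducible $\check G^I$-representation form a $\Wfin$-stable subset of $\cham$, the $\mu$-weight space of $V(\lambda)$ is nonzero iff the $\mu^+$-weight space is. It then suffices to prove that $\mu \leq_{\mathrm{Bru}} \lambda$ iff $\mu^+ \leq_{\mathrm{Bru}} \lambda$. Adapting \cref{eq:quotientbruhat} to the quasi-Coxeter structure $W \cong \Waff \rtimes \Omega_\a$, the inequality $\mu \leq_{\mathrm{Bru}} \lambda$ is equivalent to the comparison $(t^\mu)_s \leq (t^\lambda)_s$ of shortest-length representatives in $W$. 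The key combinatorial input is the identity $(t^\mu)_s = t^{\mu^+}\cdot v$ for a suitable $v \in \Wfin$, with length identity $\ell((t^\mu)_s) = \ell(t^{\mu^+}) - \ell(v)$; this follows from the Iwahori–Matsumoto length formula for elements $t^{\nu}w \in W$. By the Subword Property in $\Waff$, this identity implies $(t^\mu)_s \leq t^{\mu^+}$ in $W$, yielding one direction. The other direction follows by projecting to the double quotient $\Wfin \backslash W / \Wfin \cong \chamb$, on which the Bruhat order descends.

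Next, when $\mu \in \chamb$, the same length formula yields $(t^\mu)_s = t^\mu$. Hence $\mu \leq_{\mathrm{Bru}} \lambda$ reduces to $t^\mu \leq t^\lambda$ in $W$, which by \cref{lem:ordercomp}(2) is equivalent to $\mu \leq \lambda$ in the coroot order on $\cham$. To conclude, I invoke the classical fact that for two dominant weights $\mu, \lambda$ of a split reductive group in characteristic zero, $\mu$ is a weight of the irreducible representation of highest weight $\lambda$ iff $\mu \leq \lambda$ in the coroot order (e.g., Humphreys, \emph{Introduction to Lie Algebras and Representation Theory}, Theorem~21.3). Here the ambient split group is $\Gdual$, whose root system is the échelonnage root system of $G$ with dominant cone $\chamb$, so the hypotheses apply directly to our $\mu^+, \lambda$.

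The main obstacle will be the combinatorial identity $\ell((t^\mu)_s) = \ell(t^{\mu^+}) - \ell(v)$ together with the precise form of $(t^\mu)_s$, which requires some bookkeeping with the Iwahori–Matsumoto formula and the conventions attached to the splitting $W \cong \Waff \rtimes \Omega_\a$. A potentially cleaner alternative would be to argue geometrically: one can identify the closure relations of $\Lplus\Iop$-orbits on $\Fl{\I}$ (which by \cref{prop:quotientbruhat} control $\leq_{\mathrm{Bru}}$) with the corresponding orbit closures inside $\Fl{\Par}$ at the special parahoric $\Par$ via the proper projection $\Fl{\I} \to \Fl{\Par}$, and then use ramified geometric Satake to match these with weights of $V(\lambda)$; this route trades combinatorial bookkeeping for input from \cref{theorem:filtration} and its ingredients.
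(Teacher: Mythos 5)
Your proof is correct, but it takes a genuinely different route from the paper's. The paper reduces to the dominant case geometrically: it observes (extending the argument of \cref{prop:quotientbruhat}) that the $(\I,\Par)$-Schubert cell attached to $\mu_d$ is dense in the $(\Par,\Par)$-Schubert variety attached to $\mu_d$, so that by \cref{rmk:quotientbruhat} the orbit-closure order on $\Wfin\backslash W$ at $\mu$ matches the double-quotient order at $\mu_d$; it then invokes \cref{comb} and cites \cite[Eq.~9.6]{acharRicheReductive} for the classical weight criterion. You instead carry out the reduction combinatorially, using the identity $(t^\mu)_s = t^{\mu^+}v$ with $\ell((t^\mu)_s)=\ell(t^{\mu^+})-\ell(v)$ — which is exactly \cite[Lemma~2.4]{richeMautnerExotictstructure}, a citation the paper already relies on in \cref{wfin:orbit}, so the ``bookkeeping'' you flag as an obstacle is in fact available off the shelf — together with the subword property for $(t^\mu)_s\leq t^{\mu^+}$ and descent of the Bruhat order to $\Wfin\backslash W/\Wfin$ for the converse. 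Both approaches then meet at \cref{comb} and the standard fact that dominant $\mu\leq\lambda$ (coroot order) iff the $\mu$-weight space of $V(\lambda)$ is nonzero (your Humphreys citation is fine in characteristic zero, though the paper's \cite{acharRicheReductive} reference covers the reductive group setting directly). What the paper's geometric route buys is that the length identities are packaged inside closure relations of Schubert varieties already established in \cref{prop:quotientbruhat}; what your route buys is independence from that geometric input and a proof readable purely at the level of the Iwahori–Weyl group. Amusingly, the ``cleaner alternative'' you sketch at the end is essentially the paper's actual argument.
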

\begin{proof}
Denote by $\mu_d$ the unique dominant translate of $\mu$ under the natural $\Wfin$ action on
	$\cham$. Then, via arguments analogous to the proof of \cref{prop:quotientbruhat}, the
	$(\I,\Par)$-Schubert cell
	 associated to $\mu_d$ is dense in the $(\Par,\Par)$-Schubert variety associated to
	$\mu_d$. By \cref{rmk:quotientbruhat}, this shows that the inequality
	$\mu\leq_{\mathrm{Bru}}\lambda$ is equivalent
to the inequality $\mu_d\leq_{Bru}\lambda$; which in turn is equivalent to $\mu_d\leq\lambda$ in the
coroot order via \cref{comb}. Finally, equivalence of $\mu_d\leq\lambda$ to the
	weight space condition in the statement is well-known (e.g \cite[Eq.
	9.6]{acharRicheReductive}).
\end{proof}

\begin{cor}\label{cor:realization}
	The realization functor (\cite{beilinsonReal})
	$$\Db(\PIW)\rightarrow \DIW$$
	where $\Db(\PIW)$ is the bounded derived category of the abelian category
	$\PIW$, is an equivalence of categories.
\end{cor}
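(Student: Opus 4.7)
The plan is to apply Beilinson's criterion \cite{beilinsonReal}: for a triangulated category $\mathcal{D}$ equipped with a bounded $t$-structure with heart $\mathcal{A}$, the realization functor $\Db(\mathcal{A}) \to \mathcal{D}$ is an equivalence if and only if the natural maps
\[
\Ext^n_{\mathcal{A}}(X, Y) \longrightarrow \Hom_{\mathcal{D}}(X, Y[n])
\]
are isomorphisms for all $X, Y \in \mathcal{A}$ and all $n \geq 0$. The perverse $t$-structure on $\DIW$ is bounded, inherited from the bounded $\Dbc(\Fl{\I})$ together with the finiteness of Schubert cells on each piece of the ind-structure, so only the Ext-agreement remains.

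By \cref{prop:quotientbruhat}, $\PIW$ is a highest weight category with standard and costandard collections $\{\std^\IW_\lambda, \costd^\IW_\lambda\}_{\lambda \in \cham}$. These generate $\Db(\PIW)$ by the highest-weight formalism, and they generate $\DIW$ as a triangulated category by the Schubert stratification \eqref{eq:orbitsaffine} combined with \cref{orbit:prop}, which forces any IW-equivariant sheaf to vanish outside the union of the good orbits $\Fl{\nu}\op$ for $\nu \in \cham$. Since both sides of the desired isomorphism are cohomological functors of triangulated categories, it suffices to verify the comparison when $X = \std^\IW_\lambda$ and $Y = \costd^\IW_\mu$. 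The highest-weight axioms force
\[
\Ext^n_{\PIW}(\std^\IW_\lambda, \costd^\IW_\mu) \cong \begin{cases} \Qell & \lambda = \mu \text{ and } n = 0,\\ 0 & \text{otherwise,}\end{cases}
\]
so the task reduces to establishing the same vanishing pattern for $\Hom_{\DIW}(\std^\IW_\lambda, \costd^\IW_\mu[n])$.

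For the geometric computation, I would exploit the recollement structure along the $\Lplus\Iop$-stratification of $\Fl{\I}$. In the diagonal case $\lambda = \mu$, adjunction between $j_{(t^\lambda)_s}^{\op}$-type functors reduces the $\Hom$ to a computation of $\Ext^\bullet(L_\lambda, L_\lambda)$ on the single affine-space orbit $\Fl{\lambda}\op$, which immediately yields $\Qell$ concentrated in degree zero. In the off-diagonal case $\lambda \neq \mu$, one proceeds by induction along the good orbits $\Fl{\nu}\op$ lying in the intersection of the closures of $\Fl{\lambda}\op$ and $\Fl{\mu}\op$: at each stage, the contribution arising from a boundary stratum is an IW-equivariant cohomology complex on $\Fl{\nu}\op$, and the genericity of the character $\chi$ together with the Artin-Schreier vanishing of the cohomology of $L_{\AS}$ on $\mathbb{G}_{a,k}$ kills it.

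The main obstacle is this off-diagonal vanishing, which requires the inductive d\'evissage along the Schubert stratification; all other steps are formal once the highest weight structure of \cref{prop:quotientbruhat} is in hand. In our non-split setting the indexing poset changes from $X_*(T)$ to $\cham$, but every geometric input (Schubert cells being affine spaces, the identification of IW local systems via \cref{orbit:prop}, Artin-Schreier cohomology vanishing) carries over verbatim from the split case, so the argument ultimately mirrors the analogous one in \cite{centralRiche}.
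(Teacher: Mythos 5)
Your proof takes essentially the same route as the paper's: the paper invokes BGS \S 3.2--3.3, whose content is precisely the construction of projectives and the verification of Beilinson's criterion via the orbit stratification, and you spell that machinery out. Two small points of caution. First, the reduction ``it suffices to verify the comparison for $(\std^\IW_\lambda, \costd^\IW_\mu)$ because both sides are cohomological'' is not quite a one-liner: one needs the d\'evissage that every object of $\PIW$ has a finite projective ($\Delta$-filtered) resolution and a finite injective ($\nabla$-filtered) coresolution, which is part of the BGS/highest-weight package being cited, not a formal consequence of cohomologicality. Second, your off-diagonal d\'evissage attributes the vanishing to Artin--Schreier cohomology vanishing on $\Ga$; what actually kills the boundary contributions in the recollement is \cref{orbit:prop} (any Iwahori--Whittaker equivariant complex on a ``bad'' stratum $\Fl{w}\op$ with $w\neq w_s$ is zero), of which the Artin--Schreier vanishing is an ingredient one level upstream. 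Phrasing it as you do risks suggesting a separate cohomology computation is needed, when in fact once \cref{orbit:prop} and the stratification by affine cells are in hand the Hom computation is the standard recollement argument from BGS. Net: correct approach, same as the paper, with the caveats above about which inputs are doing the work.
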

\begin{proof}
	In \cite[\textsection 3.2-3.3]{BGS} the authors provide an axiomatisation for existence of enough
	projective objects in categories of perverse sheaves constructible with respect to a
	fixed stratification. They then use this to prove the corresponding realization functors
	are equivalences. In fact the highest weight structure is based on, and implies these
	axioms, concluding the proof.
\end{proof}
In light of \cref{cor:realization}, we will tacitly identify $\Db(\PIW)$ with $\DIW$.
\subsection{Averaging}\label{subsection:averaging}
For $\lambda\in \cham$, the inclusion $\Fl{\lambda}\op\subset \Fl{\leq
0}\op$ holds if and only if $\lambda\leq_{\mathrm{Bru}}W 0$ by \cref{prop:quotientbruhat}, and
consequently
only if $\lambda=0$. In particular, the orbit $\Fl{0}\op$ is closed and
$$\stdiw_0\cong \ICIW_0\cong \costdiw_0.$$
We can then consider \textit{Iwahori-Whittaker averaging}:
$$\aviw:\Dbc(\Hk{\I})\rightarrow \DIW,\quad F\mapsto \ICIW_0\star F.$$
We now study the various compatibilities satisfied by this functor.
\begin{lem}\label{lem:icav}
	For $w\in W$,
	$$\aviw(\IC_w)=\begin{cases}
		\ICIW_{w_s} & \text{if } w=w_s\\
		0 & \text{otherwise}
	\end{cases}$$
	\label{averaging:simples}
\end{lem}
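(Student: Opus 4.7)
The plan is to split into the two cases $w = w_s$ and $w \neq w_s$, reducing both to an analysis of the convolution $\aviw(\IC_s) = \ICIW_0 \star \IC_s$ for simple reflections $s \in \Wfin \subset W$. The key step is to establish the vanishing $\aviw(\IC_s) = 0$ for every such $s$. Here $\Fl{\I,\leq s} \cong \mathbb{P}^1$ and $\IC_s \cong \underline{\Qell}[1]$, so the convolution is computed as $m_!(\ICIW_0 \wttimes \IC_s)$ for the multiplication map $m: \Fl{\I} \wttimes \Fl{\I,\leq s} \to \Fl{\I}$. On the relevant open locus, $m$ factors as translation by the root subgroup $\Ga$ associated to $s$; since this $\Ga$ lies in $\Lplus\Iop_u$ and $\chi$ restricts to it non-trivially by the genericity assumption, pushforward along this $\Ga$-direction computes $R\Gamma(\Ga, \Las) = 0$, giving the vanishing after controlling the boundary via perverse $t$-exactness of $\aviw$ (\cref{cor:aviewtexact}).

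Granting this, the case $w \neq w_s$ is immediate: pick a simple reflection $s \in \Wfin$ with $\ell(sw) < \ell(w)$, so that $\ell(s) + \ell(sw) = \ell(w)$. By \cref{std:conv} and the decomposition theorem applied to the birational convolution map $\Fl{\I,\leq s} \wttimes \Fl{\I,\leq sw} \to \Fl{\I,\leq w}$, the object $\IC_w$ appears as a direct summand of $\IC_s \star \IC_{sw}$. Associativity of convolution then exhibits $\aviw(\IC_w)$ as a direct summand of $\aviw(\IC_s) \star \IC_{sw} = 0$.

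For the remaining case $w = w_s$, I first verify that $\aviw(\Delta_w) \cong \stdiw_w$ and $\aviw(\nabla_w) \cong \costdiw_w$ by analyzing the multiplication map directly: for $w$ of minimal length in $\Wfin \cdot w$, the restriction $\Fl{0}\op \wttimes \Fl{\I,w} \to \Fl{w}\op$ of $m$ is an isomorphism of orbits, so the twisted exterior product of $L_0$ with the constant sheaf pushes forward to the shifted Iwahori-Whittaker local system $L_w[\ell(w)]$ by $!$ (respectively $*$). Since $\aviw$ is perverse $t$-exact, applying it to the canonical factorization $\Delta_w \twoheadrightarrow \IC_w \hookrightarrow \nabla_w$ in $\Perv(\Hk{\I})$ yields the analogous factorization $\stdiw_w \twoheadrightarrow \aviw(\IC_w) \hookrightarrow \costdiw_w$ in $\PIW$, identifying $\aviw(\IC_w)$ with the image $\im(\stdiw_w \to \costdiw_w) = \ICIW_w$. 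The main obstacle will be the key vanishing: identifying the root subgroup direction $\Ga \subset \Lplus\Iop_u$ on which $\chi$ is non-trivial and showing that the remaining pieces of the pushforward contribute nothing, likely via an explicit stratification argument along the lines of the proof of \cref{orbit:prop}.
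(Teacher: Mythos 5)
Your proposal shares the central idea with the paper (kill $\aviw(\IC_w)$ for $w \neq w_s$ by exploiting the vanishing cohomology of the Artin--Schreier local system), but the technical route is genuinely different for the vanishing half, and there are two concrete problems.

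\textbf{Difference in the vanishing half.} You reduce to simple reflections via the decomposition theorem ($\IC_w$ is a summand of $\IC_s \star \IC_{sw}$), then try to prove the key vanishing $\aviw(\IC_s)=0$ by hand, which you yourself flag as ``the main obstacle.'' The paper avoids this bottleneck entirely: for any $w\neq w_s$ there is $s\in\Wfin$ with $\ell(sw)<\ell(w)$, so $\IC_w$ is $\Lplus\Par_s$-equivariant and descends to $\Dbc(\Lplus\Par_s\backslash\Fl{\I})$; the convolution then factors as $(\pi_{s*}\ICIW_0)\star^{\Lplus\Par_s}\IC_w$, and $\pi_{s*}\ICIW_0=0$ at one stroke because the $\mathbb{P}^1$-fibers of $\pi_s$ compute cohomology of $\Las$. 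This descent argument dispenses with the stratification/boundary analysis you anticipate needing, and it handles all $w \neq w_s$ uniformly rather than only simple reflections. Your decomposition-theorem route would work \emph{granted} the key vanishing, but that vanishing is exactly where the descent trick does the heavy lifting, so you have not actually removed the hard step.

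\textbf{Circularity.} You invoke \cref{cor:aviewtexact} twice --- once to ``control the boundary'' in the key vanishing, and once to extract the image factorization $\stdiw_w\twoheadrightarrow\aviw(\IC_w)\hookrightarrow\costdiw_w$. But in the paper, \cref{cor:aviewtexact} is a consequence of \cref{std:averaging}, whose proof for general $w$ explicitly cites \cref{lem:icav}. So as written your argument is circular. This is repairable: having shown $\aviw(\IC_v)=0$ for $v\neq v_s$ and the behavior of $\aviw$ on (co)standards for $w=w_s$, one can run an induction on length to obtain \cref{std:averaging} for all $w$ and hence $t$-exactness before invoking it --- but that induction must be spelled out, and the paper avoids needing it at all for this lemma by directly observing that for $w=w_s$ the multiplication map $\Fl{\leq 0}\op\times\Fl{\I,w}\to\Fl{w}\op$ is an isomorphism, which pins down $\aviw(\IC_w)$ over the open orbit without appealing to $t$-exactness.

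\textbf{Minor.} Your statement ``$\aviw(\Delta_w)\cong\stdiw_w$ and $\aviw(\nabla_w)\cong\costdiw_w$'' disagrees with \cref{std:averaging}, which asserts $\aviw(\Delta_w)\cong\costdiw_{w_s}$ and $\aviw(\nabla_w)\cong\stdiw_{w_s}$. Note that in this paper $\stdiw_\lambda$ is defined via $*$-pushforward and $\costdiw_\lambda$ via $!$-pushforward (the reverse of the convention for $\std_w,\costd_w$ on the Hecke stack), so the correct correspondence is $!\leadsto!$ and $*\leadsto*$. Worth being careful, as this sign error would propagate into the image computation.
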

\begin{proof}
The proof follows from arguments identical to \cite[Lemma
6.4.4]{centralRiche} and
\cite[Lemma 6.5]{joaoAB}. We sketch it here for the readers convenience. If $w\neq
w_s$, there exist $s\in W_{\mathsf{fin}}$ such that $\ell(sw)<\ell(w)$. Denote by
$\Par_s$ the parahoric group scheme associated to the corresponding wall of $\a$. The
inequality implies that $\IC_w$ descends to $\Dbc(\Lplus\Par_s\backslash \Fl{\I})$. Therefore
$$\ICIW_0\star^{\Lplus\I}\IC_w\cong\left(\pi_{s*}\ICIW_0\right)\star^{\Lplus\Par_s}\IC_w$$
	after their respective pullbacks to $\Dbc(\Fl{\I})$. Here
	$\pi_s:\Fl{\I}\rightarrow\Fl{\Par_s}$ is the quotient morphism and
$\star^{\Lplus\Par_s}$ is the convolution product in $\Dbc(\Lplus\Par_s\backslash \Fl{\I})$. However,
genericity of $\chi$ and proper base change implies that $\pi_{s*}\ICIW_0$
computes cohomology of $\Las$ and is thus $0$.

The other case follows from the observation that, if $w=w_s$, the multiplication map $\Fl{\leq
	0}^{op}\times \Fl{\I,w}\rightarrow \Fl{w}^{op}$ is an isomorphism.
\end{proof}
\begin{lem} \label{std:averaging}
	For $w\in W$,
	$$\aviw(\Delta_w)\cong\costdiw_{w_s},\quad\aviw(\nabla_w)\cong\stdiw_{w_s}.$$
\end{lem}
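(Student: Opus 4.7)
The plan is to reduce the lemma to two elementary cases via the unique decomposition $w=u\cdot w_s$ with $u\in\Wfin$ and $\ell(w)=\ell(u)+\ell(w_s)$. By \cref{std:conv}, this yields $\Delta_w\cong\Delta_u\star\Delta_{w_s}$ and $\nabla_w\cong\nabla_u\star\nabla_{w_s}$; since $\aviw(F)=\ICIW_0\star F$ and $\star$ is associative, the lemma reduces to (i) $\aviw(\Delta_u)\cong\aviw(\nabla_u)\cong\ICIW_0$ for every $u\in\Wfin$, and (ii) $\aviw(\Delta_{w_s})\cong\costdiw_{w_s}$ and $\aviw(\nabla_{w_s})\cong\stdiw_{w_s}$ when $w_s$ is minimal in $\Wfin\cdot w_s$.

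For (i), I induct on $\ell(u)$. For a simple reflection $s\in\Wfin$, the closure $\Fl{\leq s}\cong\mathbb{P}^1$ supplies a short exact sequence $0\to\IC_e\to\Delta_s\to\IC_s\to 0$ in $\Perv(\Hk{\I})$ coming from the standard adjunction triangle for the open/closed decomposition of $\Fl{\leq s}$. Applying the perverse $t$-exact functor $\aviw$ and invoking \cref{averaging:simples}, which gives $\aviw(\IC_e)=\ICIW_0$ and $\aviw(\IC_s)=0$ (as $s\neq e=s_s$), forces $\aviw(\Delta_s)\cong\ICIW_0$; the dual exact sequence handles $\nabla_s$. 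The induction step then writes $u=s\cdot u'$ with $\ell(u)=1+\ell(u')$ and uses $\Delta_u\cong\Delta_s\star\Delta_{u'}$ together with associativity of convolution.

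For (ii), I argue geometrically via the multiplication map $m\colon\Fl{\leq 0}\op\wttimes\Fl{\leq w_s}\to\Fl{\I}$, whose image lies in $\overline{\Fl{w_s}\op}$. I would identify $\aviw(\Delta_{w_s})$ with $\costdiw_{w_s}$ by verifying the two characterizing properties of the $*$-extension from $\Fl{w_s}\op$: the restriction to $\Fl{w_s}\op$ is identified with $L_{w_s}[\ell(w_s)]$ by a direct computation, made possible by the dimension equality $\dim\Fl{w_s}\op=\ell(w_s)$ guaranteed by the minimality of $w_s$ (\cref{prop:quotientbruhat}); while costalks on boundary orbits $\Fl{v}\op$ for $v<_{\mathrm{Bru}} w_s$ vanish by the Whittaker-genericity pushforward argument through Hecke stacks of larger parahorics, exactly as in the proof of \cref{averaging:simples}. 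The $\nabla_{w_s}$ case then follows by checking vanishing of stalks on the boundary, or equivalently via Verdier duality, which interchanges $\Delta\leftrightarrow\nabla$ and $\stdiw\leftrightarrow\costdiw$. The main obstacle is step (ii): the ``$!$-to-$*$ swap'' characteristic of Fourier-type averaging, where one must combine the geometric identification on the open stratum with the Whittaker-character vanishing on the boundary in order to force the clean $*$-extension.
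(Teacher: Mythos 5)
Your decomposition $w=u\cdot w_s$ and the resulting reduction to (i) $\aviw(\Delta_u)\cong\ICIW_0$ for $u\in\Wfin$ and (ii) the case $w=w_s$ is precisely the right strategy, and is what the paper does. The inductive proof of (i) via the $\mathbb{P}^1$ short exact sequence $0\to\IC_e\to\Delta_s\to\IC_s\to 0$, $t$-exactness of $\aviw$ (\cref{cor:aviewtexact}), and \cref{lem:icav} is correct. The paper packages the same content slightly differently: it forms the single morphism $\IC_e\to\nabla_t$ (for $w=t\cdot w_s$), convolves with $\nabla_{w_s}$, and observes that the cone is a successive extension of $\IC_y\star\nabla_{w_s}$ with $y\in\Wfin\setminus\{e\}$, which $\aviw$ kills by \cref{lem:icav}. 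Your length induction unwinds to the same input, so the two arguments are genuinely equivalent; yours is perhaps slightly more elementary to verify.

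For (ii), the geometric core you invoke (the multiplication isomorphism on the open orbit, and Whittaker genericity on the boundary) is the same ingredient the paper refers to with ``arguments as in \cref{lem:icav}''. But there are two concrete issues you should resolve. First, a bookkeeping problem: you propose to verify that $\aviw(\Delta_{w_s})$ is the $*$-extension by checking \emph{costalk} vanishing on boundary orbits, and you frame the whole lemma as a ``$!$-to-$*$ swap''. However, with the paper's definitions in \eqref{def:stdcostdiw}, $\costdiw_\lambda=\nabla^\IW_\lambda=(j_{(t^\lambda)_s}\op)_!L_\lambda[\ell((t^\lambda)_s)]$ is the \emph{$!$}-extension, not the $*$-extension; so to establish $\aviw(\Delta_{w_s})\cong\costdiw_{w_s}$ as literally stated, one must instead show the \emph{$*$-stalks} vanish on the boundary. (The paper's naming in \eqref{def:stdcostdiw} has $!$ and $*$ swapped relative to the usual $\Delta\leftrightarrow j_!$, $\nabla\leftrightarrow j_*$ convention — and this creates real tension elsewhere in the paper, e.g. the proof of \cref{wfin:orbit} writes $\stdiw_\lambda\cong\stdiw_0\star\std_{(t^\lambda)_s}$, which by \cref{std:averaging} would equal $\costdiw_\lambda$ — so there may well be a typo somewhere. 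But your proof should be consistent with whichever convention you commit to, and as written it is not consistent with \eqref{def:stdcostdiw}.) Second, the genericity argument in \cref{lem:icav} gives a \emph{global} vanishing $\aviw(\IC_y)=0$ by pushing forward to a larger parahoric; converting this into the pointwise (co)stalk vanishing on the minimal boundary orbits $\Fl{v}\op$, $v<_{\mathrm{Bru}}w_s$, that you need is a further step you should spell out rather than cite as ``exactly as in the proof of \cref{averaging:simples}''.
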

\begin{proof}
	Let $t\in W_{\mathsf{fin}}$ be the element
	such that $w=t\cdot w_s$. If $t$ is the
	identity, statement follows via arguments
 	as in \cref{lem:icav}. Otherwise,
	we can convolve the natural morphism $\IC_e \rightarrow\nabla_t$ with
	$\nabla_{w_s}$ from the left to obtain a morphism
	$\nabla_{w_s}\rightarrow \nabla_w$. The cofiber of this morphism is a
	succesive extension of objects of the form $\IC_y\star \nabla_{w_s}$ with
	$y\in W_{\mathsf{fin}}\cdot w$ which are killed by $\aviw$ by \cref{lem:icav}.
\end{proof}
\begin{cor}\label{cor:aviewtexact}
	The functor $\aviw$ is perverse t-exact.
\end{cor}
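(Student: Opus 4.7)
The plan is to reduce perverse $t$-exactness of $\aviw$ to its behaviour on standards and costandards, using \cref{std:averaging} together with a standard dévissage argument. Since $\aviw$ is triangulated, $t$-exactness amounts to showing both $\aviw(\pD^{\leq 0}) \subseteq \pD^{\leq 0}_\IW$ and $\aviw(\pD^{\geq 0}) \subseteq \pD^{\geq 0}_\IW$. I will explain the first inclusion; the second is symmetric, with the roles of $\std_w$ and $\costd_w$ (and of the two parts of \cref{std:averaging}) exchanged.

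First, I will establish the following dévissage: every $F \in \pD^{\leq 0}(\Hk{\I})$ lies in the smallest extension-closed subcategory of $\Dbc(\Hk{\I})$ containing the set $\{\std_w[n] : w \in W,\ n \geq 0\}$. I argue by induction on the finite set $\starsup(F)$. Choosing $w$ of maximal length in $\starsup(F)$, the sheaf $F$ is supported on $\Hk{\I,\leq w}$, and I consider the localization triangle $j_{w,!}j_w^* F \to F \to i_{*} i^* F \to$ attached to the open inclusion of the top stratum $\Hk{\I,w} \hookrightarrow \Hk{\I,\leq w}$ and its closed complement $i$. Both pieces lie in $\pD^{\leq 0}$; the second has strictly smaller star support and is handled by induction. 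Via the equivalence $\Perv(\Hk{\I,w}) \simeq \Vect_\Qell$ induced by $\Fl{\I,w} \cong \mathbb{A}^{\ell(w)}$, the restriction $j_w^* F$ corresponds to a bounded complex $M \in \D^{\leq -\ell(w)}(\Spec k)$, so that $j_{w,!}j_w^* F$ is isomorphic to $\std_w(M[-\ell(w)])$ with $M[-\ell(w)] \in \D^{\leq 0}$. Since any such bounded complex of vector spaces is a finite iterated extension of shifts $\underline{\Qell}[n]$ with $n \geq 0$, this expresses $j_{w,!}j_w^* F$ as an extension of objects of the form $\std_w[n]$, $n \geq 0$, completing the induction.

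The conclusion is then immediate. By \cref{std:averaging}, $\aviw(\std_w[n]) \cong \costdiw_{w_s}[n]$ lies in $\pD^{\leq -n}_\IW \subseteq \pD^{\leq 0}_\IW$ whenever $n \geq 0$, since $\costdiw_{w_s} \in \PIW$. Because $\pD^{\leq 0}_\IW$ is extension-closed (being the negative half of a $t$-structure) and $\aviw$ is triangulated, the dévissage yields $\aviw(\pD^{\leq 0}) \subseteq \pD^{\leq 0}_\IW$. The mirror argument—using $\costd_w[n]$ for $n \leq 0$, the relation $\aviw(\costd_w[n]) \cong \stdiw_{w_s}[n] \in \pD^{\geq 0}_\IW$, and a dual dévissage built from the triangle $j_{w,*}j_w^! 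F \leftarrow F \leftarrow i_! i^! F$—gives $\aviw(\pD^{\geq 0}) \subseteq \pD^{\geq 0}_\IW$.

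The only mildly delicate point is the dévissage step, specifically the bookkeeping between the natural $t$-structure on $\Dbc(\Spec k)$ and the perverse $t$-structure on the Schubert cell (shifted by $[\ell(w)]$); everything else is a formal consequence of \cref{std:averaging} and the triangulated structure of $\aviw$.
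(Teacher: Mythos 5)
Your proof is correct and follows the same strategy the paper uses: reduce to the formulas $\aviw(\std_w)\cong\costdiw_{w_s}$, $\aviw(\costd_w)\cong\stdiw_{w_s}$ of \cref{std:averaging}, together with the fact that $\pD^{\leq 0}$ (resp.\ $\pD^{\geq 0}$) is generated under extensions by shifts $\std_w[n]$, $n\geq 0$ (resp.\ $\costd_w[n]$, $n\leq 0$); your write-up just makes the dévissage explicit (cf.\ also \cref{lemma:wakimotoaux2}, which encodes the same generation principle). One tiny imprecision: choosing $w$ of maximal length in $\starsup(F)$ does not mean $F$ is supported on $\Fl{\I,\leq w}$, only that $\Fl{\I,w}$ is open in the closure of $\starsup(F)$, which is all the recollement step needs.
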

\begin{proof}
	This follows from \cref{std:averaging} together with the fact that
	connective and coconnective parts of the t-structure are generated under
	extensions by shifts of standard and costandard objects.
\end{proof}

\subsection{Aspherical quotient}
Denote by $\Pasph$, the Serre quotient of $\PervI$ by the Serre subcategory generated by
$\IC_w$ such that $w\neq w_s$. Then, \cref{averaging:simples} implies $\aviw$ factors through a
functor $\avasph:\Pasph\rightarrow \PIW$. In fact, we will later prove that this functor is an
equivalence in \cref{theorem:asphiwequiv}.
\begin{thrm}\label{antishperical:averaging}
	The functor $\aviw$ factors through a fully faithful functor
	$$\avasph:\Pasph\rightarrow \PIW,$$
	whose essential image is closed under
	subquotients and extensions.
\end{thrm}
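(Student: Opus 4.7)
The factorization of $\aviw$ through an exact functor $\avasph\colon \Pasph \to \PIW$ is immediate from the universal property of Serre quotients: by \cref{averaging:simples}, $\aviw$ vanishes on the simples $\IC_w$ with $w \neq w_s$, and by \cref{cor:aviewtexact} it is perverse $t$-exact, so the exact restriction $\aviw\colon \PervI \to \PIW$ annihilates the entire Serre subcategory $\mathcal{K}$ that these simples generate.

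For full faithfulness, my plan is to proceed by devissage. Both $\Pasph$ and $\PIW$ are length categories: the latter by the highest weight structure of \cref{prop:quotientbruhat}, the former inherited from $\PervI$ along the exact quotient $q$. Their simples are $q(\IC_{w_s})$ and $\ICIW_{w_s}$ respectively, indexed by minimal-length representatives in $\Wfin\backslash W$, and $\avasph$ matches them bijectively by \cref{averaging:simples}; between simples $\Hom$ is $\Qell \cdot \delta$ on both sides by Schur's lemma. The critical step is then to establish the bijection
\[
  \Ext^1_{\Pasph}\bigl(q(\IC_{w_s}), q(\IC_{v_s})\bigr) \simto \Ext^1_{\PIW}(\ICIW_{w_s}, \ICIW_{v_s}),
\]
after which a simultaneous five-lemma induction on length propagates full faithfulness on $\Hom$ and $\Ext^1$ to arbitrary objects.

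This $\Ext^1$ comparison is the main obstacle. My strategy is to use the description of $\Ext^1$ in $\Pasph$ as a localization of $\Ext^1$ in $\PervI$ modulo classes supported on objects of $\mathcal{K}$, and then to analyze its image under $\aviw$ via the identifications $\aviw(\Delta_w) \cong \costdiw_{w_s}$ and $\aviw(\nabla_w) \cong \stdiw_{w_s}$ from \cref{std:averaging}. The crucial case reduces to Bruhat-adjacent $v_s < w_s$, where both extension groups are controlled by the local geometry of the Schubert stratification — concretely by computations involving Demazure resolutions and $\bP^1$-fibers at the boundary of a single Schubert cell. The Iwahori-Whittaker averaging preserves this local extension data because it only quotients out contributions from orbits on which $\aviw$ vanishes. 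Surjectivity onto $\PIW$-extensions is handled symmetrically: every such extension is realised as the image under $\aviw$ of a Yoneda extension in $\PervI$ built from the standards and costandards whose $\aviw$-images are identified by \cref{std:averaging}.

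Once full faithfulness and the $\Ext^1$ bijection are in place, the essential image of $\avasph$ contains every simple $\ICIW_{w_s}$ and is closed under extensions; since $\PIW$ is a length category generated under extensions by its simples, the image is exhausted, and is therefore trivially closed under subquotients as well.
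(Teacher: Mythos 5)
Your reduction of full faithfulness to a base case on simples plus a five-lemma devissage is sound in outline and does match the final step of the paper's argument, but the proposal has a genuine gap at exactly the step you flag as ``the main obstacle,'' and the route you sketch for filling it would not work.

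First, your description of $\Ext^1$ in the Serre quotient is incorrect: $\Ext^1_{\Pasph}(X,Y)$ is \emph{not} ``$\Ext^1$ in $\PervI$ modulo classes supported on the kernel.'' In a Serre quotient $\mathcal{A}/\mathcal{B}$, morphisms and extension classes are colimits over replacements of $X$ by subobjects $X'\subset X$ with $X/X'\in\mathcal{B}$ and of $Y$ by quotients $Y/Y'$ with $Y'\in\mathcal{B}$; an extension in $\mathcal{A}/\mathcal{B}$ need have no representative in $\Ext^1_{\mathcal{A}}(X,Y)$ at all. This makes a direct local-geometry computation (Demazure resolutions, $\bP^1$-fibers) of $\Ext^1_{\Pasph}$ intractable as you propose it, since such computations take place in $\PervI$, not in the quotient.

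Second, and more importantly, the paper sidesteps computing $\Ext^1$ in the quotient entirely by constructing an explicit left inverse to $\avasph$. It introduces the induction functor $\operatorname{ind}_\IW := p_*\circ q^*$, proves in \cref{lemma:conebound} that the map $\IC_e\to\operatorname{ind}_\IW(\IC_0)[-\mathsf{rk}G]$ has cofiber $\mathsf{C}$ built from shifts $\IC_w[n]$ with $w\in\Wfin\setminus\{e\}$ and $n\le 0$, and then checks that $R:=\Pi_{\mathsf{asph}}\circ{}^pH^0\circ\operatorname{ind}_\IW[-\mathsf{rk}G]$ satisfies $R\circ\avasph\cong\mathrm{id}$. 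The existence of this left inverse immediately yields faithfulness together with \emph{injectivity} on $\Ext^1$; only then is the five-lemma induction run, and only injectivity on $\Ext^1$ is needed there. Your proposal aims for bijectivity on $\Ext^1$, which is strictly harder and unnecessary. The induction-functor construction is the actual core of the paper's proof and is entirely absent from your proposal.

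Finally, your last paragraph over-claims: if the $\Ext^1$ bijection between simples and the five-lemma induction both held as stated, you would conclude that $\avasph$ is essentially surjective. But essential surjectivity is proven separately in \cref{theorem:asphiwequiv}, and that proof requires the tilting results of \cref{section:tilting} (via \cref{cor:key}) together with closure under subquotients. That the paper needs this further machinery is itself a sign that the $\Ext^1$ bijection you want is not accessible at this stage.
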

\begin{proof}
	To prove fully faithfulness of $\avasph$, it suffices to construct a left inverse to it.
	Indeed, suppose a left inverse to $\avasph$ exists. Existence of such
	implies that $\avasph$ is faithful, and that for every $X,Y$ in $\Pasph$,
	$$\operatorname{Ext}^1_{\Pasph}(X,Y)\rightarrow \operatorname{Ext}^1_{\PIW}\left(\avasph(X),\avasph(Y)\right)$$
	is injective. From this, one can prove that $\avasph$ also is full, via a straightforward induction on the sum of lengths of $X$ and $Y$. The base case is immediate
	from \cref{averaging:simples}, while the induction step can be verified via the injectivity on the Ext groups mentioned above and the 5-lemma.

	Now, we construct the desired left inverse. First, observe that the averaging functor
	$\aviw$, has a corresponding induction functor. As the induction should be a functor
	from $(\Lplus\Iop_{u},\chi)$-equivariant sheaves to
	$\Lplus\I$-equivariant ones, start by considering their intersection
	$\Lplus\I_0:= \Lplus\I\cap  \Lplus\Iop_{u}$.
	Clearly, $\Lplus \I_0 \subset \ker\chi$, which implies Iwahori-Whittaker
	equivariant objects are on the nose $\Lplus\I_0$-equivariant. Therefore, there exist a functor
		$$q^*:\DIW \rightarrow\Dbc(\Lplus \I_0\backslash \Fl{\I}),$$
	which factorizes the inclusion $\DIW\rightarrow\Dbc(\Fl{\I})$, via postcomposition with
	the pullback associated to the quotient map $\Fl{\I}\rightarrow \Lplus \I_0\backslash
	\Fl{\I}$.
	Similarly, the containment $\Lplus\I_0\subset \Lplus\I$ implies we can consider the pushforward
	$$p_*:\Dbc(\Lplus \I_0\backslash \Fl{\I})\rightarrow \Dbc(\Hk{\I}),$$
	associated to the quotient map $p:\Lplus \I_0\backslash \Fl{\I}\rightarrow \Hk{\I}.$
	Finally we define:
	$$\operatorname{ind}_\IW:=p_*\circ q^*:\DIW \rightarrow\ \Dbc(\Hk{\I}).$$ 
        It is straightforward to observe from the definition that
	$$\operatorname{ind}_\IW(\aviw(F))\cong
	\operatorname{ind}_\IW(\ICIW_0)\star F,$$
	for every $F\in \Dbc(\Hk{\I})$.
	\begin{lem}\label{lemma:conebound}
		There is a cofiber sequence,
		$$\IC_e\rightarrow \operatorname{ind}_\IW(\IC_0)[-\mathsf{rk}G]\rightarrow \mathsf{C},$$
		such that $\mathsf{C}$ is a succesive extension of $\IC_w[n]$ with $n\in\Z_{\leq
		0}$,
		and $w\in \Wfin\setminus \{e\}$. \label{cofiber:induction}
	\end{lem}
	\begin{proof}[Proof of \cref{lemma:conebound}]
		The image of the special fiber of $\I$, denoted $\I_k$,
		in the reductive quotient of the special fiber of
		$\Par$, denoted $\Par_k^\red$,
		is a Borel subgroup \cite[Theorem 8.4.19]{kalethaPrasad}.
		Moreover, the action of
		$\Lplus\Par$ on the base point of $\Fl{\I}$
		identifies $\Fl{\I,\leq w_o}$
		with $\Lplus\Par/\Lplus\I$,
		which is precisely the flag variety of $(\Par)_k^\red$, \cite[Proposition
		8.4.16]{kalethaPrasad}. In light of this, proof of the lemma is identical to
		the proof of \cite[Lemma 6.4.8]{centralRiche} applied to
		the split reductive group $\Par_k^\red$, together with
		its Borel determined by $\I_k$, and the unipotent radical thereof.
	\end{proof}

	We now finish the proof of \cref{antishperical:averaging}. Let $\Pi_{\mathsf{asph}}:\PervI\rightarrow \Pasph$ denote the natural
	projection functor. Then, the composition  $R:=\Pi_{\mathsf{asph}}\circ {}^pH^0\circ
	{\operatorname{ind}_\IW}[-\mathsf{rk}G]$ is the right inverse of $\avasph$. To
	prove this, it
	suffices to check, for every $F\in \PervI$,
	\begin{equation}\Pi_{\mathsf{asph}}\circ{}^pH^{-1}(\mathsf{C}\star F)\cong 0\cong
	\Pi_{\mathsf{asph}}\circ {}^pH^{0}(\mathsf{C}\star F).\label{cone:bound} \end{equation}
	Indeed, convolving the cofiber sequence defining $\mathsf{C}$ with $F$ produces the cofiber sequence
	$$F\rightarrow \operatorname{ind}_\IW(\IC_0)\star F[-\mathsf{rk}G]\rightarrow \mathsf{C}\star F.$$
	If \eqref{cone:bound} holds, applying $\Pi_{\mathsf{asph}}\circ {}^pH^0$ shows
	$$\Pi_{\mathsf{asph}}(F)\cong R\circ\avasph(\Pi_{\mathsf{asph}})(F).$$
	Finally, to check $\ref{cone:bound}$ it suffices to verify the same statement, where $\mathsf{C}$
	is replaced by $\IC_w[n]$ as in \cref{lemma:conebound}. In this case, there exists a
	simple reflection $s\in\Wfin$ such that $\IC_w[n]$, and consequently
	the perverse cohomology sheaves of the convolution $\IC_w[n] \,\star\, F$, are $\Lplus\Par_s$
	equivariant; showing they vanish in the antispherical quotient (cf. \cite[Lemma
	6.4.10]{centralRiche}).

	The fact that the essential image is closed
	under subquotients follows from a simple
	induction on the length of composition
	series, using that
	$\avasph$ preserves simple objects. An analogous argument is spelled out in \cite[Lemma
	7.12]{bezrukavnikovRicheRider}.
\end{proof}

\section{Tilting objects}\label{section:tilting}
We fix a tuple $(G,B,S,\mathbf{f})$ as in
\cref{section:iwahoriwhittaker} and assume additionally that $G$ is tamely ramified.
There, we have endowed the corresponding category $\PIW$ with the structure of a highest weight
category. Therefore, we can speak of \textit{tilting objects}, which are those objects
that admit two filtrations: one whose graded pieces are sums of standard objects $\stdiw_\lambda$, and
another
whose graded pieces are sums of costandard objects $\costdiw_\lambda$ of \eqref{def:stdcostdiw}.

This section will be devoted to the proof of the following:
\begin{prop}\label{main:tilting}
	Let $\lambda\in\splitchamb$ be such that $\bar\lambda\in\chamb$ is minimal in
	$\chamb\setminus\{0\}$ with respect to the coroot order (see \cref{subsection:coweights}).
	Let $V$ be the irreducible $\check G$-representation of highest weight $\lambda$. Then the object
	$$\aviw\circ\Zent(V)$$
	is tilting.
\end{prop}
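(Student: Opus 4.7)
The plan is to produce both a standard and a costandard filtration of $\aviw\circ\Zent(V)$, which together imply tilting in the highest weight structure on $\PIW$ from \cref{prop:quotientbruhat}.

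By \cref{theorem:filtration}, $\Zent(V)$ admits a Wakimoto filtration whose graded pieces are $J_\mu$, each appearing with multiplicity $\dim V_\mu$ as $\mu$ ranges over the weights of $\Res^{\check G}_{\check T^I}(V)$. The hypothesis that $\bar\lambda$ is minimal in $\chamb\setminus\{0\}$ forces $V$ to be quasi-minuscule, so these weights are the $\Wfin$-orbit of $\bar\lambda$ together with some copies of $0$. Since $\aviw$ is perverse t-exact (\cref{cor:aviewtexact}), applying it to this Wakimoto filtration yields a filtration of $\aviw\circ\Zent(V)$ in $\PIW$ with graded pieces $\aviw(J_\mu)$.

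Next I would analyze these pieces. For $\mu=0$ we have $\aviw(J_0)=\aviw(\IC_e)=\ICIW_0$, which equals both $\stdiw_0$ and $\costdiw_0$ since $\Fl{0}\op$ is a closed orbit. For nonzero $\mu$ in the $\Wfin$-orbit of $\bar\lambda$, use the non-canonical isomorphism $J_\mu\simeq \nabla_{t^\nu}\star\Delta_{t^{\nu-\mu}}$ from \cref{remark} for sufficiently dominant $\nu\in\chamb$, combined with \cref{std:averaging} and \cref{lem:wakimotomonoidality}, to rewrite $\aviw(J_\mu)$ as a convolution of the expected standard (or costandard) with invertible Wakimoto factors. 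By refining the Wakimoto filtration along a total order extending the coroot order on $\cham$, the resulting filtration on $\aviw\circ\Zent(V)$ is expected to become a standard (respectively, in the reverse direction, costandard) filtration. For the opposite filtration type one can alternatively invoke Verdier duality: nearby cycles commutes with $\mathbb{D}$ up to a Tate twist, $V^\vee$ is again quasi-minuscule, and $\aviw$ commutes with $\mathbb{D}$ up to the standard/costandard swap.

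The main obstacle is verifying that the graded pieces genuinely assemble into a standard/costandard filtration rather than merely a filtration with nontrivial higher-Ext contributions between consecutive levels. This is where the bound on $\dim \operatorname{Hom}_{\PIW}(\aviw\circ\Zent(\Qell),\aviw\circ\Zent(V))$ announced in \cref{intro:central} enters: a lower bound for this Hom-dimension can be extracted from the Hecke-algebra decategorification (\cref{prop:categorification} and \cref{mixed:central}), while the matching upper bound is proved using the monodromy operator on $\Zent(V)$. Matching the two forces the relevant higher Ext groups to vanish and concludes tilting. The tame ramification hypothesis is used only here, via Verdier's identification of nearby-cycles monodromy with the monodromy of $\Gm$-monodromic sheaves (\cref{remark:commutativitymonodromy}), which is precisely what allows the monodromy operator to commute with every morphism in the target category.
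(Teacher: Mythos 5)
Your strategy is genuinely different from the paper's and contains a gap at its core. The paper does \emph{not} attempt to exhibit a standard and a costandard filtration directly; it instead checks the pointwise stalk/costalk criterion (\cref{criterion}): for each $\mu\in\cham$, the (co)stalk of $\Zent^\IW(V)$ at the orbit of $(t^\mu)_s$ must sit in a single perverse degree. The support computation (\cref{whittaker:eulerchar} plus \cref{lemma:minusculerep}) reduces this to the orbits indexed by $\Wfin\cdot\bar\lambda$ and $\{0\}$; the orbits in $\Wfin\cdot\bar\lambda$ are handled by the twisting trick \cref{wfin:orbit} together with openness of $\Fl{\bar\lambda}^{\mathrm{op}}$; and the closed orbit $\Fl{0}^{\mathrm{op}}$ is handled by the perverse-amplitude argument (the localization triangle forces the (co)stalk to live in degrees $\{0,\pm 1\}$ only), the Euler characteristic identity from \cref{whittaker:eulerchar}, and the upper bound of \cref{lemma:inequality0wtspc}, which together force the degree-$1$ term to vanish.

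The gap in your route is the claim that $\aviw(J_\mu)$, for $\mu$ running over the weights of $\Res(V)$, can be rearranged into a standard (or costandard) filtration. Applying the t-exact functor $\aviw$ to the Wakimoto filtration does give a filtration with graded pieces $\aviw(J_\mu)$, but for $\mu$ that is neither dominant nor antidominant, $J_\mu\simeq\nabla_{t^\nu}\star\Delta_{t^{\nu-\mu}}$ is a genuine convolution, so $\aviw(J_\mu)\simeq \stdiw_{\nu}\star\Delta_{t^{\nu-\mu}}$ is neither a single $\stdiw_\lambda$ nor a single $\costdiw_\lambda$; it is not even obviously tilting. ``Refining along a total order'' does not change what the subquotients of this filtration are, and controlling higher $\Ext$ between consecutive Wakimoto layers does not by itself re-express those layers as (co)standards. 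Filtering by non-(co)standard pieces, even with controlled extensions, is not the content of ``having a (co)standard filtration,'' so the reduction to the Hom bound does not close. What the Hom bound and the decategorified Euler characteristic do accomplish, as the paper shows, is pin down the (co)stalk at the closed orbit once you know its perverse amplitude is $\{0,1\}$ (resp. $\{-1,0\}$); that is a different and more local use of the same two inputs. Separately, a small slip: minimality of $\bar\lambda$ in $\chamb\setminus\{0\}$ does not ``force $V$ quasi-minuscule'' — $\bar\lambda$ could be minuscule (in which case $\Res(V)_0=0$ and the closed orbit does not appear, which is why the paper treats that case separately and more easily in \cref{minuscule}).

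The Verdier-duality remark you offer for the dual filtration also needs care: $\D$ exchanges $(\Lplus\Iop_u,\chi)$-equivariance with $(\Lplus\Iop_u,\chi^{-1})$-equivariance and swaps $!$- and $*$-averaging, so one has to set up the duality between two different Whittaker categories before one can transport a standard filtration to a costandard one. In the paper's framework this is sidestepped entirely by the symmetric (co)stalk criterion.
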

In the remainder of the section we will assume that the group $G$ is adjoint and tamely
ramified, unless explicitly stated otherwise. The adjoint assumption is not essential, as the
proof of the result for a group $G$ reduces to the case of its adjoint group $G_\ad$ as we explain below:

Given an arbitrary reductive group $G$,
the quotient map $G\rightarrow G_\ad$ induces a map between the respective Bruhat-Tits buildings. This
yields a map
$\mathcal{G}\rightarrow \mathcal{G}_\ad$, of the corresponding parahoric group schemes of $G$, $G_\ad$ respectively.
  Finally,
the induced map
$$\Hk{\mathcal{G}}\rightarrow \Hk{\mathcal{G}_\ad},$$
 is surjective, and restricts to universal homeomorphisms between associated connected
components \cite[Corollary A.4]{modularRamified}. In light of this, the statement for $G$
reduces to one of $G_\ad$, via pushforwards.
\subsection{Preliminaries}
To prove a given object in $\PIW$ is tilting, we will check the following criterion:
\begin{prop}\label{criterion}
	An object $F\in \PIW$ is tilting, if and only if for every $\lambda\in\cham$, the stalk $j_{(t^\lambda)_s}^{\mathrm{op}\: *}F$ and
	the costalk $j_{(t^\lambda)_s}^{\mathrm{op}\: !}F$ are both concentrated in degree
	$-\ell((t^\lambda)_s)$.
\end{prop}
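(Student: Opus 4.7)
The plan is to reduce both implications to the standard Ext-vanishing criterion for standard and costandard filtrations in a highest weight category \cite[\textsection 1.12.3]{baumannRiche}: for $F \in \PIW$,
$$F \text{ has a costandard filtration} \iff \Ext^{>0}_{\DIW}(\stdiw_\mu, F) = 0 \text{ for all } \mu \in \cham,$$
and symmetrically, $F$ has a standard filtration iff $\Ext^{>0}_{\DIW}(F, \costdiw_\mu) = 0$ for all $\mu$. Since tilting is by definition the existence of both filtrations, it suffices to equate each Ext-vanishing with one of the concentration statements in the proposition.

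For the forward direction, suppose $F$ satisfies both concentration hypotheses. Writing $\tilde j := j_{(t^\mu)_s}^{\mathrm{op}}$ and using the adjunction $\tilde j_! \dashv \tilde j^!$ together with \eqref{def:stdcostdiw}, one obtains
$$\Ext^i_{\DIW}(\stdiw_\mu, F) \;\cong\; \Ext^{i - \ell((t^\mu)_s)}\bigl(L_\mu,\ \tilde j^!F\bigr),$$
with the right-hand side computed in the Iwahori-Whittaker equivariant derived category on the single orbit $\Fl{(t^\mu)_s}^{\mathrm{op}}$. By \cref{orbit:prop} the orbit carries a unique simple Iwahori-Whittaker equivariant local system $L_\mu$, and the Iwahori-Whittaker equivariant derived category on the orbit is equivalent to $\Db(\Vect_{\Qell})$; this equivalence follows from \cref{lem:conditionnotdatum} applied to the stabilizer of a base point, using the triviality of $\chi$ on this stabilizer established in the proof of \cref{orbit:prop}. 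Positive Exts between shifted copies of $L_\mu$ therefore vanish, so the hypothesis that $\tilde j^!F$ is concentrated in degree $-\ell((t^\mu)_s)$ kills $\Ext^{>0}(\stdiw_\mu, F)$ and yields a costandard filtration of $F$. The parallel argument with $\tilde j^* \dashv \tilde j_*$ and the stalk hypothesis produces a standard filtration, so $F$ is tilting.

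Conversely, suppose $F$ is tilting and fix $\mu \in \cham$. Using the standard filtration of $F$, long exact sequences of $\tilde j^*$ reduce the stalk concentration to the computation $\tilde j^* \stdiw_\lambda = L_\lambda[\ell((t^\lambda)_s)]$ if $\mu = \lambda$ and zero otherwise, which is immediate from $\stdiw_\lambda = (j_{(t^\lambda)_s}^{\mathrm{op}})_! L_\lambda[\ell((t^\lambda)_s)]$ and the vanishing of $\tilde j^* \circ (j_{(t^\lambda)_s}^{\mathrm{op}})_!$ across distinct orbits. The symmetric computation for $\tilde j^!$ applied to costandards handles the costalks. The main technical point is the equivalence of the Iwahori-Whittaker equivariant derived category on an orbit with $\Db(\Vect_{\Qell})$; once secured, the rest is a formal application of the highest weight category machinery already used in \cref{prop:quotientbruhat}.
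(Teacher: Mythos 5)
Your proposal is correct and essentially spells out, in the Iwahori-Whittaker-twisted setting, the proof of the reference the paper cites: the paper simply invokes \cite[Proposition 1.3]{BBM04}, noting that the highest weight structure established in \cref{prop:quotientbruhat} satisfies the axioms of loc.\ cit. You instead give a direct argument. The mechanism is the same in both cases: reduce tilting to the two Ext-vanishing criteria for $\Delta$- and $\nabla$-filtrations, relate $\Ext^i(\stdiw_\mu, F)$ (resp.\ $\Ext^i(F,\costdiw_\mu)$) to the cohomology of the costalk (resp.\ stalk) on the orbit via the $(j_!, j^!)$ and $(j^*, j_*)$ adjunctions, and use that the Iwahori-Whittaker equivariant derived category of a single orbit $\Fl{(t^\mu)_s}^{\op}$ is $\Db(\Vect_{\Qell})$ so that $\Ext^{>0}$ between shifts of $L_\mu$ vanishes. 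What your route makes explicit (and BBM04 axiomatizes) is that it is precisely this last fact, together with the perversity constraint fixing the ``other'' cohomological bound on each stalk and costalk, that converts Ext-vanishing into the concentration statement. One small remark: be careful with the paper's display \eqref{def:stdcostdiw}, which appears to swap $j_!$ and $j_*$ relative to the usual convention; your adjunction computations implicitly use $\stdiw_\mu = j_!L_\mu[\ell]$ and $\costdiw_\mu = j_*L_\mu[\ell]$, which is what makes $\IC^{\IW}_\mu$ the image of the canonical map and what the rest of the paper depends on, so this is surely the intended reading.
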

\begin{proof}
	In \cite[Proposition 1.3]{BBM04}, the authors give an axiomatized proof of this fact, for highest
	weight structures on categories of certain categories of perverse sheaves, arising from
	stratifications with contractible strata. The highest weight structure we have endowed on $\PIW$
	satisfies the necessary axioms for their proof to be applicable.
\end{proof}
We will denote by $\Zent^\IW$ the composition $\aviw\circ\Zent$. For
$V\in\operatorname{Rep}(\check G)$, the stalk and costalk at $(t^\lambda)_s$ of $\Zent^\IW(V)$
are related to the $\lambda$ weight space of the restriction $\res(V)_\lambda$, of $V$ to a
$\check G^I$-representation:
\begin{prop}\label{whittaker:eulerchar}
	For $V\in\operatorname{Rep}(\check G)$ and $\lambda\in\cham$,
	\begin{align*}\label{euler:char}
		\chi^!_\lambda\left(\Zent^\IW(V)\right)\defined\sum_{n\geq
		0}(-1)^n\dim\left(\Hom_{\DIW}\left(\stdiw_\lambda,\Zent^\IW(V)[n]\right)\right)=\dim\left(\res(V)_\lambda\right),\\
		\chi^*_\lambda\left(\Zent^\IW(V)\right)\defined\sum_{n\geq
		0}(-1)^n\dim\left(\Hom_{\DIW}\left(\Zent^\IW(V),\costdiw_\lambda[n]\right)\right)=\dim\left(\res(V)_\lambda\right).
	\end{align*}
\end{prop}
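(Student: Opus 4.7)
The plan is to combine the Wakimoto filtration of $\Zent(V)$ with the perverse exactness of Iwahori--Whittaker averaging and the highest weight structure on $\PIW$.

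By \cref{theorem:filtration}, the central sheaf $\Zent(V)$ admits a finite filtration whose graded pieces are $J_\mu \otimes \res(V)_\mu$, indexed by $\mu \in \cham$. Since $\aviw$ is perverse t-exact by \cref{cor:aviewtexact}, it restricts to an exact functor $\PervI \to \PIW$, and this filtration transports to one on $\Zent^\IW(V)$ with graded pieces $\aviw(J_\mu) \otimes \res(V)_\mu$. Additivity of Euler characteristics then yields
\[
\chi^!_\lambda(\Zent^\IW(V)) = \sum_\mu (\dim \res(V)_\mu) \cdot \chi^!_\lambda(\aviw(J_\mu)),
\]
and analogously for $\chi^*_\lambda$. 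It therefore suffices to prove the key claim
\[
\chi^!_\lambda(\aviw(J_\mu)) = \chi^*_\lambda(\aviw(J_\mu)) = \delta_{\lambda, \mu}, \qquad \lambda, \mu \in \cham.
\]

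The key claim is immediate in the antidominant case: when $\mu$ is antidominant, \cref{remark} gives $J_\mu \cong \Delta_{t^\mu}$, whence \cref{std:averaging} yields $\aviw(J_\mu) \cong \costdiw_\mu$ under the identification of $\Wfin$-orbits with $\cham$; the highest weight axiom from \cref{prop:quotientbruhat} then gives $\chi^!_\lambda(\costdiw_\mu) = \delta_{\lambda, \mu}$, and the dominant case for $\chi^*_\lambda$ is symmetric. To extend to arbitrary $\mu$, I would pass to the mixed setting and work in the Grothendieck group. Via the decategorification of \cref{prop:categorification} and the central class formula of \cref{mixed:central}, one identifies $K_0$ of the antispherical quotient of $\Pervmix$ with the antispherical module $\mathcal{M}_\asph$ of $\mathcal{H}$; under this identification $[\aviw(J^\mix_\mu)]$ corresponds to the Bernstein element $\theta_\mu$ acting on the generator $1_\asph = [\stdiw_0^\mix]$. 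The target identity $\theta_\mu \cdot 1_\asph = [\costdiw^\mix_\mu]$ holds for antidominant $\mu$ by the preceding paragraph, and extends to arbitrary $\mu$ via the multiplicativity $\theta_\mu \theta_\nu = \theta_{\mu + \nu}$ applied to a sufficiently antidominant $\nu$ chosen so that $\mu + \nu$ is also antidominant. Specializing back to the unmixed setting gives $[\aviw(J_\mu)] = [\costdiw_\mu]$ in $K_0(\PIW)$, and the key claim follows from $\chi^!_\lambda([\costdiw_\mu]) = \delta_{\lambda, \mu}$.

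The main obstacle is the verification of the Hecke-algebraic identity $\theta_\mu \cdot 1_\asph = [\costdiw^\mix_\mu]$ in $\mathcal{M}_\asph$ for arbitrary $\mu$: this is combinatorial, traceable to the Bernstein--Lusztig presentation, but requires careful tracking of signs and Bruhat-order interactions in the antispherical action. An alternative, more sheaf-theoretic route writes $J_\mu \cong \nabla_{t^{\lambda'}} \star \Delta_{t^{\mu - \lambda'}}$ for a sufficiently dominant $\lambda'$ from \cref{remark}, and computes $\aviw$ using that it commutes with right convolution, reducing to an orbit-level analysis of $\stdiw_{\lambda'} \star \Delta_{t^{\mu - \lambda'}}$; this version replaces the algebraic difficulty with a length-additivity calculation on the Iwahori--Weyl group, parallel to the proof of \cref{std:conv}(1).
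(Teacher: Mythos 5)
Your opening reduction is correct and coincides with the paper's: use \cref{theorem:filtration} to write $[\Zent(V)]=\sum_\mu\dim\left(\res(V)_\mu\right)\cdot[J_\mu]$, apply the exact functor $\aviw$, and reduce to the claim that $\chi^!_\lambda\big(\aviw(J_\mu)\big)=\chi^*_\lambda\big(\aviw(J_\mu)\big)=\delta_{\lambda\mu}$. The final step, orthogonality of (co)standard objects for the highest weight structure, is also the same in both proofs.

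However, the detour you make after the antidominant case is both unnecessary and, as written, incomplete. The paper makes a simpler observation \emph{before} applying $\aviw$: the identity $[J_\mu]=[\Delta_{t^\mu}]=[\nabla_{t^\mu}]$ holds in $K_0\big(\Dbc(\Hk{\I})\big)$ for \emph{every} $\mu\in\cham$, not just antidominant ones. This follows directly from the decategorification isomorphism \eqref{decategorification:iso} (see \cref{remark} in \cref{decategorification}): Verdier duality acts trivially on $K_0$, so $[\Delta_w]=[\nabla_w]$, and one checks that $\theta_\mu\big|_{\mathbf{v}=1}=(-1)^{\langle\mu,2\rho\rangle}\,t^\mu=(-1)^{\ell(t^\mu)}\,t^\mu$ using the parity congruence $\langle\mu,2\rho\rangle\equiv\ell(t^\mu)\pmod 2$. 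Once you have this, pushing the equality $[\Zent(V)]=\sum\dim\left(\res(V)_\mu\right)[\Delta_{t^\mu}]$ through $\aviw$ via \cref{std:averaging} gives $[\Zent^\IW(V)]=\sum_\lambda\dim\left(\res(V)_\lambda\right)[\stdiw_\lambda]=\sum_\lambda\dim\left(\res(V)_\lambda\right)[\costdiw_\lambda]$ in $K_0(\PIW)$, and orthogonality finishes the argument. No passage to the mixed setting or to the antispherical module of $\cH$ is required.

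In contrast, the route you propose has a real gap. The ``extension via multiplicativity'' to close the Hecke-algebraic identity $\theta_\mu\cdot 1_{\mathsf{asph}}=[\costdiw^\mix_\mu]$ for arbitrary $\mu$ is circular as stated: from $\theta_\nu\big(\theta_\mu\cdot 1_{\mathsf{asph}}\big)=\theta_{\mu+\nu}\cdot 1_{\mathsf{asph}}=[\costdiw^\mix_{\mu+\nu}]$ with $\nu$ and $\mu+\nu$ antidominant, you cannot isolate $\theta_\mu\cdot 1_{\mathsf{asph}}$ without already knowing how $\theta_{-\nu}$ acts on $[\costdiw^\mix_{\mu+\nu}]$, which is essentially what you set out to prove. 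Your alternative sheaf-theoretic route via $J_\mu\cong\nabla_{t^{\lambda'}}\star\Delta_{t^{\mu-\lambda'}}$ is more promising but also not carried out. In short: the obstacle you flag is genuine for your chosen path, but the correct move is to establish the $K_0$-identity $[J_\mu]=[\Delta_{t^\mu}]$ upstream in $K_0\big(\Dbc(\Hk{\I})\big)$, which is immediate, rather than trying to establish $\aviw(J_\mu)$ as a class in the antispherical module downstream.
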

\begin{proof}
	From \cref{theorem:filtration}, we obtain the following equality in $K_0(\PervI)$:
	\begin{equation}\label{K0:Zent}
		\left[\Zent(V)\right]=\sum_{\lambda\in\cham}\dim(\res(V)_\lambda)\cdot\left[J_\lambda\right].
	\end{equation}
	Moreover, by \eqref{decategorification:iso} we have
	$\left[J_\lambda\right]=\left[\std_{t^\lambda}\right]=\left[\costd_{t^\lambda}\right]$.
	The formula
	$$\aviw(\std_{t^\lambda})=\stdiw_\lambda$$ of \cref{std:averaging}, together with the ring structure of $K_0(\PervI)$ lead to similar equalities
	$$
		\left[\Zent^\IW(V)\right]=\sum_{\lambda\in\cham}\dim\left(\res(V)_\lambda\right)\cdot\left[\stdiw_\lambda\right]=\sum_{\lambda\in\cham}\dim\left(\res(V)_\lambda\right)\cdot\left[\costdiw_\lambda\right],$$
		in $K_0(\PIW)$. As it is clear that $\chi^!_\lambda$ and $\chi^*_\lambda$ factor through the
		Grothendieck group, the following equalities conclude the proof:
		$$\dim\left(\Hom_{\DIW}\left(\stdiw_\lambda,\costdiw_\mu[n]\right)\right)=\begin{cases}
			\delta_{\lambda\mu}&\text{if } n=0,\\
			0&\text{else}.
		\end{cases}$$
\end{proof}

The following lemma will be used for reduction steps later.
\begin{lem}\label{wfin:orbit}
	Let $V\in \Rep(\check G)$, $\lambda\in\cham$ and $x\in \Wfin$. Then for every $n\in \Z$,
	$$\Hom\left(\stdiw_\lambda,\Zent^\IW(V)[n]\right)\cong
	\Hom\left(\stdiw_{x\cdot\lambda},\Zent^\IW(V)[n]\right),$$
	and
	$$\Hom\left(\Zent^\IW(V)[n],\costdiw_\lambda\right)\cong
	\Hom\left(\Zent^\IW(V)[n],\costdiw_{x\cdot\lambda}\right);$$
	in the category $\DIW$.
\end{lem}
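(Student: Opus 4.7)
The plan is to exploit Gaitsgory's centrality of $\Zent(V)$ to establish a $\Wfin$-invariance of $\Zent^\IW(V)$, and then to reduce both $\stdiw_\lambda$ and $\stdiw_{x\lambda}$ to a common antidominant representative of the orbit $\Wfin\cdot\lambda$. First I would show that $\Zent^\IW(V)\star\Delta_y\cong \Zent^\IW(V)$ for every $y\in\Wfin$: applying $\aviw$ to the centrality isomorphism $\Zent(V)\star\Delta_y\cong\Delta_y\star\Zent(V)$ from \cref{theorem:gaitsgoryzhu}, and using that $\aviw$ commutes with right convolution by associativity, one computes
\[
\Zent^\IW(V)\star\Delta_y\cong \aviw(\Delta_y)\star\Zent(V)\cong \ICIW_0\star\Zent(V)=\Zent^\IW(V),
\]
where the second isomorphism uses $\aviw(\Delta_y)\cong\costdiw_e=\ICIW_0$ from \cref{std:averaging}, valid since $y_s=e$ for any $y\in\Wfin$. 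The same reasoning yields $\Zent^\IW(V)\star\nabla_y\cong\Zent^\IW(V)$.

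Next, I would choose an antidominant $\lambda_0\in\cham$ with $\lambda\in\Wfin\cdot\lambda_0$ and write $\lambda=y\lambda_0$ for some $y\in\Wfin$ of minimal length satisfying this equation. For such $y$, the length identity $\ell(t^{\lambda_0}y^{-1})=\ell(t^{\lambda_0})-\ell(y)$ holds, a standard combinatorial fact about antidominant translations in the extended affine Weyl group. By \cref{std:conv} this yields $\nabla_{t^{\lambda_0}}\cong\nabla_{t^{\lambda_0}y^{-1}}\star\nabla_y$ and hence $\nabla_{t^{\lambda_0}}\star\Delta_{y^{-1}}\cong\nabla_{t^{\lambda_0}y^{-1}}$. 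A direct computation shows that the coset $\Wfin\cdot t^{\lambda_0}y^{-1}$ corresponds to $y\lambda_0\in\cham$ under the bijection $\Wfin\backslash W\cong\cham$, so applying $\aviw$ and \cref{std:averaging} yields $\stdiw_\lambda=\stdiw_{y\lambda_0}\cong \stdiw_{\lambda_0}\star\Delta_{y^{-1}}$. An analogous isomorphism holds when one writes $x\lambda=z\lambda_0$ similarly.

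Combining these ingredients, I would compute
\[
\Hom_{\DIW}(\stdiw_\lambda,\Zent^\IW(V)[n])\cong \Hom_{\DIW}(\stdiw_{\lambda_0},\Zent^\IW(V)\star\nabla_y[n])\cong \Hom_{\DIW}(\stdiw_{\lambda_0},\Zent^\IW(V)[n]),
\]
where the first isomorphism uses the invertibility of convolution with $\Delta_{y^{-1}}$ (whose inverse is $\nabla_y$) and the second uses the invariance established in the first step. The identical calculation with $x\lambda$ in place of $\lambda$ then proves the first isomorphism of the lemma. The costandard version follows by a parallel argument: the same length identity yields $\Delta_{t^{\lambda_0}}\star\nabla_{y^{-1}}\cong\Delta_{t^{\lambda_0}y^{-1}}$ and hence $\costdiw_\lambda\cong\costdiw_{\lambda_0}\star\nabla_{y^{-1}}$, which combines with $\Zent^\IW(V)\star\Delta_y\cong\Zent^\IW(V)$ to give the claim.

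The main technical point will be the length identity $\ell(t^{\lambda_0}y^{-1})=\ell(t^{\lambda_0})-\ell(y)$ for antidominant $\lambda_0$ and appropriately chosen minimal-length elements $y\in\Wfin$; this is elementary in the strictly antidominant case, but requires care when $\lambda_0$ has nontrivial stabilizer in $\Wfin$, where the minimality of $y$ in its coset modulo the stabilizer becomes essential.
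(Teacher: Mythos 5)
Your overall strategy is essentially the same as the paper's: combine Gaitsgory's centrality (\cref{theorem:gaitsgoryzhu}), the averaging identities from \cref{std:averaging}, and the convolution compatibility of (co)standard functors from \cref{std:conv}, with a length identity to normalize the standard objects. The paper takes $\lambda$ dominant without loss of generality and relates $\stdiw_\lambda$ and $\stdiw_{x\lambda}$ directly, whereas you reduce both to a common representative $\lambda_0$ and isolate the $\Wfin$-invariance $\Zent^\IW(V)\star\Delta_y\cong\Zent^\IW(V)$ as a separate first step; the latter is a clean reformulation of the same centrality-plus-averaging computation the paper carries out inline, so the two arguments are effectively the same in content.

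The one real problem is your length identity, and it is not merely a matter of "care": the sign is wrong for the representative you chose. For an antidominant $\lambda_0$ and $y\in\Wfin$, lengths \emph{add}: $\ell(t^{\lambda_0}y^{-1})=\ell(t^{\lambda_0})+\ell(y)$. The subtraction formula $\ell(t^{\lambda_0}y^{-1})=\ell(t^{\lambda_0})-\ell(y)$ (for the minimal $y$ with $y\lambda_0=\lambda$) holds when $\lambda_0$ is \emph{dominant}; in that case $(t^\lambda)_s=t^{\lambda_0}y^{-1}$. One checks this already for $\mathrm{SL}_2$: with $\lambda_0=-\alpha^\vee$ antidominant and $y=s$, one has $t^{-\alpha^\vee}s=s_1s_0s_1$ of length $3=2+1$, not $1$. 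Consequently, with antidominant $\lambda_0$, \cref{std:conv} gives $\nabla_{t^{\lambda_0}y^{-1}}\cong\nabla_{t^{\lambda_0}}\star\nabla_{y^{-1}}$, hence $\stdiw_\lambda\cong\stdiw_{\lambda_0}\star\nabla_{y^{-1}}$ rather than $\stdiw_{\lambda_0}\star\Delta_{y^{-1}}$. Either fix is fine: keep antidominant $\lambda_0$ and replace $\Delta_{y^{-1}}$ by $\nabla_{y^{-1}}$ throughout (your first step gives both $\Zent^\IW(V)\star\Delta_y\cong\Zent^\IW(V)$ and $\Zent^\IW(V)\star\nabla_y\cong\Zent^\IW(V)$, so the final $\Hom$ computation still closes), or switch to dominant $\lambda_0$ to match both the subtraction identity and the paper. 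As written, the factorization $\stdiw_\lambda\cong\stdiw_{\lambda_0}\star\Delta_{y^{-1}}$ does not hold and the proof does not go through.
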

\begin{proof}
	Proof follows the strategy of \cite[Lemma 6.5.11]{centralRiche}.
	Without loss of generality, we may assume $\lambda$ is dominant, therefore
	${(t^\lambda)_s=t^\lambda}$. Now, there exist $y\in
	\Wfin$, such that $t^\lambda=(t^{x\cdot\lambda})_s \cdot y\inv$,
	and
	$y$ is of minimal length with $x\cdot\lambda=\lambda\cdot y$ \cite[Lemma
	2.4]{richeMautnerExotictstructure}. Using the averaging formula for standard objects
	from \cref{std:averaging}, this implies the following chain of isomorphisms:
	$$\std_\lambda^\IW\defined\std_0^\IW\star\std_{(t^\lambda)_s}\cong\stdiw_0\star\std_{(t^{x\cdot\lambda})_s}\star\std_{y\inv}\cong\stdiw_{x\cdot\lambda}\star\std_{y\inv}.$$
	Therefore, by the compatibility of standard objects with convolution (\cref{std:conv}), and
	the averaging formula once again; we get the following chain of identifications:
	\begin{align*}
	\Hom\left(\stdiw_\lambda,\Zent^\IW(V)[n]\right)\cong&
	\Hom\left(\stdiw_\lambda\star\std_y,\Zent^\IW(V)\star\std_y[n]\right)\\
		\cong&\Hom\left(\stdiw_{x\cdot\lambda},\Zent^\IW(V)\star\std_y[n]\right)\\
		\cong&\Hom\left(\stdiw_{x\cdot\lambda},\stdiw_0\star\Zent(V)\star\std_y[n]\right)\\
		\cong&\Hom\left(\stdiw_{x\cdot\lambda},\stdiw_0\star\std_y\star\Zent(V)[n]\right)\\
		\cong&\Hom\left(\stdiw_{x\cdot\lambda},\Zent^\IW(V)[n]\right).
	\end{align*}
	The remaining case can be proven similarly.
\end{proof}
\subsection{Representations with (quasi-)minuscule highest weight}
Recall from \cref{iwahoriweylgroup} that $\chamb$ are the dominant coweights for the
\`echelonage root system $\breve\Sigma$. An element $\bar \mu \in\chamb\setminus\{0\}$, minimal
among such
with respect to the coroot order  is called
\begin{itemize}
	\item \textit{minuscule} if $\langle\alpha,\bar\mu\rangle \in \{0,1\}$ for all roots $\alpha \in\breve\Sigma$,
	\item \textit{quasi-minuscule}, otherwise.
\end{itemize}
\begin{lem}\label{lemma:minusculerep}
	Let $\lambda\in\splitcham$ with image $\bar\lambda\in\chamb$. Let $V$ denote irreducible $\check G$-representation with
	highest weight $\lambda$, and $V_{\bar\lambda}$ the irreducible $\check
	G^I$-representation of highest weight $\bar\lambda$. Then
	$$\Res(V)\cong
	V_{\bar\lambda}\oplus\bigoplus_{\bar\mu\leq\bar\lambda}
	V_{\bar\mu}^{c_{\bar\lambda,\bar\mu}},$$
	where $\bar\mu\in\chamb$ is distinct from $\bar\lambda$, $\leq$ denotes the coroot order, and $c_{\bar\lambda,\bar\mu}$
	are non-negative integers.
\end{lem}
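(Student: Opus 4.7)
The plan is to analyze $\Res(V)$ by combining its $\check T^I$-weight decomposition with its $\check G^I$-isotypic decomposition. Since we work over $\Qell$, the category $\Rep(\check G^I)$ is semisimple, so we may write
$$\Res(V)\;\cong\;\bigoplus_{\bar\mu\in\chamb} V_{\bar\mu}^{\oplus c_{\bar\lambda,\bar\mu}}$$
for non-negative integer multiplicities. We need to show $c_{\bar\lambda,\bar\lambda}=1$ and $c_{\bar\lambda,\bar\mu}=0$ unless $\bar\mu\leq\bar\lambda$ in the coroot order.

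For the weight inequality, recall that $\Res(V)$ viewed as a $\check T^I$-representation has its characters arising from the projection $X^*(\check T)=\splitcham\to\cham$ applied to the $\check T$-weights of $V$. Every such weight $\nu$ satisfies $\lambda-\nu\in\Z_{\geq 0}\cdot\Sigma^+$ (since $\lambda$ is the highest weight of the irreducible representation $V$), which by property (3) of \cref{subsection:coweights} implies $\bar\nu\leq\bar\lambda$ in the coroot order on $\cham$. Hence every $\check T^I$-weight of $\Res(V)$, and in particular the highest weight of any irreducible constituent, lies below $\bar\lambda$; this forces $c_{\bar\lambda,\bar\mu}=0$ unless $\bar\mu\leq\bar\lambda$.

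To obtain multiplicity one, I will show the $\bar\lambda$-weight space of $\Res(V)$ is one-dimensional. This weight space equals $\bigoplus_{\nu\,:\,\bar\nu=\bar\lambda} V_\nu$, so it suffices to prove that the only $\check T$-weight of $V$ whose image in $\cham$ is $\bar\lambda$ is $\nu=\lambda$ itself. Writing $\lambda-\nu=\sum_{\alpha} n_\alpha\alpha$ as a $\Z_{\geq 0}$-combination of simple coroots $\alpha$ of $G$, the equality $\bar\nu=\bar\lambda$ translates to $\sum_\alpha n_\alpha\bar\alpha=0$ in $\cham$. Now $\bar\alpha$ depends only on the $I$-orbit of $\alpha$, and the distinct values are positive rational multiples of the simple coroots of the \'echelonnage root system $\breve\Sigma$, which are linearly independent in $\cham\otimes_{\Z}\Q$. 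Grouping by $I$-orbit therefore forces each $n_\alpha=0$, so $\nu=\lambda$. Combined with $\dim V_\lambda=1$ and the fact that the $\bar\lambda$-weight space of $V_{\bar\lambda}$ is itself one-dimensional, this yields $c_{\bar\lambda,\bar\lambda}=1$.

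The main technical ingredient is the linear-independence assertion used above, which is a direct consequence of Haines' description \cite[Proposition 5.2]{hainesEchelonnage} of the simple coroots of $\breve\Sigma$ in terms of $I$-orbits on the simple coroots of $G$. Everything else is bookkeeping with the compatibilities of the projection $\splitcham\to\cham$ recorded in \cref{subsection:coweights}.
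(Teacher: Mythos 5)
Your proof is correct, and for the hard part — that the $\bar\lambda$-weight space of $\Res(V)$ is one-dimensional — it takes a genuinely different route from the paper. The paper reduces to this multiplicity-one statement (via the coroot-order compatibility of $\splitcham\to\cham$, exactly as you do) and then cites the geometric argument of Zhu \cite[Lemma 2.6]{zhuRamified}, which ultimately relies on the geometry of the ramified affine Grassmannian. You instead give a purely root-theoretic argument: writing $\lambda-\nu=\sum_\alpha n_\alpha\alpha$ over simple coroots, projecting to $\cham$, and using that the images of simple coroots, grouped by $I$-orbit, are linearly independent. This is more elementary and self-contained, and it is sound: because $I$ acts on the simple coroots by Dynkin diagram automorphisms, the $\Q$-span of the simple coroots is a permutation $I$-representation whose coinvariants have a basis indexed by $I$-orbits, and this span injects into $\cham\otimes_\Z\Q$ (take an $I$-stable complement, as $I$ is finite and we are in characteristic zero). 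One could phrase it directly this way rather than through Haines' description of the échelonnage simple coroots — the permutation-module fact is all you need, and it sidesteps any need to track whether the $\bar\alpha$ are literally simple coroots of $\breve\Sigma$ or just positive rational multiples thereof. One caveat, which is implicit in the lemma's statement as well: the clean decomposition of $\Res(V)$ into $V_{\bar\mu}$'s indexed by $\chamb$ presupposes $\check G^I$ connected; in the paper this lemma is invoked only after reducing to the adjoint case, where that holds.
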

\begin{proof}
This is
	proven for tamely ramified groups in \cite[Lemma 4.10]{zhuRamified}. The argument was
	observed to work in general in the proof of \cite[Theorem 4.11]{richarzRamified}.
	Indeed, since the map $\splitchamb\rightarrow\chamb$ preserves the respective coroot orders (see
	\cref{subsection:coweights}), it suffices to prove that the multiplicity of $\bar\lambda$
	weight space in $\Res(V)$ is 1. This is obtained via geometric methods, see
	\cite[Lemma 2.6]{zhuRamified}.
\end{proof}
\subsection{Proof in the minuscule case}
\begin{lem}\label{minuscule}
	Let $\lambda\in\splitchamb$ be such that $\bar\lambda\in\chamb$ is
	minuscule in the \`echelonnage root system. Denote by $V$ the
	irreducible $\check G$-representation corresponding to $\lambda$. Then $\Zent^\IW(V)$ is tilting.
\end{lem}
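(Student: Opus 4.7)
The strategy is to verify the tilting criterion of Proposition~\ref{criterion}, which requires showing that for every $\mu \in \cham$, the stalk $j_{(t^\mu)_s}^{\op\,*}\Zent^\IW(V)$ and the costalk $j_{(t^\mu)_s}^{\op\,!}\Zent^\IW(V)$ are concentrated in cohomological degree $-\ell((t^\mu)_s)$. By Lemma~\ref{wfin:orbit}, the relevant $\Hom$-complexes $\Hom(\stdiw_\mu,\Zent^\IW(V)[\bullet])$ and $\Hom(\Zent^\IW(V),\costdiw_\mu[\bullet])$ depend only on the $\Wfin$-orbit of $\mu$ in $\cham$, so it suffices to check the concentration property for $\mu \in \chamb$.

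In the minuscule case, Lemma~\ref{lemma:minusculerep} (and the standard fact that for a minuscule weight the zero weight does not appear) gives $\Res(V)\cong V_{\bar\lambda}$, and the weights of this $\check G^I$-representation form the single orbit $\Wfin\cdot\bar\lambda$, each of multiplicity one. In particular, the only dominant weight is $\bar\lambda$ itself. Proposition~\ref{whittaker:eulerchar} then yields
$$\chi^!_\mu(\Zent^\IW(V))=\chi^*_\mu(\Zent^\IW(V))=\begin{cases}1 & \mu\in\Wfin\cdot\bar\lambda,\\ 0 & \text{otherwise.}\end{cases}$$
Thus the tilting property will follow once we upgrade this Euler-characteristic identity to actual concentration in a single cohomological degree on each orbit $\Fl{\nu}^{\op}$ for $\nu\in\Wfin\cdot\bar\lambda$, and vanishing elsewhere.

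To achieve the upgrade, I would use the Wakimoto filtration of Theorem~\ref{theorem:filtration}: $\Zent(V)$ admits a filtration with graded pieces $J_\nu$ indexed by $\nu \in \Wfin\cdot\bar\lambda$. By Proposition~\ref{wakimoto:extension} the extensions only go in the direction of the coroot order, so the filtration can be arranged in either increasing or decreasing coroot order. Applying the perverse t-exact functor $\aviw$ (Corollary~\ref{cor:aviewtexact}) yields a corresponding filtration of $\Zent^\IW(V)$ with graded pieces $\aviw(J_\nu)$. For $\nu=\bar\lambda$ (dominant), Remark after Lemma~\ref{noncanon} identifies $J_{\bar\lambda}\cong\costd_{t^{\bar\lambda}}$, whence Lemma~\ref{std:averaging} gives $\aviw(J_{\bar\lambda})\cong\stdiw_{\bar\lambda}$. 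For a general $\nu=x\cdot\bar\lambda$, I would pick a dominant $\eta\in\chamb$ with $\eta+\nu\in\chamb$ and use monoidality of Wakimoto sheaves (Lemma~\ref{lem:wakimotomonoidality}) together with compatibility of $\aviw$ with right convolution to write $\aviw(J_\nu)\cong\stdiw_{\eta+\nu}\star\std_{t^{-\eta}}$; via Proposition~\ref{std:conv} and a length count this convolution is again standard in $\PIW$ supported on $\Fl{\nu}^{\op}$. Reversing the ordering of the Wakimoto filtration and using the dual presentation $J_\nu\cong\std_{t^{-\eta}}\star\nabla_{\eta+\nu}$ similarly yields a costandard-filtration.

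The main obstacle is precisely the identification of $\aviw(J_\nu)$ for non-dominant $\nu\in\Wfin\cdot\bar\lambda$: one must show that the convolution $\stdiw_{\eta+\nu}\star\std_{t^{-\eta}}$ really is supported only on a single orbit and is concentrated in one cohomological degree. The minuscule hypothesis is essential here, since it forces $\nu$ to be extremal in the weight polytope of $V$, which in turn ensures that the multiplication map restricted to the relevant Schubert cells is an isomorphism onto its image, preventing extraneous orbits from appearing. Once both the $\stdiw$- and $\costdiw$-filtrations are in hand, the tilting property follows immediately from Proposition~\ref{criterion}.
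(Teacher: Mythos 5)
Your setup is on the right track---you correctly invoke \cref{criterion}, \cref{wfin:orbit}, \cref{lemma:minusculerep}, and \cref{whittaker:eulerchar}, and you correctly identify that $\Res(V)\cong V_{\bar\lambda}$ with the weights forming the single orbit $\Wfin\cdot\bar\lambda$. But you then take an unnecessary detour and leave a genuine gap, which you yourself flag: you try to identify $\aviw(J_\nu)$ for each $\nu\in\Wfin\cdot\bar\lambda$ with a single standard (resp.\ costandard) object, and you only give a heuristic for why this should hold for non-(anti)dominant $\nu$. This identification is not obvious---$J_\nu$ for general $\nu$ is neither $\std_{t^\nu}$ nor $\costd_{t^\nu}$, and the convolution $\stdiw_{\eta+\nu}\star\std_{t^{-\eta}}$ need not a priori be supported on a single orbit or concentrated in a single degree. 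Your appeal to the minuscule hypothesis to "prevent extraneous orbits" is hand-waving, not a proof.

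The simpler point you miss is that once you have reduced to checking the criterion only at $\bar\lambda$ via \cref{wfin:orbit}, you do not need to identify each graded piece of a filtration. Instead: (1) the Euler-characteristic computation plus the $K_0$-argument of \cref{whittaker:eulerchar} (the class $[\Zent^\IW(V)]=\sum_{\mu\in\Wfin\cdot\bar\lambda}[\stdiw_\mu]$, combined with the fact that IC classes form a $K_0$-basis and composition-factor multiplicities are nonnegative) forces the support of the perverse sheaf $\Zent^\IW(V)$ to lie in $\overline{\Fl{\bar\lambda}\op}$, since by \cref{lem:concrete} the closure $\overline{\Fl{\bar\lambda}\op}$ contains exactly the orbits indexed by $\Wfin\cdot\bar\lambda$ in the minuscule case; and (2) the orbit $\Fl{\bar\lambda}\op$ is \emph{open} in this support, so for any perverse sheaf supported on $\overline{\Fl{\bar\lambda}\op}$, the $*$- and $!$-restrictions to the open stratum agree and are automatically concentrated in degree $-\ell((t^{\bar\lambda})_s)$. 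That is the entire argument in the paper; there is no need to pass to a Wakimoto filtration or prove anything about $\aviw(J_\nu)$ for individual $\nu$. Until you fill the acknowledged gap, your proof is incomplete, whereas the open-stratum observation closes it immediately.
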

\begin{proof}
	We check the criterion in \cref{criterion}. Using the description of $\Res(V)$ in
	\cref{lemma:minusculerep} and \cref{whittaker:eulerchar},
	we have that  $\Zent^\IW(V)$ is supported on $\overline{\Fl{G,\bar\lambda}\op}$.
	Since $\bar\lambda$ is minuscule, $\Fl{G,\bar\mu}\op\subset \overline{\Fl{G,\bar\lambda}\op}$ implies
	that $\bar\mu\in \Wfin\cdot\lambda$ (see \cref{lem:concrete}). Therefore, by
	\cref{wfin:orbit}, it suffices to check the criterion for $\bar\lambda$. Since
	$\Fl{G,\bar\lambda}\op$ is open in $\overline{\Fl{G,\bar\lambda}\op}$, the lemma follows.
\end{proof}
\subsection{Proof in the quasi-minuscule case}
\begin{prop}\label{quasi-minuscule}
	Let $\lambda\in\splitchamb$ such that
	${\bar\lambda\in\chamb}$ is quasi-minuscule in the \`echelonage root system. Denote by
	$V$ the irreducible $\check G$-representation corresponding to $\lambda$. Then
	$\Zent^\IW(V)$ is tilting.
\end{prop}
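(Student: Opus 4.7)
My approach is to verify the tilting criterion of \cref{criterion}: the stalks and costalks of $F:=\Zent^\IW(V)$ at each $\Lplus\Iop$-orbit stratum $\Fl{\I,(t^\mu)_s}\op$ must be concentrated in degree $-\ell((t^\mu)_s)$. By \cref{theorem:filtration}, $\Zent(V)$ admits a Wakimoto filtration whose graded pieces are indexed by the weights of $\Res(V)$; the quasi-minuscule hypothesis combined with \cref{lemma:minusculerep} confines these weights to $\Wfin\cdot\bar\lambda\cup\{0\}\subset\cham$. Consequently $F$ is supported on the union of the corresponding strata, and via \cref{wfin:orbit} it suffices to check the criterion at the dominant representatives $\mu=\bar\lambda$ and $\mu=0$. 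The case $\mu=\bar\lambda$ is handled exactly as in the proof of \cref{minuscule}, since the stratum is open in the support.

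The essential work lies in the case $\mu=0$, where $(t^0)_s=e$ and $\stdiw_0=\costdiw_0=\ICIW_0$. The criterion reduces to
$$\Hom_{\DIW}(\ICIW_0,F[n])=\Hom_{\DIW}(F,\ICIW_0[n])=0\quad\text{for } n\neq 0.$$
Since $\ICIW_0$ is simple and $F\in\PIW$ by \cref{cor:aviewtexact}, the negative shifts vanish automatically. The Euler characteristic formula of \cref{whittaker:eulerchar} gives that the alternating sum of the remaining dimensions over $n\geq 0$ equals $\dim\Res(V)_0$ in both cases. Vanishing in positive shifts will therefore follow, by a positivity argument supplemented with a parity check in the mixed lift of \cref{decategorification}, from the equality $\dim\Hom_{\PIW}(\ICIW_0,F)=\dim\Res(V)_0$ (and its dual counterpart).

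The main obstacle is the upper bound
$$\dim\Hom_{\PIW}(\ICIW_0,F)\leq\dim\Res(V)_0,$$
which is the single point at which the tame ramification hypothesis enters. To establish it I will exploit \cref{remark:commutativitymonodromy}: under tame ramification, the monodromy operator $\bn$ on $\Zent(V)$ commutes with every morphism, and via Verdier's construction is identified with the monodromy attached to the loop rotation $\Gm$-action on $\GrBD{\I}$. Applying $\aviw$ and combining this commutativity with the triviality of $\bn$ on the Wakimoto graded (\cref{lemma:wakimotogradtriv}), any morphism $\ICIW_0\to F$ must land in the generalized kernels of iterates of $\bn$, whose total dimension I will then control by passing to the mixed setting and using the decategorification \cref{mixed:central} through the algebra map of \eqref{eq:multiplicitymorph}. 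The costalk bound follows by Verdier duality, completing the argument.
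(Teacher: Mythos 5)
Your overall skeleton matches the paper's proof: invoke \cref{criterion}; use \cref{lemma:minusculerep} and \cref{whittaker:eulerchar} to see that $\Zent^\IW(V)$ is supported on the orbits indexed by $\Wfin\cdot\bar\lambda\cup\{0\}$; reduce by \cref{wfin:orbit} to the dominant representatives $\bar\lambda$ and $0$; dispatch $\bar\lambda$ as in \cref{minuscule}; and recognize that the heart of the matter is an upper bound on $\dim\Hom_{\DIW}(\ICIW_0,\Zent^\IW(V))$, which is where tame ramification, the monodromy operator, and the mixed decategorification enter (this is exactly \cref{lemma:inequality0wtspc} and its proof in \cref{subsection:weightmonodromy}). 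Also correct is the elementary observation that $\stdiw_0=\costdiw_0=\ICIW_0$ and that negative-degree $\Ext$'s vanish automatically by perversity.

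The gap is in how you pass from the Euler characteristic equality and the inequality $d_0:=\dim\Hom_{\DIW}(\ICIW_0,\Zent^\IW(V))\leq\dim\Res(V)_0$ to vanishing of the higher $\Ext$ groups. You write that this "will therefore follow, by a positivity argument supplemented with a parity check in the mixed lift"; but knowing $\sum_{n\geq 0}(-1)^n d_n=\dim\Res(V)_0$ together with $d_0\leq\dim\Res(V)_0$ does not force $d_n=0$ for $n\geq 1$, since the remaining terms alternate in sign. A genuine parity constraint (all $d_n$ for $n\geq 1$ supported in one parity) would close the argument, but you do not produce one, and the mixed decategorification of \cref{decategorification} controls weight multiplicities, not the cohomological degrees in which these $\Ext$'s live. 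The paper's actual route is geometric: having verified the criterion at the open strata $\bar\mu\in\Wfin\cdot\bar\lambda$, the restriction $j^*\Zent^\IW(V)$ to the open complement of $\overline{\Fl{G,0}\op}$ is a perverse sheaf, and the recollement triangles for the decomposition $(i,j)$ then confine the stalk and costalk at $0$ to perverse amplitude of length two. This collapses the Euler sum to $d_0-d_1=\dim\Res(V)_0$, after which \cref{lemma:inequality0wtspc} forces both $d_1=0$ and $d_0=\dim\Res(V)_0$. In particular, the equality $d_0=\dim\Res(V)_0$ you appeal to is not an input that the tame-ramification argument supplies; it is an output that only emerges once the amplitude bound is in place. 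To complete your proof, replace the "parity check" by this localization-triangle bound on the perverse amplitude of $i^*\Zent^\IW(V)$ and $i^!\Zent^\IW(V)$.
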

For the proof, we will need the following lemma, whose proof will be postponed to the end of
\cref{subsection:weightmonodromy}.
\begin{lem}\label{lemma:inequality0wtspc}
	Let $\lambda\in\splitchamb$ be such that $\bar\lambda\in\chamb$ is quasi-minuscule.
	Let $V\in\Rep(\check G)$ be the irreducible representation with highest weight
	$\lambda$. Then, there are inequalities
	\begin{align*}
	\dim\left(\Hom_{\DIW}\left(\stdiw_0,\Zent^\IW(V)\right)\right)\leq\dim\Res(V)_0,\\
	\dim\left(\Hom_{\DIW}\left(\Zent^\IW(V),\costdiw_0\right)\right)\leq\dim\Res(V)_0;
	\end{align*}
	where $\Res(V)_0$ denotes the $0$ weight space of the restriction of $V$ to a $\check G^I$-representation.
\end{lem}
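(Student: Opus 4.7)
The plan is to pass to the mixed setting and exploit the weight-monodromy compatibility made available by the loop rotation $\Gm$-action on $\GrBD{\I}$ under the tame ramification hypothesis, as noted in \cref{remark:commutativitymonodromy}. By adjunction for the closed embedding $j_0:\Fl{0}\op\hookrightarrow\Fl{\I}$ of the base point, the two inequalities translate into
\[\dim H^0\!\left(j_0^!\,\Zent^\IW(V)\right)\leq \dim\Res(V)_0\quad\text{and}\quad \dim H^0\!\left(j_0^*\,\Zent^\IW(V)\right)\leq \dim\Res(V)_0,\]
reducing the problem to bounding the $!$-stalk and $*$-stalk of $\Zent^\IW(V)$ at the $\Gm$-fixed base point $e$.

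Next, I would apply \cref{theorem:filtration} to equip $\Zent(V)$ with a Wakimoto filtration whose graded pieces are $J_{\bar\mu}^{\oplus\dim\Res(V)_{\bar\mu}}$ for $\bar\mu\in\cham$. By \cref{lemma:minusculerep}, in the quasi-minuscule case the non-zero graded pieces occur only for $\bar\mu=0$ and for $\bar\mu\in\Wfin\cdot\bar\lambda$. After applying $\aviw$ and using \cref{std:averaging}, the $\bar\mu=0$ pieces contribute $\stdiw_0^{\oplus\dim\Res(V)_0}$, while the $\bar\mu\ne 0$ pieces give (co)standard Iwahori--Whittaker sheaves supported on orbits disjoint from $\Fl{0}\op$. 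The former account precisely for $\dim\Res(V)_0$ in the degree-zero part of each stalk at $e$, so the remaining task is to show that extensions between these graded pieces contribute nothing further in degree zero.

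This is the main obstacle, and it is exactly where the tame ramification hypothesis is essential. Working with $\Zent^\mix(V)$, which is mixed as the nearby cycles of a pure sheaf, its weight filtration agrees up to a shift with the monodromy filtration of $\bn$ by weight-monodromy. Via \cref{remark:commutativitymonodromy}, the operator $\bn$ coincides, up to sign, with Verdier's $\Gm$-monodromy for the loop rotation on $\GrBD{\I}$; as $e$ is $\Gm$-fixed, the stalks $j_0^*\Zent^\mix(V)$ and $j_0^!\Zent^\mix(V)$ acquire weight filtrations aligned with the $\Gm$-weights. Combined with \cref{lemma:wakimotogradtriv}, which says $\bn$ acts trivially on every Wakimoto graded piece, this forces the Wakimoto filtration to refine the monodromy/weight filtration on these stalks; a weight count then shows that the only degree-zero contributions at $e$ come from the $\bar\mu=0$ Wakimoto pieces, establishing the $!$-stalk bound.

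Finally, the $*$-stalk bound follows by Verdier duality: the dual representation $V^*$ is again irreducible of quasi-minuscule highest weight, and $\D\Zent^\mix(V)$ is identified with $\Zent^\mix(V^*)$ up to a Tate twist, so the bound on $\dim H^0(j_0^*\Zent^\IW(V))$ follows from the bound on $\dim H^0(j_0^!\Zent^\IW(V^*))$ established above.
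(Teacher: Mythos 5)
Your proposal takes a genuinely different route from the paper: you attempt a direct stalk-level argument at the $\Gm$-fixed base point, whereas the paper passes through the regular quotient $\Po$. The reduction by adjunction to bounding $\dim H^0(j_0^!\Zent^\IW(V))$ and $\dim H^0(j_0^*\Zent^\IW(V))$ is correct, and the Verdier-duality reduction of the $*$-stalk bound to the $!$-stalk bound for $V^*$ is sound, as is the initial application of \cref{theorem:filtration} and \cref{std:averaging}.

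The core ``weight count'' step, however, has a genuine gap. The claim that the Wakimoto filtration ``refines the monodromy/weight filtration on these stalks'' does not follow from \cref{lemma:wakimotogradtriv}: knowing that $\bn$ annihilates the Wakimoto graded pieces gives at best that any linear refinement $W_\bullet$ of the Wakimoto filtration satisfies $\bn(W_i)\subset W_{i-1}$, which is much weaker than the defining characterization of the Jacobson--Morozov--Deligne filtration of $\bn$, and there is no a priori comparison between the two. You also identify $\Gm$-equivariant weights at the fixed point with Weil weights without argument. Concretely, for $\bar\mu\ne 0$ the costalk $j_0^!\aviw(J_{\bar\mu})$ does \emph{not} vanish, and since $\aviw(J_{\bar\mu})$ is perverse its costalk at the zero-dimensional orbit is a priori concentrated in cohomological degrees $\geq 0$, so it certainly can contribute in degree zero; your sketch never rules this out. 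You have also misattributed the role of tame ramification: Gabber's weight-monodromy theorem holds without it. What tameness provides, via \cref{remark:commutativitymonodromy}, is that $\bn$ commutes with \emph{all} morphisms between central sheaves; this commutativity, not weight-monodromy, is the decisive ingredient.

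The paper's actual argument linearizes the problem differently. Using the fully faithfulness of $\avasph$ (\cref{antishperical:averaging}) and the factorization through $\Po$, one shows via \cref{remark:commutativitymonodromy} that any morphism $\stdiw_0\to\Zent^\IW(V)$ must, after projection to $\Po$, land in $\ker(\Pi^0(\bn_V))$, which under the Tannakian description (\cref{regular:tannakian}) has dimension $\dim\ker(n^0_V)$; this yields \cref{inequality}. Then $\dim\ker(n^0_V)$ is computed via the decategorification to $\mathcal{H}$ and the multiplicity morphism $m$ of \eqref{eq:multiplicitymorph}, using Gabber's theorem to identify weight and monodromy filtrations; the resulting formula \eqref{ker:dim} collapses to $\dim\Res(V)_0$ in the quasi-minuscule case because all non-zero weights of $\Res(V)$ are roots and so pair evenly and nontrivially with $2\rho$. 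You should revisit the proofs of \cref{inequality} and \eqref{ker:dim}: they supply exactly the two pieces your sketch leaves unsubstantiated.
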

\begin{proof}[Proof of \cref{quasi-minuscule}]
	We proceed by checking the criterion in \cref{criterion}. First, note that
	using \cref{lemma:minusculerep}, \cref{whittaker:eulerchar}
	implies that $\Zent^\IW(V)$ is supported on $\overline{\Fl{G,\bar\lambda}\op}$.
	Since $\bar\lambda$ is quasi-minuscule, the containment
	${\Fl{G,\bar\mu}\op\subset \overline{\Fl{G,\bar\lambda}\op}}$ holds
	if and only if $\bar\mu\in \left((\Wfin\cdot\bar\lambda)\cup\{0\}\right)$,
        as follows from the representation
	theoretic interpretation of the closure order in \cref{lem:concrete}.

	For non-zero coweights $\bar\mu$, the criterion is then satisfied by noting that it
	holds for $\bar\lambda$ by \cref{wfin:orbit}.
	It remains to prove it for the stalk and costalk at $0$. Denote by
	$i:\overline{\Fl{G,0}\op}\rightarrow \Fl{G}$ the inclusion of the orbit closure and by
	$j$ the immersion associated to its open complement. The case of non-zero $\bar\mu$
	above show that $j^*\Zent^\IW(V)$ is a perverse sheaf on $\Fl{G}\setminus
	\overline{\Fl{G,0}\op}$. Then, localization
	triangles associated to the decomposition $(i,j)$ imply that shriek and star restrictions of
	$\Zent^\IW(V)$ to $\overline{\Fl{G,0}\op}$ are concentrated in perverse degrees
	$\{0,-1\}$ and $\{0,1\}$ respectively. As the orbits $\Fl{\bar\mu}\op$ are affine
	spaces \eqref{eq:orbitsaffine}, this perverse amplitude implies that the first equality in
	\cref{whittaker:eulerchar} associated to the coweight zero reduces to:
	\begin{align*}
		\dim\Res(V)_0=\sum_{n\in\{0,1\}}(-1)^n\dim\left(\Hom_{\DIW}\left(\stdiw_0,\Zent^\IW(V)[n]\right)\right).
	\end{align*}
		Therefore, \cref{lemma:inequality0wtspc} implies:
	\begin{align*}
	\dim\left(\Hom_{\DIW}\left(\stdiw_0,\Zent^\IW(V)[1]\right)\right)=0
	\end{align*}
	as desired. The argument for $\costdiw_0$ is analogous, concluding the proof.
\end{proof}
\subsection{Propagation through tensor products}
Since all (coroot order) minimal coweights in $\chamb\setminus \{0\}$ are by definition either
minuscule or quasi-minuscule, \cref{minuscule} and \cref{quasi-minuscule} together prove
\cref{main:tilting}. Now, we discuss the extension to arbitrary coweights.
First note the following, which shows that the tilting property propagates through tensor products:
\begin{lem}\label{propagation}
	For $V,W\in\operatorname{Rep}(\check G)$, if $\Zent^\IW(V),\Zent^\IW(W)$ are
	tilting then so is $\Zent^\IW(V\otimes W)$.
\end{lem}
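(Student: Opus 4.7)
The plan is to exploit the monoidality of $\Zent$ together with its centrality to obtain two equivalent expressions for $\Zent^\IW(V \otimes W)$, and to extract the standard and costandard filtrations separately from each.

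First, the monoidality of $\Zent$ (which follows from the geometric Satake equivalence combined with the monoidality of the nearby cycles functor, \cref{lemma:nearbyrest}) together with the centrality statement of \cref{theorem:gaitsgoryzhu} yields isomorphisms in $\PervI$:
$$\Zent(V \otimes W) \cong \Zent(V) \star \Zent(W) \cong \Zent(W) \star \Zent(V).$$
Applying the perverse t-exact functor $\aviw$, which by associativity of convolution satisfies $\aviw(F \star G) \cong \aviw(F) \star G$ for all $F, G \in \Dbc(\Hk{\I})$, produces two expressions for the same object:
$$\Zent^\IW(V \otimes W) \;\cong\; \Zent^\IW(V) \star \Zent(W) \;\cong\; \Zent^\IW(W) \star \Zent(V).$$

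Next, since $\Zent^\IW(V)$ is tilting by hypothesis, it admits a filtration in $\PIW$ whose graded pieces are direct sums of standards $\stdiw_\lambda$, $\lambda \in \cham$. Meanwhile, \cref{theorem:filtration} equips $\Zent(W)$ with a Wakimoto filtration whose graded pieces are of the form $J_\mu^{\oplus \dim \Res(W)_\mu}$. Because right convolution with $\Zent(W)$ is perverse t-exact (\cref{theorem:gaitsgoryzhu}), these two filtrations assemble into a finite filtration of $\Zent^\IW(V) \star \Zent(W)$ in $\PIW$ whose graded pieces are of the form $\stdiw_\lambda \star J_\mu$. The key technical claim is that each such $\stdiw_\lambda \star J_\mu$ admits a $\stdiw$-filtration; writing $\stdiw_\lambda \cong \aviw(\nabla_{(t^\lambda)_s})$ via \cref{std:averaging} and using the compatibility of $\aviw$ with convolution, this reduces to the assertion that $\nabla_w \star J_\mu$ admits a $\nabla$-filtration in $\PervI$ for every $w \in W$ and $\mu \in \cham$. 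Consequently, $\Zent^\IW(V \otimes W)$ carries a standard filtration.

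The symmetric argument applied to the second expression $\Zent^\IW(W) \star \Zent(V)$, now using the costandard filtration of the tilting $\Zent^\IW(W)$ and the Wakimoto filtration of $\Zent(V)$, produces a costandard filtration on the same object $\Zent^\IW(V \otimes W)$. An object of $\PIW$ admitting both a standard and a costandard filtration is by definition tilting, which concludes the proof.

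The main obstacle is the technical claim used in the second paragraph: that $\nabla_w \star J_\mu$ admits a $\nabla$-filtration in $\PervI$ (and dually for $\Delta_w \star J_\mu$). To establish it, one writes $\mu = \nu_1 - \nu_2$ with $\nu_1, \nu_2 \in \chamb$, so that by \cref{lem:wakimotomonoidality} one has $J_\mu \cong \nabla_{t^{\nu_1}} \star \Delta_{t^{-\nu_2}}$. Choosing reduced expressions for $w$, $t^{\nu_1}$ and $t^{-\nu_2}$, one then inducts on total length, repeatedly invoking \cref{std:conv} to either collapse adjacent factors when length adds or to produce an extension of $\nabla$'s when a simple reflection cancels; this eventually exhibits the desired $\nabla$-filtration.
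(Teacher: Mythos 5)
Your top-level strategy is exactly the paper's (via \cite[Proposition 6.5.7]{centralRiche}): use the centrality of $\Zent$ to obtain two presentations of $\Zent^\IW(V\otimes W)$, extract the $\Delta$-flag from one and the $\nabla$-flag from the other using the respective tilting hypotheses and the Wakimoto filtration on the other central factor. Up to this point the argument is sound.

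The gap is in the last paragraph. The claim that $\nabla_w\star J_\mu$ admits a $\nabla$-filtration in $\Perv(\Hk{\I})$ for all $w\in W$, $\mu\in\cham$ is false. Take $w=e$ and $\mu\in\cham\setminus\{0\}$ antidominant. Then $\nabla_e\star J_\mu\cong J_\mu\cong\Delta_{t^\mu}$ (\cref{remark} together with the identification $J_\mu\cong\std_{t^\mu}$ used in the proof of \cref{wakimoto:filtration}), and a standard object $\Delta_w$ in a highest weight category admits a $\nabla$-filtration only when $\Delta_w=\nabla_w=\IC_w$, i.e.\ when the stratum $w$ is minimal — which fails here since $\ell(t^\mu)>0$. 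Your proposed induction breaks exactly at the step where one must handle $\nabla_y\star\Delta_s$ with $\ell(ys)>\ell(y)$: there is no way to produce an extension of $\nabla$'s from it, because the Grothendieck-group equality $[\Delta_s]=[\nabla_s]$ does not lift to the abelian category.

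The deeper problem with the reduction is that by passing to $\nabla_{(t^\lambda)_s}\star J_\mu$ in $\Perv(\Hk{\I})$ before applying $\aviw$, you throw away precisely the cancellation that makes the statement true. The functor $\aviw$ kills every $\IC_w$ with $w\neq w_s$ (\cref{lem:icav}) and converts $\nabla_w\mapsto\stdiw_{w_s}$, $\Delta_w\mapsto\costdiw_{w_s}$ (\cref{std:averaging}); after averaging, the object $\stdiw_\lambda\star J_\mu$ is in fact a \emph{single} $\stdiw$, namely $\stdiw_{\lambda+\mu}$. To prove this one treats $\mu$ dominant and $\mu$ antidominant separately: in the dominant case one uses length additivity $\ell((t^\lambda)_s\,t^\nu)=\ell((t^\lambda)_s)+\ell(t^\nu)$ for minimal coset representatives times dominant translations plus $((t^\lambda)_s\,t^\nu)_s=(t^{\lambda+\nu})_s$, and in the antidominant case one inverts using the tensor-invertibility of $J_\mu$ (\cref{std:conv}). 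Once you have $\stdiw_\lambda\star J_\mu\cong\stdiw_{\lambda+\mu}$ (and dually for $\costdiw$), the filtration argument you set up in the second and third paragraphs goes through directly, without ever needing the false intermediate claim.
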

\begin{proof}
 This follows from the strategy of
	\cite[Proposition 6.5.7]{centralRiche}, using \cref{std:averaging}. Briefly, using the
	centrality of $\Zent$, one observes that
	$$\Zent^\IW(V)\star\Zent(W)\cong\Zent^\IW(V\otimes W)\cong\Zent(V)\star\Zent^\IW(W).$$
	Using this, one leverages the respective filtrations of $\Zent^\IW(V)$ and $\Zent^\IW(W)$
	which exhibit them to be tilting, to construct such for $\Zent^\IW(V\otimes W)$.
\end{proof}
Using this we finally deduce:
\begin{thrm}\label{cor:key}
	For every $V\in\Rep(\check G^I)$, there exist $V'\in\Rep(\check G)$ such that
	\begin{enumerate}
		\item the representation $V$ is a direct summand in $\Res(V')$;
		\item the object $\Zent^\IW(V)$ is tilting.
	\end{enumerate}
\end{thrm}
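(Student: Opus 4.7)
The plan is to combine Proposition \ref{main:tilting}, which provides tilting for $\Zent^\IW(V_\lambda)$ whenever the \'echelonnage image $\bar\lambda \in \chamb$ is minimal in $\chamb \setminus \{0\}$, with Lemma \ref{propagation}, which propagates the tilting property through tensor products. The task is then to realize any prescribed $V \in \Rep(\check G^I)$ as a direct summand of $\Res(V')$ for some $V' \in \Rep(\check G)$ constructed as a tensor product of such (quasi-)minuscule-image irreducibles, interpreting (2) as the statement that $\Zent^\IW(V')$ is tilting.

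First I would reduce to the case $V = V_{\bar\mu}$ irreducible of highest weight $\bar\mu \in \chamb$, which is justified since both $\Res$ and $\Zent^\IW$ commute with finite direct sums and the class of tilting objects in $\PIW$ is closed under direct sums. Next, I would produce coweights $\lambda_1, \ldots, \lambda_n \in \splitchamb$ whose images $\bar\lambda_i$ are all minimal in $\chamb \setminus \{0\}$ such that $V_{\bar\mu}$ appears as a direct summand of the $\check G^{I,\circ}$-representation $V_{\bar\lambda_1} \otimes \cdots \otimes V_{\bar\lambda_n}$. Setting $V' := V_{\lambda_1} \otimes \cdots \otimes V_{\lambda_n}$, Lemma \ref{lemma:minusculerep} gives $V_{\bar\lambda_i}$ as a direct summand of $\Res(V_{\lambda_i})$ for each $i$, so $\bigotimes_i V_{\bar\lambda_i}$ is a direct summand of $\Res(V')$ and hence so is $V_{\bar\mu}$, establishing (1). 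Since each $\Zent^\IW(V_{\lambda_i})$ is tilting by Proposition \ref{main:tilting}, iteratively applying Lemma \ref{propagation} along the factors of $V'$ shows that $\Zent^\IW(V')$ is tilting, establishing (2).

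The main obstacle is the combinatorial realization: given $\bar\mu \in \chamb$, exhibiting minimal $\bar\lambda_i$ such that $V_{\bar\mu}$ is a summand of $V_{\bar\lambda_1} \otimes \cdots \otimes V_{\bar\lambda_n}$. When the minimal dominant coweights span $\chamb$ as a monoid (e.g.\ type $A$, where all fundamental coweights are minuscule), one may simply take $\sum \bar\lambda_i = \bar\mu$ and $V_{\bar\mu}$ arises as the Cartan summand of the tensor product. For root systems without minuscule coweights, only the quasi-minuscule coweight $\bar\lambda$ is available, and one argues that sufficiently high tensor powers $V_{\bar\lambda}^{\otimes N}$ contain $V_{\bar\mu}$ as a summand; this follows from the fact that the weight support of $V_{\bar\lambda}^{\otimes N}$ exhausts large portions of the coroot lattice translated by $N \bar\lambda$, together with complete reducibility in characteristic zero. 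The disconnectedness of $\check G^I$ can be accommodated with mild additional bookkeeping, made easier by the adjoint reduction on $G$ that is already in place.
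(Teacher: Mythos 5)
Your plan matches the paper's proof strategy: reduce to an irreducible $V = V_{\bar\mu}$, realize it as a direct summand of a tensor product of $\check G^I$-irreducibles with (quasi-)minuscule highest weight, lift those factors to $\check G$-representations via \cref{lemma:minusculerep}, apply \cref{main:tilting} to the factors, and propagate tilting through the tensor product using \cref{propagation}. You also correctly read item~(2) as being about $\Zent^\IW(V')$. The only substantive divergence is how the combinatorial realization step is treated, and that is where your sketch needs tightening.

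The paper disposes of the combinatorial step by citing Ng\^o, Lemme~10.3, together with two facts that rely on the standing adjoint assumption on $G$: that $\check G^I$ is connected, and that the projection $\splitchamb \rightarrow \chamb$ is surjective, which is what produces the lifts $\lambda_i \in \splitchamb$ with prescribed minimal image $\bar\lambda_i$. Your sketch gestures at both but does not pin them down, and the way you phrase the realization claim is too coarse. The statement ``one may simply take $\sum \bar\lambda_i = \bar\mu$ and $V_{\bar\mu}$ arises as the Cartan summand'' only applies when $\bar\mu$ is literally a $\Z_{\geq 0}$-combination of (quasi-)minuscule dominant coweights; this already fails in type $E_8$, where the only available weight is the highest root $\tilde\alpha$ and most fundamental coweights are not integer multiples of it. Likewise, the fallback argument that ``sufficiently high tensor powers $V_{\bar\lambda}^{\otimes N}$ exhaust the weight support'' establishes that $\bar\mu$ occurs as a weight, but occurrence as a weight does not by itself give occurrence of $V_{\bar\mu}$ as a direct summand; one needs a more careful highest-weight or PRV-type argument. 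Ng\^o's lemma is precisely the reference that packages this correctly (mixing minuscule and quasi-minuscule factors as required by the coset of $\bar\mu$ modulo the coroot lattice), and is what you should invoke to close the gap.
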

\begin{proof}
	As $G$ is adjoint by assumption, $\check G^I$ is connected by \cite[Proposition
	4.1.(d)]{hainesSatake}.
Therefore, \cite[Lemme 10.3]{ngoDemazure} applies and any irreducible representation
$V\in\Rep(\check G^I)$ is a direct summand of a tensor
product of irreducible representations with
minuscule and quasi-minuscule highest weights. Clearly, the direct summand of a tilting object
is also tilting; therefore, \cref{propagation} reduces the assertion to the case where $V$ is
such an irreducible representation with (quasi-)minuscule highest weight. Since the projection
	$\splitchamb\rightarrow\chamb$ is surjective for adjoint groups, \cite[Lemma
	2.6]{modularRamified}, this case follows directly from \cref{main:tilting}.
\end{proof}
\subsection{Regular quotient}\label{subsection:regular}
Let $\mathsf{P}^{>0}$ be the Serre subcategory of $\PervI$
generated by $\IC_w$ with $\ell(w)>0$, and denote by $\Po$ the corresponding Serre quotient. We
will denote by $\Pi^0$ the natural quotient
functor. The following is a remarkable observation of Bezrukavnikov
\cite{bezrukavnikovTensor}.
\begin{prop}
	The truncated convolution product $^pH^0(\cdot\star \cdot)$ on $\PervI$
	descends to an exact monoidal structure $\circledast$ on $\Po$.
\end{prop}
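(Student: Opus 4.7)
The plan is to verify the strong ideal property: ${}^{p}H^{i}(F \star G) \in \mathsf{P}^{>0}$ for every $i \neq 0$ and every $F, G \in \PervI$. Once this is in hand, the descent of ${}^{p}H^{0}(\cdot \star \cdot)$ to a bifunctor $\circledast$ on $\Po$ is automatic by the universal property of Serre quotients, and associativity, unit, and biexactness all follow by standard long-exact-sequence arguments.

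The key geometric input is that $\IC_w$ is left $\Lplus\Par_s$-equivariant for a simple reflection $s$ if and only if $sw < w$, following from the description of $\Lplus\Par_s$-orbits on $\Fl{\I}$ as unions $\Fl{\I,u} \cup \Fl{\I,su}$ in \cref{rmk:quotientbruhat}; in particular, the simple composition factors of any left $\Lplus\Par_s$-equivariant perverse sheaf lie in $\mathsf{P}^{>0}$. First I would prove the ideal property for $F = \IC_w$ with $\ell(w) > 0$: pick a left descent $s$, so that $\IC_w$ and hence $\IC_w \star G$ are left $\Lplus\Par_s$-equivariant (since this property is preserved by the convolution diagram), placing every ${}^{p}H^{i}(\IC_w \star G)$ in $\mathsf{P}^{>0}$. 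The symmetric right-descent argument treats $G \star \IC_w$. For $F$ a successive extension of $\IC_\omega$'s with $\omega \in \Omega_\a$, I would use that $\IC_\omega = \std_\omega$ is convolution-invertible by \cref{std:conv}, hence $\star$ with it is t-exact, and propagate via long exact sequences to see $F \star G$ is concentrated in perverse degree $0$. A general $F \in \PervI$ reduces to these two cases by filtering so each graded piece is either in $\mathsf{P}^{>0}$ or a successive extension of $\IC_\omega$'s; long exact sequences of perverse cohomology, together with closedness of $\mathsf{P}^{>0}$ under subquotients and extensions, complete the induction. The second variable is symmetric.

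With the ideal property established, the remaining verifications are formal. Descent: for a morphism $\phi$ in $\PervI$ with $\ker\phi, \operatorname{coker}\phi \in \mathsf{P}^{>0}$, the long exact sequence applied to $\cdot \star G$ shows $\Pi^0({}^{p}H^{0}(\phi \star \mathrm{id}_G))$ is an isomorphism. Associativity of $\circledast$ descends from that of $\star$: both $(F \circledast G) \circledast H$ and $F \circledast (G \circledast H)$ arise as $\Pi^0 \circ {}^{p}H^{0}$ applied to the triple convolution $F \star G \star H$, with the discrepancy controlled by the other ${}^{p}H^{i}$'s, which vanish in $\Po$ by the ideal property. Biexactness: for a short exact sequence $0 \to F' \to F \to F'' \to 0$ in $\PervI$, the flanking terms ${}^{p}H^{-1}(F'' \star G)$ and ${}^{p}H^{1}(F' \star G)$ in the long exact sequence lie in $\mathsf{P}^{>0}$, so after applying $\Pi^0$ the middle yields a short exact sequence in $\Po$.

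The hard part will be the orbit-theoretic identification of left $\Lplus\Par_s$-equivariant simple perverse sheaves on $\Hk{\I}$ with the $\IC_v$ satisfying $sv < v$. Though the combinatorics of $(\Par_s,\I)$-Schubert varieties in \cref{rmk:quotientbruhat} provides the necessary orbit stratification, transferring this to the precise equivariance statement in our ramified, possibly non-split setting requires unpacking the $\mathbb{P}^1$-bundle geometry of the $\Lplus\Par_s$-orbit closures and verifying that the convolution diagram preserves the equivariance condition at the level of the quotient stacks.
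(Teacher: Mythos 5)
Your proof is correct and follows essentially the same route as the paper: the descent uses that $\IC_w$ with $\ell(w)>0$ is $\Lplus\Par_s$-equivariant for a left descent $s$, that this is preserved by convolution and perverse cohomology, and for (bi)exactness that $\IC_\omega\star\IC_{\omega'}\cong\IC_{\omega\omega'}$ for length-zero $\omega,\omega'$ (the paper invokes \cref{std:conv} for the latter). Your version spells out the reduction via composition series and the strong ideal property more explicitly than the paper, which compresses this into two sentences; your closing concern about the equivariance step is addressed in the paper by the citation to \cref{remark:convolution}, and in any case is a standard consequence of the $\Lplus\Par_s$-equivariance of the convolution diagram itself.
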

\begin{proof}
Consider $\IC_w$ with $\ell(w)>0$. Then there exist a simple reflection $s$ such that
	$\ell(sw)<\ell(w)$ thus, $\IC_w$ is equivariant with respect to the associated parahoric
	subgroup $\Par_s$. As this property is stable under convolution (see \cref{remark:convolution}) and taking perverse
	cohomology sheaves, we deduce $\mathsf{P}^{>0}$ is stable under truncated convolution;
	proving the bifunctor descends to $\Po$. It remains to prove exactness, which
	follows from the fact that for $w,v\in W$ with $\ell(w)=\ell(v)=0$
	we have
	$\IC_w\star \IC_v=\IC_{wv}$ by \cref{std:conv}.
\end{proof}
The following lemma is immediate yet important.
\begin{lem}\label{lemma:zent0}
	The composition $\Zent^0=\Pi^0 \circ \Zent:\operatorname{Rep}(\check G)\rightarrow
	\Po$ is monoidal and central. Moreover, it carries an endomorphism
	$n^0_\cdot: \Zent^0(\cdot)\rightarrow \Zent^0(\cdot),$
 	which is objectwise nilpotent
\end{lem}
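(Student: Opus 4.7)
The plan is to obtain each of the three assertions by descending through the quotient $\Pi^0 \colon \PervI \to \Po$ from structures already available on $\Zent$.

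For \emph{monoidality}, I would recall from \cref{theorem:gaitsgoryzhu} that $\Zent(V) \star \Zent(W)$ already lies in $\PervI$ for all $V, W \in \Rep(\check G)$, so the perverse truncation ${}^p H^0$ acts as the identity on it. Combined with the monoidality of $\Zent$ with respect to $\star$, which yields a canonical isomorphism $\Zent(V) \star \Zent(W) \simeq \Zent(V \otimes W)$, applying the exact functor $\Pi^0$ gives the desired natural isomorphism $\Zent^0(V) \circledast \Zent^0(W) \simeq \Zent^0(V \otimes W)$. Associativity and unit coherence are then transported automatically by the exactness of $\Pi^0$ and the fact that $\circledast$ is by definition the descent of $\star$ through ${}^pH^0$.

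\emph{Centrality} is even more immediate: \cref{theorem:gaitsgoryzhu} already supplies canonical isomorphisms $\Zent(V) \star F \simeq F \star \Zent(V)$ in $\PervI$ for every $F \in \PervI$. Applying $\Pi^0$ directly yields $\Zent^0(V) \circledast \bar F \simeq \bar F \circledast \Zent^0(V)$ for every $\bar F \in \Po$, naturally in $V$, which is precisely the centrality statement.

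For the \emph{endomorphism}, I take $n^0$ to be the image under $\Pi^0$ of the monodromy operator $\bn$ on nearby cycles, introduced just before \eqref{eq:monodromymonoidal}. Since we work in the unmixed category $\Dbc$ and have trivialized the Tate twist via the fixed compatible system of roots of unity, $\bn_V$ is a genuine endomorphism of $\Zent(V)$, and it is nilpotent on each object by construction. Additivity of $\Pi^0$ preserves both functoriality in $V$ and objectwise nilpotency, yielding the claimed $n^0_\cdot$. No step of the argument presents a substantial obstacle; the lemma is essentially a formality combining \cref{theorem:gaitsgoryzhu} with the definition of the Serre quotient $\Po$ and its monoidal structure.
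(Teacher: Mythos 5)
Your proof is correct and takes the same route as the paper's (which is a one-line appeal to the properties of $\Zent$ recalled in \cref{subsection:centralfunctor}, postcomposed with $\Pi^0$). You have usefully spelled out the one non-formal ingredient, namely that $\Zent(V)\star F$ is already perverse by \cref{theorem:gaitsgoryzhu}, so the perverse truncation defining $\circledast$ is vacuous on the image of $\Zent$ and the $\star$-isomorphisms pass through $\Pi^0$ without loss.
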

\begin{proof}
	Follows immediately from identical properties of $\Zent$ introduced in
	\cref{subsection:centralfunctor}, by postcomposition with $\Pi^0$ and functoriality.
\end{proof}
Now we can employ some general machinery developed in \cite{centralRiche}, see also
\cite[Proposition 5.8]{bezrukavnikovRicheRider} for a multiplicative perspective.
\begin{prop}\label{regular:tannakian}
	Consider the full subcategory $\tilde{\mathsf{P}}^0$ of $\Po$ generated under subquotients by
	the essential image of $\Zent^0$. Then there exist
	\begin{center}
		\begin{itemize}
			\item[a)] a subgroup scheme $H\subset \check G$;
			\item[b)] an element $n^0\in\frak{\check g}$ such that $H\subset
				Z_{\check G}(n^0)$;
			\item[c)] an equivalence of monoidal categories
				$$\Phi:(\tilde{\mathsf{P}}^0,\circledast)\cong
				(\operatorname{Rep}(H),\otimes);$$
			\item[d)] and an isomorphism of functors $\eta:\Phi\circ \Zent^0\cong
				\operatorname{Res}^{\check G}_H$
				such that for $V\in\operatorname{Rep}(\check G)$, the
				endomorphism $\eta\left(\Phi(n^0_V)\right)$ coincides
				with the action of $n^0$ on the underlying vector
				space of $V$.
		\end{itemize}
	\end{center}
	\begin{rmk}\label{remark:inertiaequiv}
		After proving \cref{main:tilting}, we will observe
		that $\tilde{\mathsf{P}}^0$ is equal to $\Po$.
	\end{rmk}
		\end{prop}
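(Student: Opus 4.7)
The plan is to apply Tannakian duality to the subcategory $\tilde{\mathsf P}^0$. The key observation is that the truncated convolution $\circledast$ endows $\tilde{\mathsf P}^0$ with an abelian monoidal structure (via the exactness shown in the proposition preceding the statement), and the central functor $\Zent^0$ promotes it to a symmetric monoidal one. Specifically, the centrality of $\Zent$ (\cref{theorem:gaitsgoryzhu}) provides, for any $V,W\in\Rep(\check G)$, commutativity isomorphisms $\Zent^0(V)\circledast\Zent^0(W)\cong \Zent^0(W)\circledast\Zent^0(V)$; using that $\Zent$ upgrades to a functor to the Drinfeld centre and intertwines the symmetry of $\Rep(\check G)$ with the braiding, one obtains a symmetric structure on $\tilde{\mathsf P}^0$ in which $\Zent^0$ is symmetric monoidal. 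However, as in \cite{centralRiche}, one must modify the braiding by a sign coming from $\langle \bar\lambda, 2\rho\rangle\bmod 2$ on Wakimoto-graded pieces in order to obtain a genuinely symmetric (and not merely braided) monoidal category; this sign correction is what guarantees that the Tannakian reconstruction produces a group rather than a super-group.

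Next, one needs a fibre functor $\omega:\tilde{\mathsf P}^0\to \Vect_{\Qell}$. The natural candidate is global cohomology composed with the inclusion into $\PervI$, or equivalently the constant term functor $\mathrm{CT}_{B^-}$ (\cref{eq:constantterms}) composed with the total-weight-space functor $\bigoplus_{\lambda}(-)_\lambda$. By \cref{cor:gradedconstant} and \cref{cohomology:wakimoto}, this agrees on the essential image of $\Zent^0$ with the composition $\Rep(\check G)\xrightarrow{\Res}\Rep(\check T^I)\xrightarrow{\For}\Vect_{\Qell}$, which is exact and faithful; exactness and faithfulness extend to $\tilde{\mathsf P}^0$ because it is generated under subquotients by $\Zent^0(\Rep(\check G))$. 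Monoidality of $\omega$ on the generating subcategory follows from \cref{lemma:monoidalgraded} (or the Satake identification of the fibre functor), and this monoidality propagates to $\tilde{\mathsf P}^0$. Standard Tannakian reconstruction \cite[Chapitre II]{deligneHodge} then produces an affine group scheme $H$ over $\Qell$ together with a monoidal equivalence $\Phi:(\tilde{\mathsf P}^0,\circledast)\simto(\Rep(H),\otimes)$ intertwining $\omega$ with the forgetful functor.

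The monoidal functor $\Phi\circ\Zent^0:\Rep(\check G)\to\Rep(H)$, together with its compatibility with fibre functors, yields a homomorphism $\pi:H\to\check G$. Because $\Zent^0$ is essentially surjective onto a generating set of $\tilde{\mathsf P}^0$ under subquotients and $\tilde{\mathsf P}^0$ is closed under subquotients in $\mathsf P^0$, the functor $\Phi\circ\Zent^0$ satisfies the criterion of \cite[Proposition 2.21]{deligneMilneTannakian} for $\pi$ to be a closed immersion; identifying $H$ with its image produces (a) and (c), with (d) (the compatibility $\eta$) built into the construction. Finally, the monodromy endomorphism $\bn$ on $\Zent$ (see \cref{subsection:BDGrass}) descends to an endomorphism $\bn^0$ of $\Zent^0$ which is objectwise nilpotent by \cref{lemma:zent0}. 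Under $\Phi$, $\bn^0_V$ corresponds to the action of an element $n^0\in\mathrm{End}_{\Qell}(\omega(V))$ functorially in $V\in\Rep(H)$; since $H\subset \check G$ acts naturally on every $\omega(V)$ and $\bn^0$ commutes with every morphism in $\tilde{\mathsf P}^0$ by the tameness assumption (\cref{remark:commutativitymonodromy}), the element $n^0$ lies in $\mathrm{End}_{H\text{-eq}}(\omega)=\Lie(Z_{\check G}(H))^H$, and in particular in $\check{\mathfrak g}^H$, which exactly says $H\subset Z_{\check G}(n^0)$, giving (b). The main obstacle is the symmetric (not merely braided) upgrade of $(\tilde{\mathsf P}^0,\circledast)$ and the verification that the Tannakian category produced is neutral with the correct fibre functor; once this is in place, the remaining assertions are formal consequences of Tannakian yoga and the functoriality of monodromy.
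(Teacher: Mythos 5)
Your overall strategy — upgrade $\circledast$ to a symmetric monoidal structure via the centrality of $\Zent$, produce a fibre functor, apply Tannakian reconstruction, deduce that $H\hookrightarrow \check G$ is a closed immersion from the subquotient-generation, and extract $n^0$ from the monodromy — is the right shape, and your appeal to the closed-immersion criterion and to \cref{remark:commutativitymonodromy} for item (b) are both correct. However, the construction of the fibre functor has a genuine gap. You propose global cohomology (``composed with the inclusion into $\PervI$''), or equivalently $\mathrm{CT}_{B^-}$ followed by the total-weight-space functor. First, the phrasing is incoherent: $\Po$, and hence $\tilde{\mathsf P}^0$, is a Serre \emph{quotient} of $\PervI$, not a subcategory, so there is no inclusion to precompose with. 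Second, and more seriously, neither $R\Gamma(\Fl{\I},-)$ nor $\mathrm{CT}_{B^-}$ vanishes on the Serre subcategory $\mathsf{P}^{>0}$ generated by $\IC_w$ with $\ell(w)>0$: already for a simple affine reflection $s$, one has $\Fl{\I,\leq s}\cong\mathbb{P}^1$ and $R\Gamma(\Fl{\I},\IC_s)\cong\Qell[1]\oplus\Qell[-1]\neq 0$, and the hyperbolic localization of $\IC_s$ is likewise nonzero. So neither of your candidate functors descends to the quotient $\Po$, and the ``extend by generation under subquotients'' step cannot get off the ground because the functor is not well-defined on $\tilde{\mathsf P}^0$ in the first place. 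The agreement with $\Res^{\check G}_{\check T^I}$ that you invoke from \cref{cor:gradedconstant} and \cref{theorem:filtration} holds only on Wakimoto-filtered objects, not on the kernel of the projection, which is precisely where the obstruction sits.

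The paper itself does not reprove the statement; it cites \cite[Proposition 6.5.18]{centralRiche} (with \cite[Proposition 5.8]{bezrukavnikovRicheRider} offered as a ``multiplicative'' alternative) and observes that the argument there depends only on the formal properties of $\Zent^0$ recorded in \cref{lemma:zent0} (monoidal, central, objectwise-nilpotent endomorphism). That argument does not use the naive cohomology or constant-term fibre functor; it is built instead around the regular ind-object/deequivariantization machinery attached to $\Zent^0$, which is designed precisely so that the resulting fibre functor is intrinsic to the quotient category. To repair your proof, replace your candidate fibre functor with one constructed from data that manifestly lives in $\Po$ (e.g.\ via the regular algebra $\Ind(\Zent^0)(\mathcal{O}(\check G))$, or via the identification $\Po\cong\PIW^0$ from \eqref{eq:regulariwregular} together with a suitable functor on $\PIW^0$), rather than trying to push cohomology or constant terms through the quotient.
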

	\begin{proof}
		The proof of the analogous statement \cite[Proposition
		6.5.18]{centralRiche} in the split setting depends only on the
		properties of $\Zent^0$ from \cref{lemma:zent0}, therefore applies also in our
		setting.
	\end{proof}
		From \cref{remark:commutativitymonodromy}, we obtain the following equality:
\begin{lem}\label{inequality}
	For every $V\in\Rep(\check G)$ the inequalities
	\begin{align*}
		&\dim\left(\Hom_{\DIW}\left(\stdiw_0,\Zent^\IW(V)\right)\right)\leq\dim(\ker
		n^0_V),\\
		&\dim\left(\Hom_{\DIW}\left(\Zent^\IW(V),\costdiw_0\right)\right)\leq\dim(\ker
		n^0_V)
	\end{align*}
		hold.
\end{lem}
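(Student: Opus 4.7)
\emph{Plan.} The strategy is to translate the Hom-computation in $\DIW$ into one in the regular quotient $\Po$, where the Tannakian equivalence of \cref{regular:tannakian} together with the monodromy commutativity of \cref{remark:commutativitymonodromy} delivers the bound.

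First I would use that $\stdiw_0 = \aviw(\IC_e)$, $\Zent^\IW(V) = \aviw(\Zent(V))$, and the fully faithfulness of $\avasph:\Pasph \hookrightarrow \PIW$ from \cref{antishperical:averaging} to identify
$$\Hom_{\DIW}(\stdiw_0, \Zent^\IW(V)) = \Hom_{\Pasph}(\Pi_{\mathsf{asph}}\IC_e, \Pi_{\mathsf{asph}}\Zent(V)).$$
Since every $w \neq w_s$ has $\ell(w) > 0$, the Serre subcategory defining $\Pasph$ is contained in the one defining $\Po$, so there is a further exact quotient $\Pasph \twoheadrightarrow \Po$ sending $\Pi_{\mathsf{asph}}\IC_e$ to the monoidal unit $\Pi^0\IC_e$, which is nonzero in $\Po$. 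As $\Pi_{\mathsf{asph}}\IC_e$ is simple in $\Pasph$, the image of any nonzero morphism out of it is $\Pi_{\mathsf{asph}}\IC_e$ itself, whose image in $\Po$ remains nonzero; hence the induced map on Hom spaces is injective. This reduces the first bound to showing $\dim\Hom_{\Po}(\Pi^0\IC_e, \Zent^0(V)) \leq \dim \ker n^0|_V$.

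Next, I would represent a morphism $f \in \Hom_{\Po}(\Pi^0\IC_e, \Zent^0(V))$ by a morphism $\tilde f: \IC_e \to \Zent(V)/X_0$ in $\PervI$ for some $X_0 \in \mathcal{B}^0 := \ker\Pi^0$. Enlarging $X_0$ to $X_0 + \bn_V X_0 + \cdots + \bn_V^{k-1}X_0$ (still in $\mathcal{B}^0$ since Serre subcategories are closed under images, and the sum is finite by nilpotence of $\bn_V$), I may assume $X_0$ is $\bn_V$-stable, so $\bn_V$ descends to $\bar\bn_V$ on $\Zent(V)/X_0$. Since $\IC_e = \Psi_\I(\IC_0^{\Gr_G})$ has trivial monodromy (monoidality forces $\bn_{\IC_e}=0$), and $\Zent(V)/X_0$ is $\Gm$-monodromic via loop rotation on the BD-Grassmannian, the $\Gm$-monodromic interpretation underlying \cref{remark:commutativitymonodromy} forces $\bar\bn_V \circ \tilde f = 0$. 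Hence $\tilde f$ factors through $\ker \bar\bn_V$, so $f$ factors through $\ker n^0_V$ in $\Po$. Via the Tannakian equivalence $\tilde{\mathsf{P}}^0 \cong \Rep(H)$ of \cref{regular:tannakian}, which identifies $\ker n^0_V$ with the sub-$H$-representation $\ker n^0|_V \subseteq V$, this yields
$$\Hom_{\Po}(\Pi^0\IC_e, \Zent^0(V)) = \Hom_{\Po}(\Pi^0\IC_e, \ker n^0_V) = (\ker n^0|_V)^H \leq \dim \ker n^0|_V.$$
The second inequality follows by Verdier duality: $\D\stdiw_0 = \costdiw_0$, $\D\Zent^\IW(V) \cong \Zent^\IW(V^\vee)$ by compatibility of $\D$ with nearby cycles, the Satake equivalence, and convolution, and $\dim \ker n^0|_{V^\vee} = \dim \ker n^0|_V$ since $n^0$ acts on $V^\vee$ by (minus) the transpose.

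The main obstacle is the descent of monodromy commutativity in the middle step: one must verify that the $\Gm$-monodromic interpretation underlying \cref{remark:commutativitymonodromy} survives passage to subquotients by objects of $\mathcal{B}^0$. This rests on $\mathcal{B}^0$ being stable under the monodromy operators (via closure under images) and on the fact that subquotients of $\Gm$-monodromic sheaves remain $\Gm$-monodromic, so that morphisms between them commute with the monodromy.
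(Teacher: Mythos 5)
Your proposal is correct and follows the same chain of reductions as the paper: first full-faithfulness of $\avasph$ to move into $\Pasph$, then simplicity of $\Pi_{\mathsf{asph}}(\IC_e)$ to inject into $\Hom$ in the regular quotient $\Po$, then the monodromy commutativity to force factorization through $\ker(\Pi^0(\bn_V))$, and finally the Tannakian description to get the dimension bound. You have been somewhat more careful than the paper at the step where the commutativity $\Pi^0(\bn_V)\circ f = f\circ\Pi^0(\bn_{\Qell})$ must be extracted for a morphism $f$ that lives only in the quotient category: the paper asserts this directly from \cref{remark:commutativitymonodromy}, whereas you make explicit the roof presentation $\tilde f:\IC_e\to\Zent(V)/X_0$, the enlargement of $X_0$ to a $\bn_V$-stable member of $\ker\Pi^0$ (valid since Serre subcategories are closed under images and finite sums), and the fact that subquotients of $\Gm$-monodromic perverse sheaves remain $\Gm$-monodromic with induced Verdier monodromy. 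This is a genuine improvement in rigor over the paper's terse invocation of the remark. For the second inequality the paper merely says ``similarly''; your Verdier-duality route works but quietly requires that $\D$ exchanges $L_\AS$ with its inverse $L_\AS^{-1}$, hence lands in the Iwahori-Whittaker category for a different Artin-Schreier datum---this is harmless for the dimension count, but the symmetric argument (factor $g:\Zent^0(V)\to\Zent^0(\Qell)$ through $\operatorname{coker}\Pi^0(\bn_V)$ and use $\dim\operatorname{coker} n^0_V=\dim\ker n^0_V$) avoids the side issue.
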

\begin{proof}
	Since Iwahori-Whittaker averaging factors fully-faithfully through the antispherical
	quotient by \cref{antishperical:averaging}, we have that
	$$\Hom_{\DIW}\left(\stdiw_0,\Zent^\IW(V)\right)\cong\Hom_{\Pasph}\left(\Pi(\IC_e),\Pi(\Zent(V))\right).$$
	By definition, the projection to the regular quotient $\Pi_0$ factors through the
	antispherical projection $\Pi$. Moreover, as $\Pi(\IC_e)$ is a simple object, there is
	an injection
	$$\Hom_{\Pasph}\left(\Pi(\IC_e),\Pi(\Zent(V))\right)\hookrightarrow\Hom_{\Po}\left(\Pi^0(\IC_e),\Pi^0(\Zent(V))\right).$$
	We claim that
	$$\Hom_{\Po}\left(\Pi^0(\IC_e),\Pi^0(\Zent(V))\right)\isomto\Hom_{\Po}\left(\Zent^0(\Qell),\ker(\Pi^0(\bn_V))\right).$$
	Indeed,
	$f:\Zent^0(\Qell)\rightarrow\Zent^0(V)$, $f\circ\Pi^0(\bn_{\Qell})=\Pi^0(\bn_V)\circ f$
        using
	\cref{remark:commutativitymonodromy} and $\bn_{\Qell}=0$, which proves the claim.
	Finally, using the Tannakian description \cref{regular:tannakian}, we obtain an injection
	$$\Hom_{\Po}\left(\Zent^0(\Qell),\ker(\Pi^0(\bn_V))\right)\hookrightarrow\Hom_{\Vect_{\Qell}}\left(\Qell,\ker(n^0_V)\right)\cong\ker(n^0_V).$$
	This implies the first of the asserted inequalities. The second inequality can be obtained similarly.
\end{proof}
\subsection{Weight and monodromy filtrations}\label{subsection:weightmonodromy}
	Recall that, to a nilpotent endomorphism of an object in an abelian category, one can associate
	the Jacobson-Morozov-Deligne filtration \cite[Proposition
	1.6.1]{weil2}. For $V\in\Rep(\check G)$, such filtration on
	$\Zent(V)$ induced by the logarithm of the monodromy $\bn_V$, is also known as
	\textit{the monodromy filtration}. More explicitly, it is the unique bounded increasing
	filtration $F_i(\Zent(V))$ such that:
	\begin{gather*}
		\bn_V(F_i(\Zent(V))\subset F_{i-2}(\Zent(V)),\\
		\gr_i^F(\Zent(V))\isomto \gr_{-i}^F(\Zent(V))
	\end{gather*}
	where the isomorphism between graded pieces is obtained from $\bn_V^i$, the $i$-th
	power of the monodromy.

		On the other hand,
	$\Zent(V)$ carries the apriori distinct filtration, obtained from the weight
	filtration of $\Zent^\mix(V)$ under base change. By an important theorem of Gabber
	\cite[\textsection 5.1]{beilinsonBernsteinJantzen}, the weight and monodromy filtrations coincide.
	This remarkable fact will allow us to understand the monodromy filtration of $\Zent(V)$, in terms of
	Langlands duality.
	\begin{lem}
		Let $V\in\Rep(\check G)$, and $n^0_V$ the nilpotent endomorphism of the
		underlying vector space of $V$, considered in \cref{regular:tannakian}. Then,
		$$\dim(\ker n^0_V)=\sum_{\substack{\bar\mu\in\cham \\
		\left<\mu,2\rho\right>\in\{0,1\}}}\dim\res(V)_{\bar\mu}.\label{ker:dim}$$
	\end{lem}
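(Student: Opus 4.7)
The plan is to combine the Tannakian description of the regular quotient in \cref{regular:tannakian} with Gabber's theorem equating weight and monodromy filtrations on $\Zent^\mix(V)$, and to extract the relevant dimensions via the algebra morphism $m$ of \eqref{eq:multiplicitymorph}. Under the equivalence $\Phi : \tilde{\mathsf{P}}^0 \simeq \Rep(H)$, the quantity $\dim(\ker n^0_V)$ computed in $\Po$ equals the vector-space dimension of the kernel of the nilpotent element $n^0$ acting on $V$, and $\Phi$ carries the Jacobson--Morozov--Deligne (JMD) filtration of the pair $(\Zent^0(V), n^0_V)$ to that on $(V, n^0)$. A Jacobson--Morozov decomposition into $\mathfrak{sl}_2$-strings yields the elementary identity
$$\dim(\ker n) = \dim \gr^M_0 + \dim \gr^M_1$$
for any nilpotent endomorphism $n$ on a finite-dimensional vector space with JMD filtration $M_\bullet$: each string contributes one kernel vector, and $\dim \gr^M_0$ (respectively $\dim \gr^M_1$) counts the odd-length (respectively even-length) strings.

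It remains to compute $\dim \gr^M_i \Zent^0(V)$ for $i \in \{0,1\}$ in the Tannakian sense. By Gabber's theorem (cited in \cref{subsection:weightmonodromy}) applied to the pure weight-$0$ mixed nearby cycles $\Zent^\mix(V) = \Psi(F_V)$, the JMD filtration coincides with the weight filtration $W_\bullet$, and since $\Pi^0$ is exact and compatible with base change to $k$ one has $\gr^M_i \Zent^0(V) \cong \Pi^0(\gr^W_i \Zent^\mix(V))$ in $\Po$. Moreover, every simple object of $\Po$ is of the form $\Pi^0(\IC_w)$ with $\ell(w) = 0$ and is invertible under the monoidal product $\circledast$ (since $\IC_w \star \IC_{w'} \cong \IC_{ww'}$ when both are length $0$, by \cref{std:conv}); invertible objects in a neutral Tannakian category have dimension $1$, so the Tannakian dimension of any object of $\tilde{\mathsf{P}}^0$ equals its total multiplicity of length-$0$ simple composition factors.

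To extract this multiplicity, apply $m$ to the identity of \cref{mixed:central}: using \eqref{m:theta} this gives
$$m([\Zent^\mix(V)]) = \sum_{\bar\mu \in \cham} \dim \res(V)_{\bar\mu} \cdot \mathbf{v}^{\langle \bar\mu, 2\rho\rangle}.$$
By \eqref{eq:weightfilt}, a pure weight-$i$ perverse sheaf contributes only $\mathbf{v}^i$-terms to its image in $\mathcal{H}$, while \eqref{equation2} shows that $m$ annihilates $\underline{H_w}$ whenever $\ell(w) > 0$. Therefore the coefficient of $\mathbf{v}^i$ in $m([\Zent^\mix(V)])$ counts precisely the length-$0$ composition factors of $\gr^W_i \Zent^\mix(V)$, and combining the preceding identifications yields
$$\dim \gr^M_i \Zent^0(V) = \sum_{\langle \bar\mu, 2\rho\rangle = i} \dim \res(V)_{\bar\mu}.$$
Summing over $i \in \{0,1\}$ in the kernel identity concludes the proof. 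The chief delicate point is the alignment of the two nilpotent operators across the passage to the regular quotient and the base change from $\mathbb{F}_q$ to $k$, which is guaranteed by the functoriality of the monodromy construction (cf. \cref{remark:commutativitymonodromy}) and the exactness of $\Pi^0$.
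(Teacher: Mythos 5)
Your proposal is correct and follows essentially the same route as the paper: pass to the regular quotient, identify the JMD filtration on $\Zent^0(V)$ with the image of the weight filtration via Gabber's theorem, and extract the dimensions of the degree-$0$ and degree-$1$ graded pieces from \cref{mixed:central} using the algebra morphism $m$ together with \eqref{m:theta} and \eqref{equation2}. The only additions you make — spelling out the $\mathfrak{sl}_2$-string count behind $\dim\ker n = \dim\gr^M_0 + \dim\gr^M_1$ and phrasing the final step in terms of Tannakian dimension of invertibles — are correct elaborations of what the paper cites to \cite{weil2} and to the invertibility of length-zero $\IC$'s respectively, not a different argument.
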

\begin{proof}
	Consider the Jacobson-Morozov-Deligne filtration induced by $n^0_V$
	of $\Zent^0(V)$. By uniqueness, it agrees with
	the filtration obtained under $\Pi^0$, from the monodromy filtration on $\Zent(V)$.
	We will also call this the monodromy filtration of $\Zent^0(V)$, and by extension of
	the underlying vector space of $V$.
	By the explicit description of the monodromy filtration \cite[Section 1.6.7]{weil2},
	the dimension on the left hand side of the desired equality is equal to the
	sum of the dimensions of $0$-th and $1$-st graded pieces of the monodromy filtration on $V$. We will proceed by
	calculating this dimension, using the agreement of weight and monodromy filtrations. In
	other words, we will calculate the sum of the dimensions of $0$-th and $1$-st graded
	pieces of the filtration on $V$, which is obtained under $\Pi^0$ from the weight
	filtration of $\Zent(V)$.

	Note that, the image of $\IC_w$ in $\tilde{\mathsf{P}}^0$ is invertible for $\ell(w)=0$,
	and vanishes for
	$\ell(w)>0$.
	Therefore, using the description \eqref{eq:weightfilt}, we deduce that the dimension of the
	$i$-th graded piece of the weight filtration on $\Zent^0(V)$
	is the sum of multiplicities
	$$\sum_{w\in\Omega}\left(\grW_i\left(\Zent^{\text{mix}}(V)\right):\IC_w\right),$$
	ranging over the set $\Omega$ of length zero elements in $W$.
	On the other hand, we have the
	equality
	\begin{equation}\label{mixedcentral}
	[\Zent^{\text{mix}}(V)]=\sum_{\bar\mu\in\cham}\dim\res(V)_{\bar\mu}\cdot\theta_\mu
	\end{equation}
	in $\mathcal{H}$, by \cref{mixed:central}.
	Therefore, to calculate the desired multiplicities, we may apply the
	$\ogm$-algebra morphism
	$m$ \eqref{eq:multiplicitymorph}, carrying
	the equality \eqref{mixedcentral} to $\ogm$.
	The explicit calculations \eqref{m:theta} and \eqref{equation2}
	of $m$ then yield the equality:
	$$\sum_{i\in\Z}\sum_{w\in\Omega}\left(\grW_i\left(\Zent^{\text{mix}}(V)\right):\IC_w\right)\cdot\mathbf{v}^i=\sum_{\bar\mu\in\cham}\dim(\res(V)_{\bar\mu})\cdot\mathbf{v}^{\left<\bar\mu,2\rho\right>}$$
	in $\ogm$, which concludes the proof by comparing coefficients of the variable.
\end{proof}

We are finally in a position to prove \cref{lemma:inequality0wtspc}, which asserts the
inequalities:
\begin{align*}
	\dim\left(\Hom_{\DIW}\left(\stdiw_0,\Zent^\IW(V)\right)\right)\leq\dim\Res(V)_0,\\
	\dim\left(\Hom_{\DIW}\left(\Zent^\IW(V),\costdiw_0\right)\right)\leq\dim\Res(V)_0.
\end{align*}
\begin{proof}[Proof of \cref{lemma:inequality0wtspc}]
	Since $\bar\lambda$ is quasi-minuscule,
	each non-zero
	weight $\bar\nu$ of $\Res(V)$, is also a non-zero
	weight of the adjoint representation of $\check G^I$. Since all such weights
	are roots in the \`echelonage root system,
	the integers $\left<\bar\nu,2\rho\right>$ are even. Then, for this $V$, \cref{ker:dim} reads:
	$$\dim(\ker n^0_V)=\dim\Res(V)_0;$$
	and we conclude by \cref{inequality}.
\end{proof}
\subsection{Projection of tilting objects to the regular quotient}
Denote by $\PIW^0$ the Serre quotient of $\PIW$ by the Serre subcategory generated by the
collection of objects
$\{\ICIW_\lambda \:|\: \ell((t^\lambda)_s)>0\}$. Clearly,
\begin{equation}\label{eq:regulariwregular}
\Po\cong\PIW^0
\end{equation}
as a corollary to \cref{averaging:simples}.

The following analogue of \cite[Lemma
7.6]{bezrukavnikovRicheRider} will be repeatedly used in \cref{section:coherentfunc}:
\begin{lem}\label{lemma:tiltingregular}
	The quotient functor $\Pi^0_\IW:\PIW\rightarrow\PIW^0$ is fully faithful when
	restricted to tilting objects in $\PIW$.
\end{lem}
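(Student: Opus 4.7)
The plan is to reduce, via \cref{cor:key} and Krull--Schmidt, to verifying fully faithfulness on tilting objects of the specific form $\Zent^\IW(V) = \aviw \circ \Zent(V)$ for $V \in \Rep(\check G)$. Since $\Pi^0_\IW$ is additive and the claim is stable under passage to direct summands, it suffices to show bijectivity of
$$\Hom_\PIW\bigl(\Zent^\IW(V_1), \Zent^\IW(V_2)\bigr) \longrightarrow \Hom_{\PIW^0}\bigl(\Pi^0_\IW\Zent^\IW(V_1), \Pi^0_\IW\Zent^\IW(V_2)\bigr)$$
for $V_1, V_2 \in \Rep(\check G)$.

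Using the fully-faithful factorization $\avasph:\Pasph \hookrightarrow \PIW$ of \cref{antishperical:averaging}, the source identifies with $\Hom_\Pasph(\Pi_\mathsf{asph}\Zent(V_1), \Pi_\mathsf{asph}\Zent(V_2))$. On the other side, \eqref{eq:regulariwregular} yields $\PIW^0 \cong \Po$, and the Tannakian description from \cref{regular:tannakian} identifies $\Hom_\Po(\Zent^0(V_1), \Zent^0(V_2))$ with $\Hom_{\Rep(H)}(\Res^{\check G}_H V_1, \Res^{\check G}_H V_2)$ (noting that this part of the Tannakian description only involves the full subcategory generated by the $\Zent^0(V_i)$, independently of \cref{remark:inertiaequiv}). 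The problem thus becomes verifying that the natural map from the antispherical Hom-space to the $\Rep(H)$-Hom-space is a bijection.

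To prove bijectivity I would invoke the Wakimoto filtrations on the central sheaves (\cref{theorem:filtration}) together with the $\Hom$-vanishings between Wakimoto sheaves of incompatible coweights (\cref{wakimoto:extension}). For injectivity, a morphism $f:\Zent(V_1) \to \Zent(V_2)$ in $\Pasph$ that vanishes in $\Po$ factors through the Serre subcategory generated by the $\IC_w$ with $\ell(w)>0$; using the Wakimoto-filtered structure of both source and target, \cref{wakimoto:extension} forces $f$ to vanish on each graded piece, hence $f=0$. For surjectivity, any $H$-equivariant map $\Res V_1 \to \Res V_2$ can be lifted by functoriality of the central functor $\Zent$ applied to a $\check G$-morphism in which the $H$-equivariant map appears as a direct summand (e.g.\ using \cref{cor:key} once again), together with the fact that $\Po$ is generated under subquotients by the essential image of $\Zent^0$, so Tannakian reconstruction produces a compatible lift modulo the kernel of $\Pi^0$.

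The hard part is the injectivity: ruling out a morphism of central sheaves in $\Pasph$ that is genuinely nonzero yet becomes invisible after projection to the Tannakian regular quotient. Here the crucial input is the agreement of the weight and monodromy filtrations established in \cref{subsection:weightmonodromy}, which rigidifies the Wakimoto-graded structure of the central sheaves enough that a nonzero morphism must contribute nontrivially to at least one graded piece surviving the projection to $\Po$.
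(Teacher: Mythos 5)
Your approach is genuinely different from the paper's. The paper dispatches this lemma by citing a general highest-weight-category argument (\cite[Lemma 7.6]{bezrukavnikovRicheRider}, implementing the strategy of \cite[Section 2.1]{tiltingExercises}), which analyzes socles and tops of the (co)standard objects $\stdiw_\lambda,\costdiw_\lambda$ from \eqref{def:stdcostdiw}; that argument applies uniformly to arbitrary tilting objects and never invokes central sheaves, Wakimoto filtrations, or Tannakian duality.

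Your route has a gap that makes it fail as written, and it is in the surjectivity. The entire content of the Tannakian equivalence $\Po\cong\Rep(H)$ is that $H$ is in general a \emph{proper} subgroup of $\check G^I$, so $\Hom_{\Rep(H)}(\Res V_1,\Res V_2)$ contains many morphisms that do not arise by restricting a $\check G$-morphism; those are exactly what surjectivity must produce. Applying $\Zent$ to a $\check G$-morphism in which the $H$-morphism ``appears as a direct summand'' can only yield morphisms already coming from $\Rep(\check G)$, and saying that Tannakian reconstruction gives a lift ``modulo the kernel of $\Pi^0$'' is circular --- a lift modulo that kernel is, by definition, just an element of the target $\Hom$-space. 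There are secondary issues too: the reduction via \cref{cor:key} and Krull--Schmidt is not justified, since that corollary does not assert that each indecomposable tilting $T_\lambda\in\PIW$ is a direct summand of some $\Zent^\IW(V')$ (you would need control on the maximal weight appearing in $\Res(V')$, and the paper accordingly claims only ``subquotient'' before \cref{theorem:asphiwequiv}); the injectivity argument is too imprecise, since a morphism in the Serre quotient $\Pasph$ need not be represented by an honest morphism $\Zent(V_1)\to\Zent(V_2)$ in $\PervI$, and \cref{wakimoto:extension} bounds $\Hom$-groups between individual Wakimoto sheaves, not between arbitrary Wakimoto-filtered objects; and the weight/monodromy comparison is misplaced here --- in the paper it is used only to establish \cref{lemma:inequality0wtspc} for the quasi-minuscule case of \cref{main:tilting} and plays no role in the present lemma.
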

\begin{proof}
	The proof of the analogous statement \cite[Lemma 7.6]{bezrukavnikovRicheRider} in
	the split setting implements a general strategy from \cite[Section
	2.1]{tiltingExercises}. It amounts to an analysis of socle and top of (co)standard
	Iwahori-Whittaker sheaves \eqref{def:stdcostdiw} and also goes through in our
	setting.
\end{proof}
\subsection{Two geometrizations of the aspherical module}
\cref{cor:key} in particular implies that every indecomposable
tilting object is a subquotient of $\Zent^\IW(V)$ for some
$V\in\operatorname{Rep}(\check G)$. Thus we arrive at the following theorem:
\begin{thrm}\label{theorem:asphiwequiv}
	The functor
	$$\avasph:\Pasph\rightarrow\PIW $$
	is an equivalence.
	\end{thrm}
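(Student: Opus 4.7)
The plan is to combine \cref{antishperical:averaging} with \cref{cor:key} to pin down essential surjectivity. By \cref{antishperical:averaging}, the functor $\avasph$ is already fully faithful, and its essential image is closed under both subquotients and extensions. Since every object of $\PIW$ is supported on a finite union of Schubert cells and therefore has a finite composition series, it suffices to show that every simple object $\ICIW_\lambda$, for $\lambda\in\cham$, lies in the essential image of $\avasph$. The essential image is already closed under extensions and subquotients, so once all simples are captured, we recover all of $\PIW$.

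To hit a given simple $\ICIW_\lambda$, first replace $\lambda$ by its unique $\Wfin$-dominant conjugate $\bar\lambda\in\chamb$ (using that $\chamb$ is a fundamental domain for the $\Wfin$-action on $\cham$). Let $V_{\bar\lambda}\in\Rep(\check G^I)$ be the irreducible of highest weight $\bar\lambda$. Applying \cref{cor:key} to $V_{\bar\lambda}$, one produces $V'\in\Rep(\check G)$ with $V_{\bar\lambda}$ a direct summand of $\Res(V')$ and with $\Zent^\IW(V')=\aviw\circ\Zent(V')$ tilting in $\PIW$. Since $\Zent(V')\in\PervI$ and $\Zent^\IW(V')=\avasph\bigl(\Pi_{\mathsf{asph}}(\Zent(V'))\bigr)$, the tilting object $\Zent^\IW(V')$ already belongs to the essential image of $\avasph$.

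It remains to extract $\ICIW_\lambda$ as a subquotient of $\Zent^\IW(V')$. By \cref{whittaker:eulerchar}, one has the identity
\[
\bigl[\Zent^\IW(V')\bigr]\;=\;\sum_{\mu\in\cham}\dim\Res(V')_\mu\cdot\bigl[\stdiw_\mu\bigr]
\]
in $K_0(\PIW)$; since $\Zent^\IW(V')$ is tilting and the classes $[\stdiw_\mu]$ form a basis of $K_0$ of the highest weight category, these integers are precisely the multiplicities in any standard filtration. Because $\lambda$ lies in the $\Wfin$-orbit of $\bar\lambda$, it is a weight of $V_{\bar\lambda}\subset\Res(V')$, so $\dim\Res(V')_\lambda>0$ and $\stdiw_\lambda$ occurs with positive multiplicity. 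The identity $(\stdiw_\lambda:\ICIW_\lambda)=1$ in a highest weight category forces $\ICIW_\lambda$ to appear as a composition factor of $\Zent^\IW(V')$.

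Therefore $\ICIW_\lambda$ is a subquotient of an object in the essential image of $\avasph$, and by the closure property from \cref{antishperical:averaging}, it lies in the essential image itself. The bulk of the work was actually front-loaded into \cref{cor:key}, whose tilting statement relied on the quasi-minuscule case \cref{quasi-minuscule} and hence on the tame ramification hypothesis via the weight/monodromy argument of \cref{subsection:weightmonodromy}; granted that, the present proof is a short combinatorial extraction, and the only remaining subtlety is the routine verification that $\Wfin$ indeed permutes the weights of $\Res(V')$ so as to produce $\lambda$ from $\bar\lambda$.
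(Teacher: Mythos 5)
Your proof is correct, but it takes a genuinely different route to essential surjectivity than the paper does. The paper cites the general highest-weight fact (\cite[Proposition 7.17]{richeThesis}) that every object of $\PIW$ is a subquotient of a direct sum of indecomposable tilting objects, then invokes closure of the essential image of $\avasph$ under subquotients together with \cref{cor:key} to hit each indecomposable tilting. You instead reduce to the simple objects $\ICIW_\lambda$ by finite length and closure under extensions, and then use the $K_0$ identity from \cref{whittaker:eulerchar} to exhibit $\stdiw_\lambda$ (hence $\ICIW_\lambda$) as a constituent of the tilting object $\Zent^\IW(V')$ produced by \cref{cor:key}, because $\lambda$ lies in the weights of $V_{\bar\lambda}\subset\Res(V')$. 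Both arguments rest on the same inputs (\cref{antishperical:averaging} and \cref{cor:key}); the paper's is shorter by off-loading the decomposition into a citation, while yours is more self-contained and makes the multiplicity bookkeeping explicit. One thing worth noting: your reduction leans on closure of the essential image under \emph{extensions}, which is asserted in \cref{antishperical:averaging} but whose proof sketch there addresses only subquotients; the paper's own proof of \cref{theorem:asphiwequiv} is insulated from that issue since it needs only closure under subquotients (and finite direct sums, which is automatic for an additive functor). You are entitled to use the theorem as stated, but if you want the argument to stand independently, you should flesh out the extension-closure claim via the $\Ext^1$-injectivity established in the proof of \cref{antishperical:averaging}.
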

	\begin{proof}
By the general properties of highest weight categories,
		every object in $\PIW$ is a subquotient of a direct sum of indecomposable tilting
		objects \cite[Proposition 7.17]{richeThesis}. As the essential image of $\Zent^\IW$ is closed under subquotients by
		\cref{antishperical:averaging}, this shows that the functor $\Zent^\IW$
	 is essentially surjective. As $\Zent^\IW$ is also fully faithful by
		\cref{antishperical:averaging}, it is an equivalence.
	\end{proof}

\section{Coherent functor}\label{section:coherentfunc}
In this section, we explain the construction of a functor
\begin{align}\label{eq:coherentfunctor}
	F_\IW:\Db\Coh^{\check G^I}(\Sprgi)\rightarrow\DIW
\end{align}
 which is obtained in \cref{prop:construction}, and shown to be an equivalence in \cref{theorem:main}.
The basic strategy of the construction proceeds along the lines of the unramified case, which is
explained with great detail in \cite[Section 6.3]{centralRiche} and neatly summarized in
\cite[Section 5]{joaoAB}.
However, there are two notable differences:
\begin{enumerate}
	\item the lack of a central functor from $\Rep(\check G^I)$ to $\Perv(\Hk{\I})$;
	\item the possibility that $\check G^I$ is disconnected.
\end{enumerate}
Difference (1) necessitates different strategies, and is adressed most directly in
\cref{subsection:keylemma} and \cref{subsection:descending}.
In contrast, dealing with (2) turns out to be a suprisingly straightforward exercise in
generalizing the requisite constructions from the connected case.
\subsection{On the (dis)connectedness of $\check G^I$}\label{subsection:disconnectedness}
Recall that $\check G$ is a pinned group equipped with an action of $I$ by pinned automorphisms.
In such a case, the group of fixed points $\check G^I$ was studied in \cite[Proposition
5.1]{hainesEchelonnage} over a field and in \cite{richarzPinning} over a general base ring. It
turns out that the identity component
$\Gdual$ is a reductive a group, the identity component $\Bdual$ of $\check B^I$ is a Borel
subgroup and $\check U^I$ is the unipotent radical of $\Bdual$.
Since the action of inertia is
pinning preserving, the exact sequence
$$1\rightarrow \check U^I\rightarrow \check B^I\rightarrow \check T^I\rightarrow 1,$$
is split by the inclusion $\check T^I\rightarrow \check B^I$.

For $\bar\lambda\in\cham$, denote by $\coweyl(\bar\lambda)$ the $\check G^I$-representation
$\Ind_{\check B^{-I}}^{\check G^I}(\bar\lambda)$ induced from invariants $\check B^{-I}$ of the opposite Borel. Here is a curious observation:
\begin{lem}\label{lemma:curious}
	Let $\lambda\in\splitcham$ with image $\bar\lambda\in\cham$. Denote by
	$\Res(\coweyl(\lambda))$ the restriction of the $\check G$-representation
	$\coweyl(\lambda)=\Ind_{\check B^-}^{\check G}(\lambda)$ to a $\check G^I$-representation. The map
	$$\Res\left(\coweyl(\lambda)\right)\twoheadrightarrow\coweyl(\bar\lambda)$$
	of $\check G^I$-representations induced by restriction of functions $\mathcal{O}(\check
	G)\rightarrow\mathcal{O}(\check G^I)$, is a surjection.
\end{lem}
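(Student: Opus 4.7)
The plan is to realize both induced representations as spaces of global sections of line bundles on flag varieties and then reduce surjectivity to a single non-vanishing statement. Concretely, the Borel--Weil construction yields identifications
\[
\coweyl(\lambda) \cong H^0(\check G/\check B^-, \mathcal{L}_\lambda), \qquad \coweyl(\bar\lambda) \cong H^0(\check G^I/\check B^{-I}, \mathcal{L}_{\bar\lambda}),
\]
where $\mathcal{L}_\lambda$ and $\mathcal{L}_{\bar\lambda}$ are the natural equivariant line bundles. Since the $I$-action on $\check G$ preserves the pinning, both $\check B^-$ and $\check T$ are $I$-stable, and the inclusion $\check G^I \hookrightarrow \check G$ descends to a closed immersion $\iota : \check G^I/\check B^{-I} \hookrightarrow \check G/\check B^-$. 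The identity $\lambda|_{\check T^I} = \bar\lambda$ gives $\iota^*\mathcal{L}_\lambda \cong \mathcal{L}_{\bar\lambda}$, and under these identifications the map of the lemma becomes the pullback of sections $\iota^*$.

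Under Frobenius reciprocity for the closed subgroup $\check B^{-I} \subset \check G^I$, the $\check G^I$-equivariant map $\iota^*$ corresponds to the $\check B^{-I}$-equivariant evaluation at the identity,
\[
\mathrm{ev}_e : \coweyl(\lambda) \longrightarrow k_{\bar\lambda}, \qquad f \longmapsto f(e).
\]
This evaluation is manifestly non-zero: if every $f \in \coweyl(\lambda)$ satisfied $f(e) = 0$, then using $\check G$-stability of $\coweyl(\lambda)$ under right translation and the identity $f(g) = (g \cdot f)(e)$, every $f$ would vanish identically, contradicting $\coweyl(\lambda) \neq 0$. Consequently the image of $\iota^*$ is a non-zero $\check G^I$-subrepresentation of $\coweyl(\bar\lambda)$.

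It remains to verify that this image fills all of $\coweyl(\bar\lambda)$. When $\check G^I$ is connected (so $\check G^I = \Gdual$), $\coweyl(\bar\lambda)$ is an irreducible $\check G^I$-representation in characteristic zero, and any non-zero equivariant map into it is automatically surjective. The main obstacle is the potentially disconnected case: there $\coweyl(\bar\lambda)|_{\Gdual}$ decomposes, by Mackey's formula applied to the double cosets $\Gdual \backslash \check G^I / \check B^{-I}$, into a sum of irreducible $\Gdual$-summands permuted by the finite group $\check G^I/\Gdual$; the $\check G^I$-stable image, containing the summand attached to the identity component thanks to the non-vanishing of $\mathrm{ev}_e$, then spreads over the entire $\check G^I$-orbit and so over all summands. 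As an alternative route one could instead deduce the claim from the vanishing $H^1(\check G/\check B^-, \mathcal{I}_Z \otimes \mathcal{L}_\lambda) = 0$ for the ideal sheaf of $Z = \iota(\check G^I/\check B^{-I})$, itself a standard Kempf-type statement in characteristic zero.
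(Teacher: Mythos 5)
Your argument is correct, and it is a genuinely different (and more geometric) route than the one the paper takes. The paper's proof is a short algebraic observation: since $I$ acts on $\check G$ through a finite quotient and the coefficients have characteristic zero, the projection $\mathcal{O}(\check G)\rightarrow \mathcal{O}(\check G)_I$ onto coinvariants is surjective, and the paper identifies this with the restriction of functions $\mathcal{O}(\check G)\rightarrow\mathcal{O}(\check G^I)$. You instead realize both $\coweyl(\lambda)$ and $\coweyl(\bar\lambda)$ via Borel--Weil, exhibit the map as $\iota^*$ for the closed immersion $\iota\colon \check G^I/\check B^{-I}\hookrightarrow\check G/\check B^-$, and reduce by Frobenius reciprocity to the non-vanishing of evaluation at $e$, concluding by irreducibility. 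Your version is longer but arguably more illuminating: it localizes the surjectivity to a single non-vanishing statement and avoids having to track how the coinvariants map interacts with the weight decomposition.

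Three small corrections. (i) Your excursion into Mackey theory for disconnected $\check G^I$ is unnecessary: for a pinned automorphism one has $\pi_0(\check T^I)\cong\pi_0(\check G^I)$, so $\check B^{-I}\supset\check T^I$ already meets every connected component of $\check G^I$; hence the double coset space $\Gdual\backslash\check G^I/\check B^{-I}$ is a singleton, $\check G^I/\check B^{-I}\cong\Gdual/\check B^{-I,\circ}$ is connected, and $\coweyl(\bar\lambda)$ is irreducible already as a $\Gdual$-module (it is the dual Weyl module of highest weight $\bar\lambda|_{\check T^{I,\circ}}$). The ``connected case'' of your argument therefore applies verbatim, and there is no permutation of summands to worry about. (ii) The subspace $\coweyl(\lambda)\subset\mathcal{O}(\check G)$ is stable under \emph{left} translation by $\check G$, not right translation, since it is cut out by a right $\check B^-$-equivariance condition; the relevant identity is $f(g)=(g^{-1}\cdot f)(e)$ for the left regular action. (iii) Like the paper's own proof, your argument tacitly assumes $\coweyl(\lambda)\neq 0$, i.e., $\lambda\in\splitchamb$; this is the only case in which the lemma is invoked downstream, but it is worth stating.
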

\begin{proof}
	Since the group $G$ splits after a finite field extension, the group $I$ acts on $\check
	G$ through a
	finite quotient. As $\check G$ is over the characteristic zero field $\Qell$, the map
	$$\mathcal{O}(\check G)\rightarrow\mathcal{O}(\check G)_I$$
	to the coinvariants is surjective. As this map gets identified with restriction
	of functions under the isomorphism $\mathcal{O}(\check G)_I\cong \mathcal{O}(\check
	G^I)$, the assertion follows.
\end{proof}
	Recall that $\coweyl(\lambda)$ is non-zero if and only if $\lambda$ is dominant with respect to
	$B$. In particular \cref{lemma:curious} implies that
	$\coweyl(\bar\lambda)$ is non-zero if and only if
	$\bar\lambda$ is in the image of the map $\splitchamb\rightarrow\chamb$.
\begin{rmk}\label{remark:curious}
	As the $\check G$-representations $\coweyl(\lambda)$
	for dominant $\lambda$ are precisely the irreducible ones, previous
	literature predominantly focused also on irreducible $\check G^I$-representations. This has
	previously caused a gap in the proof of the ramified Satake equivalence in
	\cite{zhuRamified} due to failure of the projection $\splitchamb\rightarrow\chamb$ to be
	surjective in general. The gap was later fixed in \cite{modularRamified} by reducing to the adjoint
	case, in which the projection between dominant coweights is surjective. In what follows,
	we have noticed that shifting the focus
	from irreducible representations to the induced representations
	$\coweyl(\bar\lambda)$ leads to a uniform treatment of split and non-split cases, which
	avoids the problem of the non-surjectivity using \cref{lemma:curious}.
\end{rmk}
In light of this, it turns out that simply replacing the tuple
$(\check G,\check B,\check T,\check U)$, with the tuple
$(\check G^I,\check B^I,\check T^I,\check U^I)$ leads to
a uniform treatment of the relavant constructions from \cite[Section
6.2]{centralRiche}. In fact, many proofs go through either verbatim, or can be reduced to the
connected case by passing to the identity component as we will observe below.
\subsection{The basic affine space}
The \textit{basic affine space} is the $\check G^I\x \check T^I$-variety
$\check G^I/\Udual$, equipped with the left action
$$(g,t)\cdot h\Udual=ght\inv\Udual.$$
Moreover, the isomorphism
$$(\check G^I\x\check T^I)/\check B^I\isomto\check G^I/\Udual$$
is compatible with the respective $\check G^I\x \check T^I$-actions;
where $\check B^I$ acts from the left by multiplication induced by the natural inclusion, times the natural
projection.

For $\bar\lambda\in\cham$, the $\bar\lambda$ weight space of the action
of $\{1\}\x \check T^I$
on $\mathcal{O}(\check G^I/\Udual)$ is given by
$\coweyl(-\bar\lambda):=\Ind_{\check B^I}^{\check G^I}(-\bar\lambda)$. This yields an isomorphism
\begin{equation}\label{eq:baseaffinespace}
\mathcal{O}(\check G^I/\Udual)\cong\bigoplus_{\bar\lambda\in\chamb} \coweyl(\bar\lambda)\otimes
-\bar\lambda
\end{equation}
of $\check G^I\times \check T^I$-modules.
We can explicitly identify the multiplication as well:
\begin{lem}
	The multiplication
	$$\mathcal{O}(\check G^I/\Udual)\otimes \mathcal{O}(\check G^I/\Udual)\rightarrow
	\mathcal{O}(\check G^I/\Udual)$$
	identifies via \eqref{eq:baseaffinespace}, with the map
	$$\bigoplus_{\bar\lambda,\bar\mu\in\cham}f_{\bar\lambda,\bar\mu}:
	\bigoplus_{\bar\lambda,\bar\mu\in\cham}\coweyl(\bar\lambda)\otimes
	\coweyl(\bar\mu)\rightarrow\bigoplus_{\cham}\coweyl(\bar\lambda+\bar\mu);$$
	induced naturally by Frobenius reciprocity.
\end{lem}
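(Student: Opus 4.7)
My plan is to reduce the claim to a Frobenius-reciprocity computation and then verify it by a direct evaluation at the identity. Both the multiplication $m \colon \mathcal{O}(\check G^I/\Udual) \otimes \mathcal{O}(\check G^I/\Udual) \to \mathcal{O}(\check G^I/\Udual)$ and the map $\bigoplus_{\bar\lambda,\bar\mu} f_{\bar\lambda,\bar\mu}$ are equivariant for the full $\check G^I \times \check T^I$-action. Since the tensor product of the $-\bar\lambda$ and $-\bar\mu$ weight spaces for $\{1\} \times \check T^I$ lands in weight $-(\bar\lambda + \bar\mu)$, the multiplication decomposes under \eqref{eq:baseaffinespace} as a direct sum of $\check G^I$-equivariant morphisms
$$
g_{\bar\lambda,\bar\mu} \colon \coweyl(\bar\lambda) \otimes \coweyl(\bar\mu) \longrightarrow \coweyl(\bar\lambda + \bar\mu).
$$
It remains to match $g_{\bar\lambda,\bar\mu}$ with $f_{\bar\lambda,\bar\mu}$ for each pair.

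Next, I will make $f_{\bar\lambda,\bar\mu}$ explicit. The canonical evaluation at the base point $e \in \check G^I/\Udual$ provides a $\check B^I$-equivariant projection
$$
\mathrm{ev}_e \colon \coweyl(\bar\nu) \longrightarrow \bar\nu
$$
for each $\bar\nu$, which is exactly the counit of the adjunction $(\mathrm{Res}, \mathrm{Ind}_{\check B^I}^{\check G^I})$. Tensoring two such counits gives a $\check B^I$-equivariant map
$$
\mathrm{ev}_e \otimes \mathrm{ev}_e \colon \coweyl(\bar\lambda) \otimes \coweyl(\bar\mu) \longrightarrow \bar\lambda + \bar\mu,
$$
and Frobenius reciprocity produces a unique $\check G^I$-equivariant lift to $\coweyl(\bar\lambda + \bar\mu)$; this lift is by definition $f_{\bar\lambda,\bar\mu}$. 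In particular, $\mathrm{ev}_e \circ f_{\bar\lambda,\bar\mu} = \mathrm{ev}_e \otimes \mathrm{ev}_e$, and Frobenius reciprocity ensures any $\check G^I$-equivariant map $\coweyl(\bar\lambda) \otimes \coweyl(\bar\mu) \to \coweyl(\bar\lambda + \bar\mu)$ is determined by its postcomposition with $\mathrm{ev}_e$.

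To conclude, it suffices to check $\mathrm{ev}_e \circ g_{\bar\lambda,\bar\mu} = \mathrm{ev}_e \otimes \mathrm{ev}_e$. Viewing sections of $\coweyl(\bar\nu)$ as regular functions on $\check G^I$ transforming by $\bar\nu$ under right translation by $\check B^I$, the map $g_{\bar\lambda,\bar\mu}$ is just pointwise multiplication of functions on $\check G^I/\Udual$, so for any $\varphi_1 \in \coweyl(\bar\lambda)$ and $\varphi_2 \in \coweyl(\bar\mu)$ we have
$$
\mathrm{ev}_e\bigl(g_{\bar\lambda,\bar\mu}(\varphi_1 \otimes \varphi_2)\bigr) = (\varphi_1 \varphi_2)(e) = \varphi_1(e)\,\varphi_2(e) = (\mathrm{ev}_e \otimes \mathrm{ev}_e)(\varphi_1 \otimes \varphi_2).
$$
Hence $g_{\bar\lambda,\bar\mu} = f_{\bar\lambda,\bar\mu}$, as desired.

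The argument is essentially a bookkeeping exercise once Frobenius reciprocity is in hand; no genuine obstacle arises. In particular, potential disconnectedness of $\check G^I$ plays no role since Frobenius reciprocity and the identification \eqref{eq:baseaffinespace} (with summands $\coweyl(\bar\lambda)$ understood to vanish when $\bar\lambda$ is not in the image of $\splitchamb \to \chamb$, cf.\ \cref{lemma:curious}) hold in the disconnected setting with the same formulas.
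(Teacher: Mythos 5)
Your proof is correct and takes essentially the same approach as the paper, whose own proof is a one-sentence reference to $\check T^I$-equivariance, weight-space tracing, and the split-case analogue in \cite[Lemma 6.2.1]{centralRiche}. You have simply fleshed out the details: the $\check T^I$-equivariance of multiplication yields the weight-graded decomposition into maps $g_{\bar\lambda,\bar\mu}$, and each $g_{\bar\lambda,\bar\mu}$ is identified with $f_{\bar\lambda,\bar\mu}$ by comparing postcompositions with the counit $\mathrm{ev}_e$, which Frobenius reciprocity guarantees is a complete invariant.
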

\begin{proof}
	This follows from noting that the multiplication is $\check T^I$-equivariant, and
	tracing the images of the weight spaces for each map respectively
	(cf. \cite[Lemma 6.2.1]{centralRiche}).
\end{proof}

Note that, the basic affine space is quasi-affine (\cite[Exercise 5.5.9(2)]{springerLinAlgGp}), thus
 embeds openly into $\mathcal{X}:=\Spec(\mathcal{O}(\check G^I/\Udual))$, its affine completion.
\subsection{Springer resolution and variants}\label{subsection:springer}
\begin{defn}
We call the space
	$$\Sprgi:=\check G^I\x^{\check B^I}\Lie\Udual,$$
	the \textit{Springer resolution} associated to the (possibly disconnected)
	group $\check G^I$.
\end{defn}
Note that the natural morphism $\Sprgd\rightarrow \Sprgi$ is an isomorphism; therefore
the Springer resolution for $\check G^I$ depends solely on its identity component.

However, the same is not true for the following $\check T^I$-torsor over $\Sprgi$:
$$\Sprgqafi:=\check G^I\x^{\Udual}\Lie\Udual.$$
The issue is again merely one of disconnectedness. The map $g\mapsto(g,0)$, induces an
isomorphism $\pi_0(\check G^I)\isomto\pi_0(\Sprgi)$; and the natural morphism
$\Sprgqafd\rightarrow \Sprgqafi$, is an isomorphism onto the neutral component.

Finally, we have the space $\Sprgafi\subset \check\fg^I\x\mathcal{X}$, which is defined to be the \textit{infinitesimal universal
stabilizer} for the $\check G^I$-action on $\mathcal{X}$. Briefly, it is the closed subscheme
determined by the kernel of an action of the Lie algebra $\check\fg^I$
on $\mathcal{O}(\mathcal{X})$ by derivations, obtained from differentiating the $\check G^I$-action; see e.g. \cite[p. 215]{centralRiche} for a detailed account.
Note that, it follows from the definition that the intersection of $\Sprgqafi$ with the basic
affine space, formed
inside $\check \fg^I\x\mathcal{X}$, is precisely $\Sprgafi$.
\subsection{Equivariant coherent sheaves on $\Sprgi$}
In this subsection, we will consider a tuple $(G,B,S)$, in which $G$ and $S$ are as
in \cref{iwahoriweylgroup}, but $B$ is \textit{opposite} to the Borel from
\cref{iwahoriweylgroup}.

Given $\mu\in\cham\cong X^*(\check T^I)$, there exists a natural geometric line bundle
$$\mathcal{L}_\mu\defined\check G^I\x^{\check B^I} \mathbb{A}^1\rightarrow \check G^I/\check B^I,$$
where $\check B^I$ acts on $\mathbb{A}^1$ through the composition of
\[\begin{tikzcd}[column sep=small]
	\check B^I\ar[r] & \check T^I \ar[r,"\mu"] &\Gm
\end{tikzcd}\]
with the natural $\Gm$-action. Let
$\mathcal{O}(\mu)$ denote the coherent sheaf on $\check G^I/\check B^I$ such that
$$\underline\Spec\left(\Sym^\otimes(\mathcal{O}(\mu)\inv)\right)\cong\mathcal{L}_\mu.$$

Consider now the embedding of $\Sprgi$ in $\check\fg^I\x \check G^I/\check B^I$ via $[g,x]\mapsto(g\cdot x,g\check B^I)$. This embedding gives rise to
two important classes of objects in the category $\Coh^{\check G^I}(\Sprgi)$ of
$\check G^I$-equivariant coherent sheaves on $\Sprgi$:
\begin{lem}\label{lemma:generatorssprg}
	The category $\Db\Coh^{\check G^I}(\Sprgi)$ is generated under cones and extensions by
	either:
	\begin{enumerate}
		\item for $\mu\in\cham$, the pullbacks $\mathcal{O}_{\Sprgi}(\mu)$
			of the line bundles $\mathcal{O}(\mu)$;
		\item the objects of the form $V\otimes\mathcal{O}_{\Sprgi}(\lambda)$, where
			$V\in\Rep(\check G^I)$ and $\lambda\in\chamb$.
	\end{enumerate}
\end{lem}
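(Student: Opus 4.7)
The plan is to establish (2) first and then deduce (1) via a filtration argument exploiting the solvability of $\check B^I$.

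\emph{Proof of (2).} Let $Y = \check G^I/\check B^I$ and let $\pi \colon \Sprgi \to Y$ be the vector bundle projection, which is an affine morphism. Given $\cF \in \Coh^{\check G^I}(\Sprgi)$, I aim to produce an equivariant surjection $V \otimes \mathcal{O}_{\Sprgi}(\lambda) \twoheadrightarrow \cF$ for some $V \in \Rep(\check G^I)$ and $\lambda \in \chamb$, and then iterate. The pushforward $\pi_*\cF$ is a $\check G^I$-equivariant quasi-coherent sheaf on $Y$ that is finitely generated over the graded $\mathcal{O}_Y$-algebra $\pi_*\mathcal{O}_{\Sprgi} = \Sym(\cE^\vee)$, where $\cE$ is the equivariant vector bundle with fiber $\Lie \check U^I$ over the base point. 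Choosing an equivariant coherent subsheaf $\cG \subset \pi_*\cF$ that generates $\pi_*\cF$ as a $\Sym(\cE^\vee)$-module, the projection formula identifies $\pi_*\pi^*\cG$ with $\pi_*\mathcal{O}_{\Sprgi} \otimes_{\mathcal{O}_Y} \cG$, so that the induced multiplication map is surjective; faithfulness of affine pushforward upgrades this to a surjection $\pi^*\cG \twoheadrightarrow \cF$. Since $Y$ is a projective $\check G^I$-variety with equivariant ample line bundles $\mathcal{O}(\lambda)$ for $\lambda$ dominant, the equivariant form of Serre's theorem provides a finite-dimensional $V \in \Rep(\check G^I)$ and a surjection $V \otimes \mathcal{O}_Y(\lambda) \twoheadrightarrow \cG$, whose pullback composes with $\pi^*\cG \twoheadrightarrow \cF$ to give the desired surjection. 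Iterating this procedure and using smoothness of $\Sprgi$ to truncate (sufficiently deep syzygies are locally free) yields a finite resolution of $\cF$ by objects of the form (2).

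\emph{Deduction of (1).} The group $\check B^I$ is solvable, being a closed subgroup of the solvable Borel $\check B$, so for any $V \in \Rep(\check G^I)$, the restriction $V|_{\check B^I}$ admits a composition series with one-dimensional successive quotients labelled by the weights $\nu$ of $V$ with multiplicities $\dim V_\nu$. Applying the exact induction functor $\check G^I \times^{\check B^I} (-)$ and then pulling back along the exact vector bundle projection $\pi$ yields a $\check G^I$-equivariant filtration of $V \otimes \mathcal{O}_{\Sprgi}$ whose associated graded is $\bigoplus_\nu \mathcal{O}_{\Sprgi}(\nu)^{\dim V_\nu}$. Twisting by the line bundle $\mathcal{O}_{\Sprgi}(\lambda)$ preserves exactness and produces a filtration of $V \otimes \mathcal{O}_{\Sprgi}(\lambda)$ by line bundles of the form $\mathcal{O}_{\Sprgi}(\lambda + \nu)$. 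Hence every object of (2) lies in the triangulated hull of objects of (1), so (1) generates as well.

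\emph{Main obstacle.} The delicate step is producing the equivariant coherent generating subsheaf $\cG \subset \pi_*\cF$: since $\pi_*\cF$ is only quasi-coherent with infinite-rank fibers, the standard equivariant Serre argument does not apply directly to it. I would resolve this by remembering the natural $\mathbb{G}_m$-action on $\Sprgi$ by fiber scaling: after refining by a short $\mathbb{G}_m$-equivariant filtration one may work with $(\check G^I \times \mathbb{G}_m)$-equivariant sheaves, whereupon $\pi_*\cF$ decomposes into coherent graded components bounded below, and finitely many degrees suffice to generate by finite generation as a $\Sym(\cE^\vee)$-module. Equivariance under $\check G^I$ is preserved by averaging, using linear reductivity in characteristic zero, and the auxiliary $\mathbb{G}_m$-equivariance is discarded at the end.
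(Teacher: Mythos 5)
Your route — equivariant Serre's theorem to obtain surjections $V\otimes\mathcal{O}_{\Sprgi}(\lambda)\twoheadrightarrow\cF$, iterate and truncate by smoothness, then deduce (1) from (2) using solvability of $\check B^I$ — is the shape of the argument the paper invokes from \cite[Lemma 6.2.8]{centralRiche}, and your deduction of (1) from (2) via the one-dimensional $\check B^I$-composition series of $V$ is correct. Your ``main obstacle'' discussion (quasi-coherence of $\pi_*\cF$, fixed via the $\Gm$-grading) is also a legitimate point to address. However, the genuinely delicate step is not there but in the truncation, and as written there is a gap.

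``Sufficiently deep syzygies are locally free'' is true, but a $\check G^I$-equivariant locally free sheaf on $\Sprgi$ is not automatically of the form $V\otimes\mathcal{O}_{\Sprgi}(\lambda)$ with $V\in\Rep(\check G^I)$ and $\lambda\in\chamb$, nor is it obviously in the triangulated hull of such. Under the equivalence $\Coh^{\check G^I}(\Sprgi)\cong\Coh^{\check B^I}(\Lie\Udual)$, a locally free equivariant sheaf corresponds to a $\check B^I$-equivariant module over the polynomial ring $\mathcal{O}(\Lie\Udual)$ that is free as a module, and such modules need not be constant on fibers: already for $\check G^I=\mathrm{SL}_2$ (so $\mathcal{O}(\Lie\Udual)=\Qell[x]$ with $\check U^I$ acting trivially on $x$), the rank-two free module with basis $e_1,e_2$ of $\check T^I$-weight $0$ and $\check U^I$-action $u\cdot e_1=e_1$, $u\cdot e_2=e_2+ux\,e_1$ is a $\check B^I$-equivariant projective module whose $\check U^I$-action mixes generators with positive-degree elements and therefore is not of the form $V'\otimes\Qell[x]$ for any $\check B^I$-representation $V'$; in particular it is not a pullback $\pi^*E$ and a fortiori not of form (2). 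What is true is that every such locally free equivariant sheaf is an iterated extension of equivariant line bundles $\mathcal{O}_{\Sprgi}(\nu)$ with $\nu\in\cham$ — i.e.\ of objects of form (1), typically with $\nu$ non-dominant — but placing those in the hull of form-(2) objects is exactly the content you are trying to deduce, so invoking it here is circular. You therefore need a separate argument (a descending induction over weights, using $\coweyl(\lambda)$ and its dual, or a BGG-type resolution) showing that $\mathcal{O}_{\Sprgi}(\nu)$ lies in the triangulated hull of the objects in (2) for every $\nu\in\cham$, before the truncation can close. This is the crux of the cited lemma and is not reducible to local freeness of a deep syzygy alone.

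Two smaller remarks. First, the paper switches to the \emph{opposite} Borel for the tuple $(G,B,S)$ in this subsection precisely to get the signs in $\mathcal{O}(\lambda)$, $\lambda\in\chamb$, right; your proof should flag which convention it is using, since the equivariant-Serre step produces twists by powers of a relatively ample line bundle and the sign of the exponent depends on this choice. Second, a cleaner route for (1) is available which avoids the truncation subtlety for that half of the statement: work directly in $\Coh^{\check B^I}(\Lie\Udual)$, where the surjection step produces terms $W\otimes\mathcal{O}(\Lie\Udual)$ with $W\in\Rep(\check B^I)$ (filterable by characters), and the locally free syzygy again filters by characters — though, as the example above shows, even this requires some care and does not by itself give (2).
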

\begin{proof}
	After making the substitutions outlined in \cref{subsection:disconnectedness}, the proof
	of the analogous statement \cite[Lemma 6.2.8]{centralRiche} goes through verbatim.
\end{proof}
\subsection{Key Lemma}\label{subsection:keylemma} In the unramified case, the desired functor \eqref{eq:coherentfunctor}
is constructed from one valued in
$\Perv(\Hk{\I})$, via postcomposition with $\aviw$.
In contrast,
our construction of \eqref{eq:coherentfunctor} is not obtained through a factorization from
$\Perv(\Hk{I})$. In fact, existence of such a factorization is
equivalent to the existence of the central functor $\Rep(\check G^I)\rightarrow \Perv(\Hk{\I})$,
which is not known at the moment.

To overcome this difficulty, we prove the following key lemma, using the tilting property:
\begin{lem}\label{lemma:key}
	There exist a unique exact functor
	$$\Zent^I:\operatorname{Rep}{\check G^I}\rightarrow \PIW,$$
	such that the diagram
$$\begin{tikzcd}
	\Rep(\check G)\ar[d,"\res"']\ar[r,"\operatorname{Z}"] &
	\PervI \ar[d,"\aviw"]\\
	\Rep(\check G^I)\ar[r,"\Zent^I"]& \PIW
\end{tikzcd}$$
commutes.
\end{lem}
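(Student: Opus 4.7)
The strategy, outlined in \cref{intro:central}, is to first construct a canonical functor $\bar\Zent:\Rep(\check G^I)\to\PIW^0$ to the regular quotient using Tannaka duality, and then lift it to $\PIW$ using the tilting property from \cref{cor:key} together with the full faithfulness of the projection $\Pi^0_\IW$ on tilting objects (\cref{lemma:tiltingregular}). Uniqueness will then be automatic because every object of $\Rep(\check G^I)$ arises as a summand of $\Res(V')$ for some $V'\in\Rep(\check G)$.

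For the first step, I would upgrade \cref{regular:tannakian}, under the identification $\PIW^0\cong\Po$ of \eqref{eq:regulariwregular}, to show that the Tannaka group $H$ appearing there is contained in $\check G^I$. The resulting group homomorphism $H\to\check G^I$ yields, by Tannaka duality, a functor $\bar\Zent:\Rep(\check G^I)\to\PIW^0$ satisfying $\bar\Zent\circ \Res\cong \Pi^0_\IW\circ \Zent^\IW$. The inclusion $H\subset\check G^I$ would follow from the fact that $\Pi^0_\IW\circ\Zent^\IW\cong \Pi^0\circ\Zent$ factors through $\Res:\Rep(\check G)\to\Rep(\check G^I)$; this in turn is a consequence of the factorization $\Psi_\G\cong p_*\circ\Zent$ along the ind-proper map $p:\GrBD{\I}\to\GrBD{\G}$ for a special parahoric $\G\supset\I$, combined with the identification $\Psi_\G\cong \Res$ from \cref{lemma:nearbyrest}.

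For the second step, fix $V\in\Rep(\check G^I)$ and invoke \cref{cor:key} to obtain $V'\in\Rep(\check G)$ such that $V$ is a direct summand in $\Res(V')$ via an idempotent $e_V\in\End_{\check G^I}(\Res V')$, and such that $\Zent^\IW(V')$ is tilting in $\PIW$. Tiltingness together with \cref{lemma:tiltingregular} and the identification from the first step yields
$$\End_{\PIW}(\Zent^\IW V')\isomto\End_{\PIW^0}(\bar\Zent(\Res V'))\hookleftarrow\End_{\check G^I}(\Res V'),$$
the last arrow being induced by the inclusion $H\subset\check G^I$. Let $\tilde e_V\in\End_{\PIW}(\Zent^\IW V')$ denote the image of $e_V$; it is an idempotent, and I would define $\Zent^I(V):=\im(\tilde e_V)$, a direct summand of the tilting object $\Zent^\IW(V')$. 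Functoriality is verified analogously: for a morphism $f:V_1\to V_2$ in $\Rep(\check G^I)$, set $V':=V_1'\oplus V_2'$, view $f$ as an element of $\End_{\check G^I}(\Res V')$ through the splittings, transport it to an endomorphism of $\Zent^\IW(V')$ by the above, and restrict to the summands. Independence of the choice of $V'$ is checked by comparing two choices via their direct sum; exactness follows from the exactness of $\Zent^\IW$ (using t-exactness of nearby cycles and \cref{cor:aviewtexact}) together with stability of exactness under passage to direct summands.

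The main obstacle is the first step, namely establishing the inclusion $H\subset \check G^I$ via a factorization of $\Pi^0_\IW\circ\Zent^\IW$ through $\Res$. While $\Psi_\G\cong \Res$ gives the factorization at the level of objects, promoting it to a natural factorization of functors compatible with the Tannakian description of \cref{regular:tannakian}, and reconciling it with the monodromy operator $n^0$ therein (via the compatibility in \cref{remark:commutativitymonodromy} and the Wakimoto-filtration calculations of \cref{theorem:filtration}), is the technical heart of the argument.
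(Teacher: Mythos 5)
Your overall strategy — pass to the regular quotient $\PIW^0\cong\Po$, construct $\bar\Zent:\Rep(\check G^I)\to\PIW^0$ via Tannaka duality, then lift to $\PIW$ using the tilting property from \cref{cor:key} and full faithfulness of $\Pi^0_\IW$ on tilting objects (\cref{lemma:tiltingregular}) — is exactly the paper's, and your second step (the idempotent-lifting construction of $\Zent^I$) is a valid way to carry out this lift.

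The gap is in your first step, which you correctly flag as the heart of the matter. You propose to establish $H\subset\check G^I$ (equivalently the factorization of $\Pi^0\circ\Zent$ through $\Res$) from the factorization $\Psi_\G\cong p_*\circ\Zent$ along $p:\GrBD{\I}\to\GrBD{\G}$ together with $\Psi_\G\cong\Res$. But these inputs only say that the \emph{pushforward} $p_*\circ\Zent$ factors through $\Res$, and $p_*$ and $\Pi^0$ are unrelated quotients of $\Dbc(\Hk{\I})$. Pushing forward to $\Hk{\G}$ lands in a semisimple category and annihilates the unipotent monodromy altogether, whereas $\Pi^0$ retains the monodromy (it becomes the nilpotent $n^0$ of \cref{regular:tannakian}) and instead kills the $\IC_w$ with $\ell(w)>0$. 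Knowing $p_*\circ\Zent$ factors through $\Res$ says nothing about $\Pi^0\circ\Zent$, and the monodromy compatibilities you gesture at (\cref{remark:commutativitymonodromy}, \cref{theorem:filtration}) do not bridge the two.

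The paper's actual argument relies on a different geometric input: the $I$-\emph{equivariance of $\Zent$ itself} (not of $\Psi_\G$). The nearby cycles construction at the Iwahori already carries canonical, composition-compatible isomorphisms $\Zent(\gamma\cdot V)\cong\Zent(V)$ for $\gamma\in I$; the paper cites [modularRamified, Corollary 8.9] for this, noting it applies at an arbitrary parahoric. Applying $\Pi^0$ and using its functoriality descends these isomorphisms to $\Zent^0$, and a Tannakian argument as in [zhuRamified, Lemma 4.5] then shows the subgroup $H$ of \cref{regular:tannakian} is pointwise $I$-fixed, i.e.\ $H\subset\check G^I$. This is a direct equivariance statement about nearby cycles at the Iwahori, not something recoverable from the special-parahoric comparison $\Psi_\G\cong\Res$.
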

\begin{proof}
	By \cref{lemma:tiltingregular}, the projection $\Pi^0:\PIW\rightarrow\PIW^0$ is fully
	faithful when restricted to tilting objects. Therefore using \cref{cor:key}, it suffices to prove the
	statement after replacing $\PIW$ with the essential image of
	$\Pi^0\circ\Zent^\IW$.

	Note that the equivalence $\PIW^0\cong\Po$
	\eqref{eq:regulariwregular} identifies $\Pi^0\circ\Zent^\IW$ with the functor
	$\Zent^0:\Rep(\check G)\rightarrow\Po$. Under
	the equivalence $\Po\cong\Rep(H)$ of \cref{regular:tannakian},
	the functor $\Zent^0$ corresponds to the restriction $\Rep(\check G)\rightarrow
	\Rep(H)$.
	It then suffices to prove that $H\subset\check G$ is invariant under the $I$-action.

	Recall that the $I$-action on $\check G$ induces an $I$-action on $\Rep(\check
	G)$. Namely, an element $\gamma\in I$
	acts by the functor which leaves the underlying space of a representation
	fixed, but replaces the action of an element $g\in\check G$ by
	the action of $\gamma\cdot g$.
	Using Tannaka duality as in \cite[Lemma 4.5]{zhuRamified},
	invariance of $H$ under the $I$-action
	amounts to existence of an isomorphism
	$$\Zent^0(\gamma\cdot V)\cong \Zent^0(V),$$
        for every $\gamma\in I$ and $V\in\Rep(\check G)$, which is compatible with composition in $I$.
	The functor $\Zent$ is $I$-equivariant by \cite[Corollary 8.9]{modularRamified}\footnote{Although their parahoric group scheme
	is assumed to be special, the argument in fact works for an arbitrary parahoric.}, which
	means such isomorphisms
	$$\Zent(\gamma\cdot V)\cong\Zent(V)$$
	exist. These then
	give rise to desired ones for $\Zent^0\defined\Pi^0\circ\Zent$ via functoriality.
\end{proof}
\begin{rmk}\label{remark:H}
	Note that, along the way to the proof we have also observed that $H\subset\check G^I$
	holds.
\end{rmk}
\subsection{Deequivariantization algebra}
Start by considering the functor
\begin{gather}\label{eq:toruszent}
	F:\Rep(\check G\x \check T^I)\rightarrow \Perv(\Hk{\I}),\\
	V\boxtimes \lambda\mapsto\Zent(V)\star J_\lambda\nonumber
\end{gather}
where $V\boxtimes \lambda$ is the representation obtained by tensor product of a
$V\in\Rep(\check G)$ and $\lambda$ the one dimensional $\check T^I$-representation
corresponding to $\lambda\in\chamb\cong X^*(\check T^I)$.

The functor $F$ factors through the (non-full!) subcategory $\cC$, whose objects are those in the essential
image of $F$, and morphisms are those which commute with the images under $F$ of the symmetry
constraints of $\Rep(\check G\x \check T^I)$. As in \cite[Lemma 6.3.3]{centralRiche},
convolution induces a natural symmetric monoidal structure on $\cC$.

Consider the
$\Qell$-algebra
$$A:=\Hom_{\Ind(\cC)}\left(1_\cC,\cO(\check G\x\check T^I)\right)$$ whose multiplication is induced by that of
$\cO(\check G\x\check T^I)$. The algebra $A$ is naturally equipped with a
$\check G\x\check T^I$-action and arguments identical to
\cite[Proposition 6.3.5]{centralRiche} provide an equivalence
\begin{equation}\label{eq:nonfullsub}
\cC\cong A-\mathsf{mod}^{\check G\x\check T^I}_{\mathsf{fr}},
\end{equation}
where right hand side is the category of free $\check G\x\check
T^I$-equivariant $A$-modules. More precisely, those of the form $M\otimes_{\Qell} A$ where
$M$ is a $\check G\x\check T^I$-representation and the tensor product is equipped with the
diagonal action.
\subsection{Tannakian construction of certain algebra maps}\label{subsection:tannakian}
Each datum in the following list can be reconstructed from another on the list, in particular
they are equivalent: \begin{enumerate}
	\item a $\check G\x\check T^I$-equivariant algebra morphism
		$\mathcal{O}(\check\fg)\rightarrow A$;
	\item a $\check G\x\check T^I$-equivariant element in $\check\fg\otimes A$;
	\item for any $V\in\Rep(\check G\x\check T^I)$, a $\check G$-equivariant endomorphism $f_V$ of
		$V\otimes A$, satisfying
		$$f_{V_1\otimes V_2}=f_{V_1}\otimes_A \id_{V_2\otimes A}+\id_{V_1\otimes
		A}\otimes_A f_{V_2}$$
		for $V_1,V_2\in\Rep(\check G\x\check T^I)$.
\end{enumerate}
The asserted equivalence between each datum above follows from a general Tannakian set-up, as explained
in \cite[Example 6.3.1]{centralRiche}.
In particular, \eqref{eq:monodromymonoidal} implies that the collection of nilpotent endomorphisms $\bn_V$
of $\Zent(V)$ form a datum of type (3). From this, we can then extract
a datum of type (1), and in particular, an algebra morphism
$$\mathcal{O}(\check\fg)\rightarrow A.$$
\begin{lem}\label{lem:factorlie}
	The map $\mathcal{O}(\check\fg)\rightarrow A$ factors through
	a $\check G^I\x\check
	T^I$-equivariant algebra morphism $\mathcal{O}(\check\fg^I)\rightarrow A.$
\end{lem}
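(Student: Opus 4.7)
The plan is to use the Tannakian dictionary recalled at the start of \cref{subsection:tannakian}: a $\check G\x\check T^I$-equivariant algebra morphism $\cO(\check\fg)\to A$ is the same data as a $\check G\x\check T^I$-equivariant element $\eta\in\check\fg\otimes A$, and this map factors through the restriction $\cO(\check\fg)\twoheadrightarrow\cO(\check\fg^I)$ precisely when $\eta$ lies in the subspace $\check\fg^I\otimes A\hookrightarrow\check\fg\otimes A$. Since $I$ acts on $\check G$ through a finite quotient by pinned automorphisms, and we work over the characteristic-zero coefficient field $\Qell$, the subspace $\check\fg^I\otimes A$ is cut out by the $I$-invariants; thus it suffices to show $\eta$ is fixed by $I$, where $I$ acts only on the first factor. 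Moreover, once such a factorization exists, it is automatically $\check G^I\x\check T^I$-equivariant, because the restriction morphism $\cO(\check\fg)\to\cO(\check\fg^I)$ is tautologically so, and the original map was $\check G\x\check T^I$-equivariant.

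To establish $I$-invariance of $\eta$, I unwind the datum of type (3) in \cref{subsection:tannakian}: $\eta$ corresponds to the natural transformation $V\mapsto \bn_V$ of the fiber functor $V\mapsto V\otimes A$ on $\Rep(\check G)$. Its $I$-invariance translates into the statement that, for each $\gamma\in I$ and $V\in\Rep(\check G)$, the monodromy endomorphism $\bn_{\gamma\cdot V}$ of $\Zent(\gamma\cdot V)$ agrees with $\bn_V$ under the canonical identification $\Zent(\gamma\cdot V)\cong\Zent(V)$ furnished by the $I$-equivariance of $\Zent$ from \cite[Corollary 8.9]{modularRamified}. That identification is precisely the quasi-unipotent inertia action $\rho_V(\gamma)$, so the compatibility reduces to the commutativity $\rho_V(\gamma)\circ\bn_V=\bn_V\circ\rho_V(\gamma)$. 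This last equation is immediate: on the finite-index subgroup of $I$ where $\rho_V(\gamma)=\exp(t_\ell(\gamma)\cdot\bn_V)$, a nilpotent operator always commutes with its own exponential.

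The main subtlety, and the step I expect to require the most care, is the precise Tannakian translation of the $I$-equivariance condition into $I$-invariance of $\eta$: one must set up the $I$-action so that $A$ is treated as a plain coefficient algebra (with trivial $I$-action), identify $(\check\fg\otimes A)^I=\check\fg^I\otimes A$ via the averaging projector, and verify that the $I$-equivariance structure on $\Zent$ coming from \cite[Corollary 8.9]{modularRamified} intertwines the pinned action on $\Rep(\check G)$ with the monodromy action on $\PervI$ in a way compatible with the derivation datum. Once this dictionary is in place the argument is formal. The tame ramification hypothesis enters implicitly through \cref{remark:commutativitymonodromy}: the commutativity of $\bn_V$ with every morphism in $\cC$ is what makes $\eta$ well-defined as an element of $\check\fg\otimes A$ in the first place, via the identification \eqref{eq:nonfullsub}.
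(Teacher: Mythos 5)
Your proposal takes essentially the same route as the paper's proof: translate the factorization into $I$-invariance of the element $\eta\in\check\fg\otimes A$, recast that as commutativity of the diagram comparing $\bn_{\gamma\cdot V}$ with $\bn_V$ across the isomorphism $\Zent(\gamma\cdot V)\cong\Zent(V)$, and observe that this commutativity is tautological because $\bn_V$ is by construction the logarithm of the very inertia action that furnishes those isomorphisms. The paper phrases this last step as "$\bn_V$ are constructed precisely as the logarithm of the $I$-action, which gave rise to the horizontal isomorphisms in the first place," which is the same punchline as your "a nilpotent operator commutes with its own exponential."

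One small caveat on an aside you make: you attribute a role to the tame ramification hypothesis here, via \cref{remark:commutativitymonodromy}, in making $\eta$ well-defined. This doesn't match the paper's accounting — the construction of the map $\cO(\check\fg)\to A$ in \cref{subsection:tannakian} rests on the monoidal compatibility \eqref{eq:monodromymonoidal}, not on \cref{remark:commutativitymonodromy}, and the introduction states explicitly that tameness is used only once, in the dimension bound feeding into the tilting proof (via \cref{inequality}). Also, your identification of the horizontal isomorphism with $\rho_V(\gamma)$ is a slightly stronger claim than what is literally needed (a priori it is the canonical isomorphism $\Psi_\I(\gamma^*\cA_V)\cong\Psi_\I(\cA_V)$, related to but not tautologically equal to the automorphism $\rho_V(\gamma)$ of a single object), but you correctly flag this translation as the delicate point, and the paper glosses over it at the same level of precision.
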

\begin{proof}
	By the discussion above, the map $\mathcal{O}(\check\fg)\rightarrow A$ corresponds to an element $a\in \check\fg\otimes
	A$.
        Casting the question in Tannakian terms, the desired factorization is equivalent to the invariance of the
	element $a$, under the induced $I$-action (see \cite[p.361]{yunIntegralHomology}). This in turn
	is equivalent to commutativity of the diagrams
	\[\begin{tikzcd}
		\Zent(\gamma\cdot V)\ar[d,"\bn_{\gamma\cdot V}"]\ar[r,"\sim"]& \Zent(V)\ar[d,"\bn_V"]\\
		\Zent(\gamma\cdot V)\ar[r,"\sim"]& \Zent(V)
	\end{tikzcd}\]
	where $V\in\Rep(\check G)$, $\gamma\in I$ and the horizontal isomorphims are those from
	the proof of \cref{lemma:key}.
	The commutativiy of these diagrams
	then follows from the constructions, as $\bn_V$ are constructed precisely as
	the logarithm of the $I$-action, which
	gave rise to the horizontal isomorphisms in the first place.
\end{proof}
\begin{rmk}\label{remark:factorlie}
	Note that, the argument in the proof of \cref{lem:factorlie} also shows
	the element $n_0\in\check\fg$ appearing in Tannakian description of the regular
	quotient, \cref{regular:tannakian}, is $I$-invariant.
\end{rmk}
The collection of highest weight arrows \cref{subsection:highestweight} live in the subcategory
$\cC$ by \cref{prop:highestweightmonoidal}, and can be used to construct a $\check G\x\check T^I$-equivariant
algebra map $\mathcal{O}(\check G/\check U)\rightarrow A$, as in \cite[Subsection
6.3.5]{centralRiche}. Briefly, by the explicit description
$$\mathcal{O}(\check G/\check
U)\cong\bigoplus_{\lambda\in \splitchamb}\coweyl(\lambda)\otimes-\bar\lambda;$$ it suffices to construct for each
$\lambda\in\splitchamb$, a $\check G\x\check T^I$-equivariant map
\begin{align}\label{eq:flambda}
	f_\lambda:\coweyl(\lambda)\otimes-\bar\lambda\rightarrow A.
\end{align}
The highest weight arrows $\mathfrak{f}_\lambda$ give rise to
morphims of $\check G\x\check T^I$-modules
\begin{equation}\label{eq:coherenthighestweight}
\coweyl(\lambda)\otimes A\rightarrow \bar\lambda \otimes A,
\end{equation}
using the description of $\cC$ in terms of the algebra $A$.
Finally, restricting \eqref{eq:coherenthighestweight} to $\coweyl(\lambda)$ and adding a
$-\bar\lambda$ twist, provides the desired maps.

Fixing the unique splitting of the surjection $\coweyl(\lambda)\rightarrow
\coweyl(\bar\lambda)$ from \cref{lemma:curious}, which induces the identity endomorphism on the
$\bar\lambda$ weight space; we can restrict
the collection $f_\lambda$ of \eqref{eq:flambda}
to obtain for each $\lambda\in\splitchamb$, a $\check G^I\x\check T^I$-equivariant morphism
$$f_{\bar\lambda}:\coweyl(\bar\lambda)\otimes-\bar\lambda\rightarrow A.$$
Correspondingly (\eqref{eq:baseaffinespace} and \cref{lemma:curious}), we get a
$\check G^I\x\check T^I$-equivariant algebra map
$$\mathcal{O}(\check G^I/\Udual)\rightarrow A.$$

Putting everything we constructed together, we finally arrive at a
$\check G^I\x\check T^I$-equivariant algebra morphism
\begin{align}\label{eq:algmap}
\mathcal{O}(\check{\fg}^I\times\mathcal{X})\rightarrow A.
\end{align}
\begin{prop}\label{prop:Sprgafi}
	The $\check G^I\x\check T^I$-equivariant map \eqref{eq:algmap} factors through
        a $\check G^I\x\check T^I$-equivariant map $\mathcal{O}(\Sprgafi)\rightarrow A.$
\end{prop}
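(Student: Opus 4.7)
The plan is to show that the equivariant algebra map \eqref{eq:algmap} annihilates the ideal $I_{\Sprgafi}\subset\mathcal{O}(\check\fg^I\x\mathcal{X})$ cutting out $\Sprgafi$. By the definition of $\Sprgafi$ as the universal infinitesimal stabilizer for the $\check G^I$-action on $\mathcal{X}$, this ideal is generated by elements $\mathrm{act}(f)\in\mathcal{O}(\check\fg^I)\otimes\mathcal{O}(\mathcal{X})$, one for each $f\in\mathcal{O}(\mathcal{X})$, characterized by $\mathrm{act}(f)(n,x)=(n\cdot f)(x)$, where $n\cdot f$ denotes the derivation action of $\check\fg^I$ on $\mathcal{O}(\mathcal{X})$ induced from the $\check G^I$-action on $\mathcal{X}$.

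By the $\check T^I$-equivariance of \eqref{eq:algmap} and the weight decomposition \eqref{eq:baseaffinespace}, it suffices to verify the vanishing of the image of $\mathrm{act}(f)$ in $A$ for $f$ lying in the summand $\coweyl(\bar\lambda)\otimes-\bar\lambda$, for each $\bar\lambda\in\chamb$. Using \cref{lemma:curious}, such an $f$ lifts to an element $\tilde f\in\coweyl(\lambda)\otimes-\bar\lambda$ for some $\lambda\in\splitchamb$ mapping to $\bar\lambda$, and by the construction of \cref{subsection:tannakian}, the restriction of $\mathcal{O}(\mathcal{X})\to A$ to this lift is precisely the map $f_\lambda$ of \eqref{eq:flambda}, built from the highest weight arrow $\ff_\lambda$. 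Translating via the equivalence \eqref{eq:nonfullsub}, the image of $\mathrm{act}(\tilde f)$ in $A$ becomes the element attached to the composite of $\bn_{\coweyl(\lambda)}$ with $\ff_\lambda$ inside $\mathcal{C}$. By \cref{lemma:highestweightmonodtriv}, this composite vanishes, so $\mathrm{act}(\tilde f)$ indeed maps to zero, and consequently so does $\mathrm{act}(f)$.

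The main obstacle is the Tannakian bookkeeping required to match the geometric ``derivative'' operation $\mathrm{act}$ with the categorical composition $\ff_\lambda\circ\bn_{\coweyl(\lambda)}$. Concretely, one must verify that the $A$-linear endomorphism of $V\otimes A$ encoded by the algebra map $\mathcal{O}(\check\fg^I)\to A$ is, under \eqref{eq:nonfullsub}, the one induced from $\bn_V$ (rather than from any other derivation-valued datum); and that this endomorphism behaves as a derivation with respect to the tensor-product Leibniz structure recorded in item (3) of \cref{subsection:tannakian}, so that its restriction to the weight-$(-\bar\lambda)$ component of $\mathcal{O}(\mathcal{X})$ is computed pointwise by $\ff_\lambda\circ\bn_{\coweyl(\lambda)}$. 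In the split setting this verification is carried out in \cite[Section 6.3]{centralRiche}; with \cref{lem:factorlie}, \cref{remark:factorlie}, and \cref{lemma:curious} in hand, the same argument transports verbatim to our non-split setup and yields the desired factorization.
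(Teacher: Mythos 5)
Your argument is the same as the paper's: both reduce to checking that the ideal generators vanish, localize by $\check T^I$-weight to the summand $\coweyl(\bar\lambda)\otimes-\bar\lambda$, and deduce the required vanishing $f_{\bar\lambda}\circ n_{\coweyl(\bar\lambda)}=0$ from $\ff_\lambda\circ\bn_{\coweyl(\lambda)}=0$ (\cref{lemma:highestweightmonodtriv}) by viewing $f_{\bar\lambda}$ and $n_{\coweyl(\bar\lambda)}$ as restrictions, with \cref{lemma:curious} and \cref{lem:factorlie} making the restrictions meaningful. One small imprecision in phrasing: $\mathrm{act}(\tilde f)$ is not literally defined for $\tilde f\in\coweyl(\lambda)\otimes-\bar\lambda$ (which does not live in $\mathcal{O}(\mathcal{X})$), so rather than passing through $\mathrm{act}(\tilde f)$ one should, as the paper does, observe directly that the $\check G^I$-equivariance of the chosen splitting $\coweyl(\lambda)\twoheadrightarrow\coweyl(\bar\lambda)$ makes $f_{\bar\lambda}$ and $n_{\coweyl(\bar\lambda)}$ honest restrictions of $f_\lambda$ and $n_{\coweyl(\lambda)}$, at which point the vanishing is immediate.
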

\begin{proof}
	Recall that the equivalence \eqref{eq:nonfullsub} provides a morphism
	$$n_{\coweyl(\lambda)}\in\End(\coweyl(\lambda)\otimes A),$$
	corresponding to the logarithm of the monodromy $\mathbf{n}_{\coweyl(\lambda)}\in
	\End\left(F(\coweyl(\lambda)\boxtimes 1)\right)$. By \cref{lem:factorlie},
	$n_{\coweyl(\lambda)}$
	preserves the subspace $\coweyl(\bar\lambda)\otimes A$ in
	$\Res(\coweyl(\lambda))\otimes A$ under the chosen embedding.
	Denote by $n_{\coweyl(\bar\lambda)}$, restriction of
	$n_{\coweyl(\lambda)}$ to $\coweyl(\bar\lambda)\otimes A$.

	As in \cite[Lemma 6.3.7]{centralRiche}, if  for every $\bar\lambda\in\chamb$
	$$f_{\bar\lambda}\circ n_{\coweyl(\bar\lambda)}=0$$
	holds, then the generators of the ideal defining $\mathcal{O}(\Sprgafi)$ as quotient
	of $\mathcal{O}(\check\fg^I\x\mathcal{X})$ vanish under
	\eqref{eq:algmap} by construction.
	Since both $f_{\bar\lambda}$ and $n_{\coweyl(\bar\lambda)}$ arise as restrictions, this follows from the compatibility
	$\mathfrak{f}_\lambda\circ\mathbf{n}_{\coweyl(\lambda)}=0$
	of \cref{lemma:highestweightmonodtriv}.
\end{proof}

\subsection{Descending to $\check G^I$ via averaging}\label{subsection:descending}
We can then construct functor
\begin{gather}
	\widetilde{F}:\Coh^{\check G^I\x\check T^I}_{\mathsf{fr}}(\Sprgafi)\rightarrow
	A-\mathsf{mod}^{\check G^I\x\check T^I}_{\mathsf{fr}},\\
	V\otimes \mathcal{O}(\Sprgafi)\mapsto V\otimes A;\nonumber
\end{gather}
where $\Coh^{\check G^I\x\check T^I}_{\mathsf{fr}}(\Sprgafi)$ denotes the category of
equivariant coherent sheaves on $\Sprgafi$ of the form $V\otimes \mathcal{O}(\Sprgafi)$ for a
$\check G^I\x\check T^I$ representation $V$; by forming tensor product over
${\mathcal{O}(\Sprgafi)}$ along the map from \cref{prop:Sprgafi}.

\begin{lem}\label{lem:factoralg}
	There exist a unique exact functor
\begin{align*}
A-\mathsf{mod}^{\check G^I\x\check T^I}_{\mathsf{fr}}\rightarrow \PIW,
\end{align*}
such that the following diagram commutes:
$$
\begin{tikzcd}
	A-\mathsf{mod}^{\check G\x\check T^I}_{\mathsf{fr}} \ar[r,"i"] \ar[d,"\Res"'] &
	\PervI \ar[d,"\aviw"]\\
	A-\mathsf{mod}^{\check G^I\x\check T^I}_{\mathsf{fr}}\ar[r]& \PIW.
\end{tikzcd}$$
\end{lem}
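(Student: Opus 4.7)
The plan is to replay, inside $\PIW$, the Tannakian construction that produced the equivalence \eqref{eq:nonfullsub}, using $\Zent^I$ from \cref{lemma:key} in place of $\Zent$. First I would introduce the analog
\[
F^I : \Rep(\check G^I \times \check T^I) \longrightarrow \PIW, \qquad W \boxtimes \lambda \longmapsto \Zent^I(W) \star J_\lambda.
\]
Since right convolution by $J_\lambda$ commutes with $\aviw = \ICIW_0 \star (-)$, \cref{lemma:key} supplies a canonical natural isomorphism $\aviw \circ F \simeq F^I \circ (\Res \times \id_{\check T^I})$, which is the key compatibility governing everything that follows.

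Next I would rerun the construction of $A$ with $F^I$ replacing $F$. One checks that the images of $F^I$ admit convolution isomorphisms compatible with the symmetry constraints of $\Rep(\check G^I \times \check T^I)$, transported from those of $F$ via centrality of $\Zent$ (\cref{theorem:gaitsgoryzhu}), monoidality of Wakimoto functors (\cref{lem:wakimotomonoidality}), and \cref{lemma:key}. This produces a non-full subcategory $\cC^I \subset \PIW$, a $\check G^I \times \check T^I$-equivariant algebra
\[
A^I := \Hom_{\operatorname{Ind}(\cC^I)}\bigl(1,\, \mathcal{O}(\check G^I \times \check T^I)\bigr),
\]
and an equivalence $\cC^I \simeq A^I\text{-}\mathsf{mod}^{\check G^I \times \check T^I}_{\mathsf{fr}}$, exactly by the same arguments underlying \eqref{eq:nonfullsub}. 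Applying this Tannakian machinery to the displayed natural isomorphism $\aviw \circ F \simeq F^I \circ (\Res \times \id)$ should then yield a $\check G^I \times \check T^I$-equivariant algebra morphism $A \to A^I$.

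The desired bottom functor is then given by base change along this algebra map followed by inclusion:
\[
W \otimes A \;\longmapsto\; (W \otimes_A A^I) \;\in\; A^I\text{-}\mathsf{mod}^{\check G^I \times \check T^I}_{\mathsf{fr}} \simeq \cC^I \hookrightarrow \PIW.
\]
Exactness is automatic on free modules, and commutativity of the target square is built in by construction. For uniqueness I would invoke \cref{cor:key}: every $W \in \Rep(\check G^I)$ is a direct summand of $\Res(V)$ for some $V \in \Rep(\check G)$, so every object of $A\text{-}\mathsf{mod}^{\check G^I \times \check T^I}_{\mathsf{fr}}$ is a direct summand of one in the essential image of $\Res$; combined with exactness and the required commutation with $\Res$, this pins the functor down up to unique isomorphism.

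The main obstacle is the Tannakian step. Because $\Zent^I$ is not produced directly as a nearby-cycles functor but via the tilting and regular-quotient detour of \cref{section:tilting}, each of the associativity, braiding, and unit compatibilities needed for $F^I$ must be traced through the identification $\aviw \circ \Zent \simeq \Zent^I \circ \Res$ and the $\PervI$-bimodule action on $\DIW$. Every individual verification is routine, but assembling them into the compatibility data required by the construction of $A^I$ is the genuine content of the argument.
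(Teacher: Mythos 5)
Your opening move is the right one: the compatibility $\aviw \circ F \simeq F^I \circ (\Res \times \id_{\check T^I})$, with $F^I(W\boxtimes\lambda) = \Zent^I(W)\star J_\lambda$, is exactly what drives the lemma, and it is correctly deduced from \cref{lemma:key} together with the fact that $\aviw$ commutes with right convolution. However, the Tannakian detour through $\cC^I$ and $A^I$ has a structural gap that your closing caveat does not reach.

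The construction of $A$ in \cref{section:coherentfunc} works because $\cC$ sits inside the \emph{monoidal} category $(\Perv(\Hk{\I}),\star)$, and the convolution of two objects of $\cC$ again lies in $\cC$; this is what makes $\mathcal{O}(\check G\times\check T^I)$ into an algebra object in $\Ind(\cC)$ and hence makes $A=\Hom_{\Ind(\cC)}(1,\mathcal{O}(\check G\times\check T^I))$ into a ring. Your proposed $\cC^I$ would live inside $\PIW$, which is \emph{not} a monoidal category: $\DIW$ only carries a right action of $\Perv(\Hk{\I})$ by convolution, and there is no bifunctor $\PIW\times\PIW\to\PIW$. So $F^I(W)\star F^I(W')$ is undefined, $\mathcal{O}(\check G^I\times\check T^I)$ is not an algebra object in $\Ind(\cC^I)$, and $A^I$ is not a $\Qell$-algebra. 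The ``$\Perv(\Hk{\I})$-bimodule action on $\DIW$'' you invoke only lets you convolve an Iwahori-Whittaker object against a bi-$\Lplus\I$-equivariant one; it does not produce the symmetric monoidal structure that the Tannakian reconstruction of $A^I$ requires. Consequently the base-change functor $W\otimes_A A^I$ and the algebra map $A\to A^I$ cannot be formed, and the argument does not close.

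The paper sidesteps this entirely and never constructs a second algebra. It observes that, via \cref{lemma:key}, the idempotent $e_W\in\End_{\check G^I\times\check T^I}(\Res V)$ that cuts $W\otimes A$ out of $\Res(V)\otimes A$ is carried by $\Zent^I$ to an idempotent of $\aviw(i(V\otimes A))\cong\Zent^I(\Res V)\star J_\lambda$; since $\PIW$ is abelian these idempotents split, and one defines the functor by taking their images and upgrading to morphisms. This is in effect your $W\otimes A\mapsto F^I(W)$, but reached without ever needing $\PIW$ to be monoidal. If you want to repair your write-up, drop $A^I$ altogether: define the bottom functor directly by splitting idempotents through $\Zent^I$, and use your direct-summand observation (every $W$ is a summand of some $\Res V'$) both to construct and to pin down the functor.
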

\begin{proof}
	As the inclusion of $A-\mathsf{mod}^{\check G\x\check T^I}_{\mathsf{fr}}$ to
	$\Perv(\Hk{\I})$ factors through the essential image of
	$$F:\Rep(\check G\x \check T^I)\rightarrow \Perv(\Hk{\I}),$$
	the idempotent endomorphisms corresponding to
	$\check G^I\x\check T^I$-stable subspaces of the underlying $A$-module of
	$V\otimes A$, can be canonically lifted to idempotent endomorphisms in
	$\End(\aviw(i(V\otimes A)))$ using \cref{lemma:key}. In turn, this assigment can be
	uniquely upgraded to the desired functor.
\end{proof}
Finally, precomposition of the functor from \cref{lem:factoralg} with
$\widetilde{F}$ yields the functor:
\begin{gather}
\widetilde{F}^I:\Coh^{\check G^I\x\check T^I}_{\mathsf{fr}}(\Sprgafi)\rightarrow \PIW\\
	V\otimes\mathcal{O}(\Sprgafi)\mapsto V\otimes A.\nonumber
\end{gather}
\subsection{Complexes on the Springer resolution as a quotient}
Denote by $\mathcal{K}$, the kernel of the functor
\begin{equation}\label{eq:functor}
\Kb\left(\Coh^{\check G^I\x\check T^I}_{\mathsf{fr}}(\Sprgafi)\right)\rightarrow
\Db\Coh^{\check G^I\x\check T^I}(\Sprgqafi)
\end{equation}
induced by restriction along the inclusion $\Sprgqafi\subset\Sprgafi$. Note also the equivalence
\begin{equation}\label{eq:qafitospgr}
\Db\Coh^{\check G^I\x\check T^I}(\Sprgqafi)\isomto\Db\Coh^{\check
G^I}(\Sprgi),
\end{equation}
induced by pullback along the isomorphism
$$\Sprgqafi/\check T^I\isomto\Sprgi.$$
Putting these together, we arrive at:
\begin{lem}\label{lem:verdier}
	The functor \eqref{eq:functor} induces an equivalence
	$$\Kb\left(\Coh^{\check G^I\x\check T^I}_{\mathsf{fr}}(\Sprgafi)\right)/\mathcal{K}\isomto \Db\Coh^{\check
G^I}(\Sprgi),$$
	where the left hand side denotes the Verdier quotient with respect to the localizing subcategory
	$\mathcal{K}$; after postcomposition with \eqref{eq:qafitospgr}.
\end{lem}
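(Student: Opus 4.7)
My plan is to factor the functor \eqref{eq:functor} as
$$\Kb\!\left(\Coh^{\check G^I \times \check T^I}_{\mathsf{fr}}(\Sprgafi)\right) \xrightarrow{\alpha} \Db\Coh^{\check G^I \times \check T^I}(\Sprgafi) \xrightarrow{\beta} \Db\Coh^{\check G^I \times \check T^I}(\Sprgqafi),$$
where $\alpha$ is the natural realization functor and $\beta$ is derived restriction along the open immersion $\Sprgqafi \hookrightarrow \Sprgafi$. The functor $\beta$ is a standard Verdier localization, with kernel given by the triangulated subcategory of complexes whose cohomology is supported on the closed complement $Z := \Sprgafi \setminus \Sprgqafi$. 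It then suffices to show that $\beta \circ \alpha$ is essentially surjective and induces a fully faithful functor on the Verdier quotient $\Kb/\mathcal{K}$.

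For essential surjectivity, fix $F \in \Db\Coh^{\check G^I \times \check T^I}(\Sprgqafi)$. The key observation is that $\Sprgqafi$ is smooth, as a $\check T^I$-torsor over the Springer resolution $\Sprgi$. Hence $F$ admits a bounded resolution by equivariant locally free sheaves. To lift this to $\Kb$ of free equivariant sheaves on $\Sprgafi$, I would: extend each relevant coherent sheaf on $\Sprgqafi$ to an equivariant coherent sheaf on the affine $\Sprgafi$, which is possible because $\Sprgqafi \subset \Sprgafi$ is a quasi-compact open immersion and equivariant coherent extensions exist; then resolve each such extension by free equivariant sheaves of the form $V \otimes \cO(\Sprgafi)$, which is possible because $\Sprgafi$ is affine and equivariant free modules are projective in the equivariant category; and finally truncate the resulting possibly unbounded resolution at a point where the syzygy becomes locally free after restriction to $\Sprgqafi$, which is possible by the finite homological dimension on the smooth scheme $\Sprgqafi$. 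The truncation error then lies in the kernel of $\beta$, hence in $\mathcal{K}$, so the truncated complex lifts $F$ modulo $\mathcal{K}$.

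For full faithfulness, the universal property of Verdier quotients yields a well-defined functor on $\Kb/\mathcal{K}$, and faithfulness is immediate from the definition of $\mathcal{K}$. For fullness, any morphism $\beta\alpha(P^\bullet) \to \beta\alpha(Q^\bullet)$ in $\Db\Coh^{\check G^I \times \check T^I}(\Sprgqafi)$ is represented by a roof $P^\bullet|_{\Sprgqafi} \leftarrow R^\bullet \rightarrow Q^\bullet|_{\Sprgqafi}$. Using the extension-and-resolution procedure from the previous paragraph, I would replace $R^\bullet$ by a bounded complex of equivariant free $\cO(\Sprgafi)$-modules restricted to $\Sprgqafi$, and then lift the roof to a genuine roof in $\Kb$ whose left-pointing arrow has cone in $\mathcal{K}$. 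Finally, postcomposing the resulting equivalence with \eqref{eq:qafitospgr}, which identifies $\Db\Coh^{\check G^I \times \check T^I}(\Sprgqafi)$ with $\Db\Coh^{\check G^I}(\Sprgi)$ via pullback along the $\check T^I$-torsor $\Sprgqafi \to \Sprgi$, yields the stated equivalence.

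The main technical obstacle is the essential surjectivity step: controlling the behavior of the equivariant extensions and their free resolutions on the singular affine $\Sprgafi$, and verifying that the truncation in the last sub-step indeed yields a genuinely bounded complex of equivariant free modules. This ultimately relies on the interplay between the affineness of $\Sprgafi$, which provides equivariant surjections from free modules, and the smoothness of $\Sprgqafi$, which forces the syzygies to eventually be locally free after restriction.
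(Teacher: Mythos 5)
Your plan of factoring the restriction through the open immersion and arguing essential surjectivity plus full faithfulness directly is a reasonable strategy, but it diverges from the paper's proof, which observes that the inclusion $\Sprgqafi \subset \Sprgafi$ induces the identity on $\pi_0$ (both being identified with $\pi_0(\check G^I)$), reduces to neutral components, and then quotes \cite[Proposition 6.2.10]{centralRiche} for the connected split group $\Gdual$. Notably, your argument never engages with the possible disconnectedness of $\check G^I$, which is the only genuinely new feature in the ramified setting.

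More seriously, the truncation step in your essential surjectivity argument does not work as stated. Take an equivariant free resolution $\cdots \to P_1 \to P_0 \to \widetilde F$ on $\Sprgafi$ and truncate at degree $N$: the complex $(P_N \to \cdots \to P_0)$, restricted to $\Sprgqafi$, has cohomology $F$ in degree $0$ \emph{and} the syzygy $Z_{N+1}|_{\Sprgqafi}$ in degree $-N$. You correctly note that $Z_{N+1}|_{\Sprgqafi}$ is eventually locally free because $\Sprgqafi$ is smooth, but locally free is not zero --- it has positive generic rank --- and there is no reason for a locally free module over $\cO(\Sprgqafi)$ to be stably free. So the ``truncation error'' $Z_{N+1}|_{\Sprgqafi}[N]$ is \emph{not} supported on the closed complement, is not killed by $\beta$, and hence does not lie in $\mathcal{K}$; the truncated complex does not lift $F$. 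Switching to the intelligent truncation $\tau^{\geq -N}$ would fix the quasi-isomorphism type but ruins the property of the terms being free. Your fullness argument inherits the same gap since it reuses the same lifting mechanism, and faithfulness is also not ``immediate from the definition of $\mathcal{K}$'': $\mathcal{K}$ being the full kernel of $\beta\alpha$ makes the induced functor conservative, not faithful. A correct direct argument (as in \cite{centralRiche}) exploits instead that $\Sprgqafi$ is quasi-affine, so that $\Db\Coh^{\check G^I \times \check T^I}(\Sprgqafi)$ is generated under cones, shifts and direct summands by the objects $V \otimes \cO_{\Sprgqafi}$ --- which are visibly restrictions of free modules on $\Sprgafi$ --- together with a direct $\operatorname{Ext}$ comparison using that $\cO(\Sprgafi) \to \cO(\Sprgqafi)$ is an isomorphism because the complement has codimension at least two.
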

\begin{proof}
	Note that, the map $\pi_0(\Sprgqafi)\rightarrow \pi_0(\Sprgafi)$ induced by
	the natural inclusion corresponds to the identity of $\pi_0(\check G^I)$ under the respective identifications.
	In light of this, by reduction to neutral components
	it suffices to check the assertion for
	the split reductive group $\Gdual$; which is \cite[Proposition 6.2.10]{centralRiche}.
	\end{proof}
Consider the functor
$$\Grad\widetilde{F}^I:\Coh^{\check G^I\x\check
T^I}_{\mathsf{fr}}(\Sprgafi)\rightarrow\Rep(\check T^I);$$
obtained by refining the composition
$$\begin{tikzcd}[column sep=large]
	A-\mathsf{mod}^{\check G\x\check T^I}_{\mathsf{fr}}\isomto
	\cC \ar[r,"\left.\Grad\right|_\cC"]& \Rep(\check T^I),
\end{tikzcd}
$$
as in \cref{subsection:descending}. Let $$e^*:\Coh^{\check G^I\x\check T^I}_{\mathsf{fr}}(\Sprgafi)\rightarrow\Rep(\check
T^I)$$
be the functor induced by restriction along the base point $[1,0]\in\Sprgqafi.$
\begin{lem}\label{lem:gradrestrict}
	The functors $\Grad\widetilde{F}^I$ and $e^*$ are isomorphic.
\end{lem}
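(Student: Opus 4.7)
Both functors send an object of the form $V\otimes\mathcal O(\Sprgafi)$, for a $\check G^I\times\check T^I$-representation $V$, to a $\check T^I$-representation of the form $V\otimes W$ for a universal $\check T^I$-representation $W$: for $e^*$ this is clear from the definition of the functor, with $W=e^*\mathcal O(\Sprgafi)\cong\Qell$; for $\Grad\widetilde F^I$ it follows because $\widetilde F$ is given by $V\otimes_{\mathcal O(\Sprgafi)} A$, together with the monoidality of $\Grad$ (\cref{lemma:monoidalgraded}) and the identifications $\Grad\circ\Zent\cong\res^{\check G}_{\check T^I}$ (\cref{theorem:filtration}) and $\Grad\circ J_\lambda\cong (\lambda)$ (\cref{cor:wakimotocohomology}). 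Thus the first step is to reduce the claim to producing a canonical isomorphism of $\check G^I\times\check T^I$-equivariant algebras $\Grad(A)/I_{\Sprgafi}\isoto\Qell$, where $I_{\Sprgafi}$ is the kernel of the algebra map $\mathcal O(\Sprgafi)\to A$ from \cref{prop:Sprgafi}.

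The plan for the second step is to show that the algebra map $\mathcal O(\Sprgafi)\to\Grad(A)$ agrees with the composition $\mathcal O(\Sprgafi)\to e^*\mathcal O(\Sprgafi)\cong\Qell\to\Grad(A)$ induced by evaluation at $[1,0]$. Using the presentation of $\mathcal O(\Sprgafi)$ as a quotient of $\mathcal O(\check\fg^I\times\mathcal X)$, it is enough to verify this separately on the two factors:
\begin{enumerate}
\item On $\mathcal O(\check\fg^I)\to\Grad(A)$, induced by the monodromy: since the monodromy acts trivially on each Wakimoto graded piece (\cref{lemma:wakimotogradtriv}), $\Grad$ of this component is zero, which matches evaluation at $0\in\Lie\check U^I$.
\item On $\mathcal O(\check G^I/\check U^I)\to\Grad(A)$, built from the highest-weight arrows $\mathfrak f_\lambda$ of \cref{subsection:highestweight}: by \cref{lemma:hwtadjunction} (and the comparison of constant terms with Wakimoto gradeds in \cref{cor:gradedconstant}), $\mathfrak f_\lambda$ is precisely the projection onto the $\gr_{\bar\lambda}$-piece of the Wakimoto filtration, so $\Grad_{\bar\lambda}$ of this component is the identity on the $\bar\lambda$-line, tautologically matching the evaluation of $\coweyl(\bar\lambda)\otimes(-\bar\lambda)\hookrightarrow\mathcal O(\check G^I/\check U^I)$ at the identity coset.
\end{enumerate}
Combining (1) and (2) yields the desired identification.

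The main technical nuisance will be bookkeeping the various equivariance structures, particularly the interplay between $\check G$- and $\check G^I$-actions, since $\widetilde F^I$ is built not from a putative central functor on $\Rep(\check G^I)$ (which we do not have) but by descending through the averaging-descent of \cref{lem:factoralg}. One must verify that applying $\Grad$ after this averaging-descent is compatible with applying $\Grad$ to the original $\check G\times\check T^I$-equivariant $A$-module, which follows since the Wakimoto filtration is intrinsic to the underlying perverse sheaf and $\aviw$ preserves it: indeed, $\aviw(J_\lambda)\cong\stdiw_{\lambda}$ (by \cref{std:averaging}) and the corresponding standard/costandard filtrations on $\PIW$ agree via $\avasph$ (\cref{antishperical:averaging}) with the Wakimoto ones upstairs, so the two notions of ``associated graded'' match up and the computations above may be carried out on the level of $\Perv(\Hk{\I})$ before averaging.
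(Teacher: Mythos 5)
Your proposal follows essentially the same route as the paper's proof: reduce, by Tannakian considerations, to showing that the $\check T^I$-equivariant algebra map $\mathcal{O}(\Sprgafi)\to\Qell$ corresponding to $\Grad\widetilde{F}^I$ is evaluation at the base point, then verify this on the $\check\fg^I$-factor via triviality of the monodromy on the Wakimoto graded pieces (\cref{lemma:wakimotogradtriv}) and on the $\mathcal{O}(\check G^I/\Udual)$-factor via the fact that the highest-weight arrows project onto the highest-weight line. The paper's proof makes exactly these two observations, deferring the reduction step to the split-case reference \cite[Lemma 6.3.8]{centralRiche}.
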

\begin{proof}
	As in \cite[Lemma 6.3.8]{centralRiche}, it suffices to observe that the
	$\check T^I$-equivariant morphism $\mathcal{O}(\Sprgafi)\rightarrow \Qell$ corresponding
	to $\Grad\widetilde{F}^I$, is the evaluation at the base point. Recall that this
	morphism is ultimately induced via the endomorphisms of the fiber functor of
	$\Rep(\check T^I)$, provided by the
	monodromy and the highest weight arrows. Since the monodromy acts trivially
	on the Wakimoto filtration by \cref{lemma:wakimotogradtriv},  and the highest weight arrows correspond
	to the projection to the highest weight line, the lemma follows.
\end{proof}
\subsection{Factorization through $\Sprgi$}
\begin{prop}\label{prop:construction}
	There exist a unique functor
	\begin{equation*}\label{eq:fiw}
	F_\IW:\Db\Coh^{\check G^I}(\Sprgi)\rightarrow \DIW,
	\end{equation*}
	making the following diagram commute:
$$\begin{tikzcd}
	\Kb\left(\Coh^{\check G^I\x\check
	T^I}_{\mathsf{fr}}(\Sprgafi)\right)\ar[d]\ar[r,"\Kb(\widetilde{F}^I)"]&\Kb(\PIW)\ar[d]\\
	\Db\Coh^{\check G^I}(\Sprgi)\ar[r,"F_\IW"]&\DIW.
\end{tikzcd}
$$
\end{prop}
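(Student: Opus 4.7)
The plan is to construct $F_\IW$ via the universal property of the Verdier quotient identified in Lemma \ref{lem:verdier}. Since $\widetilde{F}^I$ lands in $\PIW$, it extends to a functor $\Kb(\widetilde F^I): \Kb(\Coh^{\check G^I\x\check T^I}_{\mathsf{fr}}(\Sprgafi)) \rightarrow \Kb(\PIW)$, and composing with the realization equivalence $\Kb(\PIW) \rightarrow \Db(\PIW) \simeq \DIW$ from Corollary \ref{cor:realization} yields a triangulated functor to $\DIW$. To descend this through the quotient by the localizing subcategory $\mathcal{K}$, one must verify that every object of $\mathcal{K}$ is sent to zero in $\DIW$.

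By definition, an object $C^\bullet \in \mathcal{K}$ is a bounded complex of free equivariant modules on $\Sprgafi$ whose restriction to the open substack corresponding to $\Sprgqafi$ is acyclic. The task thus reduces to showing: if $C^\bullet|_{\Sprgqafi}$ is acyclic, then $\widetilde F^I(C^\bullet)$ is acyclic as a complex of perverse sheaves in $\PIW$, hence zero in $\DIW$. The bridge between the coherent and constructible sides is provided by Lemma \ref{lem:gradrestrict}, which identifies the composition $\Grad \circ \widetilde F^I$ with $e^*$, restriction along the base point $[1,0] \in \Sprgqafi$. Since the base point lies in $\Sprgqafi$, $e^*$ factors through the restriction functor to $\Sprgqafi$; therefore acyclicity of $C^\bullet|_{\Sprgqafi}$ immediately yields acyclicity of the complex $\Grad\bigl(\widetilde F^I(C^\bullet)\bigr)$ in $\Rep(\check T^I)$.

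The main obstacle is then the passage from acyclicity of the Wakimoto-graded complex to acyclicity of the complex $\widetilde F^I(C^\bullet)$ itself. Here one exploits the fact that each term $\widetilde F^I(V \otimes \mathcal O(\Sprgafi))$ lies in the Wakimoto-filtered subcategory $\PIW^{\cham}$ (by Theorem \ref{theorem:filtration} applied through the definitions of $\widetilde F$ and $\widetilde F^I$), and that this filtration is functorial in morphisms between free modules, so it is compatible with the differentials of the complex. Using the exactness of each $J_\lambda$ (Proposition \ref{wakimoto:essim}), the fact that only finitely many weights appear in each $V$, and the extension-closure of the Wakimoto-filtered subcategory, a standard finite induction on the length of the Wakimoto filtration (working term-by-term and splicing the resulting short exact sequences of subcomplexes) shows that vanishing of cohomology of the associated graded complex implies vanishing of cohomology of the original complex in $\PIW$.

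Once this factorization is established, $F_\IW$ is obtained as the unique triangulated functor making the square commute, with uniqueness being an immediate consequence of the universal property of the Verdier quotient from Lemma \ref{lem:verdier}.
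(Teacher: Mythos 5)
Your proposal is correct and follows essentially the same route as the paper: apply the universal property of the Verdier quotient from \cref{lem:verdier}, observe via \cref{lem:gradrestrict} that $\Grad\widetilde{F}^I$ corresponds to restriction at the base point $[1,0]\in\Sprgqafi$ so that objects of $\mathcal{K}$ have acyclic Wakimoto-graded image, and then run the induction on Wakimoto filtrations (the paper delegates this to an argument analogous to \cite[Proposition 4.6.1]{centralRiche}) to upgrade acyclicity of the graded complex to acyclicity of $\Kb(\widetilde{F}^I)(K)$. The only stylistic caveat: the arrow $\Kb(\PIW)\rightarrow\Db(\PIW)$ is merely the natural comparison functor, not an equivalence; it is only $\Db(\PIW)\simeq\DIW$ (from \cref{cor:realization}) that is an equivalence, and your phrasing should not be read as asserting otherwise.
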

\begin{proof}
	By the universal property of Verdier quotients, using \cref{lem:verdier}, it suffices to
	check that the kernel $\mathcal{K}$ of \eqref{eq:functor} vanishes under
	$\Kb(\widetilde{F}^I)$.
	Via a straightforward induction on the Wakimoto filtrations, analogous to \cite[Proposition 4.6.1]{centralRiche},
	acyclicity of a complex $\Kb(\Grad\widetilde{F}^I)(K)\in\Kb(\Rep(\check T^I))$
	implies the acyclicity of $\Kb(\widetilde{F}^I)(K)$.
	Finally, since $\mathcal{K}$ is the kernel of the functor induced by restriction to
	$\Sprgqafi$, and \cref{lem:gradrestrict} implies $\Grad\widetilde{F}^I$ corresponds to the
	restriction to the base point which also lies in $\Sprgqafi$; $\Kb(\Grad\widetilde{F}^I)(K)$ is acyclic for every
	$K\in\mathcal{K}$, concluding the proof.
\end{proof}

\section{Proof of the equivalence}\label{section:proof}
Now that we have constructed our functor $F_\IW$ \eqref{eq:fiw}, we can formulate our main
theorem:
\begin{thrm}\label{theorem:main}
	The functor
	$$F_\IW:\Db\Coh^{\check G^I}(\Sprgi)\rightarrow \DIW,$$
	is an equivalence.
\end{thrm}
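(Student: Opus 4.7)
The plan is to show $F_\IW$ is an equivalence by verifying it on a carefully chosen generating set of the source, in parallel with the standard strategy from the split case.

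For essential surjectivity, by \cref{lemma:generatorssprg} the source is generated under cones and extensions by the line bundles $\mathcal{O}_{\Sprgi}(\mu)$ for $\mu\in\cham$. Tracing through the construction in \cref{prop:construction}, one first identifies
$$F_\IW\bigl(\mathcal{O}_{\Sprgi}(\mu)\bigr) \cong \aviw(J_\mu).$$
For dominant $\mu$, \cref{remark} combined with \cref{std:averaging} gives $\aviw(J_\mu)\cong \stdiw_{(t^\mu)_s}$, and the general case follows from the monoidality of Wakimoto functors (\cref{lem:wakimotomonoidality}) together with the compatibility of $\aviw$ with right convolution. Combined with the highest weight structure on $\PIW$ of \cref{prop:quotientbruhat} and the realization equivalence $\Db(\PIW)\simeq \DIW$ of \cref{cor:realization}, this shows that the essential image of $F_\IW$ generates $\DIW$ under cones and extensions.

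For full faithfulness, both sides carry compatible actions of $\Rep(\check T^I)$: tensoring with line bundles $\mathcal{O}(\mu)$ on the coherent side, and right convolution with $J_\mu$ on the constructible side, intertwined by $F_\IW$ by construction. This reduces the check to computing $\Hom^\bullet$ from $\mathcal{O}_{\Sprgi}$ into the generators $\mathcal{O}_{\Sprgi}(\mu)$. On the coherent side these Ext groups are computed via the affine projection $\Sprgi\to \check G^I/\check B^I$ and induction from $\check B^I$ to $\check G^I$. On the constructible side the corresponding Hom spaces lift along $\aviw$ through the equivalence $\avasph$ of \cref{theorem:asphiwequiv}, and are then computed using \cref{cor:wakimotocohomology} together with the Tannakian description \cref{regular:tannakian} of the regular quotient.

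The hard part will be matching these two explicit computations with the correct normalizations, in particular ensuring that the monodromy operator $\bn$ and the algebra morphism $\mathcal{O}(\Sprgafi)\to A$ of \cref{prop:Sprgafi} together recover the expected coherent description on $\Sprgi$. A crucial input is the fact that $\aviw\circ \Zent$ lands in tilting objects (\cref{cor:key}), which supplies enough explicit tilting generators to anchor the Ext comparisons on both sides simultaneously. Once the Hom groups match on generators, $F_\IW$ is an equivalence by the standard principle that a triangulated functor which is fully faithful on a set of generators and whose essential image generates the target is itself an equivalence.
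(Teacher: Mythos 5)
Your proposal gets the general shape right — reduce to generators, show they hit a generating set of $\DIW$, and compare $\Hom$-spaces — but it has a genuine gap in the full faithfulness step, precisely where you flag ``the hard part.'' The paper does \emph{not} independently compute the two $\Ext$-spaces and then ``match with the correct normalizations''; instead it proves in \cref{cor:highervanish} that the map
$\Ext^n(\mathcal{O}_{\Sprgi},V\otimes\mathcal{O}_{\Sprgi}(\bar\lambda))\to\Ext^n(\ICIW_0,\Zent^I(V)\star J_{\bar\lambda})$
induced by $F_\IW$ is already \emph{injective} (via \cref{lem:relationspreserved}, which exploits the Tannakian description of the regular quotient and full faithfulness of $\Pi^0_\IW$ on tilting objects, \cref{lemma:tiltingregular}), and then a one-line dimension count finishes. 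Your sketch never produces a comparison map whose bijectivity you are checking, only two separate computations plus a hope that they agree. That is exactly the step that fails without a mechanism like the injectivity statement.

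A second, related issue is your choice of generating set and the tool you invoke to evaluate it. You use the line bundles $\mathcal{O}_{\Sprgi}(\mu)$ for \emph{all} $\mu\in\cham$; but the higher cohomology vanishing $H^n(\Sprgi,\mathcal{O}_{\Sprgi}(\bar\lambda))=0$ for $n>0$ used in the paper (\cite[Theorem 2.4]{broCohomology}) holds only for \emph{dominant} $\bar\lambda$, and the matching vanishing on the constructible side comes from the tilting property of $\Zent^I(V)$, which again requires the generating set $V\otimes\mathcal{O}_{\Sprgi}(\lambda)$, $\lambda\in\chamb$, from the second half of \cref{lemma:generatorssprg}. For non-dominant $\mu$ both sides can have higher cohomology, so your reduction does not simplify matters. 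Finally, \cref{cor:wakimotocohomology} computes $R\Gamma(\Fl{\I},J_\lambda)$, i.e.\ global cohomology, whereas $\Hom_{\DIW}(\ICIW_0,-)$ is the costalk at the base point; these are not the same functor, so this citation does not give you the constructible-side Hom spaces you need.
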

In contrast to \cref{section:coherentfunc}, the proof follows almost verbatim the split case.
\begin{lem}\label{lemma:generators1}
	The category $\DIW$ is generated by the objects $\aviw(J_{\bar\lambda})$ with
	$\bar\lambda\in\cham$, under cones and extensions.
\end{lem}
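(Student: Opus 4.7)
The plan is to invoke the realization equivalence $\Db(\PIW) \simeq \DIW$ from \cref{cor:realization}, which reduces the claim to showing that every object of $\PIW$ lies in the thick subcategory generated by the $\aviw(J_{\bar\lambda})$, viewed as objects of $\DIW$. Since $\PIW$ is a highest weight category (\cref{prop:quotientbruhat}), every object is a subquotient, hence a finite iterated extension up to direct summands, of a direct sum of indecomposable tilting objects; so it suffices to verify that every indecomposable tilting $T \in \PIW$ lies in the subcategory generated under cones, extensions, and direct summands by $\{\aviw(J_{\bar\lambda})\}_{\bar\lambda \in \cham}$.

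Next, I would apply \cref{cor:key} (the upshot of the tilting property established in \cref{section:tilting}): for each such $T$ there exists $V' \in \Rep(\check G)$ with $T$ appearing as a direct summand of $\aviw \circ \Zent(V')$. By \cref{theorem:filtration}, $\Zent(V')$ admits a finite Wakimoto filtration whose graded pieces are of the form $J_{\bar\lambda}(M_{\bar\lambda})$, i.e.\ finite direct sums of $J_{\bar\lambda}$, with multiplicities given by the weights of $\Res^{\check G}_{\check T^I}(V')$. Since $\aviw$ is perverse t-exact by \cref{cor:aviewtexact}, applying it termwise produces a filtration of $\aviw \circ \Zent(V')$ in $\PIW$ whose graded pieces are direct sums of objects of the form $\aviw(J_{\bar\lambda})$. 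Hence $\aviw \circ \Zent(V')$ lies in the extension closure of $\{\aviw(J_{\bar\lambda})\}$, and so does its direct summand $T$.

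Combining these two steps, the thick subcategory of $\DIW$ generated by $\{\aviw(J_{\bar\lambda})\}_{\bar\lambda \in \cham}$ contains every indecomposable tilting object of $\PIW$, therefore contains all of $\PIW$ by the highest weight structure, and therefore equals all of $\DIW = \Db(\PIW)$ via the realization equivalence.

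The only subtle point is how one reads \emph{``generated under cones and extensions''}; since $\DIW \simeq \Db(\PIW)$ with $\PIW$ a finite-length abelian category admitting a highest weight structure, the ambient category is Karoubian and closure under direct summands is automatic, so the thick-subcategory interpretation gives exactly what is needed. If one prefers a strict reading (extension closure only, without summands), the argument still goes through because each indecomposable tilting arises as a summand of $\aviw \circ \Zent(V')$ that can be cut out by an explicit idempotent in $\End(\aviw \circ \Zent(V'))$, and such idempotents split in $\PIW$; this is the only mild obstacle and poses no real difficulty.
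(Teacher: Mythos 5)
Your route differs from the paper's, and has a gap. The paper's own proof (following \cite[Lemma 6.6.3]{centralRiche}) is a direct $K_0$-induction: from the Wakimoto filtration and the averaging formulas one has $[\aviw(J_{\bar\lambda})] = [\stdiw_{\bar\lambda}]$ in the Grothendieck group (as in the proof of \cref{whittaker:eulerchar}), so $\aviw(J_{\bar\lambda})$ is a perverse sheaf whose composition series contains $\ICIW_{\bar\lambda}$ exactly once, with all remaining constituents of strictly smaller label. Induction on the Bruhat order then produces each simple $\ICIW_{\bar\lambda}$ by cones alone, and hence all of $\DIW$. Your argument instead routes through tiltings and \cref{cor:key}.

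The gap is in your reduction step. The implication ``every object is a subquotient, hence a finite iterated extension up to direct summands, of a direct sum of indecomposable tilting objects'' is false: a subquotient of $X$ need not lie in the thick subcategory generated by $X$. For instance, over $\F_p[\Z/p]$ the trivial module is a subquotient of the regular module, but it is not a perfect complex and hence does not lie in the thick subcategory of $\Db$ generated by the regular module. Your conclusion --- that it suffices to treat tiltings --- is nevertheless true, but for a different reason: in a highest weight category the tiltings generate $\Db$ as a thick subcategory, by induction on the poset using the exact sequences $\Delta_\lambda \hookrightarrow T(\lambda) \twoheadrightarrow Q_\lambda$ with $Q_\lambda$ admitting a $\Delta$-filtration with labels $<\lambda$. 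You would need to substitute this argument.

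Two further remarks. First, \cref{cor:key} does not literally say that every indecomposable tilting is a direct summand of some $\aviw\circ\Zent(V')$; it says each $V\in\Rep(\check G^I)$ is a summand of $\Res(V')$ with $\Zent^\IW(V')$ tilting. Your stronger claim is true, but needs a sentence: taking $V'$ a tensor product of (quasi-)minuscule irreducibles lifting $\bar\lambda$, the weight $\bar\lambda$ is maximal among weights of $\Res(V')$, so $T(\bar\lambda)$ splits off from the tilting object $\Zent^\IW(V')$. Second, your proof inherently passes through direct summands, whereas the paper's induction reaches each $\ICIW_{\bar\lambda}$ by cones alone. If ``generated under cones and extensions'' is read strictly as the triangulated subcategory rather than its Karoubi envelope, your argument proves a weaker statement; your closing paragraph raises the issue, but splitting the idempotent only exhibits $T$ as a summand and does not place it in the extension closure, so the remark does not actually remove the dependence on summands.
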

\begin{proof}
	Follows from standard arguments, using the agreement of classes
	$\left[\aviw(J_{\bar\lambda})\right]$ in the Grothendieck group
	with those of standard and costandard objects (see e.g. proof of
	\cref{whittaker:eulerchar}). Such arguments in the split setting are
	are outlined in \cite[Lemma 6.6.3]{centralRiche}.
\end{proof}
\begin{lem}\label{lem:relationspreserved}
	For every $V\in\Rep(\check G^I)$, the map
	\begin{equation}\label{eq:hominj}
		\Hom\left(\mathcal{O}(\Sprgi),V\otimes\mathcal{O}(\Sprgi)\right)\rightarrow
		\Hom\left(\ICIW_0,\Zent^I(V)\right)
	\end{equation}
	induced by $F_\IW$ is injective.
\end{lem}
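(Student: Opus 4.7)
The plan is to exploit the summand realization from \cref{cor:key} to reduce to the case $V = \Res(V')$ with $V' \in \Rep(\check G)$, and then refine the map \eqref{eq:hominj} into a $\cham$-graded version by lifting to the $\check G^I \times \check T^I$-equivariant setting on $\Sprgafi$. Since \eqref{eq:hominj} is functorial in $V$ and both sides split compatibly with the direct-summand decomposition of $\Res(V')$, it suffices to treat $V = \Res(V')$; for this $V$, \cref{lemma:key} identifies the right-hand side with $\Hom_\DIW(\ICIW_0, \aviw \Zent(V'))$.

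After this reduction, I would work on $\Sprgafi$ to obtain a $\cham$-graded refinement of \eqref{eq:hominj}. On the coherent side, the grading comes from the $\check T^I$-weight decomposition of $\Gamma(\Sprgafi, \mathcal{O})$, which via \eqref{eq:baseaffinespace} is controlled by the induced representations $\coweyl(\lambda)$. On the constructible side, the Wakimoto filtration on $\Zent(V')$ from \cref{theorem:filtration} yields a filtration on $\aviw \Zent(V')$ whose $\lambda$-th graded piece involves $\stdiw_\lambda \otimes \Res(V')_\lambda$ (using \cref{std:averaging} to compute $\aviw(J_\lambda)$), with extensions controlled by \cref{wakimoto:extension}. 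On each weight-$\lambda$ component, the coherent Hom space decomposes via Frobenius reciprocity into pieces recording $\check G^I$-equivariant maps from $\Res(V')^*$ into $\coweyl(\lambda)$-type modules, and this decomposition should match the pieces appearing on the constructible side, which can be read off via the highest weight structure of \cref{prop:quotientbruhat}.

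The crucial verification is that the map \eqref{eq:hominj} respects the $\cham$-gradings on both sides. This is where the construction of $F_\IW$ through the Tannakian machinery of \cref{subsection:tannakian} plays its role: the algebra morphism $\mathcal{O}(\Sprgafi) \to A$ of \cref{prop:Sprgafi} is built from monodromy and highest-weight arrows, which are compatible with the Wakimoto filtration by \cref{lemma:wakimotogradtriv} and \cref{prop:highestweightmonoidal}. The main obstacle is precisely this compatibility check—ensuring the Tannakian identifications in \cref{subsection:tannakian} exactly match the Wakimoto decomposition on the constructible side, without any undetected twists or boundary contributions coming from the codimension-$\geq 2$ complement $\Sprgafi \setminus \Sprgqafi$. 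Once verified, injectivity of \eqref{eq:hominj} follows by a weight-by-weight tautology.
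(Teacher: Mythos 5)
The paper's proof takes a completely different route from yours, and your argument has a genuine gap precisely at the point where the work needs to be done. The paper argues as follows: since $\ICIW_0$ and $\Zent^I(V)$ are both tilting, by \cref{lemma:tiltingregular} the projection $\Pi^0_\IW$ to the regular quotient $\PIW^0$ is faithful on the relevant Hom space, so one may check injectivity there. Under the Tannakian description $\PIW^0\cong\Rep(H)$ with $H\subset Z_{\check G^I}(n^0)$, and using that $n^0$ is a \emph{regular} nilpotent (a consequence of \cref{ker:dim}), the Springer map restricts to an isomorphism over the regular orbit, identifying $\Coh^{\check G^I}(\widetilde{\mathcal{O}}_r)\cong\Rep(Z_{\check G^I}(n^0))$. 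After these identifications, the map \eqref{eq:hominj} becomes the inclusion $V^{Z_{\check G^I}(n^0)}\hookrightarrow V^H$, which is trivially injective. The tilting property is what makes this reduction permissible, and the regular quotient is what converts injectivity into a transparent group-theoretic inclusion.

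Your proposal, by contrast, never produces an actual injectivity argument. The final sentence defers the whole content of the lemma to an unverified ``compatibility check,'' after which injectivity would be ``a weight-by-weight tautology'' — but that compatibility is exactly what has to be proved, and it is not a tautology. There are also concrete structural obstacles to the route you sketch. The $\cham$-grading you want on the coherent side lives on $\Sprgafi$ (via the $\check T^I$-action), not on $\Sprgi$; passing to $\Db\Coh^{\check G^I}(\Sprgi)$ goes through a Verdier quotient by $\mathcal{K}$, and Hom spaces in a Verdier quotient are not simply graded pieces of Hom spaces upstairs, so the graded refinement of \eqref{eq:hominj} does not obviously exist. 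On the constructible side the Wakimoto structure is a \emph{filtration}, not a direct sum, and $\Hom(\ICIW_0,\Zent^I(V))$ is controlled by the $\lambda=0$ piece alone, not by a $\cham$-indexed sum; so there is no matching grading on that side either. In short, the weight-by-weight strategy lacks the gradings it presupposes, and even if they existed, matching dimensions graded-piece by graded-piece would not by itself prove injectivity of a specific functor-induced map. The paper's regular-quotient reduction is the device that sidesteps all of this.
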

\begin{proof}
	As both objects on the right hand side are tilting, we can, via
	\cref{lemma:tiltingregular}, check
	injectivity in the regular quotient $\PIW^0$. Recall that
	$\PIW^0\cong\Rep(H)$ for some subgroup $H\subset Z_{\check G^I}(n^0)$
	(\cref{regular:tannakian}, \cref{remark:H}, \cref{remark:factorlie}, and \eqref{eq:regulariwregular}).

	On the other hand, \cref{ker:dim} implies
	that the element $n_0\in\check\fg^I$ is regular. In particular, $n_0$
	admits a unique preimage $\tilde n_0$ under the Springer map
	\begin{gather*}
	\Sprgi\rightarrow \check\fg^I,\:
		[g,x]\mapsto g\cdot x.
	\end{gather*}
	This map
	restricts to an isomorphism over the regular nilpotent orbit in $\check\fg^I$, which
	also identifies with $\check G^I/Z_{\check G^I}(n^0)$, see \cite[Section 6.10]{jantzenNilpotent}.
	In particular, the $\check G^I$-orbit $\widetilde{\mathcal{O}}_r$ of $\tilde n_0$ satisfies
	$$\Rep(Z_{\check G^I}(n^0))\cong \Coh^{\check G^I}(\widetilde{\mathcal{O}}_r).$$

	Under these
	identifications, the map \eqref{eq:hominj} corresponds to the inclusion
	$$V^{Z_{\check G}(n^0)}\rightarrow
	V^H$$ between the respective fixed vectors (see e.g. \cite[Section
	6.5.11]{centralRiche}), which is clearly injective.
\end{proof}
Given $V\in\Rep(\check G^I)$, we will denote by $\Zent^I(V)\star J_{\bar\lambda}$ the direct
summand of $\aviw(\Zent(V')\star J_{\bar\lambda})$, for a $V'\in\Rep(\check G)$ as in
\cref{cor:key}.
\begin{cor}\label{cor:highervanish}
	For every $V\in\Rep(\check G^I)$ and $\bar{\lambda}\in\cham$, the map
	\begin{equation}\label{eq:almosteqinj}
		\Ext^n\left(\mathcal{O}(\Sprgi),V\otimes\mathcal{O}(\Sprgi)(\bar\lambda)\right)\rightarrow
		\Ext^n\left(\ICIW_0,\Zent^I(V)\star J_{\bar\lambda}\right)
	\end{equation}
	induced by $F_\IW$ is injective.
\end{cor}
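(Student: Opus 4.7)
The plan is to reduce to \cref{lem:relationspreserved} via a d\'evissage argument, using the compatibility of $F_\IW$ with natural twisting operations on both sides.

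The starting point is that, in the construction of \cref{prop:construction}, the functor $F_\IW$ sends $\mathcal{O}_{\Sprgi}(\bar\mu)$ to $\aviw(J_{\bar\mu})$: the line bundle $\mathcal{O}_{\Sprgi}(\bar\mu)$ corresponds, under the descent $\Sprgqafi/\check T^I \simeq \Sprgi$, to the trivial module on $\Sprgafi$ twisted by the weight $\bar\mu \in X^*(\check T^I)$, and $\widetilde{F}^I$ sends this to the image of $J_{\bar\mu}$ via the identification $A\text{-}\mathsf{mod}^{\check G \x \check T^I}_\mathsf{fr} \cong \cC$. Consequently, $F_\IW$ intertwines the auto-equivalence $(-)\otimes \mathcal{O}_{\Sprgi}(\bar\mu)$ on the coherent side with $(-)\star J_{\bar\mu}$ on the constructible side, which yields commutative squares relating the map \eqref{eq:almosteqinj} at $\bar\lambda$ with its analogue at $\bar\lambda + \bar\mu$, after simultaneously twisting both entries.

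Next, by \cref{lemma:generatorssprg}, the category $\Db\Coh^{\check G^I}(\Sprgi)$ is generated under extensions by objects of the form $W \otimes \mathcal{O}_{\Sprgi}(\bar\mu)$ with $W \in \Rep(\check G^I)$ and $\bar\mu \in \chamb$. One resolves $V \otimes \mathcal{O}_{\Sprgi}(\bar\lambda)$ by a finite complex $P_\bullet$ of direct sums of such generators; applying $F_\IW$ yields a corresponding complex in $\DIW$ whose terms are $\Zent^I(W)\star J_{\bar\mu+\bar\lambda}$. Both the coherent Ext groups and their constructible images then admit compatible spectral sequence computations from $P_\bullet$, so injectivity of \eqref{eq:almosteqinj} reduces to injectivity on the generating objects. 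For the $\Hom$ case on these generators, \cref{lem:relationspreserved} applies directly after absorbing $\bar\mu$ into the twist compatibility of the first step.

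The main obstacle is controlling higher Ext on the generators and propagating injectivity through the spectral sequence differentials. On the coherent side, for $\bar\mu$ sufficiently dominant, Kempf-type vanishing on $\Sprgi \to \check G^I/\Bdual$ forces the higher Ext groups $\Ext^n(\mathcal{O}, W \otimes \mathcal{O}_{\Sprgi}(\bar\mu))$ to vanish for $n>0$; on the constructible side, the Wakimoto filtration of $\Zent(W')\star J_{\bar\mu}$ (for $W'\in\Rep(\check G)$ as in \cref{cor:key}) together with the tilting vanishing from \cref{main:tilting} should similarly produce the required vanishing, reducing the problem entirely to the $n=0$ case. Verifying this matching vanishing carefully -- and in particular the compatibility of the two Ext long exact sequences via $F_\IW$ -- is the principal technical step, and likely requires a concrete diagram chase combined with the Grothendieck-group analysis of \cref{prop:categorification}.
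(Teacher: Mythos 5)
Your proposal identifies relevant ingredients (twist compatibility of $F_\IW$ with $\otimes\mathcal{O}_{\Sprgi}(\bar\mu)$ and $\star J_{\bar\mu}$, the generators from \cref{lemma:generatorssprg}, reduction to \cref{lem:relationspreserved}, and the role of cohomology vanishing on $\Sprgi$), but the route through resolutions and spectral sequences is both over-engineered and has a genuine gap; the paper's actual argument is substantially shorter and avoids the pitfalls you encounter.

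First, the positive degrees. The key observation is that one never needs to understand the right-hand side $\Ext^n(\ICIW_0, \Zent^I(V)\star J_{\bar\lambda})$ for $n>0$ at all: by Broer's vanishing theorem, $\sH^n(\Sprgi, \mathcal{O}_{\Sprgi}(\bar\lambda))=0$ for $n>0$, so the left-hand side $\Ext^n(\mathcal{O}_{\Sprgi}, V\otimes\mathcal{O}_{\Sprgi}(\bar\lambda))$ is already zero and the map is trivially injective. Your spectral sequence d\'evissage tries to propagate injectivity across differentials, but injectivity of a map is not a property that passes along comparison maps of spectral sequences in general, and you correctly flag this as an unresolved "principal technical step" -- it is in fact where the argument would break. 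The point is that there is nothing to propagate: the higher Ext simply vanishes on the source.

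Second, the degree-zero case. You write that "\cref{lem:relationspreserved} applies directly after absorbing $\bar\mu$ into the twist compatibility," but that does not work: twisting by a line bundle is an auto-equivalence, so it identifies $\Hom(\mathcal{O}_{\Sprgi}, V\otimes\mathcal{O}_{\Sprgi}(\bar\lambda))$ with $\Hom(\mathcal{O}_{\Sprgi}(-\bar\lambda), V\otimes\mathcal{O}_{\Sprgi})$, which is not the $\Hom$ group appearing in \cref{lem:relationspreserved}. The lemma is genuinely only about $\Hom$ out of the \emph{untwisted} structure sheaf. The missing step is to choose a $\check G^I$-equivariant embedding $\mathcal{O}_{\Sprgi}(\bar\lambda) \hookrightarrow V'\otimes\mathcal{O}_{\Sprgi}$ for some $V'\in\Rep(\check G^I)$; tensoring with $V$ gives $V\otimes\mathcal{O}_{\Sprgi}(\bar\lambda) \hookrightarrow (V\otimes V')\otimes\mathcal{O}_{\Sprgi}$, and a diagram chase using commutativity of $F_\IW$ with this embedding reduces the assertion to \cref{lem:relationspreserved} applied to $V\otimes V'$. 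This is a genuinely different reduction than the twist absorption you attempt, and is the one that actually closes the argument.
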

\begin{proof}
	By \cite[Theorem 2.4]{broCohomology}, the cohomology groups
	$\sH^n(\Sprgi,\mathcal{O}(\Sprgi)(\bar\lambda))$ vanish for $n>0$. Consequently, it
	suffices to check the assertion for $n=0$.

	We can find $V'\in\Rep(\check G^I)$, such that there is a $\check G^I$-equivariant
	embedding $\mathcal{O}(\Sprgi)(\bar\lambda)\rightarrow V'\otimes\mathcal{O}(\Sprgi)$ (e.g.
	using the method of \cite[Lemma 6.2.9]{centralRiche}). This reduces the assertion to
	\cref{lem:relationspreserved} (cf. \cite[Corollary 6.6.5]{centralRiche}).
\end{proof}
Finally, we prove the ramified Arkhipov-Bezrukavnikov equivalence:
\begin{proof}[Proof of \cref{theorem:main}]
	By \cref{lemma:generators1}, the generators of $\DIW$ are in the essential image of
	$F_\IW$, reducing the claim to its fully faithfulness. To prove said fully faithfulness, by using 5-lemma and \cref{lemma:generatorssprg}, it suffices to check that
	the injective maps \eqref{eq:almosteqinj} are also surjective. As both are finite
	dimensional, we compare dimensions.

	To compute the dimension of the right hand side, we may first convolve with the
	invertible object $J_{-\bar\lambda}$, \cref{std:conv}. Then, we see
	by the tilting property of $\Zent^I(V)$ (\cref{cor:key}) that the right hand side
	vanishes for $n>0$;
	showing the desired surjectivity. On the other hand, for $n=0$
	the right hand side is equidimensional with the $\bar\lambda$ weight
	space of $V$, which coincides with the left hand side by an explicit calculation, see
	\cite[Section 6.6.3]{centralRiche}.
\end{proof}

\printbibliography
\end{document}